\newcommand{\bk}{\Bbbk}
\newcommand{\Z}{\mathbb{Z}}
\newcommand{\C}{\mathbb{C}}
\newcommand{\F}{\mathbb{F}}
\newcommand{\Gm}{\mathbb{G}_{\mathrm{m}}}
\newcommand{\Ga}{\mathbb{G}_{\mathrm{a}}}
\newcommand{\IC}{\mathcal{IC}}
\newcommand{\Perv}{\mathsf{Perv}}
\newcommand{\Vect}{\mathsf{Vect}}
\newcommand{\Mod}{\mathsf{Mod}}
\newcommand{\Mof}{\mathsf{Mof}}
\newcommand{\nil}{\mathrm{nil}}
\newcommand{\scO}{\mathscr{O}}
\newcommand{\hscO}{\widehat{\mathscr{O}}}
\newcommand{\Db}{D^{\mathrm{b}}}
\newcommand{\Dbet}{D^{\mathrm{b},\mathrm{et}}}
\newcommand{\Kb}{K^{\mathrm{b}}}
\DeclareMathOperator{\Hom}{Hom}
\DeclareMathOperator{\Ext}{Ext}
\DeclareMathOperator{\End}{End}
\DeclareMathOperator{\For}{For}
\newcommand{\id}{\mathrm{id}}
\newcommand{\simto}{\xrightarrow{\sim}}
\newcommand{\Av}{\mathsf{Av}}
\newcommand{\Whit}{\mathrm{Wh}}
\newcommand{\Tilt}{\mathsf{Tilt}}
\newcommand{\hD}{\widehat{D}}
\newcommand{\hDet}{\widehat{D}^{\mathrm{et}}}
\newcommand{\hT}{\widehat{T}}
\newcommand{\hTet}{\widehat{T}^{\mathrm{et}}}
\newcommand{\hP}{\widehat{P}}
\newcommand{\hDel}{\widehat{\Delta}}
\newcommand{\hnab}{\widehat{\nabla}}
\newcommand{\hTil}{\widehat{\mathscr{T}}}
\newcommand{\hL}{\widehat{\mathscr{L}}}
\newcommand{\rig}{\mathrm{r}}
\newcommand{\lef}{\mathrm{l}}
\newcommand{\gr}{\mathrm{gr}}
\newcommand{\Fun}{\mathsf{Fun}}
\newcommand{\hatstar}{\mathbin{\widehat{\star}}}
\newcommand{\wbtimes}{\mathbin{\widetilde{\boxtimes}}}
\newcommand{\pH}{{}^p \hspace{-1.5pt} \mathscr{H}}
\newcommand{\quot}{\hspace{-2.5pt} \mathord{\fatslash} \hspace{3pt}}
\newcommand{\quott}{\hspace{-2.5pt} \mathord{\fatslash} \hspace{1.5pt}}
\newcommand{\bG}{\mathbf{G}}
\newcommand{\bB}{\mathbf{B}}
\newcommand{\bT}{\mathbf{T}}
\newcommand{\bU}{\mathbf{U}}
\newcommand{\bX}{\mathbf{X}}
\newcommand{\bY}{\mathbf{Y}}
\def\lotimes{\@ifnextchar_{\@lotimessub}{\@lotimesnosub}}
\def\@lotimessub_#1{\mathchoice{\mathbin{\mathop{\otimes}^L}_{#1}}%
  {\otimes^L_{#1}}{\otimes^L_{#1}}{\otimes^L_{#1}}}
\def\@lotimesnosub{\mathbin{\mathop{\otimes}^L}}
\numberwithin{equation}{section}
\newtheorem{thm}{Theorem}[section]
\newtheorem{lem}[thm]{Lemma}
\newtheorem{prop}[thm]{Proposition}
\newtheorem{cor}[thm]{Corollary}
\theoremstyle{definition}
\newtheorem{defn}[thm]{Definition}
\theoremstyle{remark}
\newtheorem{rmk}[thm]{Remark}
\title{A topological approach to Soergel theory}
\author{Roman Bezrukavnikov}
\address{Department of Mathematics \\ Massachusetts Institute of Technology \\ Cambridge MA \\ 02139 \\ USA.}
\email{bezrukav@math.mit.edu}
\author{Simon Riche}
\address{Universit\'e Clermont Auvergne, CNRS, LMBP, F-63000 Clermont-Ferrand, France.}
\email{simon.riche@uca.fr}
\thanks{
R.B. was partially supported by the NSF grant DMS-1601953. This project has received
funding from the European Research Council (ERC) under the European Union's Horizon 2020
research and innovation programme (grant agreement No 677147).
}
\dedicatory{To Sasha Be{\u\i}linson and Vitya Ginzburg with gratitude and admiration.}
\begin{document}

\begin{abstract}
We develop a ``Soergel theory'' for Bruhat-constructi\-ble perverse sheaves on the flag variety $G/B$ of a complex reductive group $G$, with coefficients in an arbitrary field $\bk$. Namely, we describe the endomorphisms of the projective cover of the skyscraper sheaf in terms of a ``multiplicative'' coinvariant algebra, and then establish an equivalence of categories between projective (or tilting) objects in this category and a certain category of ``Soergel modules'' over this algebra. We also obtain a description of 
the derived category of unipotently $T$-monodromic $\Bbbk$-sheaves on $G/U$ (where $U$, $T\subset B$ are the unipotent radical and the maximal torus), as a monoidal category, in terms of coherent sheaves on the formal neighborhood of the base point in $T^\vee_\bk \times_{(T^\vee_\bk)^W} T^\vee_\bk$, where $T^\vee_\bk$ is the $\bk$-torus dual to $T$.\end{abstract}

\maketitle

\section{Introduction}
\label{sec:intro}

\subsection{Soergel theory}
\label{ss:intro-Soergel-theory}

In~\cite{soergel}, Soergel developed a new approach to the study of the principal block $\mathscr{O}_0$ of the Bernstein--Gelfand--Gelfand category $\mathscr{O}$ of a complex semisimple Lie algebra $\mathfrak{g}$ (with a fixed Borel subalgebra $\mathfrak{b}$ and Cartan subalgebra $\mathfrak{h} \subset \mathfrak{b}$). Namely, let $P$ be the projective cover of the unique simple object in $\mathscr{O}_0$ with antidominant highest weight (in other words, of the unique simple Verma module). 
Then Soergel establishes the following results:
\begin{enumerate}
\item
(Endomorphismensatz) there exists a canonical algebra isomorphism
\[
\mathrm{S}(\mathfrak{h}) / \langle \mathrm{S}(\mathfrak{h})^W_+ \rangle \simto \End(P),
\]
where $W$ is the Weyl group of $(\mathfrak{g},\mathfrak{h})$, $\mathrm{S}(\mathfrak{h})$ is the symmetric algebra of $\mathfrak{h}$, and $\langle \mathrm{S}(\mathfrak{h})^W_+ \rangle$ is the ideal generated by homogeneous $W$-invariant elements of positive degree;
\item
(Struktursatz) the functor $\mathbb{V}:=\Hom_{\mathscr{O}_0}(P,-)$ is fully faithful on projective objects; in other words for any projective objects $Q,Q'$ this functor induces an isomorphism
\[
\Hom_{\mathscr{O}_0}(Q,Q') \simto \Hom_{\End(P)}(\mathbb{V}(Q),\mathbb{V}(Q'));
\]
\item
the essential image of the restriction of $\mathbb{V}$ to projective objects in $\mathscr{O}_0$ is the subcategory generated by the trivial module $\C$ under the operations of (repeatedly) applying the functors $\mathrm{S}(\mathfrak{h}) \otimes_{\mathrm{S}(\mathfrak{h})^s} -$ with $s$ is a simple reflection and taking direct sums and direct summands.
\end{enumerate}
Taken together, these results allow him to describe the category of projective objects in $\mathscr{O}_0$, and hence the category $\mathscr{O}_0$ itself, in terms of commutative algebra (``Soergel modules''). On the other hand, Soergel relates these modules to cohomology of Bruhat-constructible simple perverse sheaves on the Langlands dual flag variety
opening the way to the ideas of \emph{Koszul duality} further developed in his celebrated work with Be{\u\i}linson and Ginzburg~\cite{bgs}.
Another celebrated application of these ideas is Soergel's new proof of the Kazhdan--Lusztig conjecture~\cite{kl} proved earlier by Be{\u\i}linson--Bernstein and 
Brylinsky--Kashiwara. 

\subsection{Geometric version}
\label{ss:intro-geom-version}

If $G$ is the semisimple complex algebraic group of adjoint type whose Lie algebra is $\mathfrak{g}$, and if $B \subset G$ is the Borel subgroup whose Lie algebra is $\mathfrak{b}$, then
combining the Be{\u\i}linson--Bernstein localization theory~\cite{bb} and an equivalence due to Soergel~\cite{soergel-equiv} one obtains that the category $\mathscr{O}_0$ is equivalent to the category $\Perv_{U}(G/B,\C)$ of $U$-equivariant (equivalently, $B$-constructible) $\C$-perverse sheaves on the flag variety $G/B$, where $U$ is the unipotent radical of $B$ (see e.g.~\cite[Proposition~3.5.2]{bgs}). Under this equivalence, the simple Verma module corresponds to the skyskraper sheaf at the base point $B/B$. The main goal of the present paper is to develop a geometric approach to the results in~\S\ref{ss:intro-Soergel-theory}, purely in the framework of perverse sheaves, and moreover valid in the setting where the coefficients can be in an arbitrary field $\bk$ (of possibly positive characteristic) instead of $\C$.

In fact, a geometric proof of the Struktursatz (stated for coefficients of characteristic $0$, but in fact valid in the general case) was already found by Be{\u\i}linson, the first author and Mirkovi\'c in~\cite{bbm}. 
One of the main themes of the latter paper, which is fundamental in our approach too, is an idea introduced by Be{\u\i}linson--Ginzburg in~\cite{bg}, 
namely that it is easier (but equivalent) to work with \emph{tilting} objects in $\mathscr{O}_0$ (or its geometric counterparts) rather than with projective objects. Our main contribution is generalization of the Endomorphismensatz to the present setting; then the description of the essential image of the functor $\mathbb{V}$ follows by rather standard methods.

\subsection{Monodromy}
\label{ss:intro-monodromy}

So, we fix a field $\bk$, and consider the category $\Perv_{U}(G/B,\bk)$ of $U$-equivariant $\bk$-perverse sheaves on the complex variety $G/B$. This category has a natural highest weight structure, with weight poset the Weyl group $W$, and as in the characteristic-$0$ setting the projective cover of the skyskraper sheaf at $B/B$ is also the tilting object associated with the longest element $w_0$ in $W$; we will therefore denote it $\mathscr{T}_{w_0}$. Our first task is then to describe the $\bk$-algebra $\End_{\Perv_{U}(G/B,\bk)}(\mathscr{T}_{w_0})$. 

In the representation-theoretic context studied by Soergel (see~\S\ref{ss:intro-Soergel-theory}), the morphism $\mathrm{S}(\mathfrak{h}) / \langle \mathrm{S}(\mathfrak{h})^W_+ \rangle \simto \End(P)$ is obtained from the action of the center of the enveloping algebra $\mathcal{U} \mathfrak{g}$ on $P$. It has been known for a long time (see e.g.~\cite[\S 4.6]{bgs} or~\cite[Footnote~8 on p.~556]{bbm}) that from the geometric point of view this morphism can be obtained via the logarithm of monodromy for the action of $T$ on $G$. But of course, the logarithm will not make sense over an arbitrary field $\bk$; therefore what we consider here is the monodromy itself, which defines an algebra morphism
\[
\varphi_{\mathscr{T}_{w_0}} : \bk[X_*(T)] \to \End(\mathscr{T}_{w_0}).
\]
We then need to show that:
\begin{enumerate}
\item
the morphism $\varphi_{\mathscr{T}_{w_0}}$ factors through the quotient $\bk[X_*(T)] / \langle \bk[X_*(T)]^W_+ \rangle$, where $\langle \bk[X_*(T)]^W_+ \rangle$ is the ideal generated by $W$-invariant elements in the kernel of the natural augmentation morphism $\bk[X_*(T)] \to \bk$;
\item
the resulting morphism $\bk[X_*(T)] / \langle \bk[X_*(T)]^W_+ \rangle \to \End(\mathscr{T}_{w_0})$ is an isomorphism.
\end{enumerate}

\subsection{Free-monodromic deformation}

To prove these claims we need the second main ingredient of our approach, namely the ``completed category'' defined by Yun in~\cite[Appendix~A]{by}. This category (which is constructed using certain pro-objects in the derived category of sheaves on $G/U$) is a triangulated category endowed with a t-structure, which we will denote $\hD_U((G/U) \quot T, \bk)$, and which contains certain objects whose monodromy is ``free unipotent.'' Killing this monodromy provides a functor to the $U$-equivariant derived category $\Db_U(G/B,\bk)$. The tilting objects in $\Perv_U(G/B,\bk)$ admit ``lifts'' (or ``deformations'') to this category, and we can in particular consider the lift $\hTil_{w_0}$ of $\mathscr{T}_{w_0}$. Now the algebra $\End_{\hD_U((G/U) \quot T, \bk)}(\hTil_{w_0})$ admits \emph{two} morphisms from (the completion $\bk[X_*(T)]^\wedge$ with respect to the augmentation ideal of) $\bk[X_*(T)]$ coming from the monodromy for the left and the right actions of $T$ on $G/U$, and moreover we have a canonical isomorphism
\[
\End(\mathscr{T}_{w_0}) \cong \End(\hTil_{w_0}) \otimes_{\bk[X_*(T)]^\wedge} \bk.
\]
Hence what we have to prove transforms into the following claims:
\begin{enumerate}
\item
the monodromy morphism $\bk[X_*(T)]^\wedge \otimes_\bk \bk[X_*(T)]^\wedge \to \End(\hTil_{w_0})$ factors through $\bk[X_*(T)]^\wedge \otimes_{(\bk[X_*(T)]^\wedge)^W} \bk[X_*(T)]^\wedge$;
\item
the resulting morphism $\bk[X_*(T)]^\wedge \otimes_{(\bk[X_*(T)]^\wedge)^W} \bk[X_*(T)]^\wedge \to \End(\hTil_{w_0})$ is an isomorphism.
\end{enumerate}

\subsection{Identification of \texorpdfstring{$\End(\hTil_{w_0})$}{End(Tw0)}}

One of the main advantages of working with the category $\hD_U((G/U) \quot T, \bk)$ rather than with $\Db_U(G/B,\bk)$ is that the natural lifts $(\hDel_w : w \in W)$ of the standard perverse sheaves satisfy $\Hom(\hDel_x, \hDel_y)=0$ if $x \neq y$. This implies that the functor of ``taking the associated graded for the standard filtration'' is faithful, and we obtain an injective algebra morphism
\begin{equation}
\label{eqn:intro-gr}
\End(\hTil_{w_0}) \to \End(\mathrm{gr}(\hTil_{w_0})).
\end{equation}
Now we have $\mathrm{gr}(\hTil_{w_0}) \cong \bigoplus_{w \in W} \hDel_w$, so that the right-hand side identifies with $\bigoplus_{w \in W} \bk[X_*(T)]^\wedge$. To conclude it remains to identify the image of~\eqref{eqn:intro-gr}; for this we use some algebraic results due to Kostant--Kumar~\cite{kk} (in their study of the K-theory of flag varieties) and Andersen--Jantzen--Soergel~\cite{ajs}.

\subsection{The functor \texorpdfstring{$\mathbb{V}$}{V}}

Once we have identified $\End(\mathscr{T}_{w_0})$ and $\End(\hTil_{w_0})$, we can consider the functor
\[
\mathbb{V}:=\Hom(\mathscr{T}_{w_0},-) : \Perv_U(G/B,\bk) \to \Mod(\bk[X_*(T)] / \langle \bk[X_*(T)]^W_+ \rangle)
\]
and its version $\widehat{\mathbb{V}}$ for free-monodromic perverse sheaves. As explained in~\S\ref{ss:intro-geom-version}, a short argument from~\cite{bbm} shows that these functors are fully faithful on tilting objects. To conclude our study we need to identify their essential image. The main step for this is to show that $\widehat{\mathbb{V}}$ is monoidal. (Here the monoidal structure on tilting objects is given by a ``convolution'' construction, and the monoidal structure on modules over $\bk[X_*(T)]^\wedge \otimes_{(\bk[X_*(T)]^\wedge)^W} \bk[X_*(T)]^\wedge$ is given by tensor product over $\bk[X_*(T)]^\wedge$.)  Adapting an argument of~\cite{by} we show that there exists an isomorphism of bifunctors
\begin{equation}
\label{eqn:intro-isom-monoidal}
\widehat{\mathbb{V}}(- \hatstar -) \cong \widehat{\mathbb{V}}(-) \otimes_{\bk[X_*(T)]^\wedge} \widehat{\mathbb{V}}(-).
\end{equation}
However, constructing a \emph{monoidal} structure (i.e.~an isomorphism compatible with the relevant structures) is a bit harder. In fact we construct such a structure in the similar context of \'etale sheaves on the analogue of $G/B$ (or $G/U$) over an algebraically closed field of positive characteristic, using a ``Whittaker-type'' construction. We then deduce the similar claim in the classical topology over $\C$ using the general formalism explained in~\cite[\S 6.1]{bbd}.

With this at hand, we obtain a description of the monoidal triangulated category $(\hD_U((G/U) \quot T, \bk), \hatstar)$ and its module category $\Db_U(G/B,\bk)$ in terms of coherent sheaves on the formal neighborhood of the point $(1,1)$ in $T^\vee_\bk \times_{(T^\vee_\bk) / W} T^\vee_\bk$ and on the fiber of the quotient morphism $T^\vee_\bk \to (T^\vee_\bk) / W$ over the image of $1$ respectively (where $T^\vee_\bk$ is the split $\bk$-torus which is Langlands dual to $T$);
see Theorem~\ref{thm:soergel-triang}.

\begin{rmk}
Identification of the essential image of $\mathbb{V}$ and $\widehat{\mathbb{V}}$ does not require monoidal structure on  $\widehat{\mathbb{V}}$:
 an isomorphism as in~\eqref{eqn:intro-isom-monoidal} would be sufficient. 
However, description of the monoidal structure  provides a stronger statement.
\end{rmk}

\subsection{Some remarks}
\label{ss:intro-remarks}

We conclude this introduction with a few remarks.

As explained in~\S\ref{ss:intro-monodromy}, in the present paper we work with the group algebra $\bk[X_*(T)]$ and not with the symmetric algebra $\mathrm{S}(\bk \otimes_\Z X_*(T))$ as one might have expected from the known characteristic-$0$ setting. However one can check (see e.g.~\cite[Proposition~5.5]{modrap1}) that if $\mathrm{char}(\bk)$ is very good for $G$ then there exists a $W$-equivariant algebra isomorphism between the completions of $\bk[X_*(T)]$ and $\mathrm{S}(\bk \otimes_\Z X_*(T))$ with respect to their natural augmentation ideals. (In the characteristic-$0$ setting there exists a canonical choice of identification, given by the logarithm; in positive characteristic there exists no ``preferred'' isomorphism.) Therefore, fixing such an isomorphism, under this assumption our results can also be stated in terms of $\mathrm{S}(\bk \otimes_\Z X_*(T))$. An important observation in~\cite{soergel, bgs} is that the identification between $\End(P)$ and the coinvariant algebra allows one to define a grading on $\End(P)$, and then to define a ``graded version'' of $\mathscr{O}_0$. This graded version can be realized geometrically via mixed perverse sheaves (either in the sense of Deligne, see~\cite{bgs}, or in a more elementary sense constructed using semisimple complexes, see~\cite{ar1,modrap2}). When $\mathrm{char}(\bk)$ is not very good, the algebra $\bk[X_*(T)] / \langle \bk[X_*(T)]^W_+ \rangle$ does not admit an obvious grading; we do not know how to interpret this, and the relation with the corresponding category of ``mixed perverse sheaves'' constructed in~\cite{modrap2}. (In very good characteristic, this category indeed provides a ``graded version'' of $\Perv_U(G/B,\bk)$, as proved in~\cite{modrap1,modrap2}.)

As explained already, in the case of characteristic-$0$ coefficients our results are equivalent to those of Soergel in~\cite{soergel}. They are also proved by geometric means in this case in~\cite{by}. In the case of very good characteristic, these methods were extended in~\cite{modrap1} (except for the consideration of the free-monodromic objects). The method we follow here is completely general (in particular, new in bad characteristic), more direct (since it does not involve Koszul duality) and more canonical (since it does not rely on any choice of identification relating $\mathrm{S}(\bk \otimes_\Z X_*(T))$ and $\bk[X_*(T)]$).

In the complex coefficients setting, the category $\Perv_U(G/B,\C)$ has a represen\-tation-theoretic interpretation, in terms of the category $\mathscr{O}_0$. It also admits a representation-theoretic description in the case when $\mathrm{char}(\bk)$ is bigger than the Coxeter number of $G$, in terms of Soergel's modular category $\mathscr{O}$~\cite{soergel-relation}. This fact was first proved in~\cite[Theorem~2.4]{modrap1}; it can also be deduced more directly by comparing the results of~\cite{soergel-relation} and those of the present paper.

\subsection{Contents}

The paper starts with a detailed review of the construction of Yun's ``completed category'' (see~\cite[Appendix~A]{by}) in Sections~\ref{sec:monodromy}--\ref{sec:perverse}. 
More precisely, we adapt his constructions (performed initially for \'etale $\mathbb{Q}_\ell$-complexes) to the setting of sheaves on complex algebraic varieties, with coefficients in an arbitrary field. This adaptation does not require new ideas,
but since the wording in~\cite{by} is quite dense we reproduce most proofs, and propose alternative arguments in a few cases. 

Starting from Section~\ref{sec:flag-tilting} we concentrate on the case of the flag variety. We start by constructing the ``associated graded'' functor. Then in Section~\ref{sec:convolution} we review the construction of the convolution product on $\hD_U((G/U) \quot T, \bk)$ (again, mainly following Yun). In Section~\ref{sec:kostant-kumar} we recall some algebraic results of Kostant--Kumar, and we apply all of this to prove our ``Endomorphismensatz'' in Section~\ref{sec:main-thm}. In Section~\ref{sec:etale} we explain how to adapt our constructions in the setting of \'etale sheaves, and in Section~\ref{sec:soergel-theory} we study the functors $\mathbb{V}$ and $\widehat{\mathbb{V}}$. Finally, in Section~\ref{sec:erratum} we take the opportunity  to correct the proof of a technical lemma in~\cite{ab}.

\subsection{Acknowledgements}

Part of the work on this paper was done while the second author was a member of the Freiburg Institute for Advanced Studies, as part of the Research Focus ``Cohomology in Algebraic Geometry and Representation Theory'' led by A. Huber--Klawitter, S. Kebekus and W. Soergel.

We thank Geordie Williamson for useful discussions on the subject of this paper (in particular for the suggestion to compare with the K-theory of the flag variety), Pramod Achar for useful comments, and Valentin Gouttard for spotting several typos and minor errors.

\part{Reminder on completed categories}

We fix a field $\bk$.

\section{Monodromy}
\label{sec:monodromy}

\subsection{Construction}
\label{ss:monodromy-construction}

We consider a complex algebraic torus $A$ and an $A$-torsor\footnote{All the torsors we will encounter in the present paper will be locally trivial for the Zariski topology.} $\pi : X \to Y$. We then denote by $\Db_c(X \quot A, \bk)$ the full triangulated subcategory of $\Db_c(X,\bk)$ generated by the essential image of the functor $\pi^* : \Db_c(Y,\bk) \to \Db_c(X,\bk)$.

Fix some $\lambda \in X_*(A)$. We then set
\[
 \theta_\lambda : \left\{
 \begin{array}{ccc}
  \C \times X & \to & X \\
  (z,x) & \mapsto & \lambda(\exp(z)) \cdot x
 \end{array}.
 \right.
\]
We will also denote by $\mathrm{pr} : \C \times X \to X$ the projection.

The following claims follow from the considerations in~\cite[\S 9]{verdier}.

\begin{lem}\phantomsection
\label{lem:monodromy-def}
 \begin{enumerate}
  \item For any $\mathscr{F}$ in $\Db_c(X \quot A, \bk)$, there exists a unique morphism
  \[
   \iota^{\lambda}_{\mathscr{F}} : \theta_\lambda^*(\mathscr{F}) \to \mathrm{pr}^*(\mathscr{F})
  \]
whose restriction to $\{0\} \times X$ is $\id_{\mathscr{F}}$.
Moreover, $\iota^{\lambda}_{\mathscr{F}}$ is an isomorphism.
\item
If $\mathscr{F}, \mathscr{G}$ are in $\Db_c(X \quot A, \bk)$ and $f : \mathscr{F} \to \mathscr{G}$ is a morphism, then the following diagram commutes:
\[
 \xymatrix@C=2cm{
 \theta_\lambda^*(\mathscr{F}) \ar[r]^{\iota^\lambda_{\mathscr{F}}} \ar[d]_-{\theta_\lambda^*(f)} & \mathrm{pr}^*(\mathscr{F}) \ar[d]^-{\mathrm{pr}^*(f)} \\
 \theta_\lambda^*(\mathscr{G}) \ar[r]^{\iota^\lambda_{\mathscr{G}}} & \mathrm{pr}^*(\mathscr{G}).
 }
\]
 \end{enumerate}
\end{lem}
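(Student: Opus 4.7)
My plan is to reduce both assertions to a single claim: for all $\mathscr{F},\mathscr{G} \in \Db_c(X \quot A, \bk)$, the restriction map
\[
i_0^* \colon \Hom(\theta_\lambda^*\mathscr{F}, \mathrm{pr}^* \mathscr{G}) \to \Hom(\mathscr{F}, \mathscr{G})
\]
is a bijection, where $i_0 \colon X \hookrightarrow \C \times X$ sends $x$ to $(0,x)$ (so that $\theta_\lambda \circ i_0 = \mathrm{pr} \circ i_0 = \id_X$, giving canonical identifications $i_0^* \theta_\lambda^* \mathscr{F} = \mathscr{F} = i_0^* \mathrm{pr}^* \mathscr{F}$). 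Granting this, part~(1) follows by defining $\iota^\lambda_{\mathscr{F}}$ to be the unique preimage of $\id_{\mathscr{F}}$, and part~(2) is automatic: both $\mathrm{pr}^*(f) \circ \iota^\lambda_{\mathscr{F}}$ and $\iota^\lambda_{\mathscr{G}} \circ \theta_\lambda^*(f)$ restrict to $f$ under $i_0^*$, so they coincide. That $\iota^\lambda_{\mathscr{F}}$ is an isomorphism can then be established by dévissage from the fact that on generators it is literally the identity.

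The bijectivity of $i_0^*$ would be proved in two steps. First, on the generators $\mathscr{F} = \pi^* \mathscr{F}_0$: the $A$-invariance of $\pi$ gives the equality $\pi \circ \theta_\lambda = \pi \circ \mathrm{pr}$, which yields a literal equality $\theta_\lambda^* \pi^* \mathscr{F}_0 = \mathrm{pr}^* \pi^* \mathscr{F}_0$ of pullback functors. Consequently $\Hom(\theta_\lambda^*\mathscr{F}, \mathrm{pr}^* \mathscr{G}) = \Hom(\mathrm{pr}^*\mathscr{F}, \mathrm{pr}^* \mathscr{G})$, and the question reduces to showing that $\mathrm{pr}^* \colon \Db_c(X,\bk) \to \Db_c(\C \times X, \bk)$ is fully faithful with $i_0^*$ computing the inverse on $\Hom$-groups. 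This is the one piece of analytic input: since $\C$ is contractible in the classical topology, $R\Gamma(\C,\bk) = \bk$ in degree $0$, and a stalk computation (using that $\mathrm{pr}$ is a trivial fibration) shows that the adjunction unit $\mathscr{G} \to \mathrm{pr}_* \mathrm{pr}^* \mathscr{G}$ is an isomorphism. Since $i_0$ is a section of $\mathrm{pr}$, the induced inverse $\Hom(\mathrm{pr}^*\mathscr{F}, \mathrm{pr}^*\mathscr{G}) \simto \Hom(\mathscr{F}, \mathscr{G})$ is precisely $i_0^*$.

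Second, I extend to all of $\Db_c(X \quot A,\bk)$ by a standard dévissage in each variable: the full subcategory of those $\mathscr{F}$'s for which $i_0^*$ is bijective for every $\mathscr{G} \in \Db_c(X \quot A, \bk)$ is triangulated, by the five-lemma applied to the long exact $\Hom$-sequences obtained from a distinguished triangle $\mathscr{F}_1 \to \mathscr{F}_2 \to \mathscr{F}_3 \to$ after applying $\theta_\lambda^*$ and $\mathrm{pr}^*$; and it contains the generators by the previous step. A symmetric argument handles the variable $\mathscr{G}$. Finally, the isomorphism property for $\iota^\lambda_{\mathscr{F}}$ propagates along distinguished triangles via part~(2) and the ``two-out-of-three'' principle for morphisms of triangles, starting from the trivial case $\mathscr{F} = \pi^* \mathscr{F}_0$ where $\iota^\lambda_{\mathscr{F}} = \id$.

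The principal obstacle is the verification that $\mathrm{pr}^*$ is fully faithful; everything else is formal. The contractibility of $\C$ in the classical topology is precisely the place where the topology enters the argument, and it is what makes the construction of the monodromy morphism possible.
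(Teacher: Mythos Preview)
Your argument is correct and follows essentially the same approach as the paper: both rest on the full faithfulness of $\mathrm{pr}^*$ (from the contractibility of $\C$), the identity $\theta_\lambda^* \pi^* = \mathrm{pr}^* \pi^*$ on generators, and a d\'evissage to the general case. Your formulation via the bijection $i_0^* \colon \Hom(\theta_\lambda^*\mathscr{F}, \mathrm{pr}^* \mathscr{G}) \simto \Hom(\mathscr{F}, \mathscr{G})$ is a tidy repackaging that makes uniqueness and the functoriality in part~(2) fall out immediately, whereas the paper leaves those as implicit consequences; the underlying ideas are the same.
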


\begin{proof}[Sketch of proof]
The essential ingredient of the proof is the (obvious) fact that the functor $\mathrm{pr}^*$ is fully faithful, so that its essential image is a triangulated subcategory of $\Db_c(\C \times X, \bk)$. We see that for any $\mathscr{G}$ in $\Db_c(Y,\bk)$, the object $\theta_\lambda^* \pi^* (\mathscr{G})$ belongs to this essential image; hence for any $\mathscr{F}$ in $\Db_c(X \quot A, \bk)$ the object $\theta_\lambda^* (\mathscr{F})$ is isomorphic to $\mathrm{pr}^*(\mathscr{F}')$ for some $\mathscr{F}'$ in $\Db_c(X,\bk)$. Restricting to $\{0\} \times X$ we obtain an isomorphism $f : \mathscr{F} \simto \mathscr{F}'$, and we can define $\iota^{\lambda}_{\mathscr{F}}$ as the composition $\theta_\lambda^* (\mathscr{F}) \simto \mathrm{pr}^*(\mathscr{F}') \xrightarrow{\mathrm{pr}^*(f^{-1})} \mathrm{pr}^*(\mathscr{F})$.
\end{proof}

Using this lemma and restricting $\iota^\lambda_{\mathscr{F}}$ to $\{2\mathbf{i}\pi\} \times X$ we obtain an automorphism $\varphi^\lambda_{\mathscr{F}}$ of $\mathscr{F}$. This automorphism satisfies the property that if $\mathscr{F}, \mathscr{G}$ are in $\Db_c(X \quot A, \bk)$ and $f : \mathscr{F} \to \mathscr{G}$ is a morphism, then $\varphi^\lambda_{\mathscr{G}} \circ f = f \circ \varphi^\lambda_{\mathscr{F}}$.

For any $\mathscr{F}$ in $\Db_c(X \quot A, \bk)$,
the automorphism $\varphi^\lambda_{\mathscr{F}}$ is unipotent. (In fact, this automorphism is the identity if $\mathscr{F}$ belongs to the essential image of $\pi^*$, and the category $\Db_c(X \quot A, \bk)$ is generated by such objects.) Moreover, if $\lambda,\mu \in X_*(A)$ we have
\[
 \varphi_{\mathscr{F}}^{\lambda \cdot \mu} = \varphi_{\mathscr{F}}^{\lambda} \circ \varphi_{\mathscr{F}}^{\mu}.
\]
In other words, the assignment $\lambda \mapsto \varphi_{\mathscr{F}}^\lambda$ defines a group morphism
\begin{equation}
\label{eqn:monodromy-gps}
 X_*(A) \to \mathrm{Aut}(\mathscr{F}).
\end{equation}

We now set
\[
 R_A := \bk[X_*(A)].
\]
The group morphism~\eqref{eqn:monodromy-gps} induces
a $\bk$-algebra morphism
\[
 \varphi_{\mathscr{F}} : R_A \to \End(\mathscr{F}).
\]
Since each $\varphi_{\mathscr{F}}(\lambda)$ is unipotent, this morphism factors through an algebra morphism
\[
 \varphi_{\mathscr{F}}^\wedge : R_A^\wedge \to \End(\mathscr{F}),
\]
where $R_A^\wedge$ is the completion of $R_A$ with respect to the maximal ideal $\mathfrak{m}_A$ given by the kernel of the algebra map $\varepsilon_A : R_A \to \bk$ sending each $\lambda \in X_*(A)$ to $1$. This construction is functorial, in the sense that it makes $\Db_{c}(X \quot A,\bk)$ an $R_A^\wedge$-linear category. (Here, the $R_A^\wedge$-action on $\Hom_{\Db_{c}(X \quott A,\bk)}(\mathscr{F},\mathscr{G})$ is given by $r\cdot f=f \circ \varphi_{\mathscr{F}}^\wedge(r) = \varphi_{\mathscr{G}}^\wedge(r) \circ f$.)

\begin{rmk}
\label{rmk:RA}
 Geometrically, we have $R_A=\mathcal{O}(A_\bk^\vee)$, where $A_\bk^\vee$ is the $\bk$-torus such that $X^*(A_\bk^\vee) = X_*(A)$, and $R_A^\wedge$ identifies with the algebra of functions on the formal neighborhood of $1$ in $A_\bk^\vee$. Note that any choice of trivialization $A \simto (\C^\times)^r$ provides isomorphisms
 \[
 R_A \cong \bk[y_1^{\pm 1}, \cdots, y_r^{\pm 1}] \quad \text{and} \quad R_A^\wedge \cong \bk [ \hspace{-0.5pt} [x_1, \cdots, x_r] \hspace{-0.5pt} ]
 \]
 (where $x_i = y_i-1$).
\end{rmk}

\subsection{Basic properties}
\label{ss:monodromy-properties}

We denote by $\varepsilon_A^\wedge : R_A^\wedge \to \bk$ the continuous morphism which extends $\varepsilon_A$.

\begin{lem}
\label{lem:monodromy-augmentation}
For any $\mathscr{F}$ in $\Db_c(X \quot A, \bk)$ and $x \in R_A^\wedge$ we have
\[
\pi_!(\varphi^\wedge_{\mathscr{F}}(x)) = \varepsilon_A^\wedge(x) \cdot \id_{\pi_! \mathscr{F}}.
\]
\end{lem}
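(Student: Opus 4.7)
The plan is to reduce to generators of $X_*(A) \subset R_A^\wedge$ and then globalise the monodromy isomorphism over $\C$ before pushing forward.

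First, both $x \mapsto \pi_!(\varphi^\wedge_\mathscr{F}(x))$ and $x \mapsto \varepsilon^\wedge_A(x) \cdot \id_{\pi_! \mathscr{F}}$ are $\bk$-algebra morphisms $R_A^\wedge \to \End(\pi_! \mathscr{F})$, continuous with respect to the $\mathfrak{m}_A$-adic topology on the source (their restrictions to $R_A$ factor through $R_A / \mathfrak{m}_A^N$ for $N \gg 0$, since $\varphi_\mathscr{F}(\lambda) - \id$ is nilpotent for every $\lambda$). Since $X_*(A) \subset R_A$ generates $R_A$ as a $\bk$-algebra, and $R_A$ is dense in $R_A^\wedge$, it suffices to check the equality for $x = \lambda \in X_*(A)$. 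Here $\varepsilon_A(\lambda) = 1$, so the claim reduces to $\pi_!(\varphi^\lambda_\mathscr{F}) = \id_{\pi_! \mathscr{F}}$.

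To prove this, set $\tilde{\pi} := \id_\C \times \pi : \C \times X \to \C \times Y$ and apply $\tilde{\pi}_!$ to $\iota^\lambda_\mathscr{F} : \theta_\lambda^* \mathscr{F} \simto \mathrm{pr}^* \mathscr{F}$. I will identify both source and target with $\mathrm{pr}_Y^* \pi_! \mathscr{F}$ via two canonical isomorphisms. On one hand, the square
\[
\xymatrix{
\C \times X \ar[r]^{\mathrm{pr}} \ar[d]_{\tilde{\pi}} & X \ar[d]^{\pi} \\
\C \times Y \ar[r]^{\mathrm{pr}_Y} & Y
}
\]
is Cartesian, so base change yields $\tilde{\pi}_! \mathrm{pr}^* \mathscr{F} \simeq \mathrm{pr}_Y^* \pi_! \mathscr{F}$. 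On the other hand, the self-homeomorphism $\Theta : \C \times X \to \C \times X$, $(z, x) \mapsto (z, \lambda(\exp(z)) \cdot x)$, satisfies $\mathrm{pr} \circ \Theta = \theta_\lambda$ and, by $A$-invariance of $\pi$, $\tilde{\pi} \circ \Theta = \tilde{\pi}$; the latter gives a canonical identification $\tilde{\pi}_! \Theta^* \simeq \tilde{\pi}_!$, whence $\tilde{\pi}_! \theta_\lambda^* \mathscr{F} = \tilde{\pi}_! \Theta^* \mathrm{pr}^* \mathscr{F} \simeq \tilde{\pi}_! \mathrm{pr}^* \mathscr{F} \simeq \mathrm{pr}_Y^* \pi_! \mathscr{F}$.

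Composing these identifications with $\tilde{\pi}_!(\iota^\lambda_\mathscr{F})$ yields an endomorphism $\Phi$ of $\mathrm{pr}_Y^* \pi_! \mathscr{F}$ on $\C \times Y$. The crucial observation is that $\Theta$ restricts to the identity at both $z = 0$ and $z = 2\mathbf{i}\pi$ (since $\exp(0) = \exp(2\mathbf{i}\pi) = 1$), so pulling the two identifications back along $i_0^*$ and $i_{2\mathbf{i}\pi}^*$ (using base change for the closed inclusions) reduces them to the canonical identity on $\pi_! \mathscr{F}$. Consequently $\Phi|_{z = 0} = \pi_!(\iota^\lambda_\mathscr{F}|_{z=0}) = \id_{\pi_! \mathscr{F}}$ and $\Phi|_{z = 2\mathbf{i}\pi} = \pi_!(\varphi^\lambda_\mathscr{F})$. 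Finally, contractibility of $\C$ gives $(\mathrm{pr}_Y)_* \mathrm{pr}_Y^* \simeq \id$, so $\mathrm{pr}_Y^*$ is fully faithful and restriction at any single point of $\C$ is a bijection on endomorphisms; hence $\Phi = \id$, and restricting at $z = 2\mathbf{i}\pi$ gives $\pi_!(\varphi^\lambda_\mathscr{F}) = \id$.

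The hardest part of the plan is purely bookkeeping: verifying that the base-change isomorphism for $\mathrm{pr}$ and the $\Theta$-based identification for $\theta_\lambda$ both pull back to the tautological identity at $z = 0$ and at $z = 2\mathbf{i}\pi$. The genuine conceptual inputs are minimal: the $A$-invariance of $\pi$ (which makes $\Theta$ preserve $\tilde{\pi}$-fibres) and the uniqueness-of-extension principle of Lemma~\ref{lem:monodromy-def} applied over the trivial $A$-action on $Y$, packaged here as the full faithfulness of $\mathrm{pr}_Y^*$.
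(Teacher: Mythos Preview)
Your proof is correct and follows essentially the same route as the paper's. The only cosmetic difference is that the paper observes directly that the square with $\theta_\lambda$ on top and $p=\mathrm{pr}_Y$ on the bottom is Cartesian (since $\pi$ is $A$-invariant) and applies base change to it, whereas you factor $\theta_\lambda=\mathrm{pr}\circ\Theta$ and use $\tilde\pi\circ\Theta=\tilde\pi$; these produce the same identification, and both arguments then conclude via full faithfulness of $\mathrm{pr}_Y^*$ and restriction to $z=0$ and $z=2\mathbf{i}\pi$.
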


\begin{proof}
Let $\lambda \in X_*(A)$, and let $p : \C \times Y \to Y$ be the projection. Then both of the following squares are Cartesian:
\[
\xymatrix{
\C \times X \ar[r]^-{\theta_\lambda} \ar[d]_-{\id_\C \times \pi} & X \ar[d]^-{\pi} \\
\C \times Y \ar[r]^-{p} & Y,
}
\qquad
\xymatrix{
\C \times X \ar[r]^-{\mathrm{pr}} \ar[d]_-{\id_\C \times \pi} & X \ar[d]^-{\pi} \\
\C \times Y \ar[r]^-{p} & Y.
}
\]
By the base change theorem, we deduce canonical isomorphisms
\[
(\id_\C \times \pi)_! \theta_\lambda^*(\mathscr{F}) \cong p^* \pi_!(\mathscr{F}), \qquad (\id_\C \times \pi)_! \mathrm{pr}^*(\mathscr{F}) \cong p^* \pi_!(\mathscr{F}).
\]
Under these isomorphisms the map $(\id_\C \times \pi)_! \iota^\lambda_{\mathscr{F}}$ identifies with an endomorphism of $p^* \pi_!(\mathscr{F})$. Now the functor $p^*$ is fully faithful, hence this morphism must be of the form $p^*(f)$ for $f$ an endomorphism of $\pi_! \mathscr{F}$. Restricting to $\{0\} \times Y$ we see that $f=\id_{\pi_! \mathscr{F}}$. Hence the restriction of $(\id_\C \times \pi)_! \iota^\lambda_{\mathscr{F}}$ to $\{2\mathbf{i}\pi\} \times Y$ is also the identity. But this morphism identifies with $\pi_!(\varphi_{\mathscr{F}}^\lambda)$, which completes the proof.
\end{proof}

We now consider a second $A$-torsor $\pi' : X' \to Y'$, and an $A$-equivariant morphism $f : X \to X'$. The following claims follow easily from the definitions.

\begin{lem}\phantomsection
\label{lem:properties-monodromy}
 \begin{enumerate}
  \item  The functors $f^!$ and $f^*$ induce functors
  \[
   f^!, f^* : \Db_c(X' \quot A, \bk) \to \Db_c(X \quot A, \bk).
  \]
Moreover, for any $\mathscr{F}$ in $\Db_c(X' \quot A, \bk)$ and $r \in R_A^\wedge$ we have
\[
 \varphi_{f^! \mathscr{F}}^\wedge(r) = f^!(\varphi^\wedge_{\mathscr{F}}(r)), \quad \varphi_{f^* \mathscr{F}}^\wedge(r) = f^*(\varphi^\wedge_{\mathscr{F}}(r)).
\]
\item  The functors $f_!$ and $f_*$ induce functors
  \[
   f_!, f_* : \Db_c(X \quot A, \bk) \to \Db_c(X' \quot A, \bk).
  \]
Moreover, for any $\mathscr{F}$ in $\Db_c(X \quot A, \bk)$ and $r \in R_A^\wedge$ we have
\[
 \varphi_{f_! \mathscr{F}}^\wedge(r) = f_!(\varphi^\wedge_{\mathscr{F}}(r)), \quad \varphi_{f_* \mathscr{F}}^\wedge(r) = f_*(\varphi^\wedge_{\mathscr{F}}(r)).
\]
 \end{enumerate}
\end{lem}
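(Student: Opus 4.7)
The plan is to handle the four functors in parallel. First, I would show each preserves the subcategory $\Db_c(- \quot A,\bk)$ by reducing to the generators $\pi^{(\prime)*}(\Db_c(Y^{(\prime)},\bk))$ via base change. Second, I would verify the monodromy compatibility for $\lambda \in X_*(A)$ by applying (or dualising) the defining isomorphism $\iota^\lambda_{\mathscr{F}}$ of Lemma~\ref{lem:monodromy-def} and invoking its uniqueness property; the extension from $\lambda$ to $r \in R_A^\wedge$ is automatic because both sides of the claimed identity $\varphi^\wedge_{g\mathscr{F}}(r) = g(\varphi^\wedge_{\mathscr{F}}(r))$ are continuous $\bk$-algebra maps into $\End(g\mathscr{F})$.

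The key geometric input is that $A$-equivariance of $f$ forces it to descend to a unique $\bar{f} : Y \to Y'$ with $\pi' \circ f = \bar{f} \circ \pi$, and the resulting square is Cartesian (the natural $A$-equivariant map $X \to Y \times_{Y'} X'$ between $A$-torsors over $Y$ is automatically an isomorphism). From this, standard base change produces isomorphisms $f^* \pi'^* \cong \pi^* \bar{f}^*$, $f^! \pi'^* \cong \pi^* \bar{f}^!$, $f_! \pi^* \cong \pi'^* \bar{f}_!$, and $f_* \pi^* \cong \pi'^* \bar{f}_*$. Each identifies the given functor, applied to a generator of the source, with an object pulled back via $\pi$ or $\pi'$, hence lying in $\Db_c(- \quot A,\bk)$; triangulatedness completes the first step.

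For the second step, fix $\lambda \in X_*(A)$. The $A$-equivariance gives $f \circ \theta_\lambda^X = \theta_\lambda^{X'} \circ (\id_\C \times f)$, and the analogous relation for $\mathrm{pr}$ is automatic; the two resulting squares are Cartesian. Applying the appropriate base change of $\iota^\lambda_{\mathscr{F}}$ — pulling back along $(\id_\C \times f)^*$ or $(\id_\C \times f)^!$, or pushing forward along $(\id_\C \times f)_!$ or $(\id_\C \times f)_*$, according to which of the four functors $g$ one considers — produces a candidate morphism between $(\theta_\lambda)^\bullet (g\mathscr{F})$ and $\mathrm{pr}^\bullet (g\mathscr{F})$. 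By the uniqueness clause of Lemma~\ref{lem:monodromy-def}, this candidate equals $\iota^\lambda_{g \mathscr{F}}$ provided its restriction to $\{0\} \times (-)$ is the identity, and this in turn reduces via a further base change along the inclusion $i_0$ to the tautology $i_0^* \iota^\lambda_{\mathscr{F}} = \id_{\mathscr{F}}$. Restricting to $\{2 \mathbf{i} \pi\} \times (-)$ then yields $\varphi^\lambda_{g \mathscr{F}} = g(\varphi^\lambda_{\mathscr{F}})$. Conceptually the argument is routine; the main obstacle is the administrative task of assembling the six base-change diagrams and checking their compatibility.
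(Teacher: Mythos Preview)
Your proposal is correct and is precisely the natural unpacking of what the paper means by ``follow easily from the definitions'' (the paper gives no further argument). The only point worth making explicit when you carry it out is that, for $f^!$ and $f_*$, the base-change identifications $(\id_\C\times f)^!\theta_\lambda'^{*}\cong\theta_\lambda^{*}f^!$ and $(\id_\C\times f)_*\theta_\lambda^{*}\cong\theta_\lambda'^{*}f_*$ (and the compatibility with restriction to $\{0\}$) use not just that the squares are Cartesian but that $\theta_\lambda$ and $\mathrm{pr}$ are smooth of relative dimension~$1$ (equivalently, that $\theta_\lambda^{*}\cong\theta_\lambda^{!}[-2]$); once this is said, the rest is indeed bookkeeping.
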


Finally, we consider a second torus $A'$, and an injective morphism $\phi : A' \to A$. Of course, in this setting we can consider $X$ either as an $A$-torsor or as an $A'$-torsor, and $\Db_c(X \quot A, \bk)$ is a full subcategory in $\Db_c(X \quot A', \bk)$. In particular, for $\mathscr{F}$ in $\Db_c(X \quot A, \bk)$ we can consider the morphism $\varphi^\wedge_{\mathscr{F}}$ both for the action of $A$ (in which case we will denote it $\varphi^\wedge_{\mathscr{F},A}$) and for the action of $A'$ (in which case we will denote it $\varphi^\wedge_{\mathscr{F},A'}$). Once again, the following lemma immediately follows from the definitions.

\begin{lem}
\label{lem:monodromy-isom}
For $\mathscr{F}$ in $\Db_c(X \quot A, \bk)$, the morphism $\varphi^\wedge_{\mathscr{F},A'}$ is the composition of $\varphi^\wedge_{\mathscr{F},A}$ with the morphism $R^\wedge_{A'} \to R^\wedge_A$ induced by $\phi$.
\end{lem}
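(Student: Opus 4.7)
The plan is to reduce everything to a statement about individual cocharacters, and observe that the relevant geometric construction is manifestly compatible with $\phi$.

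More precisely, I would first note that the $A'$-action on $X$ is, by definition, obtained by composing the $A$-action with $\phi$. Thus, for any $\lambda' \in X_*(A')$, if we set $\lambda := \phi \circ \lambda' \in X_*(A)$, then the two maps
\[
\theta_{\lambda'}^{A'}, \theta_{\lambda}^{A} : \C \times X \to X
\]
(the first defined with respect to the $A'$-action, the second with respect to the $A$-action) literally coincide, since both send $(z,x)$ to $\lambda(\exp(z)) \cdot x$. By the uniqueness clause in Lemma~\ref{lem:monodromy-def}, this forces $\iota^{\lambda'}_{\mathscr{F}} = \iota^{\lambda}_{\mathscr{F}}$ for every $\mathscr{F}$ in $\Db_c(X \quot A, \bk) \subset \Db_c(X \quot A', \bk)$. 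Restricting to $\{2\mathbf{i}\pi\} \times X$ then yields
\[
\varphi^{\lambda'}_{\mathscr{F}} = \varphi^{\phi \circ \lambda'}_{\mathscr{F}}
\]
as automorphisms of $\mathscr{F}$.

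The statement of the lemma now follows by linearity and continuity. Indeed, the above identity says exactly that the group homomorphism $X_*(A') \to \mathrm{Aut}(\mathscr{F})$ defining $\varphi_{\mathscr{F},A'}$ factors as $X_*(A') \xrightarrow{\phi_*} X_*(A) \to \mathrm{Aut}(\mathscr{F})$, the second arrow being the one defining $\varphi_{\mathscr{F},A}$. Passing to group $\bk$-algebras gives the equality $\varphi_{\mathscr{F},A'} = \varphi_{\mathscr{F},A} \circ \phi_*$ of morphisms $R_{A'} \to \End(\mathscr{F})$, where $\phi_* : R_{A'} \to R_A$ is the ring map induced by $\phi$. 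Since $\phi_*$ sends each generator $\lambda' \in X_*(A')$ to an element on which $\varepsilon_A$ takes value $1$, we have $\phi_*(\mathfrak{m}_{A'}) \subset \mathfrak{m}_A$, so $\phi_*$ extends uniquely to a continuous morphism $R^\wedge_{A'} \to R^\wedge_A$. The claimed factorization of $\varphi^\wedge_{\mathscr{F},A'}$ then follows by the universal property of completion, together with the fact that both sides agree on the dense subring $R_{A'}$.

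There is no real obstacle here: the lemma is a direct unwinding of definitions, the only subtle point being to verify that the relevant construction (the isomorphism $\iota^\lambda_{\mathscr{F}}$ of Lemma~\ref{lem:monodromy-def}) depends only on the geometric map $\theta_\lambda$ and not on whether $\lambda$ is viewed as a cocharacter of $A$ or of $A'$. This is ensured by the uniqueness part of that lemma.
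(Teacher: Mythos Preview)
Your proposal is correct and is precisely the unwinding of definitions that the paper has in mind; indeed, the paper does not give a proof at all, simply stating that the lemma ``immediately follows from the definitions.'' Your argument makes explicit the one genuine point, namely that the map $\theta_{\lambda'}$ for the $A'$-action coincides with $\theta_{\phi\circ\lambda'}$ for the $A$-action, after which uniqueness in Lemma~\ref{lem:monodromy-def} and passage to completions finish the job.
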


\subsection{Monodromy and equivariance}

For simplicity, in this subsection we assume that $A=\C^\times$. 
We denote by $a,p : A \times X \to X$ the action and projection maps, respectively. Recall that a perverse sheaf $\mathscr{F}$ in $\Db_c(X,\bk)$ is said to be $A$-equivariant if $a^*(\mathscr{F}) \cong p^*(\mathscr{F})$. (See~\cite[Appendix~A]{br} for the equivalence with other ``classical'' definitions.)

\begin{lem}
\label{lem:mon-equiv}
Let $\mathscr{F}$ be a perverse sheaf in $\Db_c(X \quot A, \bk)$. Then $\mathscr{F}$ is $A$-equivariant iff the morphism $\varphi_{\mathscr{F}}^\wedge$ factors through $\varepsilon_A^\wedge$.
\end{lem}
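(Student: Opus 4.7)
Since $A = \C^\times$, the lattice $X_*(A)$ is infinite cyclic, generated by $\lambda_0 := \id_{\C^\times}$; thus $\varphi^\wedge_{\mathscr{F}}$ factors through $\varepsilon^\wedge_A$ if and only if $\varphi^{\lambda_0}_{\mathscr{F}} = \id$. The key geometric observation is that, setting $q := \exp \times \id_X : \C \times X \to A \times X$, the map $q$ is a topological covering with deck group $\Z$ acting by $T^n : (z,x) \mapsto (z+2\mathbf{i}\pi n, x)$, and one has $\theta_{\lambda_0} = a \circ q$ and $\mathrm{pr} = p \circ q$. In particular $\iota^{\lambda_0}_{\mathscr{F}}$ is naturally an isomorphism $q^* a^* \mathscr{F} \simto q^* p^* \mathscr{F}$.

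For the direction $(\Rightarrow)$, I would invoke the standard rigidity of equivariance data for perverse sheaves on connected groups (cf.~\cite[Appendix~A]{br}): there is a unique isomorphism $\alpha : a^*\mathscr{F} \simto p^*\mathscr{F}$ whose pullback along the unit section $\{1\} \times X \hookrightarrow A \times X$ is the identity. Pulling $\alpha$ back along $q$ gives an isomorphism $\theta_{\lambda_0}^* \mathscr{F} \simto \mathrm{pr}^* \mathscr{F}$ restricting to the identity on $\{0\} \times X$, which by the uniqueness clause of Lemma~\ref{lem:monodromy-def}(1) must coincide with $\iota^{\lambda_0}_{\mathscr{F}}$. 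Restricting further to $\{2\mathbf{i}\pi\} \times X$, which $q$ sends into $\{1\} \times X$, then yields $\varphi^{\lambda_0}_{\mathscr{F}} = \alpha|_{\{1\} \times X} = \id$.

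For the direction $(\Leftarrow)$, I would assume $\varphi^{\lambda_0}_{\mathscr{F}} = \id$, so that $\varphi^{n \lambda_0}_{\mathscr{F}} = \id$ for all $n \in \Z$. Since $\theta_{\lambda_0} \circ T^n = \theta_{\lambda_0}$ and $\mathrm{pr} \circ T^n = \mathrm{pr}$, the pullback $(T^n)^* \iota^{\lambda_0}_{\mathscr{F}}$ is again an isomorphism $\theta_{\lambda_0}^* \mathscr{F} \simto \mathrm{pr}^* \mathscr{F}$, and its restriction to $\{0\} \times X$ equals the restriction of $\iota^{\lambda_0}_{\mathscr{F}}$ to $\{2\mathbf{i}\pi n\} \times X$, namely $\varphi^{n \lambda_0}_{\mathscr{F}} = \id$. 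Uniqueness in Lemma~\ref{lem:monodromy-def}(1) then forces $(T^n)^* \iota^{\lambda_0}_{\mathscr{F}} = \iota^{\lambda_0}_{\mathscr{F}}$, i.e.~$\iota^{\lambda_0}_{\mathscr{F}} : q^* a^* \mathscr{F} \simto q^* p^* \mathscr{F}$ is $\Z$-equivariant for the natural actions. Standard descent along $q$ will then produce the desired isomorphism $a^*\mathscr{F} \simto p^*\mathscr{F}$, proving $A$-equivariance.

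The step requiring the most care is the descent at the end of the backward direction, namely that a $\Z$-equivariant isomorphism between the $q$-pullbacks of two constructible complexes on $A \times X$ descends to an isomorphism on $A \times X$. This is standard given that $q$ is a local homeomorphism with discrete deck group $\Z$, so the constructible derived category on $A \times X$ is equivalent to its $\Z$-equivariant counterpart on $\C \times X$; but one must verify that the canonical $\Z$-equivariant structures on $q^* a^*\mathscr{F}$ and $q^* p^*\mathscr{F}$ coming from $q \circ T^n = q$ are exactly those with respect to which the equivariance of $\iota^{\lambda_0}_{\mathscr{F}}$ was checked.
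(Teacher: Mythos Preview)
Your forward direction is the same as the paper's. For the backward direction, your first step---showing that $\iota^{\lambda_0}_{\mathscr{F}}$ is invariant under the deck translation---is also what the paper does (it only uses the generator $T$, not all $T^n$, but this is cosmetic). The divergence is in how the descent is carried out.

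The paper does not invoke an abstract equivalence between constructible categories on $A\times X$ and $\Z$-equivariant categories on $\C\times X$. Instead it works concretely: it covers $\C^\times$ by the two open sets $U_1=\C\smallsetminus\mathbb{R}_{\geq 0}$ and $U_2=\C\smallsetminus\mathbb{R}_{\leq 0}$, observes that $\exp$ restricts to homeomorphisms from suitable horizontal strips in $\C$ onto $U_1$ and $U_2$, and uses the restrictions of $\iota^{\lambda}_{\mathscr{F}}$ to those strips to define candidate isomorphisms over $U_1\times X$ and $U_2\times X$. On the component $U_+=\{\Im(z)>0\}$ of $U_1\cap U_2$ the two candidates agree tautologically; on $U_-=\{\Im(z)<0\}$ they agree precisely because $T^*\iota^{\lambda}_{\mathscr{F}}=\iota^{\lambda}_{\mathscr{F}}$. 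The gluing is then justified by the stack property of perverse sheaves, \cite[Corollaire~2.1.22]{bbd}.

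Your abstract descent is morally the same, but the sentence ``the constructible derived category on $A\times X$ is equivalent to its $\Z$-equivariant counterpart on $\C\times X$'' is not quite right if ``$\Z$-equivariant'' is read in the naive sense (objects with compatible $\Z$-action). For instance, $\Hom_{\Db(S^1)}(\underline{\bk},\underline{\bk}[1])=\bk$ while $\Hom_{\Db(\mathbb{R})}(\underline{\bk},\underline{\bk}[1])^{\Z}=0$, so naive $\Z$-invariance of a morphism upstairs does not in general suffice to descend it at the derived level. What saves you here is exactly what the paper uses: $a^*\mathscr{F}$ and $p^*\mathscr{F}$ are shifts of perverse sheaves, and for the abelian category of perverse sheaves, naive equivariant descent along a covering does hold (this is again the stack property). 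So your argument works once you replace the derived-category claim by the corresponding statement for perverse sheaves; the paper's two-chart argument is simply an explicit unpacking of that.
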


\begin{proof}
If $\mathscr{F}$ is equivariant, then there exists an isomorphism $a^*(\mathscr{F}) \simto p^*(\mathscr{F})$ whose restriction to $\{1\} \times X$ is the identity. For $\lambda \in X_*(A)$, pulling back under the morphism $\C \times X \to A \times X$ given by $(z,x) \mapsto (\lambda(\exp(z)),x)$ we obtain the morphism $\iota^\lambda_{\mathscr{F}}$ of Lemma~\ref{lem:monodromy-def}, whose restriction to $\{2\mathbf{i}\pi\} \times X$ is therefore the identity.

Conversely, assume that $\varphi^\wedge_{\mathscr{F}}$ factors through $\varepsilon_A$. Let $\lambda : \C^\times \to A$ be the tautological cocharacter, and let $f : \C \times X \to \C \times X$ be the map defined by $f(z,x)=(z+2\mathbf{i}\pi,x)$. Then $f^*(\iota_{\mathscr{F}}^\lambda)$ is a morphism $\theta_\lambda^*(\mathscr{F}) \to \mathrm{pr}^*(\mathscr{F})$ whose restriction to $\{0\} \times X$ is, by assumption, the identity of $\mathscr{F}$. Therefore, by the unicity claim in Lemma~\ref{lem:monodromy-def}, we have $f^*(\iota_{\mathscr{F}}^\lambda)=\iota_{\mathscr{F}}^\lambda$.

Now, we explain how to construct an isomorphism $\eta : a^*(\mathscr{F}) \simto p^*(\mathscr{F})$. Recall (see~\cite[Corollaire~2.1.22]{bbd}) that since we consider (shifts of) perverse sheaves, such an isomorphism can be constructed locally; more concretely, if we set $U_1=\C \smallsetminus \mathbb{R}_{\geq 0}$ and $U_2=\C \smallsetminus \mathbb{R}_{\leq 0}$, then to construct $\eta$ it suffices to construct isomorphisms on $U_1 \times X$ and $U_2 \times X$, which coincide on $(U_1 \cap U_2) \times X$. The map $\C \times X \to A \times X$ given by $(z,x) \mapsto (\lambda(\exp(z)),x)$ restricts to homeomorphisms between $\{z \in \C \mid \Im(z) \in (0,2\pi)\} \times X$ and $U_1 \times X$, and between $\{z \in \C \mid \Im(z) \in (-\pi,\pi)\} \times X$ and $U_2 \times X$. Therefore, we can obtain the isomorphisms on $U_1 \times X$ and $U_2 \times X$ by simply restricting $\iota^\lambda_{\mathscr{F}}$ to these open subsets. The intersection $U_1 \cap U_2$ has two connected components: $U_+=\{z \in \C \mid \Im(z)>0\}$ and $U_-=\{z \in \C \mid \Im(z)<0\}$. Our two isomorphisms coincide on $U_+ \times X$ by definition, and they coincide on $U_- \times X$ because of the equality $f^*(\iota_{\mathscr{F}}^\lambda)=\iota_{\mathscr{F}}^\lambda$ justified above. Hence they indeed glue to an isomorphism $\eta : a^*(\mathscr{F}) \simto p^*(\mathscr{F})$, which finishes the proof.
\end{proof}

\begin{rmk}
\begin{enumerate}
\item
Our proof of Lemma~\ref{lem:mon-equiv} can easily be adapted to the case of a general torus; we leave the details to interested readers.
\item
In~\cite{verdier}, Verdier defines (by the exact same procedure) monodromy for a more general class of objects in $\Db_c(X,\bk)$, called the \emph{monodromic complexes}, namely those complexes $\mathscr{F}$ such that the restriction of $\mathscr{H}^i(\mathscr{F})$ to each $A$-orbit is locally constant for any $i \in \Z$.
As was suggested to one of us by J.~Bernstein, one can give an alternative definition of the category $\Db_c(X \quot A,\bk)$ as the category of monodromic complexes $\mathscr{F}$ (in this sense) such that the monodromy morphism $\varphi_{\mathscr{F}} : R_A \to \End(\mathscr{F})$ is unipotent, i.e.~factors through $R_A / \mathfrak{m}_A^n$ for some $n$. Indeed, it is clear that our category $\Db_c(X \quot A,\bk)$ is included in the latter category. Now if $\mathscr{F}$ is monodromic with unipotent monodromy, then $\mathscr{F}$ is an extension of its perverse cohomology objects, which have the same property; hence we can assume that $\mathscr{F}$ is perverse. Then one can consider the (finite) filtration
\[
\mathscr{F} \supset \sum_{x \in \mathfrak{m}_A} \mathrm{Im}(x) \supset \sum_{x \in \mathfrak{m}^2_A} \mathrm{Im}(x) \supset \cdots.
\]
Each subquotient in this filtration is a perverse sheaf with trivial monodromy, hence belongs to the essential image of $\pi^*$ by (the general version of) Lemma~\ref{lem:mon-equiv}.
\end{enumerate}
\end{rmk}

\section{Completed category}

\subsection{Definition}
\label{ss:definition-hD}

As in Section~\ref{sec:monodromy} we consider a complex torus $A$ of rank $r$ and an $A$-torsor $\pi : X \to Y$. We also assume we are given a finite algebraic stratification
\[
 Y = \bigsqcup_{s \in \mathcal{S}} Y_s
\]
where each $Y_s$ is isomorphic to an affine space, and such that for any $s \in \mathcal{S}$ the restriction $\pi_s : \pi^{-1}(Y_s) \to Y_s$ is a trivial $A$-torsor. We set
\[
 \pi_\dag := \pi_! [r], \qquad \pi^\dag := \pi^![-r] \cong \pi^*[r].
\]
Then $(\pi_\dag, \pi^\dag)$ is an adjoint pair, and $\pi^\dag$ is t-exact with respect to the perverse t-structures.

We denote by $\Db_{\mathcal{S}}(Y,\bk)$ the $\mathcal{S}$-constructible derived category of $\bk$-sheaves on $Y$, and by $\Db_{\mathcal{S}}(X \quot A,\bk)$ the full triangulated subcategory of $\Db_c(X,\bk)$ generated by the essential image of the restriction of $\pi^\dag$ to $\Db_{\mathcal{S}}(Y,\bk)$.

\begin{defn}
 The category $\hD_{\mathcal{S}}(X \quot A,\bk)$ is defined as the full subcategory of the category of pro-objects\footnote{All our pro-objects are tacitly parametrized by $\Z_{\geq 0}$ (with its standard order).} in $\Db_{\mathcal{S}}(X \quot A,\bk)$ consisting of the objects $``\varprojlim" \mathscr{F}_n$ which are:
 \begin{itemize}
  \item $\pi$-constant, i.e.~such that the pro-object $``\varprojlim" \pi_\dag(\mathscr{F}_n)$ in $\Db_{\mathcal{S}}(Y,\bk)$ is isomorphic to an object of $\Db_{\mathcal{S}}(Y,\bk)$;
  \item uniformly bounded in degrees, i.e.~isomorphic to a pro-object $``\varprojlim" \mathscr{F}'_n$ such that each $\mathscr{F}'_n$ belongs to $D^{[a,b]}_{\mathcal{S}}(X \quot A,\bk)$ for some $a,b \in \Z$ (independent of $n$).
 \end{itemize}
\end{defn}

The morphisms in this category can be described as
\begin{equation}
\label{eqn:Hom-hD}
\Hom_{\hD_{\mathcal{S}}(X \quott A,\bk)}(``\varprojlim" \mathscr{F}_n, ``\varprojlim" \mathscr{G}_n) = \varprojlim_n \varinjlim_m \Hom_{\Db_{\mathcal{S}}(X \quott A,\bk)}(\mathscr{F}_m, \mathscr{G}_n).
\end{equation}

According to~\cite[Theorem~A.3.2]{by}, the category $\hD_{\mathcal{S}}(X \quot A,\bk)$ has a natural triangulated structure, for which the distinguished triangles are the triangles isomorphic to those of the form
\[
``\varprojlim" \mathscr{F}_n \xrightarrow{``\varprojlim" f_n} ``\varprojlim" \mathscr{G}_n \xrightarrow{``\varprojlim" g_n} ``\varprojlim" \mathscr{H}_n \xrightarrow{``\varprojlim" h_n} ``\varprojlim" \mathscr{F}_n [1]
\]
obtained from projective systems of distinguished triangles
\[
\mathscr{F}_n \xrightarrow{f_n} \mathscr{G}_n \xrightarrow{g_n} \mathscr{H}_n \xrightarrow{h_n} \mathscr{F}_n [1]
\]
in $\Db_{\mathcal{S}}(X \quot A,\bk)$.
By definition the functor $\pi_\dag$ induces a functor
\[
 \hD_{\mathcal{S}}(X \quot A,\bk) \to \Db_{\mathcal{S}}(Y,\bk),
\]
which will also be denoted $\pi_\dag$. From the proof of~\cite[Theorem~A.3.2]{by} we see that this functor is triangulated.

The monodromy construction from Section~\ref{sec:monodromy} makes the category $\hD_{\mathcal{S}}(X \quot A,\bk)$ an $R_A^\wedge$-linear category. More precisely, for any object $\mathscr{F} = ``\varprojlim" \mathscr{F}_n$ in $\hD_{\mathcal{S}}(X \quot A,\bk)$, we have
\[
 \End(\mathscr{F}) = \varprojlim_n \varinjlim_m \Hom_{\Db_{\mathcal{S}}(X \quott A,\bk)}(\mathscr{F}_m, \mathscr{F}_n),
\]
see~\eqref{eqn:Hom-hD}.
We have a natural algebra morphism $R_A \to \End(\mathscr{F})$, sending $r \in R_A$ to $(\varphi_{\mathscr{F}_n}(r))_{n}$. Since each $\varphi_{\mathscr{F}_n}$ factors through a quotient $R_A / \mathfrak{m}_A^{N}$ for some $N$ (depending on $n$), this morphism extends to a morphism $\varphi_{\mathscr{F}}^\wedge : R_A^\wedge \to \End(\mathscr{F})$. As in~\S\ref{ss:monodromy-construction}, this construction provides an $R_A^\wedge$-linear structure on $\hD_{\mathcal{S}}(X \quot A,\bk)$.

All the familiar functors (in particular, the pushforward and pullback functors associated with morphisms of $A$-torsors) induce functors between the appropriate completed categories, which will be denoted similarly; for details the reader might consult~\cite[Proposition~A.3.3 and Corollary~A.3.4]{by}.

\begin{rmk}
\label{rmk:completed-filtered}
As explained in~\cite[Remark~A.2.3]{by}, there exists a filtered triangulated category $\hD^F_{\mathcal{S}}(X \quot A,\bk)$ over $\hD_{\mathcal{S}}(X \quot A,\bk)$ in the sense of~\cite[Definition~A.1(c)]{beilinson}. Namely, consider a filtered triangulated category $D^F_{\mathcal{S}}(X \quot A,\bk)$ over $\Db_{\mathcal{S}}(X \quot A,\bk)$ (constructed e.g.~following~\cite[Example~A.2]{beilinson}). Then one can take as $\hD^F_{\mathcal{S}}(X \quot A,\bk)$ the category of pro-objects $``\varprojlim" \mathscr{F}_n$ in $D^F_{\mathcal{S}}(X \quot A,\bk)$ such that the filtrations on the objects $\mathscr{F}_n$ are uniformly bounded, and such that $``\varprojlim" \mathrm{gr}^F_i(\mathscr{F}_n)$ belongs to $\hD_{\mathcal{S}}(X \quot A,\bk)$ for any $i \in \Z$.
\end{rmk}

\subsection{The free-monodromic local system}
\label{ss:fm-local-system}

Let us consider the special case $X=A$ (with its natural action) and $Y=\mathrm{pt}$. Let us choose as a generator of the fundamental group $\pi_1(\C^\times)$ the anti-clockwise loop $\gamma : t \in [0,1] \mapsto \exp(2\mathbf{i}\pi t)$. Then we obtain a group isomorphism
\begin{equation}
\label{eqn:isom-fund-group}
 X_*(A) \simto \pi_1(A)
\end{equation}
by sending $\lambda \in X_*(A)$ to the class of the loop $t \mapsto \lambda(\gamma(t))$. (Here, our fundamental groups are taken with the neutral element as base point.) Of course the category of $\bk$-local systems on $A$ is equivalent to the category of finite-dimension $\bk$-representations of $\pi_1(A)$. Via the isomorphism~\eqref{eqn:isom-fund-group}, we thus obtain an equivalence between the category of $\bk$-local systems on $A$ and that of finite-dimensional $R_A$-modules. The Serre subcategory consisting of local systems which are extensions of copies of the constant local system $\underline{\bk}_A$ then identifies with the category of finite-dimensional $R_A$-modules annihilated by a power of $\mathfrak{m}_A$, or equivalently with the category of finite-dimensional $R_A^\wedge$-modules annihilated by a power of $\mathfrak{m}^\wedge_A:= \mathfrak{m}_A
 R_A^\wedge$. The latter category will be denoted $\Mod^{\nil}(R_A^\wedge)$.

For any $n \in \Z_{\geq 0}$, we denote by $\mathscr{L}_{A,n}$ the local system on $A$ corresponding to the $R_A$-module $R_A / \mathfrak{m}_A^{n+1}$. Then we have natural surjections $\mathscr{L}_{A,n+1} \to \mathscr{L}_{A,n}$, hence we can define $\widehat{\mathscr{L}}_A$ as the pro-object $``\varprojlim" \mathscr{L}_{A,n}$. It is clear that this pro-object is uniformly bounded. It is easily seen that it is also $\pi$-constant; in fact the surjections $\mathscr{L}_{A,n} \twoheadrightarrow \mathscr{L}_{A,0}=\underline{\bk}_A$ induce an isomorphism
\[
 ``\varprojlim" \pi_!(\mathscr{L}_{A,n}) \simto \mathscr{H}^{2r}(\pi_! \mathscr{L}_{A,0})[-2r] = \bk[-2r].
\]
In particular, this shows that $\widehat{\mathscr{L}}_A$ defines an object of $\hD(A \quot A,\bk)$, which satisfies
\begin{equation}
\label{eqn:pidag-L}
\pi_\dag(\widehat{\mathscr{L}}_A) \cong \bk[-r].
\end{equation}
(The stratification of $Y=\mathrm{pt}$ we consider here is the obvious one.)

\begin{rmk}
Choose a trivialization $A \simto (\C^\times)^r$. Then we obtain an isomorphism $R_A \cong (R_{\C^\times})^{\otimes r}$, see Remark~\ref{rmk:RA}. For any $n \geq 0$ we have
\begin{multline*}
\mathfrak{m}_A^{n \cdot r} \subset
\mathfrak{m}_{\C^\times}^n \otimes (R_{\C^\times})^{\otimes (r-1)} + R_{\C^\times} \otimes \mathfrak{m}_{\C^\times}^n \otimes (R_{\C^\times})^{\otimes (r-2)} + \cdots +  (R_{\C^\times})^{\otimes (r-1)} \otimes \mathfrak{m}_{\C^\times}^n \\
\subset \mathfrak{m}_A^n,
\end{multline*}
hence an isomorphism
\begin{equation}
\label{eqn:isom-wL}
\widehat{\mathscr{L}}_A \simto ``\varprojlim" \bigl( \mathscr{L}_{\C^\times,n} \bigr)^{\boxtimes r}.
\end{equation}
The definition of $\widehat{\mathscr{L}}_A$ given above is much more canonical, but the description as the right-hand side in~\eqref{eqn:isom-wL} is sometimes useful to reduce the proofs to the case $r=1$.
\end{rmk}

\subsection{``Averaging'' with the free-monodromic local system}
\label{ss:av-fm-local-system}

In this subsection, for simplicity we assume that $\ell:=\mathrm{char}(\bk)$ is positive. We will prove a technical lemma that will allow us later to prove that in the flag variety setting the convolution product admits a unit (see Lemma~\ref{lem:unit-convolution}). A reader ready to accept (or ignore) this question might skip this subsection.

We denote by $a : A \times X \to X$ the action morphism.

\begin{lem}
\label{lem:averaging-hL}
For any $\mathscr{F}$ in $\Db_{\mathcal{S}}(X \quot A,\bk)$, there exists a canonical isomorphism
\[
a_! \bigl( \widehat{\mathscr{L}}_A \boxtimes \mathscr{F} \bigr) \cong \mathscr{F}[-2r].
\]
\end{lem}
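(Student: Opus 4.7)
The plan is to reduce the statement to the case $\mathscr{F} = \pi^{*}\mathscr{G}$ by a triangulated argument, and then handle this base case via the projection formula, proper base change, and the identity $\pi_{A,!}\widehat{\mathscr{L}}_{A} \cong \bk[-2r]$ (which follows from~\eqref{eqn:pidag-L}).

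For the base case $\mathscr{F} = \pi^{*}\mathscr{G}$, the equality $\pi \circ a = \pi \circ \mathrm{pr}_{2}$ provides a canonical identification $\mathrm{pr}_{2}^{*}\pi^{*}\mathscr{G} = a^{*}\pi^{*}\mathscr{G}$, and thus
\[
\widehat{\mathscr{L}}_{A} \boxtimes \pi^{*}\mathscr{G} \cong \mathrm{pr}_{1}^{*}\widehat{\mathscr{L}}_{A} \otimes a^{*}\pi^{*}\mathscr{G}.
\]
The projection formula for $a$, applied level-by-level to the pro-system defining $\widehat{\mathscr{L}}_{A}$, then yields
\[
a_{!}(\widehat{\mathscr{L}}_{A} \boxtimes \pi^{*}\mathscr{G}) \cong a_{!}(\mathrm{pr}_{1}^{*}\widehat{\mathscr{L}}_{A}) \otimes \pi^{*}\mathscr{G}.
\]
Now the square
\[
\xymatrix@C=1cm{
A \times X \ar[r]^-{a} \ar[d]_-{\mathrm{pr}_{1}} & X \ar[d]^-{q} \\
A \ar[r]_-{\pi_{A}} & \mathrm{pt}
}
\]
is Cartesian via the isomorphism $(a, \mathrm{pr}_{1}) : A \times X \simto X \times A$, $(g, x) \mapsto (gx, g)$; so proper base change combined with $\pi_{A,!}\widehat{\mathscr{L}}_{A} \cong \bk[-2r]$ gives $a_{!}(\mathrm{pr}_{1}^{*}\widehat{\mathscr{L}}_{A}) \cong \underline{\bk}_{X}[-2r]$, completing the base case.

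To extend to general $\mathscr{F} \in \Db_{\mathcal{S}}(X \quot A, \bk)$, I would observe that both $\mathscr{F} \mapsto a_{!}(\widehat{\mathscr{L}}_{A} \boxtimes \mathscr{F})$ and $\mathscr{F} \mapsto \mathscr{F}[-2r]$ are triangulated functors. The isomorphism constructed above is a composition of canonical natural isomorphisms (projection formula and proper base change), so it defines a natural transformation of triangulated functors which is an isomorphism on each $\pi^{*}\mathscr{G}$. Since $\Db_{\mathcal{S}}(X \quot A, \bk)$ is by definition the triangulated subcategory of $\Db_{c}(X, \bk)$ generated by the essential image of $\pi^{\dag}|_{\Db_{\mathcal{S}}(Y, \bk)}$, and since the full subcategory on which a natural transformation between triangulated functors is an isomorphism is itself triangulated (by the five-lemma), the iso extends to all of $\Db_{\mathcal{S}}(X \quot A, \bk)$.

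The main subtlety lies in the pro-object bookkeeping: the identity $\pi_{A,!}\widehat{\mathscr{L}}_{A} \cong \bk[-2r]$ holds at the level of pro-objects rather than at finite truncations (at each finite level $n$, the object $\pi_{A,!}\mathscr{L}_{A,n}$ has additional lower-degree contributions). One must therefore verify that the level-wise applications of projection formula and base change assemble into a pro-system whose limit is the constant pro-object on $\mathscr{F}[-2r]$, which uses the $\pi$-constancy of $\widehat{\mathscr{L}}_{A}$ developed in~\S\ref{ss:fm-local-system}.
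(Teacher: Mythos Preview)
Your argument has a genuine gap in the extension step. The isomorphism you construct in the base case relies on the identity $\mathrm{pr}_2^*\pi^*\mathscr{G} = a^*\pi^*\mathscr{G}$ (coming from $\pi \circ a = \pi \circ \mathrm{pr}_2$), which holds only for objects pulled back from $Y$. Thus what you have actually produced is a natural isomorphism between the \emph{composites} $a_!(\widehat{\mathscr{L}}_A \boxtimes \pi^*(-))$ and $\pi^*(-)[-2r]$, i.e.\ a natural isomorphism of functors on $\Db_{\mathcal{S}}(Y,\bk)$, not a natural transformation between the functors $a_!(\widehat{\mathscr{L}}_A \boxtimes -)$ and $(-)[-2r]$ on $\Db_{\mathcal{S}}(X \quot A,\bk)$. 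A five-lemma argument requires a morphism defined on the entire triangulated category which is an isomorphism on generators; an isomorphism defined only on the generating subcategory does not propagate, since cones are not functorial and the induced maps on cones need not be compatible with further morphisms.

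The paper's proof supplies exactly this missing ingredient, and does so by exploiting the hypothesis $\ell := \mathrm{char}(\bk) > 0$ imposed at the start of the subsection. For each $s \geq 0$ one has the $\ell^s$-th power map $[s] : A \to A$ and the twisted action $a_s = a \circ ([s] \times \id_X)$. Since the monodromy on any object of $\Db_{\mathcal{S}}(X \quot A,\bk)$ is unipotent and any unipotent matrix over $\bk$ satisfies $M^{\ell^s} = 1$ for $s \gg 0$, one obtains for $s$ large a canonical isomorphism $(a_s)^*\mathscr{F} \simto p^*\mathscr{F}$ for \emph{every} $\mathscr{F}$ in the category. By adjunction together with the identification $``\varprojlim_s" [s]_! \underline{\bk} \cong \widehat{\mathscr{L}}_A$, this yields a natural transformation $a_!(\widehat{\mathscr{L}}_A \boxtimes -) \to \id[-2r]$ on all of $\Db_{\mathcal{S}}(X \quot A,\bk)$. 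Only then does one invoke the five-lemma, together with essentially your base-case computation, to conclude that this globally defined morphism is an isomorphism. Your projection-formula/base-change calculation is correct and is morally the same verification the paper performs at the end; what is missing is the construction of the morphism itself for general $\mathscr{F}$.
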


\begin{proof}
We first want to construct a morphism of functors $a_! \bigl( \widehat{\mathscr{L}}_A \boxtimes - \bigr) \to \id[-2r]$. For this, by adjunction it suffices to construct a morphism of functors
\begin{equation}
\label{eqn:morph-unit}
\bigl( \widehat{\mathscr{L}}_A \boxtimes - \bigr) \to a^![-2r].
\end{equation}

For any $s \geq 0$, we denote by $[s] : A \to A$ the morphism sending $z$ to $z^{\ell^s}$, and set $a_s:=a \circ ([s] \times \id_X)$. Since any unipotent matrix $M$ with coefficients in $\bk$ satisfies $M^{\ell^s}=1$ for $s \gg 0$, we see that for $\mathscr{F}$ in $\Db_{\mathcal{S}}(X \quot A,\bk)$, for $s \gg 0$ all the cohomology objects of $(a_s)^* \mathscr{F}$ are constant on the fibers of the projection to $X$. In fact, the techniques of~\cite[\S 5]{verdier} show that for any such $\mathscr{F}$ and for $s \gg 0$ there exists an isomorphism $f_s^{\mathscr{F}} : (a_s)^* \mathscr{F} \simto p^*(\mathscr{F})$ whose restriction to $\{1\} \times X$ is the identity. Moreover, these morphisms are essentially unique in the sense that given $s,s'$ such that $f_s^{\mathscr{F}}$ and $f_{s'}^{\mathscr{F}}$ are defined, for $t \gg s,s'$ we have
\[
([t-s] \times \id_X)^* f_s^{\mathscr{F}} = ([t-s'] \times \id_X)^* f_{s'}^{\mathscr{F}},
\]
and functorial in the sense that if $u : \mathscr{F} \to \mathscr{G}$ is a morphism then for $s \gg 0$ the diagram
\[
\xymatrix@C=2cm{
(a_s)^* \mathscr{F} \ar[r]_-{\sim}^-{f_s^{\mathscr{F}}} \ar[d]_-{(a_s)^* u} & p^* \mathscr{F} \ar[d]^-{p^* u} \\
(a_s)^* \mathscr{G} \ar[r]_-{\sim}^-{f_s^{\mathscr{G}}} & p^* \mathscr{G}
}
\]
commutes.

Now, fix $\mathscr{F}$ in $\Db_{\mathcal{S}}(X \quot A,\bk)$. For $s \gg 0$, we have the morphism
\begin{multline*}
(f_s^{\mathscr{F}})^{-1} \in \Hom \bigl( p^*(\mathscr{F}), (a_s)^* \mathscr{F} \bigr) = \Hom \bigl( p^*(\mathscr{F}), (a_s)^!\mathscr{F} [-2r]\bigr) \\
\cong \Hom \bigl( ([s] \times \id_X)_! p^*(\mathscr{F}), a^!(\mathscr{F}) [-2r]\bigr).
\end{multline*}
The ``essential unicity'' claimed above implies that these morphisms define a cano\-nical element in
\[
\varinjlim_s \Hom \bigl( ([s] \times \id_X)_! p^*(\mathscr{F}), a^!(\mathscr{F}) [-2r] \bigr) = \Hom \left( \bigl( ``\varprojlim_s " [s]_! \underline{\bk} \bigr) \boxtimes \mathscr{F}, a^! \mathscr{F} [-2r] \right).
\]
Now we observe that $[s]_! \underline{\bk} =\mathscr{L}_{A,\ell^s}$, so that $``\varprojlim_s " [s]_! \underline{\bk} \cong \widehat{\mathscr{L}}_A$, and we deduce the wished-for morphism~\eqref{eqn:morph-unit}. (The functoriality of our morphism follows from the ``functoriality'' of the morphisms $f_s^{\mathscr{F}}$ claimed above.)

To conclude the proof it remains to show that the morphism $a_! \bigl( \widehat{\mathscr{L}}_A \boxtimes \mathscr{F} \bigr) \to \mathscr{F}$ is an isomorphism for any $\mathscr{F}$ in $\Db_{\mathcal{S}}(X \quot A,\bk)$. By the 5-lemma and the definition of this category, it suffices to do so in case $\mathscr{F}=\pi^\dag \mathscr{G}$ for some $\mathscr{G}$ in $\Db_{\mathcal{S}}(Y,\bk)$. In this case, the morphism $f_t^{\mathscr{F}}$ is defined for any $t \geq 0$, and can be chosen as the obvious isomorphism
\[
(a_t)^* \mathscr{F} = (a_t)^* \pi^* \mathscr{G}[-r] = (\pi \circ a_t)^* \mathscr{G}[-r] = (\pi \circ p)^* \mathscr{G}[-r] = p^* \mathscr{F}.
\]
Then under the identification
\[
a_! \bigl( \widehat{\mathscr{L}}_A \boxtimes \mathscr{F} \bigr) = \pi^\dag (p_Y)_! (\widehat{\mathscr{L}}_A \boxtimes \mathscr{G} ) = \pi^\dag \bigl( (\pi')_! (\widehat{\mathscr{L}}_A) \boxtimes \mathscr{G} \bigr),
\]
where $p_Y : A \times Y \to Y$ and $\pi':A \to \mathrm{pt}$ are the projections, our morphism is induced by the isomorphism $(\pi')_!(\widehat{\mathscr{L}}_A) \cong \bk[-2r]$ from~\S\ref{ss:fm-local-system}. This concludes the proof.
\end{proof}

\section{The case of the trivial torsor}
\label{sec:trivial-torsor}

In this section we study the category $\hD_{\mathcal{S}}(X \quot A, \bk)$ in the special case $X=A$.

\subsection{Description of \texorpdfstring{$\hD(A \quot A,\bk)$}{D(A/A)} in terms of pro-complexes of \texorpdfstring{$R_A^\wedge$}{RA}-modules}
\label{ss:description-A/A}

As explained in~\S\ref{ss:fm-local-system}, every object of $\Mod^{\nil}(R_A^\wedge)$ defines a sheaf on $A$; this assignment therefore defines a functor $\Db \Mod^{\nil}(R_A^\wedge) \to \Db_c(A,\bk)$, which clearly takes values in $\Db(A \quot A,\bk)$. We will denote by
\[
 \Phi_A : \Db \Mod^{\nil}(R_A^\wedge) \to \Db(A \quot A,\bk)
\]
the composition of this functor with the shift of complexes by $r$ to the left (where $r$ is the rank of $A$). In this way, $\Phi_A$ is t-exact if $\Db(A \quot A,\bk)$ is equipped with the perverse t-structure.

\begin{lem}
 The functor $\Phi_A$ is an equivalence of triangulated categories.
\end{lem}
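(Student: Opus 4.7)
The plan is to prove that $\Phi_A$ is an equivalence by combining t-exactness with an identification of the hearts and a direct computation of Ext groups between generators.

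First I would verify that $\Phi_A$ is t-exact, where the source carries its standard t-structure and the target carries the perverse t-structure (the shift by $r$ in the definition of $\Phi_A$ is precisely what is needed). The restriction of $\Phi_A$ to the hearts is, by construction, the equivalence between $\Mod^{\nil}(R_A^\wedge)$ and the category of unipotent local systems on $A$ recalled in \S\ref{ss:fm-local-system}. In particular it is fully faithful on hearts.

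Next I would observe that both categories are generated, as triangulated categories, by a single object. On the source, every object of $\Mod^{\nil}(R_A^\wedge)$ is a finite iterated extension of the simple module $\bk = R_A^\wedge/\mathfrak{m}_A^\wedge$, so $\bk$ generates $\Db\Mod^{\nil}(R_A^\wedge)$. On the target, since $Y = \mathrm{pt}$ the image of $\pi^{\dag}$ consists of finite sums of shifts of $\underline{\bk}_A$, so $\underline{\bk}_A[r] = \Phi_A(\bk)$ generates $\Db(A \quot A, \bk)$. By the standard five-lemma dévissage on distinguished triangles, full faithfulness of $\Phi_A$ reduces to verifying that it induces an isomorphism
\[
\Ext^{\bullet}_{R_A^\wedge}(\bk, \bk) \simto \bigoplus_{i \in \Z} \Hom_{\Db(A \quott A, \bk)}(\underline{\bk}_A, \underline{\bk}_A[i]) = H^{\bullet}(A; \bk).
\]

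Both sides are exterior $\bk$-algebras on $r$ generators placed in cohomological degree $1$: the left-hand side via the Koszul resolution of $\bk$ over the regular local ring $R_A^\wedge \cong \bk[[x_1, \ldots, x_r]]$, yielding $\bigwedge^{\bullet} \Hom_\bk(\mathfrak{m}_A^\wedge/(\mathfrak{m}_A^\wedge)^2, \bk)$, and the right-hand side by the K\"unneth formula applied to $A \cong (\C^\times)^r$. Since $\Phi_A$ is exact, the induced map is a homomorphism of graded $\bk$-algebras; as both are exterior algebras freely generated in degree $1$, it suffices to check the map is an isomorphism in degree $1$. Unwinding the identifications, a class in $\Ext^1_{R_A^\wedge}(\bk, \bk)$ is a $\bk$-linear map $\mathfrak{m}_A^\wedge/(\mathfrak{m}_A^\wedge)^2 \to \bk$, which via $\lambda - 1 \mapsto 1 \otimes \lambda$ is the same datum as an element of $\Hom_{\Z}(X_*(A), \bk)$, and this in turn matches $H^1(A; \bk) = \Hom(\pi_1(A), \bk)$ under the isomorphism \eqref{eqn:isom-fund-group}; this is precisely how $\Phi_A$ acts on $\Ext^1$ by the definition of the monodromy-local-system dictionary of \S\ref{ss:fm-local-system}.

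Essential surjectivity is then automatic, since the essential image of a fully faithful triangulated functor is a triangulated subcategory and contains the generator $\underline{\bk}_A[r]$. The main obstacle I expect is the careful degree-$1$ comparison: one must track canonical identifications on both sides and confirm they are compatible with the functor $\Phi_A$. Once that is in place, multiplicativity of $\Phi_A$ on Ext algebras, together with the freeness of the exterior algebra on its degree-$1$ part, propagates the isomorphism to all higher degrees automatically.
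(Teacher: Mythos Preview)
Your proof is correct and follows essentially the same strategy as the paper: show that $\Phi_A$ induces an isomorphism $\Ext^\bullet_{R_A^\wedge}(\bk,\bk) \to H^\bullet(A;\bk)$ between the graded endomorphism algebras of the generators, and then conclude by Be{\u\i}linson's lemma since $\bk$ and $\underline{\bk}_A[r]$ generate the respective triangulated categories. The only difference lies in how the Ext comparison is carried out: the paper trivializes $A \cong (\C^\times)^r$ and reduces (implicitly via K\"unneth) to a direct two-dimensional check in the rank-one case, whereas you exploit multiplicativity to identify both sides as exterior algebras generated in degree~$1$ and then verify the degree-$1$ map directly. Your route is marginally more intrinsic (no trivialization is needed), but the two arguments have the same content and difficulty.
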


\begin{proof}
If we denote by $\bk$ the $R_A^\wedge$-module $R_A^\wedge / \mathfrak{m}_A^\wedge$, then
 it is clear that $\Phi_A(\bk) = \underline{\bk}_A [r]$. We claim that $\Phi_A$ induces an isomorphism
 \[
  \bigoplus_{n \in \Z} \Hom_{\Db \Mod^{\nil}(R_A^\wedge)}(\bk, \bk[n]) \simto \bigoplus_{n \in \Z} \Hom_{\Db(A \quott A,\bk)}(\underline{\bk}_A [r],\underline{\bk}_A [r+n]).
 \]
 Here, the right-hand side identifies with $\mathsf{H}^\bullet(A;\bk)$.
 
Choosing a trivialization of $A$ we reduce the claim to the case $r=1$, i.e.~$A=\Gm$ (see Remark~\ref{rmk:RA}). In this case the left-hand side has dimension $2$, with a basis consisting of $\id : \bk \to \bk$ and the natural extension
\[
 \bk = \mathfrak{m}_{\C^\times}^\wedge / (\mathfrak{m}_{\C^\times}^\wedge)^2 \hookrightarrow R_{\C^\times}^\wedge / (\mathfrak{m}_{\C^\times}^\wedge)^2 \twoheadrightarrow R_{\C^\times}^\wedge / \mathfrak{m}_{\C^\times}^\wedge=\bk.
\]
It is clear that $\Phi_{\C^\times}$ identifies this space with $\mathsf{H}^\bullet(\C^\times;\bk)$, and the claim is proved.

Since the object $\bk$, resp.~the object $\underline{\bk}_A[r]$, generates the triangulated category $\Db \Mod^{\nil}(R_A^\wedge)$, resp.~$\Db(A \quot A,\bk)$, this claim and Be{\u\i}linson's lemma imply that $\Phi_A$ indeed is an equivalence of categories.
\end{proof}

The category $\Db_c(\mathrm{pt},\bk)$ identifies with $\Db \Vect^{\mathrm{fd}}_\bk$, where $\Vect^{\mathrm{fd}}_\bk$ is the category of finite-dimensional $\bk$-vector spaces. Under this identification, the functor $\pi^\dag$ corresponds to the composition of $\Phi_A$ with the restriction-of-scalars functor associated with the natural surjection $R_A^\wedge \twoheadrightarrow \bk$. By adjunction, we deduce an isomorphism
\[
 \pi_\dag \circ \Phi_A \cong \bk \lotimes_{R_A^\wedge} (-).
\]

In view of these identifications, the category $\hD(A \quot A,\bk)$ is therefore equivalent to the category $\hD(R_A^\wedge)$ of pro-objects $``\varprojlim" M_n$ in $\Db \Mod^{\nil}(R_A^\wedge)$ which are uniformly bounded and such that the object
\[
 ``\varprojlim" \bk \lotimes_{R_A^\wedge} M_n
\]
is isomorphic to an object of $\Db \Vect^{\mathrm{fd}}_\bk$. 
We use this equivalence to transport the triangulated structure on $\hD(A \quot A,\bk)$ to $\hD(R_A^\wedge)$.

\subsection{Some results on pro-complexes of \texorpdfstring{$R_A^\wedge$}{RA}-modules}

We now consider
\[
 \widehat{L}_A := ``\varprojlim" R_A^\wedge / (\mathfrak{m}_A^\wedge)^{n+1},
\]
a pro-object in the category $\Db \Mod^{\nil}(R_A^\wedge)$.

\begin{lem}
\label{lem:hL}
 For any $M$ in $\Db \Mod^{\nil}(R_A^\wedge)$, there exists a canonical isomorphism
 \[
  \Hom(\widehat{L}_A,M) \cong \mathsf{H}^0(M)
 \]
(where morphisms are taken in the category of pro-objects in $\Db \Mod^{\nil}(R_A^\wedge)$).
\end{lem}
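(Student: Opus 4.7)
The plan is to unravel the Hom in the pro-category and recognise the resulting colimit as an instance of the identity $R\Gamma_{\mathfrak{m}_A^\wedge}(M) = M$ for objects whose cohomology is $\mathfrak{m}_A^\wedge$-power-torsion.

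First, by the standard formula for morphisms out of a pro-object into a constant one,
\[
\Hom(\widehat{L}_A, M) = \varinjlim_n \Hom_{\Db \Mod^{\nil}(R_A^\wedge)}\bigl(R_A^\wedge/(\mathfrak{m}_A^\wedge)^{n+1},\, M\bigr).
\]
Since $\Mod^{\nil}(R_A^\wedge)$ is a Serre subcategory of $\Mod(R_A^\wedge)$ stable under extensions, each of these Hom-spaces coincides with $\mathsf{H}^0 R\Hom_{R_A^\wedge}(R_A^\wedge/(\mathfrak{m}_A^\wedge)^{n+1}, M)$ computed in $\Db \Mod(R_A^\wedge)$. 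The candidate canonical morphism sends a representative $f : R_A^\wedge/(\mathfrak{m}_A^\wedge)^{n+1} \to M$ to $\mathsf{H}^0(f)(1) \in \mathsf{H}^0(M)$; one checks routinely that this is well-defined on the filtered colimit, natural in $M$, and compatible with the pro-structure.

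To prove this map is an isomorphism I would argue by dévissage. Both sides are cohomological functors of $M$ with values in $\bk$-vector spaces (using exactness of filtered colimits for the left-hand side), so truncation triangles and the five-lemma reduce the claim to the case $M = N[-i]$ with $N \in \Mod^{\nil}(R_A^\wedge)$ and $i \in \Z$. The target is then $N$ if $i = 0$ and $0$ otherwise; on the source, the cases $i < 0$ are trivial, and for $i = 0$ the colimit equals $\bigcup_n \{m \in N \mid (\mathfrak{m}_A^\wedge)^{n+1} m = 0\} = N$, since $N$ is annihilated by some power of $\mathfrak{m}_A^\wedge$.

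The main obstacle is the case $i > 0$, namely showing $\varinjlim_n \Ext^i_{R_A^\wedge}(R_A^\wedge/(\mathfrak{m}_A^\wedge)^{n+1}, N) = 0$. I would replace the indexing system by the cofinal subsystem $\{R_A^\wedge/(x_1^{n+1}, \ldots, x_r^{n+1})\}_{n \geq 0}$, where $x_1, \ldots, x_r$ is a regular system of parameters for $R_A^\wedge$; cofinality follows from the obvious inclusion $(x_1^{n+1},\ldots,x_r^{n+1}) \subset (\mathfrak{m}_A^\wedge)^{n+1}$ together with the pigeonhole inclusion $(\mathfrak{m}_A^\wedge)^{rn+1} \subset (x_1^{n+1},\ldots,x_r^{n+1})$. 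Each $R_A^\wedge/(x_1^{n+1},\ldots,x_r^{n+1})$ is resolved by the Koszul complex $K_n = K(x_1^{n+1},\ldots,x_r^{n+1})$, and the chain map $K_{n+1} \to K_n$ lifting the quotient acts in homological degree $-k$ on each standard summand by multiplication by a monomial of total degree $k$ in the $x_j$'s. Applying $\Hom_{R_A^\wedge}(-, N)$ and taking the $i$-th cohomology for $i > 0$, the transition maps in the colimit thus act on $N$ by multiplication by such positive-degree monomials; since $N$ is annihilated by some power of $\mathfrak{m}_A^\wedge$, iterating enough transitions produces the zero map, and the colimit vanishes. Conceptually, this Koszul calculation expresses that the derived $\mathfrak{m}_A^\wedge$-torsion functor acts as the identity on $\Db \Mod^{\nil}(R_A^\wedge)$.
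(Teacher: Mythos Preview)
Your argument is correct and follows the same d\'evissage skeleton as the paper: reduce to $M=N[-i]$ with $N$ a single module, handle $i\le 0$ directly, and then show $\varinjlim_n \Ext^i_{R_A^\wedge}(R_A^\wedge/(\mathfrak{m}_A^\wedge)^{n+1},N)=0$ for $i>0$. The difference lies in that last step. The paper does not compute: it invokes full faithfulness of $\Db\Mod^{\nil}(R_A^\wedge)\hookrightarrow\Db\Mod(R_A^\wedge)$ (citing~\cite{orlov}) and then observes that, for $m\gg0$, the transition image of any class lifts to $\Hom_{\Db\Mod(R_A^\wedge/(\mathfrak{m}_A^\wedge)^{m+1})}(R_A^\wedge/(\mathfrak{m}_A^\wedge)^{m+1},N[-k])$, which vanishes because the source is projective over that Artinian quotient. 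Your route instead passes to the cofinal Koszul system and shows the transition maps are multiplication by positive-degree monomials, hence eventually zero on a nilpotent module---essentially the classical identification of the colimit with local cohomology $H^i_{\mathfrak{m}_A^\wedge}(N)$, which vanishes for $i>0$ on torsion modules. Your approach is more self-contained and hands-on; the paper's is shorter once the cited full-faithfulness is accepted.

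One small remark: when you write that the Hom-spaces coincide with $\mathsf{H}^0 R\Hom_{R_A^\wedge}(-,M)$ ``since $\Mod^{\nil}(R_A^\wedge)$ is a Serre subcategory stable under extensions'', that justification is not quite enough on its own---Serre subcategories need not give fully faithful derived functors in general. The claim is true here (it is exactly the result the paper cites from~\cite{orlov}), but you should cite it rather than suggest it follows from the Serre-subcategory property alone.
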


\begin{proof}
By d\'evissage it is sufficient to prove this claim when $M$ is concentrated in a certain degree $k$, i.e.~$M=N[-k]$ for some $N$ in $\Mod^{\nil}(R_A^\wedge)$.
 By definition we have
 \[
  \Hom(\widehat{L}_A,N[-k]) = \varinjlim_n \Ext^{-k}_{R_A^\wedge}(R_A^\wedge / (\mathfrak{m}_A^\wedge)^{n+1}, N).
 \]
 If $k=0$, it is easily seen that the right-hand side identifies with $N$. Now if $k \neq 0$ we use the fact that the natural functor from $\Db \Mod^{\nil}(R_A^\wedge)$ to the bounded derived category of $R_A^\wedge$-modules is fully faithful (see e.g.~\cite[Lemma~2.1]{orlov}), which implies that any morphism $f : R_A^\wedge / (\mathfrak{m}_A^\wedge)^{n+1} \to N[-k]$ is the image of a morphism in the category $\Db \Mod(R_A^\wedge / (\mathfrak{m}_A^\wedge)^{m+1})$ for $m \gg 0$. Then the image of $f$ in $\Ext^{-k}_{R_A^\wedge}(R_A^\wedge / (\mathfrak{m}_A^\wedge)^{m+1}, N)$ vanishes, since it is the image of a morphism in $\Hom_{\Db \Mod(R_A^\wedge / (\mathfrak{m}_A^\wedge)^{m+1})}(R_A^\wedge / (\mathfrak{m}_A^\wedge)^{m+1}, N[-k])=0$.
\end{proof}

As a consequence of this lemma, one obtains in particular an isomorphism
\begin{equation}
\label{eqn:hL-tensor}
 ``\varprojlim" \bk \lotimes_{R_A^\wedge} R_A^\wedge / (\mathfrak{m}_A^\wedge)^{n+1} \cong \bk
\end{equation}
in the category of pro-objects in $\Db \Vect^{\mathrm{fd}}_\bk$. This shows that $\widehat{L}_A$ belongs to $\hD(R_A^\wedge)$. (Of course, this property also follows from the fact that this object is the image of $\widehat{\mathscr{L}}_A[r]$ under the equivalence considered in~\S\ref{ss:description-A/A}.)

\begin{lem}
\label{lem:pro-objects-truncation}
 Let $``\varprojlim" M_n$ be an object of $\hD(R_A^\wedge)$, and assume that the object $``\varprojlim" \bk \lotimes_{R_A^\wedge} M_n$ belongs to $D^{\leq 0} \Vect^{\mathrm{fd}}_\bk$. Then the obvious morphism
 \[
  ``\varprojlim" \tau_{\leq 0} M_n \to ``\varprojlim" M_n
 \]
is an isomorphism in the category of pro-objects in $\Db \Mod^{\nil}(R_A^\wedge)$, where $\tau_{\leq 0}$ is the usual truncation functor for complexes of $R_A^\wedge$-modules.
\end{lem}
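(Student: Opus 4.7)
The plan is to reduce everything to showing that the pro-object $``\varprojlim" \tau_{>0} M_n$ is zero in the pro-category. Indeed, the truncation triangles $\tau_{\leq 0} M_n \to M_n \to \tau_{>0} M_n \xrightarrow{+1}$ are functorial in $M_n$, so they form a compatible system and assemble into a distinguished triangle in the pro-category of $\Db \Mod^{\nil}(R_A^\wedge)$; vanishing of the third term then yields the desired isomorphism. After replacing $(M_n)$ by an isomorphic pro-system with $M_n \in D^{[a,b]}$ uniformly (as we may by definition of $\hD(R_A^\wedge)$), I would argue by induction on $b$. If $b \leq 0$ the claim is trivial. If $b \geq 1$, I use the functorial truncation triangle
\[
\tau_{\leq b-1} M_n \to M_n \to H^b(M_n)[-b] \xrightarrow{+1}
\]
to reduce the question to showing $``\varprojlim" H^b(M_n) = 0$ as a pro-object of $\Mod^{\nil}(R_A^\wedge)$: once this is known, the replacement of $(M_n)$ by $(\tau_{\leq b-1} M_n)$ preserves the hypothesis (via the triangle $\bk \lotimes \tau_{\leq b-1} M_n \to \bk \lotimes M_n \to \bk \lotimes H^b(M_n)[-b] \xrightarrow{+1}$) and reduces the top degree by one.

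The crux is therefore $``\varprojlim" H^b(M_n) = 0$ when $b \geq 1$ is the uniform top cohomological degree. Since $M_n \in D^{\leq b}$, the spectral sequence arising from the Koszul resolution of $\bk$ over $R_A^\wedge$ (or a direct calculation) produces a natural isomorphism
\[
H^b(\bk \lotimes_{R_A^\wedge} M_n) \cong \bk \otimes_{R_A^\wedge} H^b(M_n),
\]
since no other bidegree can contribute in total degree $b$. The hypothesis states that $``\varprojlim" \bk \lotimes M_n$ is isomorphic in the pro-category to an object of $D^{\leq 0} \Vect^{\mathrm{fd}}_\bk$; applying degree-$b$ cohomology to this isomorphism of pro-objects gives
\[
``\varprojlim" \bk \otimes_{R_A^\wedge} H^b(M_n) = 0
\]
as a pro-object of $\Vect^{\mathrm{fd}}_\bk$.

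Unfolding the definition, this means that for every $n$ there exists $m \geq n$ such that the image of the transition map $H^b(M_m) \to H^b(M_n)$ lies in $\mathfrak{m}_A^\wedge \cdot H^b(M_n)$. To upgrade this to genuine vanishing of a transition map, I iterate this construction, exploiting that $H^b(M_n)$ is a finite-dimensional nilpotent $R_A^\wedge$-module and is therefore annihilated by some power $(\mathfrak{m}_A^\wedge)^N$. Composing $N$ successive transitions $H^b(M_{m_N}) \to \cdots \to H^b(M_{m_1}) \to H^b(M_n)$ chosen by this procedure, $R_A^\wedge$-linearity forces the total image to sit inside $(\mathfrak{m}_A^\wedge)^N \cdot H^b(M_n) = 0$. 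This proves $``\varprojlim" H^b(M_n) = 0$ and closes the induction. The subtle point I anticipate as the main obstacle is checking that truncation and cohomology are correctly compatible with pro-limits and with $\bk \lotimes (-)$, but this is purely functorial bookkeeping once the triangulated structure of $\hD(R_A^\wedge)$ described in Section~\ref{ss:definition-hD} is in hand; the Nakayama iteration itself is elementary.
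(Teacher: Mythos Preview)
Your proof is correct and follows essentially the same approach as the paper: reduce to a uniformly bounded system, peel off the top cohomology $H^b(M_n)$ using the truncation triangle, identify $H^b(\bk \lotimes_{R_A^\wedge} M_n) \cong \bk \otimes_{R_A^\wedge} H^b(M_n)$, and then use a Nakayama iteration to force the pro-object $``\varprojlim" H^b(M_n)$ to vanish. Your explicit description of composing $N$ successive transitions to land in $(\mathfrak{m}_A^\wedge)^N \cdot H^b(M_n)$ is in fact a slightly more careful version of the paper's terse sentence ``this implies that the map $H^d(M_m) \to H^d(M_n)$ vanishes for $m \gg 0$''.
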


\begin{proof}
 By uniform boundedness, we can assume that each complex $M_n$ belongs to $D^{\leq d} \Mod^{\nil}(R_A^\wedge)$ for some $d \in \Z$. If $d \leq 0$ then there is nothing to prove. Hence we assume that $d>0$. We will prove that in this case the pro-object $``\varprojlim" H^d(M_n)$ is isomorphic to $0$. Since filtrant direct limits are exact, this will show that for any $X$ in $\Db \Mod^{\nil}(R_A^\wedge)$ the morphism
 \[
  \varinjlim \Hom(M_n,X) \to \varinjlim \Hom(\tau_{<d} M_n,X)
 \]
is an isomorphism, hence that the morphism of pro-objects
\[
 ``\varprojlim" \tau_{<d} M_n \to ``\varprojlim" M_n
\]
is an isomorphism. Of course, this property is sufficient to conclude.

We observe that the pro-object
\[
 ``\varprojlim" \bk \otimes_{R_A^\wedge} H^d(M_n) = ``\varprojlim" H^d \left( \bk \lotimes_{R_A^\wedge} M_n \right) = H^d \left( ``\varprojlim" \bk \lotimes_{R_A^\wedge} M_n\right)
\]
in the category $\Vect^{\mathrm{fd}}_\bk$
vanishes. Hence for any fixed $n$, for $m \gg n$ the map $\bk \otimes_{R_A^\wedge} H^d(M_m) \to \bk \otimes_{R_A^\wedge} H^d(M_n)$ vanishes, or in other words the map $H^d(M_m) \to H^d(M_n)$ takes values in $\mathfrak{m}_A^\wedge \cdot H^d(M_n)$. Since $H^d(M_n)$ is annihilated by $(\mathfrak{m}_A^\wedge)^q$ for some $q$, this implies that the map $H^d(M_m) \to H^d(M_n)$ vanishes for $m \gg 0$. Clearly, this implies that $``\varprojlim" H^d(M_n) \cong 0$, and concludes the proof.
\end{proof}

\begin{lem}
\label{lem:hL-generates}
 The object $\widehat{L}_A$ generates $\hD(R_A^\wedge)$ as a triangulated category.
\end{lem}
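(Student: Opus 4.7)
The plan is to let $\mathscr{D}$ denote the strictly full triangulated subcategory of $\hD(R_A^\wedge)$ generated by $\widehat{L}_A$, and prove $\mathscr{D} = \hD(R_A^\wedge)$ by first reducing to the constant pro-object $\bk$, then to all constant pro-objects from $\Db\Mod^{\nil}(R_A^\wedge)$, and finally handling general pro-objects.

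First I would show $\bk \in \mathscr{D}$. Fix a basis $\lambda_1, \ldots, \lambda_r$ of $X_*(A)$ and set $x_i := \lambda_i - 1 \in \mathfrak{m}_A^\wedge$; these form a regular system of parameters generating $\mathfrak{m}_A^\wedge$. Each $x_i$ defines an endomorphism of $\widehat{L}_A$ via the $R_A^\wedge$-action, and iterating the mapping cones of these endomorphisms yields the object $\widehat{L}_A \lotimes_{R_A^\wedge} K^\bullet(x_1, \ldots, x_r)$, where $K^\bullet$ is the Koszul complex. Since $K^\bullet$ is a free $R_A^\wedge$-resolution of $\bk$, this coincides with $\widehat{L}_A \lotimes_{R_A^\wedge} \bk$, which is $\bk$ by~\eqref{eqn:hL-tensor}. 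Then every $N \in \Mod^{\nil}(R_A^\wedge)$ admits the finite filtration by submodules $(\mathfrak{m}_A^\wedge)^i N$, whose subquotients are direct sums of copies of $\bk$, so the constant pro-object of $N$ lies in $\mathscr{D}$; standard truncation triangles extend this to all of $\Db\Mod^{\nil}(R_A^\wedge)$ viewed as constant pro-objects.

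For a general $M = ``\varprojlim" M_n$ in $\hD(R_A^\wedge)$, I would argue by induction on $d(M) := \dim_\bk(\bk \lotimes_{R_A^\wedge} M)$, which is a finite nonnegative integer by the definition of $\hD(R_A^\wedge)$. Using Lemma~\ref{lem:hL} together with~\eqref{eqn:Hom-hD} one obtains
\[
\Hom_{\hD(R_A^\wedge)}(\widehat{L}_A[k], M) \cong \varprojlim_n H^{-k}(M_n),
\]
so $\widehat{L}_A$ behaves like a projective pro-generator. The base case $d(M) = 0$ follows from a slick iterated application of Lemma~\ref{lem:pro-objects-truncation}: for every integer $k \geq 0$ one has $\bk \lotimes M[k] = 0 \in D^{\leq 0}$, hence $``\varprojlim" \tau_{\leq -k} M_n \simeq M$; taking $k$ larger than the cohomological amplitude of the $M_n$ forces the left-hand side to be $0$, so $M \cong 0 \in \mathscr{D}$.

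The main obstacle is the inductive step. When $d(M) > 0$, one needs to produce a morphism $\widehat{L}_A[k] \to M$ in $\hD(R_A^\wedge)$ whose mapping cone $C$ still belongs to $\hD(R_A^\wedge)$ and satisfies $d(C) < d(M)$, so that $C \in \mathscr{D}$ by induction and therefore $M \in \mathscr{D}$. Concretely, using $\bk \lotimes \widehat{L}_A[k] \cong \bk[k]$ (from~\eqref{eqn:hL-tensor}), one picks a nonzero cohomology class of $\bk \lotimes M$ in some degree $-k$, and attempts to lift it, through the natural map $\varprojlim_n H^{-k}(M_n) \to H^{-k}(\bk \lotimes M)$, to a morphism $\widehat{L}_A[k] \to M$ whose induced map on $\bk \lotimes (-)$ is nonzero on the chosen class. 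The hard part is guaranteeing the existence of such a lift and verifying that the dimension strictly drops; this will require Mittag-Leffler control on the transition maps of the pro-system combined with levelwise Nakayama arguments, in the spirit of the proof of Lemma~\ref{lem:pro-objects-truncation}.
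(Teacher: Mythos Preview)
Your overall strategy matches the paper's: induct on a numerical invariant of $\bk \lotimes_{R_A^\wedge} M$, build a morphism from a shift of $\widehat{L}_A$ hitting a prescribed cohomology class, and pass to the cone. The paper inducts on the length of the smallest interval $I$ with $\bk \lotimes M \in D^I \Vect^{\mathrm{fd}}_\bk$ rather than on $d(M)$, and at each step hits the entire top-degree cohomology $V$ at once with a map from $\widehat{L}_A \otimes_\bk V$ rather than one class at a time, but these are inessential variations. Your first paragraph (showing $\bk$ and then all constant pro-objects lie in $\mathscr{D}$) is correct but plays no role in your induction and can be dropped.

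There is, however, a genuine gap in the inductive step. You propose to pick a nonzero class in $H^{-k}(\bk \lotimes M)$ for ``some degree $-k$'' and lift it through the map $\Hom(\widehat{L}_A[k], M) \to H^{-k}(\bk \lotimes M)$. But this map is not surjective for arbitrary $k$: once the equivalence of Proposition~\ref{prop:equiv-RAwedge} is in hand, it identifies with $H^{-k}(N) \to H^{-k}(\bk \lotimes N)$ for the corresponding $N \in \Db \Mod^{\mathrm{fg}}(R_A^\wedge)$, and already for $N = R_A^\wedge/(x_1)$ with $r \geq 2$ (which is \emph{not} a constant pro-object, so your first paragraph does not rescue it) the source vanishes in degree $-1$ while the target is $\bk$. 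The fix --- and this is precisely the paper's argument --- is to always take $-k$ to be the \emph{top} degree of $\bk \lotimes M$: after shifting so that this degree is $0$, Lemma~\ref{lem:pro-objects-truncation} lets you assume each $M_n \in D^{\leq 0} \Mod^{\nil}(R_A^\wedge)$, whence $H^0(\bk \lotimes M_n) = \bk \otimes_{R_A^\wedge} H^0(M_n)$ and the levelwise maps $H^0(M_m) \twoheadrightarrow \bk \otimes_{R_A^\wedge} H^0(M_m)$ are the obvious quotient surjections (no Nakayama needed here). Their kernels are finite-dimensional, so Mittag--Leffler yields the lift. The paper's choice of induction variable has the mild advantage of making the top degree the only natural place to look.
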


\begin{proof}
 We will prove, by induction on the length of the shortest interval $I \subset \Z$ such that $``\varprojlim" \bk \lotimes_{R_A^\wedge} M_n$ belongs to $D^I \Vect^{\mathrm{fd}}_\bk$, that any object $``\varprojlim" M_n$ of $\hD(R_A^\wedge)$ belongs to the triangulated subcategory generated by $\widehat{L}_A$.
 
 First, assume that $I=\varnothing$. Then for any $X$ in $\Db \Vect^{\mathrm{fd}}_\bk$ we have
 \[
  0=\varinjlim_n \Hom_{\Db \Vect^{\mathrm{fd}}_\bk}(\bk \lotimes_{R_A^\wedge} M_n, X) \cong \varinjlim_n \Hom_{\Db \Mod^{\nil}(R_A^\wedge)}(M_n, X).
 \]
Since the essential image of $\Db \Vect^{\mathrm{fd}}_\bk$ generates $\Db \Mod^{\nil}(R_A^\wedge)$ as a triangulated category, and since filtrant direct limits are exact, it follows that
\[
 \varinjlim_n \Hom_{\Db \Mod^{\nil}(R_A^\wedge)}(M_n, X)=0
\]
for any $X$ in $\Db \Mod^{\nil}(R_A^\wedge)$. By definition, this implies that $``\varprojlim" M_n=0$, proving the claim in this case.

Now, we assume that $I \neq \varnothing$. Shifting complexes if necessary, we can assume that $I=[-d,0]$ for some $d \in \Z_{\geq 0}$. Using Lemma~\ref{lem:pro-objects-truncation}, we can then assume that each $M_n$ belongs to $D^{\leq 0} \Mod^{\nil}(R_A^\wedge)$. Set 
\[
V:=H^0 \left( ``\varprojlim" \bk \lotimes_{R_A^\wedge} M_n \right) = ``\varprojlim" H^0(\bk \lotimes_{R_A^\wedge} M_n) = ``\varprojlim" \bk \otimes_{R_A^\wedge} H^0(M_n).
\]
Then $V$ is a finite-dimensional $\bk$-vector space, and $\id_V$ defines an element in
\[
 \Hom_\bk \left( V, ``\varprojlim" \bk \otimes_{R_A^\wedge} H^0(M_n) \right) = \varprojlim \Hom_\bk \left( V, \bk \otimes_{R_A^\wedge} H^0(M_n) \right).
\]

Consider the object
\[
 \mathcal{V} := ``\varprojlim" \left( R_A^\wedge / (\mathfrak{m}_A^\wedge)^{n+1} \otimes_\bk V \right)
\]
in $\hD(R_A^\wedge)$.
(Of course, $\mathcal{V}$ is isomorphic to a direct sum of copies of $\widehat{L}_A$.) Then by Lemma~\ref{lem:hL} we have
\[
\Hom_{\hD(R_A^\wedge)}(\mathcal{V}, ``\varprojlim" M_m) = \varprojlim_m \Hom_{\hD(R_A^\wedge)}(\mathcal{V}, M_m) \\
 \cong \varprojlim_m \Hom_{\bk}(V, H^0(M_m)).
\]
Now for any $m$ we have a surjection
\[
 H^0(M_m) \twoheadrightarrow \bk \otimes_{R_A^\wedge} H^0(M_m),
\]
which induces a surjection
\[
 \Hom_{\bk}(V, H^0(M_m)) \twoheadrightarrow \Hom_\bk \left( V, \bk \otimes_{R_A^\wedge} H^0(M_m) \right).
\]
Each vector space $\ker \bigl( \Hom_{\bk}(V, H^0(M_m)) \to \Hom_\bk ( V, \bk \otimes_{R_A^\wedge} H^0(M_m) ) \bigr)$ is finite-dimensional; therefore the projective system formed by these spaces
satisfies the Mittag--Leffler condition. This implies that the map
\begin{equation}
\label{eqn:hL-generates-proof}
 \varprojlim_m \Hom_{\bk}(V, H^0(M_m)) \to \varprojlim_m \Hom_\bk \left( V, \bk \otimes_{R_A^\wedge} H^0(M_n) \right)
\end{equation}
is surjective (see e.g.~\cite[Proposition~1.12.3]{ks1}).

Let now $f : \mathcal{V} \to ``\varprojlim" M_n$ be a morphism whose image in the right-hand side of~\eqref{eqn:hL-generates-proof} is $\id_V$. By definition (and in view of~\eqref{eqn:hL-tensor}), the morphism
\[
 \bk \lotimes_{R_A^\wedge} f : \bk \lotimes_{R_A^\wedge} \mathcal{V} \to \bk \lotimes_{R_A^\wedge} ``\varprojlim" M_n
\]
induces an isomorphism in degree-$0$ cohomology. Hence the cone $C$ of $f$ (in the triangulated category $\hD(R_A^\wedge)$) is such that $\bk \lotimes_{R_A^\wedge} C$ belongs to $D^{[-d,-1]} \Vect^{\mathrm{fd}}_\bk$. By induction, this objects belongs to the triangulated subcategory of $\hD(R_A^\wedge)$ generated by $\widehat{L}_A$. Then the distinguished triangle
\[
 \mathcal{V} \to ``\varprojlim" M_n \to C \xrightarrow{[1]}
\]
shows that $``\varprojlim" M_n$ also belongs to this subcategory, which finishes the proof.
\end{proof}

\subsection{Description of \texorpdfstring{$\hD(A \quot A,\bk)$}{D(A/A)} in terms of complexes of \texorpdfstring{$R_A^\wedge$}{RA}-modules}

Recall that the algebra $R_A^\wedge$ is isomorphic to an algebra of formal power series in $r$ indeterminates, see Remark~\ref{rmk:RA}. In particular this shows that this algebra is local, Noetherian, and of finite global dimension. We will denote by $\Mod^{\mathrm{fg}}(R^\wedge_A)$ the category of finitely-generated $R_A^\wedge$-modules.

\begin{prop}
\label{prop:equiv-RAwedge}
 There exists a natural equivalence of triangulated categories
 \[
  \Db \Mod^{\mathrm{fg}}(R^\wedge_A) \simto \hD(R_A^\wedge).
 \]
\end{prop}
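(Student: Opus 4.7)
The plan is to construct the equivalence explicitly via the functor
\[
\Phi(M) := ``\varprojlim_n" \bigl( M \lotimes_{R_A^\wedge} R_A^\wedge / (\mathfrak{m}_A^\wedge)^{n+1} \bigr),
\]
for $M \in \Db \Mod^{\mathrm{fg}}(R_A^\wedge)$, and then to verify in turn that $\Phi$ lands in $\hD(R_A^\wedge)$, is triangulated, fully faithful, and essentially surjective.

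For well-definedness, I would use that $R_A^\wedge$ is a regular local ring of Krull dimension $r$ (see Remark~\ref{rmk:RA}), so every $M$ admits a bounded resolution $P_\bullet \to M$ by finitely generated free modules. Then each $M \lotimes_{R_A^\wedge} R_A^\wedge/(\mathfrak{m}_A^\wedge)^{n+1}$ is represented by a bounded complex of finite-dimensional $R_A^\wedge$-modules annihilated by $(\mathfrak{m}_A^\wedge)^{n+1}$, with amplitude bounded uniformly in $n$. Applying~\eqref{eqn:hL-tensor} termwise to $P_\bullet$ gives $``\varprojlim_n" \bk \lotimes_{R_A^\wedge} \Phi(M)_n \cong \bk \lotimes_{R_A^\wedge} M$, which lies in $\Db \Vect^{\mathrm{fd}}_\bk$, so $\Phi(M)$ belongs to $\hD(R_A^\wedge)$. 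The triangulated structure is automatic: any distinguished triangle in $\Db \Mod^{\mathrm{fg}}(R_A^\wedge)$ is represented by an honest triangle of complexes whose termwise tensor products with $R_A^\wedge/(\mathfrak{m}_A^\wedge)^{n+1}$ form a projective system of distinguished triangles, hence a distinguished triangle in $\hD(R_A^\wedge)$ by the definition in~\S\ref{ss:definition-hD}.

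For fully faithfulness, I would first observe that $\Phi(R_A^\wedge) = \widehat{L}_A$, then combine Lemma~\ref{lem:hL} with~\eqref{eqn:Hom-hD} to obtain
\[
\Hom_{\hD(R_A^\wedge)}\bigl(\widehat{L}_A, \Phi(M)[k]\bigr) \cong \varprojlim_n \mathsf{H}^k\bigl(M \lotimes_{R_A^\wedge} R_A^\wedge/(\mathfrak{m}_A^\wedge)^{n+1}\bigr).
\]
Using a finite free resolution of $M$ together with the Artin--Rees lemma, the right-hand side identifies with the $\mathfrak{m}_A^\wedge$-adic completion of $\mathsf{H}^k(M)$, which equals $\mathsf{H}^k(M)$ itself because this is a finitely generated module over the complete local ring $R_A^\wedge$. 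This matches $\Hom_{\Db \Mod^{\mathrm{fg}}(R_A^\wedge)}(R_A^\wedge, M[k])$, giving full faithfulness when the source is $R_A^\wedge$. Extending to an arbitrary source $N$ by resolving $N$ as a bounded complex of finite free modules and iterating the five-lemma on the associated cones yields full faithfulness in general; finite global dimension of $R_A^\wedge$ guarantees that the resolution is bounded. Essential surjectivity is then immediate: the essential image of $\Phi$ is a triangulated subcategory of $\hD(R_A^\wedge)$ containing $\widehat{L}_A = \Phi(R_A^\wedge)$, hence coincides with $\hD(R_A^\wedge)$ by Lemma~\ref{lem:hL-generates}.

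The main obstacle I anticipate is the commutative-algebra identification
\[
\varprojlim_n \mathsf{H}^k\bigl(M \lotimes_{R_A^\wedge} R_A^\wedge/(\mathfrak{m}_A^\wedge)^{n+1}\bigr) \cong \mathsf{H}^k(M),
\]
which mixes levelwise cohomology with a projective limit and needs Artin--Rees to control the discrepancy between the quotient filtration on the cohomology of a finite free resolution and the $\mathfrak{m}_A^\wedge$-adic filtration on $\mathsf{H}^k(M)$. Once that is established, the remaining steps are routine triangulated-category formalism built around Lemmas~\ref{lem:hL} and~\ref{lem:hL-generates}.
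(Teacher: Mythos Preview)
Your proposal is correct and follows essentially the same route as the paper: the same functor $\Phi$, the same well-definedness check via finite free resolutions, and the same reliance on Lemmas~\ref{lem:hL} and~\ref{lem:hL-generates}. The only noteworthy difference is in the full-faithfulness step: you compute $\Hom(\widehat{L}_A,\Phi(M)[k])$ for \emph{general} $M$ (which forces the Artin--Rees argument you flag as the main obstacle) and then propagate to arbitrary sources by resolution and the five-lemma, whereas the paper checks only the single case $M=R_A^\wedge$ (where the computation~\eqref{eqn:End-hL} is immediate from Lemma~\ref{lem:hL}) and then invokes Be{\u\i}linson's lemma, using that $R_A^\wedge$ and $\widehat{L}_A$ generate their respective categories. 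Your version is more hands-on and self-contained; the paper's is shorter and sidesteps the commutative-algebra identification entirely.
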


\begin{proof}
 We consider the functor $\varphi$ from $\Db \Mod^{\mathrm{fg}}(R^\wedge_A)$ to the category of pro-objects in $\Db \Mod^{\nil}(R_A^\wedge)$ sending a complex $M$ to
 \[
  \varphi(M) := ``\varprojlim" \left( R_A^\wedge / (\mathfrak{m}_A^\wedge)^{n+1} \lotimes_{R_A^\wedge} M \right).
 \]
Since $R_A^\wedge$ is local, Noetherian, and of finite global dimension, any object in the category $\Db \Mod^{\mathrm{fg}}(R^\wedge_A)$ is isomorphic to a bounded complex of free $R_A^\wedge$-modules. It is clear that the image of such a complex belongs to $\hD(R_A^\wedge)$; hence $\varphi$ takes values in $\hD(R_A^\wedge)$. Once this is established, it is clear that this functor is triangulated.

By Lemma~\ref{lem:hL}, for $k \in \Z$ we have
\begin{multline}
\label{eqn:End-hL}
 \Hom_{\hD(R_A^\wedge)}(\widehat{L}_A, \widehat{L}_A[k]) = \varprojlim_n \Hom_{\hD(R_A^\wedge)}(\widehat{L}_A, R_A^\wedge / (\mathfrak{m}^\wedge_A)^{n+1}[k]) \\
 \cong \begin{cases}
        R_A^\wedge & \text{if $k=0$;}\\
        0 & \text{otherwise.}
       \end{cases}
\end{multline}
Hence $\varphi$ induces an isomorphism
\[
 \Hom_{\Db \Mod^{\mathrm{fg}}(R^\wedge_A)}(R_A^\wedge,R_A^\wedge[k]) \simto \Hom_{\hD(R_A^\wedge)}(\widehat{L}_A, \widehat{L}_A[k]).
\]
Since the object $R_A^\wedge$, resp.~$\widehat{L}_A$, generates $\Db \Mod^{\mathrm{fg}}(R^\wedge_A)$, resp.~$\hD(R_A^\wedge)$, as a triangulated category (see Lemma~\ref{lem:hL-generates}), this observation and Be{\u\i}linson's lemma imply that $\varphi$ is an equivalence of categories.
\end{proof}

Combining Proposition~\ref{prop:equiv-RAwedge} and the considerations of~\S\ref{ss:description-A/A}, we finally obtain the following result.

\begin{cor}
\label{cor:equiv-A/A-RA}
 There exists a canonical equivalence of triangulated categories
 \[
  \Db \Mod^{\mathrm{fg}}(R^\wedge_A) \simto \hD(A \quot A,\bk)
 \]
sending the free module $R_A^\wedge$ to $\widehat{\mathscr{L}}_A[r]$.
\end{cor}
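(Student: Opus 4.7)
The plan is simply to compose the two equivalences already established in the preceding subsections. First, Proposition~\ref{prop:equiv-RAwedge} provides the triangulated equivalence
\[
\varphi : \Db \Mod^{\mathrm{fg}}(R^\wedge_A) \simto \hD(R_A^\wedge), \qquad \varphi(M) = ``\varprojlim" \bigl( R_A^\wedge / (\mathfrak{m}_A^\wedge)^{n+1} \lotimes_{R_A^\wedge} M \bigr),
\]
which by construction sends the free module $R_A^\wedge$ to $\widehat{L}_A$.

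Second, I would extend the equivalence $\Phi_A : \Db \Mod^{\nil}(R_A^\wedge) \simto \Db(A \quot A,\bk)$ of \S\ref{ss:description-A/A} termwise to a functor between the associated categories of pro-objects, and check that it restricts to an equivalence
\[
\Psi_A : \hD(R_A^\wedge) \simto \hD(A \quot A,\bk).
\]
This is essentially a matter of unwinding the definitions: the category $\hD(R_A^\wedge)$ was introduced in \S\ref{ss:description-A/A} precisely as the preimage of $\hD(A \quot A,\bk)$ under termwise $\Phi_A$, and the identification $\pi_\dag \circ \Phi_A \cong \bk \lotimes_{R_A^\wedge} (-)$ noted there turns the $\pi$-constancy condition on the sheaf side into the second condition in the definition of $\hD(R_A^\wedge)$; uniform boundedness matches trivially. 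Under $\Psi_A$ the pro-object $\widehat{L}_A = ``\varprojlim" R_A^\wedge/(\mathfrak{m}_A^\wedge)^{n+1}$ is sent to $``\varprojlim" \mathscr{L}_{A,n}[r] = \widehat{\mathscr{L}}_A[r]$, since by definition $\mathscr{L}_{A,n}$ is the local system on $A$ attached to $R_A^\wedge/(\mathfrak{m}_A^\wedge)^{n+1}$ and $\Phi_A$ incorporates the shift by $r$.

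Composing, $\Psi_A \circ \varphi$ is the desired equivalence, and tracking $R_A^\wedge$ through both steps confirms that its image is $\widehat{\mathscr{L}}_A[r]$. There is no real obstacle here; the only point demanding a moment of thought is the claim that termwise $\Phi_A$ genuinely identifies the two completed subcategories (rather than only inducing an equivalence at the level of all pro-objects), but this is built into the very definition of $\hD(R_A^\wedge)$ via the compatibility $\pi_\dag \circ \Phi_A \cong \bk \lotimes_{R_A^\wedge} (-)$.
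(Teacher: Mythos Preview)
Your proposal is correct and follows exactly the paper's own approach: the corollary is obtained by composing the equivalence of Proposition~\ref{prop:equiv-RAwedge} with the identification $\hD(R_A^\wedge) \cong \hD(A \quot A,\bk)$ from \S\ref{ss:description-A/A}, and the tracking of $R_A^\wedge \mapsto \widehat{L}_A \mapsto \widehat{\mathscr{L}}_A[r]$ is precisely what is needed.
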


From~\eqref{eqn:hL-tensor} we see that the equivalence of Proposition~\ref{prop:equiv-RAwedge} intertwines the functors $\bk \lotimes_{R_A^\wedge} (-)$ on both sides. Therefore, under the equivalence of Corollary~\ref{cor:equiv-A/A-RA} the functor $\bk \lotimes_{R_A^\wedge} (-)$ on the left-hand side corresponds to the functor $\pi_\dag$ on the right-hand side. (Here $\pi : A \to \mathrm{pt}$ is the unique map, and we identify the categories $\Db_c(\mathrm{pt},\bk)$ and $\Db \Vect^{\mathrm{fd}}_\bk$ as in~\S\ref{ss:description-A/A}.)

\section{The perverse t-structure}
\label{sec:perverse}

\subsection{Recollement}
\label{ss:recollement}

We now come back to the setting of~\S\ref{ss:definition-hD}. If $Z \subset Y$ is a locally closed union of strata, and if we denote by $h : \pi^{-1}(Z) \to X$ the embedding, in view of the results recalled in~\S\ref{ss:definition-hD} the functors $h_!$, $h_*$, $h^!$, $h^*$ induce triangulated functors
\begin{align*}
 h_!, h_* &: \hD_{\mathcal{S}}(\pi^{-1}(Z) \quot A, \bk) \to \hD_{\mathcal{S}}(X \quot A, \bk), \\
 h^*, h^! &: \hD_{\mathcal{S}}(X \quot A, \bk) \to \hD_{\mathcal{S}}(\pi^{-1}(Z) \quot A, \bk)
\end{align*}
which satisfy the usual adjunction and fully-faithfulness properties. (Here, following standard conventions we write $\hD_{\mathcal{S}}(\pi^{-1}(Z) \quot A, \bk)$ for $\hD_{\mathcal{T}}(\pi^{-1}(Z) \quot A, \bk)$ where $\mathcal{T}=\{s \in \mathcal{S} \mid Y_s \subset Z\}$.) If $\pi_Z : \pi^{-1}(Z) \to Z$ is the restriction of $\pi$, and if $\overline{h} : Z \to Y$ is the embedding, then the arguments of the proof of~\cite[Corollary~A.3.4]{by} show that we have canonical isomorphisms
\begin{equation}
\label{eqn:isom-pi-embeddings}
 (\pi_Z)_\dag \circ h_? \cong \overline{h}_? \circ \pi_\dag, \quad (\pi_Z)_\dag \circ h^? \cong \overline{h}^? \circ \pi_\dag
\end{equation}
for $? \in \{!,*\}$.

In particular, if $Z$ is closed with $U:=Y \smallsetminus Z$ its open complement, and if we denote the corresponding embeddings by $i : \pi^{-1}(Z) \to X$ and $j : \pi^{-1}(U) \to X$, then we obtain a recollement diagram
\[
 \xymatrix@C=1.5cm{
 \hD_{\mathcal{S}}(\pi^{-1}(Z) \quot A, \bk) \ar[r]|-{i_*} & \hD_{\mathcal{S}}(X \quot A, \bk)\ar[r]|-{j^*} \ar@/^0.5cm/[l]^-{i^!} \ar@/_0.5cm/[l]_-{i^*} & \hD_{\mathcal{S}}(\pi^{-1}(U) \quot A, \bk) \ar@/^0.5cm/[l]^-{j_*} \ar@/_0.5cm/[l]_-{j_!}
 }
\]
in the sense of~\cite{bbd}.

\subsection{Definition of the perverse t-structure}
\label{ss:def-perv}

Let us choose, for any $s \in \mathcal{S}$, an $A$-equivariant map $p_s : X_s \to A$, where $X_s :=\pi^{-1}(Y_s)$. (Such a map exists by assumption.) Then the functor $(p_s)^*[\dim(Y_s)] \cong (p_s)^![-\dim(Y_s)]$ induces an equivalence of triangulated categories
\begin{equation*}
 \hD(A \quot A, \bk) \simto \hD_{\mathcal{S}}(X_s \quot A, \bk).
\end{equation*}
Composing with the equivalence of Corollary~\ref{cor:equiv-A/A-RA} we deduce an equivalence of categories
\begin{equation}
\label{eqn:equiv-perverse-stratum}
 \Db \Mod^{\mathrm{fg}}(R_A^\wedge) \simto \hD_{\mathcal{S}}(X_s \quot A, \bk).
\end{equation}
The transport, via this equivalence, of the tautological t-structure on $\Db \Mod^{\mathrm{fg}}(R_A^\wedge)$, will be called the \emph{perverse} t-structure, and will be denoted
\[
 \left( {}^p \hspace{-1pt} \hD_{\mathcal{S}}(X_s \quot A, \bk)^{\leq 0}, {}^p \hspace{-1pt} \hD_{\mathcal{S}}(X_s \quot A, \bk)^{\geq 0} \right).
\]

Using the recollement formalism from~\S\ref{ss:recollement}, by gluing these t-structures we obtain a t-structure on $\hD_{\mathcal{S}}(X \quot A, \bk)$, which we also call the perverse t-structure. More precisely, for any $s \in \mathcal{S}$ we denote by $j_s : X_s \to X$ the embedding. Then the full subcategory ${}^p \hspace{-1pt} \hD_{\mathcal{S}}(X \quot A, \bk)^{\leq 0}$ consists of the objects $\mathscr{F}$ such that $j_s^* \mathscr{F}$ belongs to ${}^p \hspace{-1pt} \hD_{\mathcal{S}}(X_s \quot A, \bk)^{\leq 0}$ for any $s$, and the full subcategory ${}^p \hspace{-1pt} \hD_{\mathcal{S}}(X \quot A, \bk)^{\geq 0}$ consists of the objects $\mathscr{F}$ such that $j_s^! \mathscr{F}$ belongs to ${}^p \hspace{-1pt} \hD_{\mathcal{S}}(X_s \quot A, \bk)^{\geq 0}$ for any $s$.

The heart of the perverse t-structure will be denoted $\hP_{\mathcal{S}}(X \quot A,\bk)$, and an object of $\hD_{\mathcal{S}}(X \quot A, \bk)$ will be called perverse if it belongs to this heart.

\begin{rmk}
\label{rmk:DbS-hDS}
 By construction there exists an obvious fully-faithful triangulated functor $\Db_{\mathcal{S}}(X \quot A, \bk) \to \hD_{\mathcal{S}}(X \quot A, \bk)$. The essential image of this functor consists of the objects $\widehat{\mathscr{F}}$ such that the monodromy morphism $R_A^\wedge \to \End(\widehat{\mathscr{F}})$ factors through some quotient $R_A^\wedge / (\mathfrak{m}_A^\wedge)^n = R_A / \mathfrak{m}_A^n$. In fact, it is clear that the objects in the essential image of our functor satisfy this property. For the converse statement, using the fact that this essential image is a triangulated subcategory and the usual recollement triangles we reduce the proof to the case $X$ has only one stratum. Then the equivalence~\eqref{eqn:equiv-perverse-stratum} allows to translate the question in terms of complexes of $R_A^\wedge$-modules. Using once again the triangulated structure (and the result quoted in the proof of Lemma~\ref{lem:hL}) one can then assume that the complex is concentrated in one degree; in this case the claim is obvious.
\end{rmk}

The following (well-known) claim will be needed for certain proofs below.

\begin{lem}
\label{lem:nakayama}
Let $M$ be in $\Db \Mod^{\mathrm{fg}}(R_A^\wedge)$, and assume that $\bk \lotimes_{R_A^\wedge} M$ is concentrated in non-negative degrees. Then $M$ is isomorphic to a complex of free $R_A^\wedge$-modules with nonzero terms in non-negative degrees only.
\end{lem}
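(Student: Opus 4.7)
The plan is to reduce to considering a \emph{minimal} free resolution and then apply Nakayama's lemma directly to the complex $\bk \lotimes_{R_A^\wedge} M$.

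As already noted in the proof of Proposition~\ref{prop:equiv-RAwedge}, because $R_A^\wedge$ is local, Noetherian and of finite global dimension, any object of $\Db \Mod^{\mathrm{fg}}(R_A^\wedge)$ is isomorphic to a bounded complex $P^\bullet$ of finitely generated free $R_A^\wedge$-modules. The first step is to show that one can choose $P^\bullet$ to be \emph{minimal} in the sense that each differential $d^i : P^i \to P^{i+1}$ satisfies $d^i(P^i) \subseteq \mathfrak{m}_A^\wedge \cdot P^{i+1}$. To do so, I would start with any such $P^\bullet$ and argue as follows: if some matrix entry of some $d^i$ lies outside $\mathfrak{m}_A^\wedge$ (hence is a unit of $R_A^\wedge$), then one can split off a contractible subcomplex of the form $R_A^\wedge \xrightarrow{\id} R_A^\wedge$ placed in degrees $i, i+1$. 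Since the total rank strictly decreases at each such step, after finitely many reductions we arrive at a bounded complex of free modules with all differentials having entries in $\mathfrak{m}_A^\wedge$; this is the desired minimal representative of $M$.

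Next I would apply the functor $\bk \lotimes_{R_A^\wedge} (-)$ to the minimal complex $P^\bullet$. Since each $P^i$ is free and the differentials become zero after tensoring with $\bk = R_A^\wedge / \mathfrak{m}_A^\wedge$, we obtain the identification
\[
H^i\bigl( \bk \lotimes_{R_A^\wedge} M \bigr) \cong \bk \otimes_{R_A^\wedge} P^i
\]
for every $i \in \Z$. By hypothesis the left-hand side vanishes for $i < 0$; hence $\bk \otimes_{R_A^\wedge} P^i = 0$ for $i < 0$. Since $P^i$ is a finitely generated free $R_A^\wedge$-module, Nakayama's lemma then forces $P^i = 0$ for $i < 0$, so $P^\bullet$ has nonzero terms in non-negative degrees only, as desired.

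The only real subtlety is the existence of the minimal representative; this is a completely standard homological-algebra fact over a local ring, but worth stating carefully. Once minimality is in hand, the conclusion is immediate from Nakayama.
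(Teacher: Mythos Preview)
Your proof is correct and follows essentially the same idea as the paper's: represent $M$ by a bounded complex of finitely generated free modules and use Nakayama to strip off contractible summands. The paper organizes this inductively---if the leftmost nonzero term sits in a negative degree, the hypothesis forces the first differential to be injective modulo $\mathfrak{m}_A^\wedge$, hence a split injection, and one repeats---whereas you pass to a minimal complex at the outset; both routes amount to the same reduction.
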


\begin{proof}
Since $R_A^\wedge$ is local and of finite global dimensional, $M$ is isomorphic to a bounded complex $N^\bullet$ of free $R_A$-modules. Let $n$ be the smallest integer with $N^n \neq 0$. If $n<0$, then our assumption implies that the morphism $\bk \otimes_{R_A^\wedge} N^n \to \bk \otimes_{R_A^\wedge} N^{n+1}$ is injective. Then by the Nakayama lemma the map $N^n \to N^{n+1}$ is a split embedding, and choosing a (free) complement to its image in $N^{n+1}$ we see that $M$ isomorphic to a complex of free $R_A^\wedge$-modules concentrated in degrees $\geq n+1$. Repeating this procedure if necessary, we obtain the desired claim.
\end{proof}

\begin{lem}
\label{lem:perv-pi}
 Let $\mathscr{F}$ in $\hD_{\mathcal{S}}(X \quot A, \bk)$. 
 \begin{enumerate}
  \item 
  \label{it:pidag-perv}
  If $\pi_\dag \mathscr{F}$ is perverse, then $\mathscr{F}$ is perverse.
  \item 
  \label{it:pidag-conservative}
  If $\pi_\dag \mathscr{F}=0$, then $\mathscr{F}=0$.
  \item
  \label{it:pidag-conservative-perv}
  If $\mathscr{F}$ is perverse and $\pH^0(\pi_\dag \mathscr{F})=0$, then $\mathscr{F}=0$.
 \end{enumerate}
\end{lem}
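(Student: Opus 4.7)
The strategy is to reduce each assertion, via the equivalence~\eqref{eqn:equiv-perverse-stratum}, to a statement about bounded complexes of finitely generated $R_A^\wedge$-modules, and then glue stratum by stratum using the recollement of~\S\ref{ss:recollement} and the compatibility~\eqref{eqn:isom-pi-embeddings}. The key commutative-algebra input sharpens Lemma~\ref{lem:nakayama}: since $R_A^\wedge$ is local Noetherian of finite global dimension, every $M \in \Db \Mod^{\mathrm{fg}}(R_A^\wedge)$ admits a bounded minimal free resolution $P^\bullet \to M$ whose differentials land in $\mathfrak{m}_A^\wedge$, so that $\bk \otimes_{R_A^\wedge} P^\bullet \simeq \bk \lotimes_{R_A^\wedge} M$ has trivial differential. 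By Nakayama this gives, for any $a \leq b$ (including the vacuous case $[a,b] = \varnothing$):
\[
M \in D^{[a,b]}\Mod^{\mathrm{fg}}(R_A^\wedge) \iff \bk \lotimes_{R_A^\wedge} M \in D^{[a,b]}\Vect^{\mathrm{fd}}_\bk.
\]

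On a trivialized stratum $X_s \cong A \times Y_s$ (with projections $p_s$ and $\pi_s$), proper base change identifies, under~\eqref{eqn:equiv-perverse-stratum}, the functor $(\pi_s)_\dag$ with $(\pi_{Y_s})^\dag \circ (\bk \lotimes_{R_A^\wedge} -)$. Since $(\pi_{Y_s})^\dag$ is perverse t-exact and is an equivalence onto the locally constant complexes on $Y_s$, any t-condition on $(\pi_s)_\dag \mathscr{F}$ in $\Db_c(Y_s,\bk)$ is equivalent to the same t-condition on $\bk \lotimes_{R_A^\wedge} M$ (where $M$ corresponds to $\mathscr{F}$), hence, by the displayed equivalence, on $M$, hence on $\mathscr{F}$.

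For~\eqref{it:pidag-perv}, I would verify perversity of $\mathscr{F}$ stratum by stratum via $j_s^*$ and $j_s^!$: by~\eqref{eqn:isom-pi-embeddings}, $(\pi_s)_\dag j_s^? \mathscr{F} \cong \overline{j}_s^? \pi_\dag \mathscr{F}$ lies in ${}^pD^{\leq 0}(Y_s,\bk)$ for $? = *$ and in ${}^pD^{\geq 0}(Y_s,\bk)$ for $? = !$, and the stratum-wise translation above transfers these to the required t-conditions on $j_s^? \mathscr{F}$. For~\eqref{it:pidag-conservative}, the same scheme applies: $\pi_\dag \mathscr{F} = 0$ forces $(\pi_s)_\dag j_s^* \mathscr{F} = 0$, hence $j_s^* \mathscr{F} = 0$ for every $s$, hence $\mathscr{F} = 0$ by iterated use of the recollement triangles.

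For~\eqref{it:pidag-conservative-perv} I would argue by induction on the number of strata of $Y$. In the single-stratum base case, $\mathscr{F}$ corresponds to some $M$ concentrated in degree $0$; the t-exactness of $(\pi_{Y_s})^\dag$ converts $\pH^0(\pi_\dag \mathscr{F}) = 0$ into $\bk \otimes_{R_A^\wedge} M = 0$, which by Nakayama forces $M = 0$. In general, pick an open stratum $Y_s$ of $Y$; then $\overline{j}_s^*$ is perverse t-exact so
\[
\pH^0((\pi_s)_\dag j_s^* \mathscr{F}) = \overline{j}_s^* \pH^0(\pi_\dag \mathscr{F}) = 0,
\]
and $j_s^* \mathscr{F}$ is perverse (open embedding), so the base case yields $j_s^* \mathscr{F} = 0$. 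Hence $\mathscr{F} = i_* \mathscr{F}'$ for some $\mathscr{F}'$ perverse on $\pi^{-1}(Y \smallsetminus Y_s)$, and t-exactness and faithfulness of $\overline{i}_*$ give $\pH^0((\pi_{Y \smallsetminus Y_s})_\dag \mathscr{F}') = 0$, whence the inductive hypothesis concludes. The main subtle point, supplied by the minimal-resolution observation, is that the transported perverse t-structure on $\hD_{\mathcal{S}}(X_s \quot A,\bk)$ is faithfully detected through $(\pi_s)_\dag$ by the condition on $\bk \lotimes_{R_A^\wedge} M$; once this translation is in place, every part of the lemma is routine.
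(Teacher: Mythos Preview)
Your proposal is correct and follows essentially the same route as the paper: reduce to a single stratum via the recollement isomorphisms~\eqref{eqn:isom-pi-embeddings}, translate along the equivalence~\eqref{eqn:equiv-perverse-stratum} so that $(\pi_s)_\dag$ becomes $\bk\lotimes_{R_A^\wedge}(-)$, and then invoke a Nakayama-type argument (the paper cites Lemma~\ref{lem:nakayama} and the ordinary Nakayama lemma where you package both directions at once via minimal free resolutions). The paper proves~\eqref{it:pidag-perv} in this way and leaves~\eqref{it:pidag-conservative}--\eqref{it:pidag-conservative-perv} to the reader; your induction-on-strata argument for~\eqref{it:pidag-conservative-perv} is exactly the sort of detail the paper omits.
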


\begin{proof}
\eqref{it:pidag-perv}
The shifted pullback functor associated with the projection $Y_s \to \mathrm{pt}$ induces a (perverse) t-exact equivalence between $\Db \Vect^{\mathrm{fd}}_\bk$ and $\Db_{\mathcal{S}}(Y_s,\bk)$. Under this equivalence and~\eqref{eqn:equiv-perverse-stratum}, the functor $(\pi_s)_\dag$ corresponds to the functor $\bk \lotimes_{R_A^\wedge} (-)$ (see the comments after Corollary~\ref{cor:equiv-A/A-RA}). In view of the isomorphisms~\eqref{eqn:isom-pi-embeddings}, this reduces the lemma to the claim that if an object $M$ of $\Db \Mod^{\mathrm{fg}}(R_A^\wedge)$ satisfies $\mathsf{H}^k(\bk \lotimes_{R_A^\wedge} M)=0$ for all $k > 0$, resp.~for all $k<0$, then we have $\mathsf{H}^k(M)=0$ for all $k > 0$, resp.~for all $k<0$. This claim is a standard consequence of the Nakayama lemma, resp.~follows from Lemma~\ref{lem:nakayama}.

\eqref{it:pidag-conservative}--\eqref{it:pidag-conservative-perv} The proofs are similar to that of~\eqref{it:pidag-perv}; details are left to the reader.
\end{proof}

\subsection{Standard and costandard perverse sheaves}
\label{ss:standard-costandard}

For any $s \in \mathscr{S}$ we denote by $i_s : Y_s \to Y$ the embedding, and consider the objects
\[
 \Delta_s := (i_s)_! \underline{\bk}_{Y_s} [\dim Y_s], \quad \nabla_s := (i_s)_* \underline{\bk}_{Y_s} [\dim Y_s].
\]
We will also set
\[
 \hL_{A,s} := (p_s)^* \hL_A,
\]
and
consider
the objects
\[
 \hDel_s := (j_s)_! \hL_{A,s} [\dim X_s], \quad \hnab_s := (j_s)_* \hL_{A,s} [\dim X_s]
\]
in $\hD_{\mathcal{S}}(X \quot A, \bk)$. In view of~\eqref{eqn:isom-pi-embeddings} and~\eqref{eqn:pidag-L}, we have canonical isomorphisms
\begin{equation}
\label{eqn:Pidag-hD-hN}
 \pi_\dag \hDel_s \cong \Delta_s, \quad \pi_\dag \hnab_s \cong \nabla_s.
\end{equation}
We also have isomorphisms of $R_A^\wedge$-modules
\begin{equation}
\label{eqn:Hom-hDel-hnab}
\Hom_{\hD_{\mathcal{S}}(X \quot A, \bk)} \left( \hDel_s, \hnab_t[k] \right) \cong \begin{cases}
                                                                          R_A^\wedge & \text{if $s=t$ and $k=0$;} \\
                                                                        0 & \text{otherwise.}
                                                                         \end{cases}
\end{equation}

Our map $i_s$ is an affine morphism, so that the objects $\Delta_s$ and $\nabla_s$ are perverse sheaves on $Y$.
By Lemma~\ref{lem:perv-pi}\eqref{it:pidag-perv} and~\eqref{eqn:Pidag-hD-hN}, this implies that the objects $\hDel_s$ and $\hnab_s$ are perverse too.

\begin{lem}\phantomsection
\label{lem:properties-hD-hN}
 \begin{enumerate}
  \item
  \label{it:hD-hN-generate}
  The triangulated category $\hD_{\mathcal{S}}(X \quot A, \bk)$ is generated by the objects $\hDel_s$ for $s \in \mathcal{S}$, as well as by the objects $\hnab_s$ for $s \in \mathcal{S}$.
  \item
  \label{it:Edn-hD}
  For any $s \in \mathcal{S}$, the monodromy morphism $\varphi^\wedge_{\hDel_s}$ induces an isomorphism
  \[
   R_A^\wedge \simto \Hom_{\hD_{\mathcal{S}}(X \quot A, \bk)}(\hDel_s, \hDel_s).
  \]
Moreover, any nonzero endomorphism of $\hDel_s$ is injective.
 \end{enumerate}
\end{lem}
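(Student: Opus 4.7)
For claim~\eqref{it:hD-hN-generate}, I would argue by induction on $|\mathcal{S}|$ using the recollement of~\S\ref{ss:recollement}. For a single stratum, the equivalence~\eqref{eqn:equiv-perverse-stratum} combined with Corollary~\ref{cor:equiv-A/A-RA} identifies $\hD_{\mathcal{S}}(X_s \quot A, \bk)$ with $\Db \Mod^{\mathrm{fg}}(R_A^\wedge)$, and tracking the shifts (using $\dim X_s = \dim Y_s + r$) shows that $\hL_{A,s}[\dim X_s]$ corresponds to the free module $R_A^\wedge$, which generates $\Db \Mod^{\mathrm{fg}}(R_A^\wedge)$. For the inductive step, pick an open stratum $j : X_s \hookrightarrow X$ with closed complement $i : Z \hookrightarrow X$; the recollement triangle $j_! j^* \mathscr{F} \to \mathscr{F} \to i_* i^* \mathscr{F} \xrightarrow{+1}$ reduces generation of $\mathscr{F}$ to generation of $\hD_{\mathcal{S}}(X_s \quot A, \bk)$ (handled by the base case via $j_!$) and of $\hD_{\mathcal{S}}(Z \quot A, \bk)$ (handled by the inductive hypothesis via $i_*$, using the identity $i_* \hDel_t^Z = \hDel_t^X$). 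The same argument applied to the other recollement triangle $i_* i^! \mathscr{F} \to \mathscr{F} \to j_* j^* \mathscr{F} \xrightarrow{+1}$ handles the $\hnab_s$ case.

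For claim~\eqref{it:Edn-hD}, the key input is the standard identity $(j_s)^* (j_s)_! \cong \id$, valid for any locally closed embedding, which via adjunction yields
\[
\End(\hDel_s) \cong \End_{\hD_{\mathcal{S}}(X_s \quot A, \bk)}(\hL_{A,s}[\dim X_s]).
\]
Under~\eqref{eqn:equiv-perverse-stratum} the right-hand side identifies with $\End_{R_A^\wedge}(R_A^\wedge) = R_A^\wedge$. That the resulting isomorphism is precisely the monodromy morphism $\varphi^\wedge_{\hDel_s}$ follows by combining Lemma~\ref{lem:properties-monodromy} (functoriality of monodromy under $j_s^*$ and $(p_s)^*$) with the observation, implicit in~\eqref{eqn:End-hL} and in the construction of $\hL_A$, that on $\hL_A$ the monodromy action coincides with the multiplication action of $R_A^\wedge$ on the pro-system $``\varprojlim" R_A^\wedge / (\mathfrak{m}_A^\wedge)^{n+1}$.

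For the injectivity claim, a nonzero $f \in \End(\hDel_s)$ corresponds to a nonzero $r \in R_A^\wedge$ and has the form $f = (j_s)_! f'$, where $f'$ is multiplication by $r$ on $R_A^\wedge$ under~\eqref{eqn:equiv-perverse-stratum}. Since $R_A^\wedge$ is an integral domain (a formal power series ring over $\bk$; see Remark~\ref{rmk:RA}), the cone of $f'$ corresponds to the quotient $R_A^\wedge/(r)$ concentrated in tautological degree $0$, and is therefore a perverse object of $\hD_{\mathcal{S}}(X_s \quot A, \bk)$. Since $i_s$ (and hence $j_s$) is affine, $(j_s)_!$ is t-exact for the perverse t-structures, so $\mathrm{cone}(f) = (j_s)_! \mathrm{cone}(f')$ is itself perverse. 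Applying the long exact sequence of perverse cohomology to $\hDel_s \xrightarrow{f} \hDel_s \to \mathrm{cone}(f) \xrightarrow{+1}$ then gives $\ker(f) = \pH^{-1}(\mathrm{cone}(f)) = 0$ in $\hP_{\mathcal{S}}(X \quot A, \bk)$, proving injectivity.

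The step I expect to be most delicate is the identification of the abstract algebraic isomorphism $R_A^\wedge \simto \End(\hDel_s)$ with the geometric monodromy morphism $\varphi^\wedge_{\hDel_s}$: this requires tracking monodromy through the equivalence of Corollary~\ref{cor:equiv-A/A-RA}, through the pullback $(p_s)^*$, and through the fully-faithful extension $(j_s)_!$. The remaining steps are fairly routine applications of recollement and Artin vanishing.
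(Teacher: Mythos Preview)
Your arguments for~\eqref{it:hD-hN-generate} and for the isomorphism $R_A^\wedge \simto \End(\hDel_s)$ in~\eqref{it:Edn-hD} are correct and match the paper's (which simply cites the equivalences~\eqref{eqn:equiv-perverse-stratum}, the gluing formalism, and full faithfulness of $(j_s)_!$).

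There is a gap in your injectivity argument, though. You assert that $(j_s)_!$ is t-exact ``since $i_s$ (and hence $j_s$) is affine''. Artin vanishing is a statement about the usual perverse t-structure on $Y$; the perverse t-structure on $\hD_{\mathcal{S}}(X \quot A, \bk)$ is glued from the \emph{tautological} t-structures on $\Db \Mod^{\mathrm{fg}}(R_A^\wedge)$ on each stratum, and there is no a~priori reason why affineness of $i_s$ should force $(j_s)_!$ to be left t-exact for this t-structure. Nor can you reduce to the classical case via Lemma~\ref{lem:perv-pi}\eqref{it:pidag-perv}: for $r \in \mathfrak{m}_A^\wedge$ the object $(\pi_s)_\dag(\mathrm{cone}(f'))$ corresponds to $\bk \lotimes_{R_A^\wedge} R_A^\wedge/(r)$, which has nonzero $H^{-1}$, so $\pi_\dag(\mathrm{cone}(f))$ is not perverse and that lemma does not apply.

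The paper avoids this by checking directly that the cone $\mathscr{C}$ of $f$ lies in ${}^p \hspace{-1pt} \hD_{\mathcal{S}}(X \quot A, \bk)^{\geq 0}$, stratum by stratum. For each $t$, the object $j_t^! \hDel_s$ corresponds under~\eqref{eqn:equiv-perverse-stratum} to a complex $M$ whose reduction $\bk \lotimes_{R_A^\wedge} M$ identifies (via~\eqref{eqn:isom-pi-embeddings} and~\eqref{eqn:Pidag-hD-hN}) with $i_t^! \Delta_s$, hence lies in $D^{\geq 0}$ since $\Delta_s$ is perverse. Lemma~\ref{lem:nakayama} then represents $M$ by a bounded complex $N^\bullet$ of free $R_A^\wedge$-modules with $N^i=0$ for $i<0$; since $R_A^\wedge$ is a domain, multiplication by a nonzero $x$ is injective on each $N^i$, so the cone of $x$ on $N^\bullet$ has cohomology only in non-negative degrees. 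This cone is exactly $j_t^! \mathscr{C}$, and the conclusion follows. In effect this computation \emph{is} the missing justification for your claim that $(j_s)_!(\mathrm{cone}(f'))$ lies in non-negative perverse degrees, but it goes through the $!$-restrictions to all strata rather than through a blanket appeal to affineness.
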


\begin{proof}
 Property~\eqref{it:hD-hN-generate} follows from 
 the equivalences~\eqref{eqn:equiv-perverse-stratum}, and the gluing formalism. 
 And in~\eqref{it:Edn-hD}, the isomorphism follows from
 the equivalence~\eqref{eqn:equiv-perverse-stratum} and the fact that $(j_s)_!$ is fully faithful.
 
 Now, let $x \in R_A^\wedge \smallsetminus \{0\}$, and consider the induced endomorphism $\varphi_{\hDel_s}^\wedge(x)$. Let $\mathscr{C}$ be the cone of this morphism; then we need to show that $\mathscr{C}$ is concentrated in non-negative perverse degrees, or in other words that for any $t \in \mathcal{S}$ the complex $j_t^! \mathscr{C}$ belongs to ${}^p \hspace{-1pt} \hD_{\mathcal{S}}(X_t \quot A, \bk)^{\geq 0}$. Fix $t \in \mathcal{S}$, and denote by $M$ the inverse image of the complex $j_t^! \hDel_s$ under the equivalence~\eqref{eqn:equiv-perverse-stratum} (for the stratum labelled by $t$); then the inverse image of $j_t^! \mathscr{C}$ is the cone of the endomorphism of $M$ induced by the action of $x$.
 
 Using~\eqref{eqn:Pidag-hD-hN} we see that $(\pi_t)_\dag(j_t^! \hDel_s) \cong i_t^! \Delta_s$. Since $\Delta_s$ is perverse this complex is concentrated in non-negative perverse degrees, which implies that the complex of vector spaces $\bk \lotimes_{R_A^\wedge} M$ is concentrated in non-negative degrees. Hence, by Lemma~\ref{lem:nakayama}, $M$ is isomorphic to a complex $N$ of free $R_A^\wedge$-modules with $N^i=0$ for all $i<0$. It is clear that the cone of the endomorphism of $N$ induced by the action of $x$ has cohomology only in non-negative degrees; therefore the same is true for $M$, and finally $j_t^! \mathscr{C}$ indeed belongs to ${}^p \hspace{-1pt} \hD_{\mathcal{S}}(X_t \quot A, \bk)^{\geq 0}$.
\end{proof}

\begin{cor}\phantomsection
\label{cor:Hom-fg}
\begin{enumerate}
\item
\label{it:Hom-fg}
For any $\mathscr{F},\mathscr{G}$ in $\hD_{\mathcal{S}}(X \quot A, \bk)$, the $R_A^\wedge$-module
\[
\Hom_{\hD_{\mathcal{S}}(X \quot A, \bk)}(\mathscr{F},\mathscr{G})
\]
is finitely generated.
\item
\label{it:Krull-Schmidt}
The category $\hD_{\mathcal{S}}(X \quot A, \bk)$ is Krull--Schmidt.
\end{enumerate}
\end{cor}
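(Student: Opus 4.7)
For~\eqref{it:Hom-fg}, my plan is a two-step dévissage reducing everything to the explicit Hom computation in~\eqref{eqn:Hom-hDel-hnab}. Since $R_A^\wedge$ is Noetherian, the class of finitely generated $R_A^\wedge$-modules is closed under extensions; hence the long exact sequence of Homs associated to a distinguished triangle shows that, for each fixed $\mathscr{G}$, the full subcategory
\[
\{\mathscr{F} \in \hD_{\mathcal{S}}(X \quot A, \bk) : \Hom(\mathscr{F}, \mathscr{G}[k]) \text{ is finitely generated over } R_A^\wedge \text{ for all } k \in \Z\}
\]
is a strictly full triangulated subcategory of $\hD_{\mathcal{S}}(X \quot A, \bk)$ (and symmetrically with the roles of $\mathscr{F}$ and $\mathscr{G}$ exchanged). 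Now~\eqref{eqn:Hom-hDel-hnab} shows this subcategory contains every $\hDel_s$ whenever $\mathscr{G}$ is a shift of some $\hnab_t$, so Lemma~\ref{lem:properties-hD-hN}\eqref{it:hD-hN-generate} forces it to equal all of $\hD_{\mathcal{S}}(X \quot A, \bk)$. A symmetric second dévissage, this time with $\mathscr{F}$ arbitrary and $\mathscr{G}$ running through the $\hnab_t$'s, then yields~\eqref{it:Hom-fg}.

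For~\eqref{it:Krull-Schmidt}, the plan is to combine~\eqref{it:Hom-fg} with standard facts about module-finite algebras over a complete Noetherian local ring. By~\eqref{it:Hom-fg}, $E := \End(\mathscr{F})$ is finitely generated as an $R_A^\wedge$-module. Since $R_A^\wedge$ is a complete Noetherian local ring (Remark~\ref{rmk:RA}), $E$ is semi-local with Jacobson radical containing $\mathfrak{m}_A^\wedge E$, and because $E$ is $\mathfrak{m}_A^\wedge$-adically complete it is complete in the topology of its Jacobson radical. Hence $E$ is semi-perfect, and $1_E$ decomposes as a finite sum $e_1 + \cdots + e_n$ of primitive orthogonal idempotents with each corner $e_i E e_i$ local.

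The main obstacle is then to promote such an idempotent decomposition to a direct-sum decomposition of $\mathscr{F}$ inside $\hD_{\mathcal{S}}(X \quot A, \bk)$, i.e.~to establish that idempotents split. For this, given $e \in \End(\mathscr{F})$ idempotent with $\mathscr{F} = \text{``}\varprojlim\text{''}\mathscr{F}_n$, I would use the description~\eqref{eqn:Hom-hD} together with the finite generation from~\eqref{it:Hom-fg} to replace $(\mathscr{F}_n)$ by a cofinal reindexing on which $e$ is realized by a compatible system of honest idempotents $e_n \in \End(\mathscr{F}_n)$ in $\Db_{\mathcal{S}}(X \quot A, \bk)$. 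Since the latter is Karoubian, each $e_n$ splits, and the splittings assemble into a pro-object decomposition $\mathscr{F} \cong \mathrm{im}(e) \oplus \mathrm{im}(1-e)$ in $\hD_{\mathcal{S}}$. Combined with the semi-perfect decomposition above, this yields the Krull--Schmidt property.
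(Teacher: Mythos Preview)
Your argument for~\eqref{it:Hom-fg} is correct and is exactly the paper's: d\'evissage in both variables via Lemma~\ref{lem:properties-hD-hN}\eqref{it:hD-hN-generate}, reducing to~\eqref{eqn:Hom-hDel-hnab}.

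For~\eqref{it:Krull-Schmidt}, your algebraic half is fine and matches the paper: part~\eqref{it:Hom-fg} makes $\End(\mathscr{F})$ module-finite over the complete Noetherian local ring $R_A^\wedge$, hence semi-local (the paper cites~\cite[Example~23.3]{lam}), and indeed semi-perfect. The divergence is in how you establish that $\hD_{\mathcal{S}}(X \quot A,\bk)$ is Karoubian. The paper does not touch the pro-object presentation at all: it simply invokes the general fact (Le--Chen~\cite{lc}) that any triangulated category admitting a bounded t-structure is idempotent-complete, and $\hD_{\mathcal{S}}(X \quot A,\bk)$ carries the perverse t-structure constructed in~\S\ref{ss:def-perv}. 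Then~\cite[Theorem~A.1]{cyz} (Karoubian $+$ semi-local endomorphism rings $\Rightarrow$ Krull--Schmidt) finishes.

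Your proposed route---realizing an idempotent $e \in \End(``\varprojlim"\mathscr{F}_n)$ as a compatible system of honest idempotents $e_n \in \End(\mathscr{F}_n)$ after reindexing---has a genuine gap. An idempotent in $\varprojlim_n \varinjlim_m \Hom(\mathscr{F}_m,\mathscr{F}_n)$ is, for each $n$, only a map $\mathscr{F}_{m(n)} \to \mathscr{F}_n$ for some $m(n)$, and the idempotency relation $e^2=e$ holds only after passing to a further index; there is no evident mechanism forcing these to become levelwise idempotents on any cofinal system, and the finite generation from~\eqref{it:Hom-fg} does not supply one. This is precisely the kind of difficulty the bounded t-structure argument sidesteps.
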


\begin{proof}
\eqref{it:Hom-fg}
 Lemma~\ref{lem:properties-hD-hN}\eqref{it:hD-hN-generate} reduces the claim to the special case $\mathscr{F}=\hDel_s$, $\mathscr{G}=\hnab_t$ for some $s,t \in \mathcal{S}$, which is clear from~\eqref{eqn:Hom-hDel-hnab}.
 
 \eqref{it:Krull-Schmidt}
 Since the triangulated category $\hD_{\mathcal{S}}(X \quot A, \bk)$ admits a bounded t-structure, it is Karoubian by~\cite{lc}. By~\eqref{it:Hom-fg} and~\cite[Example~23.3]{lam}, the endomorphism ring of any of its objects is semi-local. By~\cite[Theorem~A.1]{cyz}, this implies that $\hD_{\mathcal{S}}(X \quot A, \bk)$ is Krull--Schmidt.
\end{proof}

The standard objects also allow one to describe the perverse t-structure on the category $\hD_{\mathcal{S}}(X \quot A, \bk)$, as follows.

\begin{lem}
\label{lem:perverse-hDel}
The subcategory
${}^p \hspace{-1pt} \hD_{\mathcal{S}}(X \quot A, \bk)^{\leq 0}$ is generated under extensions by the objects of the form $\hDel_s [n]$ for $s \in \mathcal{S}$ and $n \geq 0$.
\end{lem}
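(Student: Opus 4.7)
The containment of the extension-generated subcategory in ${}^p\hspace{-1pt}\hD_\mathcal{S}(X\quot A,\bk)^{\leq 0}$ is immediate: each $\hDel_s$ was noted above to be perverse, so $\hDel_s[n]$ lies in perverse degrees $\leq 0$ for $n\geq 0$, and the subcategory ${}^p\hspace{-1pt}\hD_\mathcal{S}(X\quot A,\bk)^{\leq 0}$ is closed under extensions. My plan is therefore to establish the reverse inclusion by induction on $|\mathcal{S}|$.

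For the base case $|\mathcal{S}|=1$, with $\mathcal{S}=\{s\}$ and $X=X_s$, I would transport the question through~\eqref{eqn:equiv-perverse-stratum} to $\Db\Mod^{\mathrm{fg}}(R_A^\wedge)$, under which $\hDel_s$ corresponds to $R_A^\wedge$ placed in cohomological degree $0$. It then suffices to show that any $M\in\Db\Mod^{\mathrm{fg}}(R_A^\wedge)$ with $H^k(M)=0$ for $k>0$ can be built from shifts $R_A^\wedge[n]$ (with $n\geq 0$) by iterated extensions. Since $R_A^\wedge$ is local, Noetherian, and of finite global dimension, each cohomology module $H^k(M)$ admits a finite resolution by finitely generated free modules; splicing such resolutions (one for each $k\leq 0$, appropriately shifted) produces a quasi-isomorphism of $M$ with a bounded complex of free modules concentrated in non-positive degrees. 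The stupid filtration of that complex then exhibits $M$ as an iterated extension of its terms, each of which is a finite direct sum of shifts $R_A^\wedge[n]$ with $n\geq 0$.

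For the inductive step I would pick a closed stratum $Y_t\subset Y$, let $i=i_t\colon X_t\hookrightarrow X$ denote the associated closed embedding (so $i=j_t$), and $j\colon \pi^{-1}(Y\smallsetminus Y_t)\hookrightarrow X$ the open complement. For $\mathscr{F}\in{}^p\hspace{-1pt}\hD_\mathcal{S}(X\quot A,\bk)^{\leq 0}$, the recollement triangle
\[
j_! j^*\mathscr{F}\to\mathscr{F}\to i_*i^*\mathscr{F}\xrightarrow{+1}
\]
from~\S\ref{ss:recollement} exhibits $\mathscr{F}$ as an extension. Since $j^*$ is perverse t-exact and $i^*$ is right t-exact, both $j^*\mathscr{F}$ and $i^*\mathscr{F}$ lie in the $\leq 0$ part of the respective perverse t-structures. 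Applying the inductive hypothesis on the open complement (which carries a stratification with one fewer stratum) to $j^*\mathscr{F}$, and the base case on $X_t$ to $i^*\mathscr{F}$, one expresses each restriction as an iterated extension of the relevant $\hDel_s[n]$'s. The triangulated functors $j_!$ and $i_*$ convert such extensions into extensions in $\hD_\mathcal{S}(X\quot A,\bk)$; using $j_!\hDel_s=\hDel_s$ for $s\neq t$, and $i_*\hL_{A,t}[\dim X_t]=(j_t)_!\hL_{A,t}[\dim X_t]=\hDel_t$ (since $i=j_t$ is a closed immersion), this identifies both outer terms of the triangle as iterated extensions of objects $\hDel_s[n]$ with $n\geq 0$, whence so is $\mathscr{F}$.

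The only genuinely delicate point is the base case, which amounts to a standard manipulation of free resolutions over the regular local ring $R_A^\wedge$ (essentially the non-positive analogue of Lemma~\ref{lem:nakayama}); once it is in hand, the inductive step is a formal consequence of the recollement and the compatibility of $j_!$ and $i_*$ with the standard objects.
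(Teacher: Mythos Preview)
Your proof is correct and follows essentially the same approach as the paper: the paper's proof simply asserts that the claim ``follows from the yoga of recollement, starting from the observation that the subcategory $\Db \Mod^{\mathrm{fg}}(R_A^\wedge)^{\leq 0}$ is generated under extensions by the objects of the form $R_A^\wedge[n]$ with $n \geq 0$,'' and you have spelled out precisely this base case (via free resolutions over the regular local ring $R_A^\wedge$) together with the inductive recollement argument.
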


\begin{proof}
This claim follows from the yoga of recollement, starting from the observation that the subcategory $\Db \Mod^{\mathrm{fg}}(R_A^\wedge)^{\leq 0}$ is generated under extensions by the objects of the form $R_A^\wedge[n]$ with $n \geq 0$. (Here we use the fact that $R_A^\wedge$ is local, so that any finitely generated projective module is free.)
\end{proof}

\begin{rmk}
It is \emph{not} true that the subcategory
${}^p \hspace{-1pt} \hD_{\mathcal{S}}(X \quot A, \bk)^{\geq 0}$ is generated under extensions by the objects of the form $\hnab_s [n]$ for $s \in \mathcal{S}$ and $n \leq 0$. (This is already false if $Y=\mathrm{pt}$ and $r>0$.)
\end{rmk}

\begin{cor}
\label{cor:pidag-exact}
The functor $\pi_\dag$ is right t-exact with respect to the perverse t-structures.
\end{cor}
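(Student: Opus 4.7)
The plan is to combine Lemma~\ref{lem:perverse-hDel} with the identification of $\pi_\dag$ on the standard objects provided by~\eqref{eqn:Pidag-hD-hN}. Concretely, to show that $\pi_\dag$ sends ${}^p \hspace{-1pt} \hD_{\mathcal{S}}(X \quot A, \bk)^{\leq 0}$ into the non-positive part ${}^p \hspace{-1pt} \Db_{\mathcal{S}}(Y,\bk)^{\leq 0}$ of the perverse t-structure on $Y$, it suffices, since $\pi_\dag$ is triangulated and ${}^p \hspace{-1pt} \Db_{\mathcal{S}}(Y,\bk)^{\leq 0}$ is stable under extensions, to check the claim on a family of generators under extensions of ${}^p \hspace{-1pt} \hD_{\mathcal{S}}(X \quot A, \bk)^{\leq 0}$.

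By Lemma~\ref{lem:perverse-hDel}, such a family is given by the objects $\hDel_s[n]$ with $s \in \mathcal{S}$ and $n \geq 0$. Applying $\pi_\dag$ and using the canonical isomorphism $\pi_\dag \hDel_s \cong \Delta_s$ from~\eqref{eqn:Pidag-hD-hN}, we are reduced to showing that $\Delta_s[n]$ lies in ${}^p \hspace{-1pt} \Db_{\mathcal{S}}(Y,\bk)^{\leq 0}$ for every $s \in \mathcal{S}$ and $n \geq 0$. But this is immediate from the fact already recalled in~\S\ref{ss:standard-costandard}: since $i_s : Y_s \to Y$ is affine (as the inclusion of an affine stratum into $Y$), the sheaf $\Delta_s = (i_s)_! \underline{\bk}_{Y_s}[\dim Y_s]$ is an honest perverse sheaf on $Y$, hence a fortiori lies in ${}^p \hspace{-1pt} \Db_{\mathcal{S}}(Y,\bk)^{\leq 0}$, and so does its shift by $n \geq 0$.

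There is no real obstacle here: the corollary is essentially a formal consequence of the description of the left half of the perverse t-structure via the $\hDel_s$'s together with the compatibility between $\pi_\dag$ and standard objects. The only subtlety worth flagging is the asymmetry noted in the remark preceding the corollary, namely that the costandard objects $\hnab_s$ do \emph{not} generate the non-negative part of the t-structure under extensions; this is why one cannot, by the same trivial argument, deduce left t-exactness of $\pi_\dag$ (which in fact fails in general).
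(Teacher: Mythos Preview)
Your proof is correct and follows exactly the approach of the paper, which simply cites Lemma~\ref{lem:perverse-hDel} and~\eqref{eqn:Pidag-hD-hN}. Your added remark on the asymmetry with the costandard side is a helpful clarification but not part of the paper's argument.
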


\begin{proof}
This follows from Lemma~\ref{lem:perverse-hDel} and~\eqref{eqn:Pidag-hD-hN}.
\end{proof}

\subsection{Tilting perverse sheaves}

It is a standard fact (see e.g.~\cite{bgs}) that under our assumptions the category $\Perv_{\mathcal{S}}(Y,\bk)$ of $\mathcal{S}$-constructible perverse sheaves on $Y$ is a highest weight category, with weight poset $\mathcal{S}$ (for the order induced by inclusions of closures of strata), standard objects $(\Delta_s : s \in \mathcal{S})$, and costandard objects $(\nabla_s : s \in \mathcal{S})$. Hence we can consider the tilting objects in this category, i.e.~those which admit both a filtration with subquotients of the form $\Delta_s$ ($s \in \mathcal{S}$) and a filtration with subquotients of the form $\nabla_s$ ($s \in \mathcal{S}$). If $\mathscr{F}$ is a tilting object, the number of occurences of $\Delta_s$, resp.~$\nabla_s$, in a filtration of the first kind, resp.~second kind, does not depend on the choice of filtration, and equals the dimension of $\Hom(\mathscr{F}, \nabla_s)$, resp.~$\Hom(\Delta_s, \mathscr{F})$. This number will be denoted $(\mathscr{F} : \Delta_s)$, resp.~$(\mathscr{F} : \nabla_s)$. The indecomposable tilting objects are parametrized (up to isomorphism) by $\mathcal{S}$; the object corresponding to $s$ will be denoted $\mathscr{T}_s$.

Similarly, an object $\mathscr{F}$ of $\hP_{\mathcal{S}}(X \quot A,\bk)$ will be called tilting if it admits both a filtration with subquotients of the form $\hDel_s$ ($s \in \mathcal{S}$) and a filtration with subquotients of the form $\hnab_s$ ($s \in \mathcal{S}$). From~\eqref{eqn:Hom-hDel-hnab} we see that the number of occurences of $\hDel_s$, resp.~$\hnab_s$, in a filtration of the first kind, resp.~second kind, does not depend on the choice of filtration, and equals the rank of $\Hom(\mathscr{F}, \hnab_s)$, resp.~$\Hom(\hDel_s, \mathscr{F})$, as an $R_A^\wedge$-module. (These modules are automatically free of finite rank.) This number will be denoted $(\mathscr{F} : \hDel_s)$, resp.~$(\mathscr{F} : \hnab_s)$.

It is clear from definitions and~\eqref{eqn:Pidag-hD-hN} that if $\mathscr{F}$ is a tilting object in $\hP_{\mathcal{S}}(X \quot A,\bk)$, then $\pi_\dag(\mathscr{F})$ is a tilting perverse sheaf, and that moreover
\begin{equation}
\label{eqn:multiplicities-pidag}
 (\pi_\dag(\mathscr{F}) : \Delta_s) = (\mathscr{F} : \hDel_s), \quad (\pi_\dag(\mathscr{F}) : \nabla_s) = (\mathscr{F} : \hnab_s).
\end{equation}

\begin{lem}\phantomsection
\label{lem:tiltings}
 \begin{enumerate}
  \item 
  \label{it:tilting-pi}
  If $\mathscr{F}$ belongs to $\hD_{\mathcal{S}}(X \quot A, \bk)$, then $\mathscr{F}$ is a tilting perverse sheaf iff $\pi_\dag(\mathscr{F})$ is a tilting perverse sheaf.
  \item 
  \label{it:tilting-Hom}
  If $\mathscr{F},\mathscr{G}$ are tilting perverse sheaves in $\hD_{\mathcal{S}}(X \quot A, \bk)$, then we have
  \[
   \Hom_{\hD_{\mathcal{S}}(X \quott A, \bk)}(\mathscr{F},\mathscr{G}[k])=0 \quad \text{if $k \neq 0$,}
  \]
the $R_A^\wedge$-module $\Hom_{\hD_{\mathcal{S}}(X \quot A, \bk)}(\mathscr{F},\mathscr{G})$ is free of finite rank, and the functor $\pi_\dag$ induces an isomorphism
  \[
   \bk \otimes_{R_A^\wedge} \Hom_{\hD_{\mathcal{S}}(X \quott A, \bk)}(\mathscr{F},\mathscr{G}) \simto \Hom_{\Db_{\mathcal{S}}(Y, \bk)}(\pi_\dag \mathscr{F},\pi_\dag \mathscr{G}).
  \]
 \end{enumerate}
\end{lem}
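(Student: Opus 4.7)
The plan is to treat the two parts in sequence. For part~(1), the ``only if'' direction is immediate from~\eqref{eqn:multiplicities-pidag} and its analogue for $\hnab$'s. For the ``if'' direction, assume $\pi_\dag \mathscr{F}$ is tilting. Since $\pi_\dag \mathscr{F}$ is in particular perverse, Lemma~\ref{lem:perv-pi}\eqref{it:pidag-perv} gives that $\mathscr{F}$ is perverse. I would then show that for each $s \in \mathcal{S}$, $j_s^* \mathscr{F}$ is isomorphic to a direct sum of copies of $\hL_{A,s}[\dim X_s]$. Via~\eqref{eqn:equiv-perverse-stratum}, $j_s^* \mathscr{F}$ corresponds to an object $M_s$ of $\Db \Mod^{\mathrm{fg}}(R_A^\wedge)$. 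Using~\eqref{eqn:isom-pi-embeddings} we have $(\pi_s)_\dag j_s^* \mathscr{F} \cong i_s^* \pi_\dag \mathscr{F}$, and the latter is perverse (in fact isomorphic to $\underline{\bk}_{Y_s}[\dim Y_s]^{\oplus n_s}$) since $\pi_\dag \mathscr{F}$ is tilting; applying Lemma~\ref{lem:perv-pi}\eqref{it:pidag-perv} once more, $j_s^* \mathscr{F}$ is perverse, so $M_s$ is concentrated in degree~$0$. The same identification translates the tilting condition into $\bk \lotimes_{R_A^\wedge} M_s \cong \bk^{\oplus n_s}$ concentrated in degree~$0$. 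Hence $\mathrm{Tor}^{R_A^\wedge}_i(\bk, M_s) = 0$ for all $i > 0$, so $M_s$ is flat, and being finitely generated over the local Noetherian ring $R_A^\wedge$ it is free. A standard induction on the strata in the support, using the recollement formalism of~\S\ref{ss:recollement}, then produces a filtration of $\mathscr{F}$ by $\hDel$'s. Applying the same argument with $j_s^!$ in place of $j_s^*$ furnishes a filtration by $\hnab$'s.

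For part~(2), I would argue more generally: if $\mathscr{F}$ admits a $\hDel$-filtration and $\mathscr{G}$ admits a $\hnab$-filtration, then all three conclusions hold. The base case $\mathscr{F} = \hDel_s$, $\mathscr{G} = \hnab_t$ yields the Hom-vanishing and freeness directly from~\eqref{eqn:Hom-hDel-hnab}. The base-change isomorphism is trivial unless $s=t$, in which case both sides are one-dimensional over $\bk$; I would verify nonvanishing by tracing the canonical generator of $\Hom(\hDel_s, \hnab_s) \cong R_A^\wedge$, namely the adjunction morphism $(j_s)_! \hL_{A,s}[\dim X_s] \to (j_s)_* \hL_{A,s}[\dim X_s]$, and noting that its image under $\pi_\dag$ is, via~\eqref{eqn:Pidag-hD-hN}, the analogous nonzero adjunction map $\Delta_s \to \nabla_s$ which generates $\Hom(\Delta_s, \nabla_s) \cong \bk$. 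From the base case I would induct first on the length of a $\hnab$-filtration of $\mathscr{G}$ and then on a $\hDel$-filtration of $\mathscr{F}$. At each step, the Hom-vanishing already proved forces the long exact sequences obtained by applying $\Hom(\mathscr{F},-)$ (and $\Hom(\pi_\dag \mathscr{F}, \pi_\dag -)$) to degenerate into short exact sequences of finitely generated $R_A^\wedge$-modules. Freeness is preserved by extensions of free modules, and the base-change statement is inherited via the five lemma applied after $\bk \lotimes_{R_A^\wedge} -$, which is exact on short exact sequences of free modules.

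The most delicate step is the verification of the base case of the base-change isomorphism, which requires carefully tracking the canonical generator of $\Hom(\hDel_s, \hnab_s)$ through $\pi_\dag$ under the identifications of~\eqref{eqn:Pidag-hD-hN}. Beyond that, the arguments reduce to routine homological algebra over the local Noetherian ring $R_A^\wedge$ of finite global dimension, combined with the behavior of $\pi_\dag$ recorded in Lemma~\ref{lem:perv-pi} and the recollement formalism of~\S\ref{ss:recollement}.
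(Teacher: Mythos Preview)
Your approach is essentially the same as the paper's for both parts: part~(1) is organized as two separate directions rather than the paper's chain of biconditionals, and part~(2) is spelled out as an induction on filtration lengths rather than the paper's compressed ``follows from~\eqref{eqn:Hom-hDel-hnab} and the 5-lemma,'' but the content is the same.

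There is one small gap in part~(2). You assert that $\pi_\dag$ induces a map from $\bk \otimes_{R_A^\wedge} \Hom(\mathscr{F},\mathscr{G})$ to $\Hom(\pi_\dag\mathscr{F},\pi_\dag\mathscr{G})$ and then verify it is an isomorphism, but you never say why the original map $\Hom(\mathscr{F},\mathscr{G}) \to \Hom(\pi_\dag\mathscr{F},\pi_\dag\mathscr{G})$ kills $\mathfrak{m}_A^\wedge \cdot \Hom(\mathscr{F},\mathscr{G})$. Your generator-tracing in the base case $s=t$ shows the map $R_A^\wedge \to \bk$ is surjective, but a surjective $\bk$-linear map $R_A^\wedge \to \bk$ need not have kernel $\mathfrak{m}_A^\wedge$; and even if you extract the factorization in the base case by an ad hoc argument (e.g.\ via the ring map $\End(\hDel_s)\to\End(\Delta_s)$), it is not clear how to propagate it through the inductive step by a diagram chase alone. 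The paper handles this up front by invoking Lemma~\ref{lem:monodromy-augmentation}: since $\pi_\dag(\varphi^\wedge_{\mathscr{G}}(r)) = \varepsilon_A^\wedge(r)\cdot\id$, the map $f \mapsto \pi_\dag(f)$ is $R_A^\wedge$-linear for the trivial action on the target and hence factors through $\bk \otimes_{R_A^\wedge}$ uniformly in $\mathscr{F},\mathscr{G}$. With that one citation added, your induction and generator-tracing go through and are in fact more explicit than the paper's treatment.
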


\begin{proof}
 \eqref{it:tilting-pi}
 Using recollement triangles, it is easy to show that $\mathscr{F}$ is a tilting perverse sheaf iff for any $s \in \mathcal{S}$ the objects $j_s^* \mathscr{F}$ and $j_s^! \mathscr{F}$ are direct sums of copies of $\hL_{A,s} [\dim X_s]$ (see~\cite{bbm} for this point of view in the case of usual tilting perverse sheaves). In turn, this condition is equivalent to the requirement that the inverse images of $j_s^* \mathscr{F}$ and $j_s^! \mathscr{F}$ under the equivalence~\eqref{eqn:equiv-perverse-stratum} are isomorphic to a free $R_A^\wedge$-module. It is well known that the latter condition is equivalent to the property that the image under $\bk \lotimes_{R_A^\wedge} (-)$ of these objects is concentrated in degree $0$. We deduce that $\mathscr{F}$ is a tilting perverse sheaf iff for any $s \in \mathcal{S}$ the complexes $(\pi_s)_\dag j_s^* \mathscr{F}$ and $(\pi_s)_\dag j_s^! \mathscr{F}$ are concentrated in perverse degree $0$ (see the proof of Lemma~\ref{lem:perv-pi}). Since
 \[
  (\pi_s)_\dag j_s^* \mathscr{F} \cong i_s^* \pi_\dag \mathscr{F} \quad \text{and} \quad (\pi_s)_\dag j_s^! \mathscr{F} \cong i_s^! \pi_\dag \mathscr{F}
 \]
by~\eqref{eqn:isom-pi-embeddings}, we finally obtain that $\mathscr{F}$ is a tilting perverse sheaf iff the object $\mathscr{G}:=\pi_\dag \mathscr{F}$ is such that for any $s \in \mathcal{S}$ the complexes $i_s^* \mathscr{G}$ and $i_s^! \mathscr{G}$ are concentrated in perverse degree $0$. This condition is equivalent to the fact that $\mathscr{G}$ is a tilting perverse sheaf, see~\cite{bbm}, which concludes the proof.
 
 \eqref{it:tilting-Hom} 
 By Lemma~\ref{lem:monodromy-augmentation}, the morphism
   \[
   \Hom_{\hD_{\mathcal{S}}(X \quott A, \bk)}(\mathscr{F},\mathscr{G}) \to \Hom_{\Db_{\mathcal{S}}(Y, \bk)}(\pi_\dag \mathscr{F},\pi_\dag \mathscr{G})
  \]
  induced by $\pi_\dag$ factors through the quotient $\bk \otimes_{R_A^\wedge} \Hom_{\hD_{\mathcal{S}}(X \quot A, \bk)}(\mathscr{F},\mathscr{G})$. Then
 the desired properties follow from~\eqref{eqn:Hom-hDel-hnab} and the 5-lemma.
\end{proof}

\begin{rmk}
\label{rmk:filtrations-pidag}
The arguments in the proof of Lemma~\ref{lem:tiltings}\eqref{it:tilting-pi} show more generally that if $\mathscr{F}$ belongs to $\hD_{\mathcal{S}}(X \quot A, \bk)$ and if $\pi_\dag(\mathscr{F})$ is a perverse sheaf admitting a standard filtration, then $\mathscr{F}$ is perverse and admits a filtration with subquotients of the form $\hDel_s$ for $s \in \mathcal{S}$, with $\hDel_s$ occuring as many times as $\Delta_s$ occurs in $\pi_\dag(\mathscr{F})$. Of course, a similar claim holds for costandard filtrations.
\end{rmk}

We will denote by $\hT_{\mathcal{S}}(X \quot A, \bk)$ the full subcategory of $\hD_{\mathcal{S}}(X \quot A, \bk)$ whose objects are the tilting perverse sheaves.
Lemma~\ref{lem:tiltings}\eqref{it:tilting-Hom} has the following consequence.

\begin{prop}
\label{prop:realization-equiv}
There exists an equivalence of triangulated categories
\[
\Kb \hT_{\mathcal{S}}(X \quot A, \bk) \simto \hD_{\mathcal{S}}(X \quot A, \bk).
\]
\end{prop}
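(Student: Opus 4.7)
The plan is to follow the classical strategy for realization equivalences in highest weight categories: construct a triangulated functor $\mathrm{real} : \Kb \hT_{\mathcal{S}}(X \quot A, \bk) \to \hD_{\mathcal{S}}(X \quot A, \bk)$, then verify fully faithfulness via Ext-vanishing and essential surjectivity via a set of generators.

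For the construction, one invokes the filtered triangulated category $\hD^F_{\mathcal{S}}(X \quot A, \bk)$ introduced in Remark~\ref{rmk:completed-filtered} and applies Be{\u\i}linson's standard realization procedure (see~\cite{beilinson}); the resulting functor $\mathrm{real}$ sends a bounded complex of tilting perverse sheaves to its totalization in $\hD_{\mathcal{S}}(X \quot A, \bk)$ and restricts to the tautological embedding on $\hT_{\mathcal{S}}(X \quot A, \bk)$ placed in cohomological degree $0$. Fully faithfulness of $\mathrm{real}$ reduces, by a standard d\'evissage on the source complexes using the ``stupid truncation'' triangles together with the 5-lemma, to the assertion that for any two tilting objects $\mathscr{F}, \mathscr{G}$ and any $k \in \Z$ the natural map $\Hom_{\Kb \hT_{\mathcal{S}}(X \quott A, \bk)}(\mathscr{F}, \mathscr{G}[k]) \to \Hom_{\hD_{\mathcal{S}}(X \quott A, \bk)}(\mathscr{F}, \mathscr{G}[k])$ is an isomorphism; both sides equal $\Hom(\mathscr{F}, \mathscr{G})$ when $k = 0$, and both vanish when $k \neq 0$, the source by definition of the homotopy category and the target by Lemma~\ref{lem:tiltings}\eqref{it:tilting-Hom}.

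For essential surjectivity, note that the essential image of a fully faithful triangulated functor is a full triangulated subcategory; by Lemma~\ref{lem:properties-hD-hN}\eqref{it:hD-hN-generate} it therefore suffices to exhibit each $\hDel_s$ in this image. We proceed by induction on the closure order on $\mathcal{S}$: when $s$ indexes a minimal (closed) stratum the equivalence~\eqref{eqn:equiv-perverse-stratum} shows that $\hDel_s = \hnab_s$ is already tilting. For the inductive step, one combines the existence of the classical indecomposable tilting $\mathscr{T}_s \in \Perv_{\mathcal{S}}(Y, \bk)$ with Lemma~\ref{lem:tiltings}\eqref{it:tilting-pi} and Remark~\ref{rmk:filtrations-pidag} to produce an indecomposable tilting lift $\hTil_s \in \hT_{\mathcal{S}}(X \quot A, \bk)$ fitting into a short exact sequence $0 \to K \to \hTil_s \to \hDel_s \to 0$ in $\hP_{\mathcal{S}}(X \quot A, \bk)$, with $K$ admitting a standard filtration supported on strata strictly smaller than $s$. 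By induction $K$ lies in the essential image of $\mathrm{real}$; since $\hTil_s$ trivially does too, the associated distinguished triangle forces $\hDel_s$ to lie there as well.

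The main technical obstacle is the inductive construction of the tilting lifts $\hTil_s$: this is where the completed setting genuinely departs from the classical one, since one must run the usual highest-weight construction over the coefficient ring $R_A^\wedge$ rather than over a field. However, Remark~\ref{rmk:filtrations-pidag} combined with the freeness statements of Lemma~\ref{lem:tiltings}\eqref{it:tilting-Hom} are sufficient to carry out the standard iterated-extension argument.
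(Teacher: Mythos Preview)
Your proposal is correct and follows essentially the same route as the paper: both build the realization functor from the filtered category of Remark~\ref{rmk:completed-filtered} via Be{\u\i}linson's procedure, and both deduce the equivalence from the Ext-vanishing of Lemma~\ref{lem:tiltings}\eqref{it:tilting-Hom}; the paper simply compresses your d\'evissage and induction into the phrase ``Be{\u\i}linson's lemma.'' One small caveat: Lemma~\ref{lem:tiltings}\eqref{it:tilting-pi} and Remark~\ref{rmk:filtrations-pidag} only \emph{recognize} a tilting lift once you already have an object of $\hD_{\mathcal{S}}(X \quot A,\bk)$ in hand---they do not manufacture one from $\mathscr{T}_s$---so the iterated-extension argument you allude to is genuinely needed, and in the paper it is carried out explicitly in the next subsection (Lemma~\ref{lem:prelim-tilting} and Proposition~\ref{prop:classification-tiltings}).
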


\begin{proof}
As explained in Remark~\ref{rmk:completed-filtered}, the category $\hD_{\mathcal{S}}(X \quot A, \bk)$ admits a filtered version. Hence, by~\cite[Proposition~2.2]{amrw} (see also~\cite[\S A.6]{beilinson}), there exists a triangulated functor $\Kb \hT_{\mathcal{S}}(X \quot A, \bk) \to \hD_{\mathcal{S}}(X \quot A, \bk)$ whose restriction to $\hT_{\mathcal{S}}(X \quot A, \bk)$ is the natural embedding. The fact that this functor is an equivalence follows from Be{\u\i}linson's lemma.
\end{proof}

\subsection{Classification of tilting perverse sheaves}
\label{ss:classification-tiltings}

It follows from Corollary~\ref{cor:Hom-fg}\eqref{it:Krull-Schmidt} that the category $\hT_{\mathcal{S}}(X \quot A, \bk)$ is Krull--Schmidt. To proceed further, we need to classify its indecomposable objects.

The following classification result is proved in~\cite[Lemma~A.7.3]{by}. Here we provide a different proof, based on some ideas developed in~\cite{rsw} and~\cite[Appendix~B]{modrap1}. (These ideas are themselves closely inspired by the methods of~\cite{bgs}.)

\begin{prop}
\label{prop:classification-tiltings}
 For any $s \in \mathcal{S}$, there exists a unique (up to isomorphism) object $\hTil_s$ in $\hD_{\mathcal{S}}(X \quot A, \bk)$ such that $\pi_\dag(\hTil_s) \cong \mathscr{T}_s$. Moreover, $\hTil_s$ is an indecomposable tilting perverse sheaf, and the assignment $s \mapsto \hTil_s$ induces a bijection between $\mathcal{S}$ and the set of isomorphism classes of indecomposable tilting objects in $\hD_{\mathcal{S}}(X \quot A, \bk)$.
\end{prop}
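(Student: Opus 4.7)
My plan is to establish uniqueness first (which also yields indecomposability and the full bijection as by-products), and then tackle existence, which I expect to be the main obstacle.

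\textbf{Step 1 (Uniqueness).} Given two objects $\mathscr{F},\mathscr{F}'$ both mapping to $\mathscr{T}_s$ under $\pi_\dag$, I would first invoke Lemma~\ref{lem:tiltings}\eqref{it:tilting-pi} to conclude that both are tilting perverse sheaves, and then Lemma~\ref{lem:tiltings}\eqref{it:tilting-Hom} to get that
\[
\Hom_{\hD_{\mathcal{S}}(X \quott A,\bk)}(\mathscr{F},\mathscr{F}') \twoheadrightarrow \Hom_{\Db_{\mathcal{S}}(Y,\bk)}(\pi_\dag\mathscr{F},\pi_\dag\mathscr{F}')
\]
is surjective with kernel $\mathfrak{m}_A^\wedge \cdot \Hom(\mathscr{F},\mathscr{F}')$. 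I would then lift a fixed isomorphism $\pi_\dag\mathscr{F} \simto \pi_\dag\mathscr{F}'$ to a morphism $f$, its inverse to $g$, and observe that $gf-\id_{\mathscr{F}}$ lies in $\mathfrak{m}_A^\wedge \End(\mathscr{F})$. Since $\End(\mathscr{F})$ is a finite module over the complete Noetherian local ring $R_A^\wedge$ (Corollary~\ref{cor:Hom-fg}\eqref{it:Hom-fg}), it is $\mathfrak{m}_A^\wedge$-adically complete, so $gf$ is invertible by the usual geometric-series argument; symmetrically $fg$ is invertible, hence $f$ is the desired isomorphism.

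\textbf{Step 2 (Indecomposability and the bijection).} The same completeness argument applied with $\mathscr{F}=\mathscr{F}'=\hTil_s$ shows that any element of $\End(\hTil_s)$ whose reduction modulo $\mathfrak{m}_A^\wedge\End(\hTil_s) \cong \End(\mathscr{T}_s)$ is a unit is itself a unit; since $\End(\mathscr{T}_s)$ is local ($\mathscr{T}_s$ being indecomposable), so is $\End(\hTil_s)$, proving indecomposability. Injectivity of $s \mapsto [\hTil_s]$ is immediate from $\pi_\dag\hTil_s \cong \mathscr{T}_s$. For essential surjectivity onto indecomposable tiltings, I would take an indecomposable tilting $\mathscr{F}$, note that its endomorphism ring is local by Krull--Schmidt (Corollary~\ref{cor:Hom-fg}\eqref{it:Krull-Schmidt}), deduce that $\End(\pi_\dag\mathscr{F}) \cong \End(\mathscr{F})/\mathfrak{m}_A^\wedge\End(\mathscr{F})$ is local (it is nonzero by Lemma~\ref{lem:perv-pi}\eqref{it:pidag-conservative}), conclude $\pi_\dag\mathscr{F} \cong \mathscr{T}_s$ for some $s$, and then apply Step~1 to get $\mathscr{F} \cong \hTil_s$.

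\textbf{Step 3 (Existence).} I would induct on the number of strata contained in $\overline{Y_s}$. The base case is when $Y_s$ is closed, where $\hDel_s=\hnab_s$ is a tilting with $\pi_\dag\hDel_s = \Delta_s = \nabla_s = \mathscr{T}_s$, so $\hTil_s := \hDel_s$ works. For the inductive step, having produced $\hTil_t$ for every $t<s$, I would start from $\hDel_s$ and iteratively form universal extensions by shifted copies of the $\hTil_t$ to cancel the defect preventing a costandard filtration, mirroring the classical construction of indecomposable tiltings. Two technical tools carry the induction: Remark~\ref{rmk:filtrations-pidag}, which automatically promotes a standard filtration on $\pi_\dag(-)$ to one in the completed category; and Lemma~\ref{lem:tiltings}\eqref{it:tilting-Hom}, which allows lifting morphisms (and hence extension classes) between tiltings from $Y$ to $X$. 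The process terminates after finitely many steps (bounded by the length of a filtration of $\mathscr{T}_s$), producing some tilting $\mathscr{F}$ with $\mathscr{T}_s$ a Krull--Schmidt summand of $\pi_\dag\mathscr{F}$; the corresponding indecomposable summand of $\mathscr{F}$ is the desired $\hTil_s$.

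\textbf{Main obstacle.} As the structure above makes clear, Steps~1 and~2 are essentially formal consequences of Lemma~\ref{lem:tiltings} combined with completeness of $R_A^\wedge$. The genuine work sits in Step~3: the inductive tilting construction must be realized inside the pro-object category $\hD_{\mathcal{S}}(X \quot A, \bk)$ rather than only in $\Db_{\mathcal{S}}(X \quot A, \bk)$, and checking at each stage that the universal extension exists at the pro-level and preserves the inductive hypotheses is precisely where the machinery adapted from~\cite{rsw} and~\cite[Appendix~B]{modrap1} is needed.
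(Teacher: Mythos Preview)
Your overall strategy matches the paper's, with two differences worth noting.

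For uniqueness (your Steps~1--2), the paper takes a shorter route: having lifted an isomorphism $\pi_\dag\mathscr{F}\simto\pi_\dag\mathscr{F}'$ to some $f:\mathscr{F}\to\mathscr{F}'$ via Lemma~\ref{lem:tiltings}\eqref{it:tilting-Hom}, it simply observes that the cone of $f$ is killed by $\pi_\dag$, hence vanishes by Lemma~\ref{lem:perv-pi}\eqref{it:pidag-conservative}. This bypasses your completeness/geometric-series argument entirely, though yours is also correct. The paper's argument for indecomposability is likewise more direct: $\End(\pi_\dag\hTil_s)$ is a \emph{quotient} of $\End(\hTil_s)$, so locality of the former follows from locality of the latter (rather than the other way around, as you argue).

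For existence (your Step~3), the paper's induction (Lemma~\ref{lem:prelim-tilting}) proceeds by adding one closed stratum $Y_t$ at a time: given a tilting $\hTil_V$ on the open complement, it extends $j_!\hTil_V$ by copies of the \emph{standard} object $\hDel_t$---not by $\hTil_t$ as you write---via a chosen surjection $(R_A^\wedge)^{\oplus n}\twoheadrightarrow\Ext^1(\hDel_t,j_!\hTil_V)$. The substantive work is then verifying that $\pi_\dag$ of the resulting extension admits a costandard filtration, which the paper reduces (via an isomorphism $\bk\otimes_{R_A^\wedge}\Ext^1(\hDel_t,j_!\hTil_V)\cong\Ext^1(\Delta_t,\overline{\jmath}_!\mathscr{T}_V)$) to an $\Ext^2$-vanishing in $\Perv_{\mathcal{S}}(U,\bk)$. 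Your phrase ``universal extensions by shifted copies of the $\hTil_t$'' is imprecise here; the inductive hypothesis produces the $\hTil_t$, but they are not what one extends by.
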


We begin with the following lemma, where we fix $s \in \mathcal{S}$.

\begin{lem}
\label{lem:prelim-tilting}
 For any open subset $U \subset \overline{Y_s}$ which is a union of strata, there exists a tilting perverse sheaf in $\hD_{\mathcal{S}}(\pi^{-1}(U) \quot A,\bk)$ whose restriction to $X_s$ is $\hL_{A,s} [\dim X_s]$.
\end{lem}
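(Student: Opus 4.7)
The plan is to argue by induction on the number of strata contained in $U$. When $U = Y_s$, the preimage $\pi^{-1}(U) = X_s$ has only one stratum, and the object $\hL_{A,s}[\dim X_s]$ simultaneously realizes $\hDel_s$ and $\hnab_s$ in this one-stratum category, hence is tautologically tilting.

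For the inductive step, suppose a tilting extension $\widehat{\mathscr{F}}$ on $\pi^{-1}(U)$ has been constructed, and pick $U' = U \cup Y_t$ where $Y_t$ is a stratum closed in $U'$. Write $j \colon \pi^{-1}(U) \hookrightarrow \pi^{-1}(U')$ and $i \colon X_t \hookrightarrow \pi^{-1}(U')$. I would first verify, using Lemma~\ref{lem:tiltings}\eqref{it:tilting-pi}, the isomorphisms~\eqref{eqn:isom-pi-embeddings}, and Lemma~\ref{lem:perv-pi}\eqref{it:pidag-perv}, that $j_! \widehat{\mathscr{F}}$ and $j_* \widehat{\mathscr{F}}$ are perverse objects of $\hD_{\mathcal{S}}(\pi^{-1}(U') \quot A, \bk)$ by reducing to the classical fact (cf.~\cite{bbm}) that the $!$- and $*$-extensions of a tilting perverse sheaf along an affine open embedding are perverse.

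Next I would analyze the stalks $i^* j_* \widehat{\mathscr{F}}$ and $i^! j_! \widehat{\mathscr{F}}$. By the equivalence~\eqref{eqn:equiv-perverse-stratum} these correspond to complexes $M, N \in \Db \Mod^{\mathrm{fg}}(R_A^\wedge)$, and since $\pi_\dag$ translates on the algebraic side into $\bk \lotimes_{R_A^\wedge} (-)$, the classical perversity of the corresponding stalks of $\pi_\dag \widehat{\mathscr{F}}$ implies that $\bk \lotimes_{R_A^\wedge} M$ and $\bk \lotimes_{R_A^\wedge} N$ are concentrated in degree $0$; Lemma~\ref{lem:nakayama} then forces $M$ and $N$ themselves to be finitely generated $R_A^\wedge$-modules. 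Choosing a surjection $\varphi \colon F \twoheadrightarrow M$ with $F$ free of finite rank, converting $\varphi$ via~\eqref{eqn:equiv-perverse-stratum} into a morphism $\mathscr{N}_F \to i^* j_* \widehat{\mathscr{F}}$, passing to its adjoint $i_* \mathscr{N}_F \to j_* \widehat{\mathscr{F}}$, and forming the homotopy pullback with $j_* \widehat{\mathscr{F}}$ over $i_* i^* j_* \widehat{\mathscr{F}}$, I obtain a candidate $\widehat{\mathscr{G}}$ with $j^* \widehat{\mathscr{G}} \cong \widehat{\mathscr{F}}$ and $i^* \widehat{\mathscr{G}}$ free in degree $0$.

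The main obstacle is ensuring that $i^! \widehat{\mathscr{G}}$ is \emph{also} free: the one-step pullback above produces $i^! \widehat{\mathscr{G}}$ corresponding to $\ker(\varphi)$, which is finitely generated over $R_A^\wedge$ but in general not free. To fix this, the plan is to iterate, alternating between the $j_!$- and $j_*$-type constructions and extending by direct sums of copies of $\hDel_t$ and $\hnab_t$ at each stage, and to invoke the finite global dimension of the regular local ring $R_A^\wedge$ (see Remark~\ref{rmk:RA}) to guarantee termination. The delicate point is keeping track of perversity and of the standard/costandard filtrations through the iteration; at each stage this would be checked using~\eqref{eqn:equiv-perverse-stratum}, Lemma~\ref{lem:nakayama}, and the stalk characterization of tilting objects used already in the proof of Lemma~\ref{lem:tiltings}\eqref{it:tilting-pi}.
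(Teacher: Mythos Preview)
There is a genuine gap in your degree analysis. The complexes $M$ and $N$ cannot both be concentrated in degree $0$: applying $i^!$ to the recollement triangle $j_!\widehat{\mathscr{F}} \to j_*\widehat{\mathscr{F}} \to i_* i^* j_*\widehat{\mathscr{F}}$ and using $i^! j_* = 0$ yields a canonical isomorphism $M \cong N[1]$. What you call ``the corresponding stalks of $\pi_\dag\widehat{\mathscr{F}}$'' are restrictions to strata of $U$, whereas $Y_t$ lies in $U' \smallsetminus U$; the relevant classical complex is $\bar\imath^{\,!}\bar\jmath_!(\pi_\dag\widehat{\mathscr{F}})$, which sits in perverse degrees $[0,1]$ with $H^1 \cong \Ext^1_{\Perv_{\mathcal{S}}(U',\bk)}(\Delta_t, \bar\jmath_!\,\pi_\dag\widehat{\mathscr{F}})$, generally nonzero. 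Consequently $i^!\widehat{\mathscr{G}}$ is not $\ker(\varphi)$ as you assert, and your alternating iteration --- whose termination you do not establish --- is built on a false premise.

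In fact your one-step pullback already produces the correct object. The resulting triangle is $j_!\widehat{\mathscr{F}} \to \widehat{\mathscr{G}} \to (\hDel_t)^{\oplus n}$, and under the identification $\Hom\bigl((\hDel_t)^{\oplus n}, j_!\widehat{\mathscr{F}}[1]\bigr) \cong \Hom(F, H^0(M)) = \Hom(F, E)$ with $E := \Ext^1(\hDel_t, j_!\widehat{\mathscr{F}})$, your ``surjection $\varphi$'' amounts exactly to a surjection $(R_A^\wedge)^{\oplus n} \twoheadrightarrow E$. This is the paper's construction. What the paper does next, and what your proposal is missing, is to bypass any direct computation of $i^!\widehat{\mathscr{G}}$: by Lemma~\ref{lem:tiltings}\eqref{it:tilting-pi} it suffices that $\pi_\dag\widehat{\mathscr{G}}$ be tilting, and this is checked by the classical highest-weight argument. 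The key step is a canonical isomorphism $\bk \otimes_{R_A^\wedge} E \cong \Ext^1(\Delta_t, \bar\jmath_!\,\pi_\dag\widehat{\mathscr{F}})$, so that the chosen surjection onto $E$ descends to one killing the obstructing classical $\Ext^1$. No iteration or global-dimension argument is needed.
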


\begin{proof}
We proceed by induction on the number of strata in $U$, the initial case being when $U=Y_s$ (which is of course obvious).
 
 Consider now a general $U$ as in the statement, and $t \in \mathcal{S}$ such that $Y_t \subset U$ and $Y_t$ is closed in $U$. Then we set $V:=U \smallsetminus Y_t$, and assume (by induction) that we have a suitable object $\hTil_V$ in $\hD_{\mathcal{S}}(\pi^{-1}(V) \quot A,\bk)$. We then denote by $j : V \to U$ the embedding, and consider the object $j_! \hTil_V$. This object admits a filtration (in the sense of triangulated categories) whose subquotients are standard objects in $\hD_{\mathcal{S}}(\pi^{-1}(U) \quot A,\bk)$. In particular, it is perverse. We now consider the $R_A^\wedge$-module
 \[
  E:=\Ext^1_{\hP_{\mathcal{S}}(\pi^{-1}(U) \quot A,\bk)}(\hDel_t^U, j_! \hTil_V) = \Hom_{\hD_{\mathcal{S}}(\pi^{-1}(U) \quot A,\bk)}(\hDel_t^U, j_! \hTil_V [1])
 \]
(where $\hDel_t^U$ is the standard object in $\hD_{\mathcal{S}}(\pi^{-1}(U) \quot A,\bk)$ associated with $t$). By Corollary~\ref{cor:Hom-fg}\eqref{it:Hom-fg}, $E$ is finitely generated as an $R_A^\wedge$-module; therefore we can choose a non-negative integer $n$ and a surjection $(R_A^\wedge)^{\oplus n} \twoheadrightarrow E$. This morphism defines an element in
\[
 \Hom_{R_A^\wedge} \left( (R_A^\wedge)^{\oplus n},E \right) \cong E^{\oplus n} \cong \Ext^1_{\hP_{\mathcal{S}}(\pi^{-1}(U) \quot A,\bk)} \left( (\hDel_t^U)^{\oplus n}, j_! \hTil_V \right),
\]
and therefore an extension
\begin{equation}
\label{eqn:extension-hTU}
 j_! \hTil_V \hookrightarrow \hTil_U \twoheadrightarrow (\hDel_t^U)^{\oplus n}
\end{equation}
in $\hP_{\mathcal{S}}(\pi^{-1}(U) \quot A,\bk)$, for some object $\hTil_U$. It is clear that this object admits a filtration with subquotients of the form $\hDel_u$ ($u \in \mathcal{S}$) and has the appropriate restriction to $X_s$. Hence to conclude the proof of the claim, in view of Remark~\ref{rmk:filtrations-pidag}
it suffices to prove that if $\mathscr{T}_U:=(\pi_U)_\dag \hTil_U$ (where $\pi_U$ is the restriction of $\pi$ to $\pi^{-1}(U)$) then $\mathscr{T}_U$ admits a costandard filtration in the highest weight category $\Perv_{\mathcal{S}}(U,\bk)$, or in other words that
\[
 \Ext^1_{\Perv_{\mathcal{S}}(U,\bk)}(\Delta_u^U, \mathscr{T}_U)=0
\]
for any $u \in \mathcal{S}$ such that $Y_u \subset U$. (Here, $\Delta_u^U$ is the standard perverse sheaf in $\Perv_{\mathcal{S}}(U,\bk)$ associated with $u$.)

The case $u \neq t$ is easy, and left to the reader. We then remark that applying the functor $(\pi_U)_\dag$ to~\eqref{eqn:extension-hTU} we obtain an exact sequence
\begin{equation}
\label{eqn;exact-sequence-TU}
 \overline{\jmath}_! \mathscr{T}_V \hookrightarrow \mathscr{T}_U \twoheadrightarrow (\Delta_t^U)^{\oplus n}
\end{equation}
in $\Perv_{\mathcal{S}}(U,\bk)$, where $\overline{\jmath} : V \to U$ is the embedding
and $\mathscr{T}_V:=(\pi_V)_\dag \hTil_V$ for $\pi_V : \pi^{-1}(V) \to V$ the restriction of $\pi$.

We now claim that there exists a canonical isomorphism
\begin{equation}
\label{eqn:E-otimes}
 \bk \otimes_{R_A^\wedge} E \cong \Ext^1_{\Perv_{\mathcal{S}}(U,\bk)}(\Delta_u^U, \overline{\jmath}_! \mathscr{T}_V).
\end{equation}
In fact, using the natural exact sequences
\[
 \ker \hookrightarrow \overline{\jmath}_! \mathscr{T}_V \twoheadrightarrow \overline{\jmath}_{!*} \mathscr{T}_V, \quad \overline{\jmath}_{!*} \mathscr{T}_V \hookrightarrow \overline{\jmath}_{*} \mathscr{T}_V \twoheadrightarrow \mathrm{coker}
\]
and the fact that $\mathscr{T}_V$ admits a standard filtration, it is easily checked that
\[
 \Ext^i_{\Perv_{\mathcal{S}}(U,\bk)}(\Delta_u^U, \overline{\jmath}_! \mathscr{T}_V)=0 \quad \text{for $i \geq 2$}
\]
(see~\cite[Proof of Proposition~B.2]{modrap1} for details). If $M$ is the inverse image of $j_t^! j_! \hTil_V$ under the equivalence~\eqref{eqn:equiv-perverse-stratum}, this means that the complex $\bk \lotimes_{R_A^\wedge} M$ is concentrated in degrees $\leq 1$. This implies that $M$ itself is concentrated in degrees $\leq 1$, and that we have a canonical isomorphism $\bk \otimes_{R_A^\wedge} \mathsf{H}^1(M) \cong \mathsf{H}^1(\bk \lotimes_{R_A^\wedge} M)$. This isomorphism is precisely~\eqref{eqn:E-otimes}.

Once~\eqref{eqn:E-otimes} is established, we see that our surjection $(R_A^\wedge)^{\oplus n} \twoheadrightarrow E$ induces a surjection $\bk^{\oplus n} \twoheadrightarrow \Ext^1_{\Perv_{\mathcal{S}}(U,\bk)}(\Delta_u^U, \overline{\jmath}_! \mathscr{T}_V)$. Using this fact and considering the long exact sequence obtained by applying the functor $\Hom(\Delta_t^U, -)$ to~\eqref{eqn;exact-sequence-TU} we conclude that $\Ext^1_{\Perv_{\mathcal{S}}(U,\bk)}(\Delta_t^U, \mathscr{T}_U)=0$, which finishes the proof.
\end{proof}

\begin{proof}[Proof of Proposition~{\rm \ref{prop:classification-tiltings}}]
 By Lemma~\ref{prop:classification-tiltings}, there exists a tilting object $\hTil_s$ in the category $\hD_{\mathcal{S}}(X \quot A, \bk)$ which is supported on $\overline{X}_s$ and whose restriction to $X_s$ is $\hL_{A,s}$. Of course, we can (and will) further require that this object is indecomposable. By Lemma~\ref{lem:tiltings}, the object $\pi_\dag \hTil_s$ is then a tilting perverse sheaf, and its endomorphism ring is a quotient of $\End(\hTil_s)$, hence is local; in other words, $\pi_\dag \hTil_s$ is indecomposable. Since it is supported on $\overline{Y_s}$, and since its restriction to $Y_s$ is $\underline{\bk}_{Y_s} [\dim(Y_s)]$ it follows that $\pi_\dag(\hTil_s) \cong \mathscr{T}_s$.
 
 These arguments show more generally that if $\hTil$ is any indecomposable tilting object in $\hD_{\mathcal{S}}(X \quot A, \bk)$, the object $\pi_\dag(\hTil)$ is isomorphic to $\mathscr{T}_t$ for some $t \in \mathcal{S}$. To conclude the proof, it remains to prove that in this case we must have $\hTil \cong \hTil_t$. By Lemma~\ref{lem:tiltings}\eqref{it:tilting-Hom}, the functor $\pi_\dag$ induces an isomorphism
 \[
  \bk \otimes_{R_A^\wedge} \Hom_{\hD_{\mathcal{S}}(X \quot A, \bk)}(\hTil, \hTil_t) \simto \Hom_{\Db_{\mathcal{S}}(Y, \bk)}(\pi_\dag \hTil, \pi_\dag \hTil_t).
 \]
Hence there exists a morphism $f : \hTil \to \hTil_t$ such that $\pi_\dag(f)$ is an isomorphism. Then the cone $\mathscr{C}$ of $f$ satisfies $\pi_\dag(\mathscr{C})=0$. By Lemma~\ref{lem:perv-pi}\eqref{it:pidag-conservative} this implies that $\mathscr{C}=0$, hence that $f$ is an isomorphism.
\end{proof}

\part{The case of flag varieties}

\section{Study of tilting perverse objects}
\label{sec:flag-tilting}

\subsection{Notation}

From now on we fix a complex connected reductive algebraic group $G$, and choose a maximal torus and a Borel subgroup $T \subset B \subset G$. We will denote by $U$ the unipotent radical of $B$, and by $W$ the Weyl group of $(G,T)$. The choice of $B$ determines a subset $S \subset W$ of simple reflections, and a choice of positive roots (such that $B$ is the \emph{negative} Borel subgroup).

We will study further the previous constructions in the special case
\[
 X = G/U, \quad Y=G/B
\]
(with the action of $A=T$ given by $t \cdot gU = gtU$),
$\pi : X \to Y$ is the natural projection and the stratification is
\[
 Y = \bigsqcup_{w \in W} Y_w \quad \text{with} \quad Y_w := BwB/B.
\]
The corresponding categories in this case will be denoted
\[
 \Db_{U}(Y,\bk), \quad \hD_{U}(X \quot T,\bk).
\]
(Here, $\Db_{U}(Y,\bk)$ is indeed equivalent to the $U$-equivariant constructible derived category in the sense of Bernstein--Lunts, which explains the notation.)

Recall that to define the objects $\hDel_w$ and $\hnab_w$ we need to choose a $T$-equivariant morphism $X_w \to T$, where $X_w=\pi^{-1}(Y_w)$. For this we choose a lift $\dot w$ of $w$ in $N_G(T)$, and consider the subgroup $U_{w^{-1}} \subset U$ defined as in~\cite[Lemma~8.3.5]{springer}. Then the map $u \mapsto u \dot w B$ induces an isomorphism $U_w \simto Y_w$, and the map $(u,t) \mapsto u \dot w t U$ induces an isomorphism $U_w \times T \simto X_w$, see~\cite[Lemma~8.3.6]{springer}. We will choose $p_w$ as the composition of the inverse isomorphism with the projection to the $T$ factor.

The category $\Db_{U}(Y,\bk)$ admits a natural perverse t-structure; its heart will be denoted
\[
\scO := \Perv_{U}(Y,\bk).
\]
Similarly, the constructions of~\S\ref{ss:def-perv} provide a perverse t-structure on the category $\hD_{U}(X \quot T,\bk)$, whose heart will be denoted
\[
\hscO := \Perv_{U}(X \quot T, \bk).
\]

\subsection{Right and left monodromy}
\label{ss:lr-monodromy}

By the general formalism of the completed monodromic category (see~\S\ref{ss:definition-hD}), for any $\mathscr{F}$ in $\hD_{U}(X \quot T, \bk)$ we have an algebra morphism
\[
 \varphi^{\wedge}_{\mathscr{F}} : R_T^\wedge \to \End(\mathscr{F}).
\]
Since this monodromy comes from the action of $T$ by right multiplication, we will denote it in this case by $\varphi^{\wedge}_{\rig,\mathscr{F}}$. 

Now, let $a : G \to G/U$ be the projection (a locally trivial fibration, with fibers isomorphic to affine spaces). Then the functor $a^* : \Db_c(Y,\bk) \to \Db_c(G,\bk)$ is fully-faithful since $a_* \circ a^* \cong \id$.
The triangulated category $\Db_{U}(X,\bk)$ is generated by the image of the forgetful functor $\Db_B(X,\bk) \to \Db_c(X,\bk)$; therefore, if
$\mathscr{G}$ belongs to $\Db_{U}(X \quot T,\bk)$ then $a^*(\mathscr{G})$ belongs to the monodromic category $\Db_c(G \quot T,\bk)$ where $T$ acts on $G$ via $t \cdot g = tg$. Hence we can consider the morphism $\varphi_{a^*(\mathscr{G})}^\wedge$. Since $a^*$ is fully-faithful, this morphism can be interpreted as a morphism
\[
 \varphi_{\lef,\mathscr{G}}^\wedge : R_T^\wedge \to \End(\mathscr{G})
\]
(where ``l'' stands for left).  Passing to projective limits we deduce, for any $\mathscr{F}$ in $\hD_{U}(X \quot T, \bk)$, an algebra morphism
\[
 \varphi^{\wedge}_{\lef,\mathscr{F}} : R_T^\wedge \to \End(\mathscr{F}).
\]

Combining these two constructions, we obtain an algebra morphism
\[
 \varphi^{\wedge}_{\lef\rig,\mathscr{F}} : R_T^\wedge \otimes_\bk R_T^\wedge \to \End(\mathscr{F})
\]
sending $r \otimes r'$ to $\varphi^{\wedge}_{\lef,\mathscr{F}}(r) \circ \varphi^{\wedge}_{\rig,\mathscr{F}} (r') = \varphi^{\wedge}_{\rig,\mathscr{F}}(r') \circ \varphi^{\wedge}_{\lef,\mathscr{F}}(r)$.

\begin{lem}
\label{lem:mondromy-leftright-hDel}
 For any $w \in W$, the morphism $\varphi_{\rig,\hDel_w}^\wedge$, resp.~$\varphi_{\rig,\hnab_w}^\wedge$, is the composition of $\varphi_{\lef,\hDel_w}^\wedge$, resp.~$\varphi_{\lef,\hnab_w}^\wedge$, with the automorphism of $R_T^\wedge$ induced by $w$.
\end{lem}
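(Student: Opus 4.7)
The plan is to reduce the statement to a single identity on the local system $\hL_{T,w}$ over the stratum $X_w$, using the functoriality of monodromy (Lemma~\ref{lem:properties-monodromy}), and then to settle that identity via the defining uniqueness property of the monodromy isomorphism (Lemma~\ref{lem:monodromy-def}).

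First, the locally closed embedding $j_w : X_w \hookrightarrow X$ is $T$-equivariant for both the left and the right action on $G/U$, since the double coset $BwB$ is stable under $T$ on either side. By Lemma~\ref{lem:properties-monodromy} (applied termwise to the pro-systems and then passed to the limit) one has
\[
\varphi^\wedge_{?, \hDel_w}(r) = (j_w)_! \bigl( \varphi^\wedge_{?, \hL_{T,w}[\dim X_w]}(r) \bigr), \quad \varphi^\wedge_{?, \hnab_w}(r) = (j_w)_* \bigl( \varphi^\wedge_{?, \hL_{T,w}[\dim X_w]}(r) \bigr)
\]
for $? \in \{\lef, \rig\}$. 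The shift plays no role in monodromy, so it suffices to establish the analogous identity on $\hL_{T,w}$ itself, namely $\varphi^\wedge_{\rig, \hL_{T,w}} = \varphi^\wedge_{\lef, \hL_{T,w}} \circ w_*$, where $w_* : R_T^\wedge \to R_T^\wedge$ is the automorphism induced by $w$ acting on $X_*(T)$.

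Second, through the parametrization $U_w \times T \simto X_w$, $(u,t) \mapsto u \dot w t U$, the map $p_w$ is the projection onto $T$. The right action of $s \in T$ becomes $(u,t) \mapsto (u, t s)$, so that $p_w$ is equivariant for the standard action on $T$. For the left action, using the $T$-stability of $U_w$ together with $\dot w^{-1} s \dot w = w^{-1}(s)$, one computes
\[
s \cdot_{\lef} u \dot w t U = (s u s^{-1})\, \dot w\, \bigl(w^{-1}(s)\, t\bigr)\, U,
\]
whence $p_w \circ \theta^{\lef}_\lambda = \theta^{T}_{w^{-1}\lambda} \circ (\id_\C \times p_w)$ for every $\lambda \in X_*(T)$, where $\theta^T_\mu$ denotes the analogous map on $T$ itself.

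Third, pulling back the canonical isomorphism $\iota^{w^{-1}\lambda}_{\hL_T}$ along $\id_\C \times p_w$ produces an isomorphism $(\theta_\lambda^{\lef})^* \hL_{T,w} \simto \mathrm{pr}^* \hL_{T,w}$ whose restriction to $\{0\} \times X_w$ is the identity. By the uniqueness clause of Lemma~\ref{lem:monodromy-def}, it must coincide with $\iota^\lambda_{\hL_{T,w}}$ defined from the left action; restricting to $\{2 \mathbf{i} \pi\} \times X_w$ then yields $\varphi^\lambda_{\lef, \hL_{T,w}} = \varphi^{w^{-1}\lambda}_{\rig, \hL_{T,w}}$, which extends by completion to the desired identity on $R_T^\wedge$. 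The case of $\hnab_w$ is identical, with $(j_w)_*$ in place of $(j_w)_!$. The main obstacle is merely bookkeeping — tracking which copy of $T$ is acting and verifying that Lemma~\ref{lem:properties-monodromy}, stated at the level of $\Db_c$, passes to the pro-limit defining $\hD_U(X \quot T, \bk)$; this is automatic since monodromy is computed termwise.
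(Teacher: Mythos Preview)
Your argument is correct and follows essentially the same route as the paper: reduce to the stratum, observe that $p_w$ intertwines the left $T$-action with the $w^{-1}$-twisted action on $T$, and conclude by functoriality/uniqueness of monodromy. The paper carries this out on $a^{-1}(X_w)\subset G$ rather than on $X_w$ itself, because $\varphi^\wedge_{\lef}$ is \emph{defined} in~\S\ref{ss:lr-monodromy} via the pullback $a^*$ to $G$; your version works directly on $X_w$, which presupposes that the left monodromy can equally be read off from the left $T$-action on $X$ (or $X_w$). That is true---$a$ is left-$T$-equivariant and $a^*$ is fully faithful, so Lemma~\ref{lem:properties-monodromy} gives the compatibility---but you should say so explicitly, since otherwise your invocation of Lemma~\ref{lem:monodromy-def} for the \emph{left} action is outside the literal framework set up in~\S\ref{ss:monodromy-construction}. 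With that one sentence added, your proof and the paper's are the same computation in slightly different coordinates.
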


\begin{proof}
 We treat the case of $\hDel_w$; the case of $\hnab_w$ is similar. More precisely we will prove a similar claim for the monodromy endomorphisms of each object $\Delta_w^n := (j_w)_! p_w^*(\mathscr{L}_{T,n})[\dim X_w]$.
 
 By the base change theorem we have
 \[
  a^*(\Delta^n_w) \cong (\widetilde{\jmath}_w)_! (p_w \circ a_w)^* \mathscr{L}_{T,n}[\dim X_w],
 \]
where $\widetilde{\jmath}_w : a^{-1}(X_w) \hookrightarrow G$ is the embedding and $a_w : a^{-1}(X_w) \to X_w$ is the restriction of $a$.
 By Lemma~\ref{lem:properties-monodromy}, we deduce that for any $r \in R^\wedge_T$ we have
 \begin{equation}
  \label{eqn:monodromy-hDel-right}
  a^* \bigl( \varphi^\wedge_{\rig,\Delta^n_w}(r) \bigr) = \varphi^\wedge_{\rig,a^*(\Delta^n_w)}(r) = (\widetilde{\jmath}_w)_! (p_w \circ a_w)^* \varphi^\wedge_{\mathscr{L}_{T,n}}(r) [\dim X_w],
 \end{equation}
 where in the first two terms we consider the monodromy operation with respect to the action of $T$ on $G/U$ and $G$ by multiplication on the right.
 
 Now we consider the actions induced by multiplication on the left. It is not difficult to check that
 \[
  (p_w \circ a_w)(t \cdot x) = w^{-1}(t) (p_w \circ a_w)(x)
 \]
for any $t \in T$ and $x \in a^{-1}(X_w)$. In other words, $p_w \circ a_w$ is $T$-equivariant when $T$ acts on $a^{-1}(X_w)$ by multiplication on the left, and on $T$ via the natural action twisted by $w^{-1}$. From this, using the same arguments as above and Lemma~\ref{lem:monodromy-isom}, we deduce that
\begin{equation}
\label{eqn:monodromy-hDel-left}
 a^* \bigl( \varphi^{\wedge}_{\lef, \Delta^n_w}(r) \bigr) = (\widetilde{\jmath}_w)_! (p_w \circ a_w)^* \varphi^\wedge_{\mathscr{L}_{T,n}}(w^{-1} (r)) [\dim X_w].
\end{equation}
Comparing~\eqref{eqn:monodromy-hDel-right} and~\eqref{eqn:monodromy-hDel-left}, and using the fact that $a^*$ is fully-faithful, we deduce the desired claim.
\end{proof}

Similar considerations hold for objects in $\Db_{U}(Y,\bk)$. Below we will only consider the case of perverse sheaves, so we restrict to this setting. Let $b=\pi \circ a : G \to Y$ be the natural projection, and let $\mathscr{F}$ in $\scO$. Then the object $b^*(\mathscr{F})$ belongs to $\Db_c(G \quot T, \bk)$, where the $T$-action on $G$ is induced by multiplication on the \emph{left}. Hence the monodromy construction from Section~\ref{sec:monodromy} provides a morphism $R_T^\wedge \to \End(b^*(\mathscr{F}))$. Now the functor $b^*$ is fully-faithful on perverse sheaves since $b$ is smooth with connected fibers (see~\cite[Proposition~4.2.5]{bbd}); hence this morphism can be considered as an algebra morphism
\[
\varphi^\wedge_{\lef,\mathscr{F}} : R_T^\wedge \to \End(\mathscr{F}).
\]
It is clear that if $\mathscr{F}$ belongs to $\hD_{U}(X \quot T, \bk)$ and $\pi_\dag(\mathscr{F})$ is perverse, the composition
\[
 R_T^\wedge \xrightarrow{\varphi^\wedge_{\lef,\mathscr{F}}} \End(\mathscr{F}) \xrightarrow{\pi_\dag} \End(\pi_\dag(\mathscr{F}))
\]
coincides with $\varphi^\wedge_{\lef,\pi_\dag(\mathscr{F})}$.

\subsection{The associated graded functor}

Let us now fix a total order $\preceq$ on $W$ that refines the Bruhat order. We then denote by $j_{\prec w}$ the embedding of the closed subvariety $\bigsqcup_{y \prec w} X_y$ in $X$. For any $\hTil$ in $\hT_{U}(X \quot T, \bk)$, the adjunction morphism
\[
 \hTil \to (j_{\prec w})_* (j_{\prec w})^* \hTil
\]
is surjective. If we denote its kernel by $\hTil_{\succeq w}$, then the family of subobjects of $\hTil$ given by $(\hTil_{\succeq w})_{w \in W}$ is an exhaustive filtration on $\hTil$ indexed by $W$, endowed with the order opposite to $\preceq$ (meaning that $\hTil_{\succeq w} \subset \hTil_{\succeq y}$ if $y \preceq w$). Moreover, if we set
\[
 \gr_w(\hTil):= \hTil_{\succeq w} / \hTil_{\succeq w'},
\]
where $w'$ is the successor of $w$ for $\preceq$, then $\gr_w(\hTil)$ is a direct sum of copies of $\hDel_w$. (Here by convention $\hTil_{\succeq w'}=0$ if $w$ has no successor, i.e.~if $w$ is the longest element in $W$.) Since by adjunction we have $\Hom_{\hD_{U}(X \quott T, \bk)}(\hDel_y, \hDel_w)=0$ if $y \succ w$, we see that if $f : \hTil \to \hTil'$ is a morphism in $\hT_{U}(X \quot T, \bk)$, then $f(\hTil_{\succeq w}) \subset \hTil'_{\succeq w}$ for any $w \in W$. In other words, the assignment $\hTil \mapsto \hTil_{\succeq w}$ is functorial. This allows us to define the functor
\[
 \gr : \left\{
 \begin{array}{ccc}
  \hT_{U}(X \quot T, \bk) & \to & \hP_{U}(X \quot T, \bk) \\
  \hTil & \mapsto & \bigoplus_{w \in W} \gr_w(\hTil)
 \end{array}
 \right. .
\]
This functor is clearly additive.

\begin{lem}
\label{lem:Hom-vanishing-hDel}
 For any $y,w \in W$ with $y \neq w$, we have
 \[
  \Hom_{\hD_{U}(X \quott T, \bk)}(\hDel_y, \hDel_w)=0.
 \]
\end{lem}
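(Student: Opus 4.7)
The plan is to exploit the interaction between the left and right monodromy actions on $\hDel_y$ and $\hDel_w$: by naturality, both commute with any morphism, whereas by Lemma~\ref{lem:mondromy-leftright-hDel} they differ on each $\hDel_z$ by the Weyl automorphism $z$ of $R_T^\wedge$. When $y \neq w$ this mismatch will force every morphism $f : \hDel_y \to \hDel_w$ to be annihilated, on the target side, by a non-zero element of $R_T^\wedge$, after which the injectivity property of Lemma~\ref{lem:properties-hD-hN}\eqref{it:Edn-hD} concludes.

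Concretely, for any $f \in \Hom_{\hD_U(X \quott T, \bk)}(\hDel_y, \hDel_w)$ and any $r \in R_T^\wedge$, chaining the naturality of right monodromy with respect to $f$, Lemma~\ref{lem:mondromy-leftright-hDel} applied to both $\hDel_y$ and $\hDel_w$, and the naturality of left monodromy would yield
\[
\varphi^\wedge_{\lef,\hDel_w}(y \cdot r - w \cdot r) \circ f = 0.
\]
Since $y \neq w$ and the Weyl group acts faithfully on $T$ (and hence on $R_T^\wedge = \bk[X_*(T)]^\wedge$), one can choose $r$ (for instance $r = e^\lambda$ for a suitable $\lambda \in X_*(T)$) such that $y \cdot r \neq w \cdot r$.

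To finish, I would invoke both parts of Lemma~\ref{lem:properties-hD-hN}\eqref{it:Edn-hD}: combined with Lemma~\ref{lem:mondromy-leftright-hDel}, its first part ensures that the left monodromy $\varphi^\wedge_{\lef,\hDel_w}$ is an isomorphism $R_T^\wedge \simto \End(\hDel_w)$, so $\varphi^\wedge_{\lef,\hDel_w}(y \cdot r - w \cdot r)$ is a non-zero endomorphism of $\hDel_w$; its second part then asserts that any such endomorphism is injective (i.e.~a monomorphism in $\hP_U(X \quot T, \bk)$), which forces $f=0$. The main subtlety is essentially bookkeeping---keeping track of which side the Weyl twist appears on when interchanging the two monodromies---after which the result follows immediately from the lemmas already established.
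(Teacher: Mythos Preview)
Your proof is correct and follows essentially the same approach as the paper: both exploit the naturality of left and right monodromy together with Lemma~\ref{lem:mondromy-leftright-hDel} to produce a relation forcing $f$ to vanish, and both conclude via the injectivity statement in Lemma~\ref{lem:properties-hD-hN}\eqref{it:Edn-hD}. The only difference is organizational: the paper factors a putative nonzero $f$ through its image $\mathscr{F}$ and derives the identity $\varphi^\wedge_{\rig,\mathscr{F}}(r - y^{-1}w(r)) = 0$ on $\mathscr{F}$ for all $r$, whereas you work directly on $\hDel_w$ to obtain $\varphi^\wedge_{\lef,\hDel_w}(y(r)-w(r)) \circ f = 0$ for a single well-chosen $r$---your route is slightly more streamlined since it avoids the image factorization entirely.
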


\begin{proof}
 Let $f : \hDel_y \to \hDel_w$ be a nonzero morphism. We denote by $\mathscr{F}$ the image of $f$, and write $f=f_1 \circ f_2$ with $f_2 : \hDel_y \to \mathscr{F}$ the natural surjection and $f_1 : \mathscr{F} \to \hDel_w$ the natural embedding. Then for any $r \in R^\wedge_T$ we have a commutative diagram
 \[
  \xymatrix@C=3cm{
  \hDel_y \ar[r]^-{f_2} \ar@/^/[d]^-{\varphi^\wedge_{\rig,\hDel_y}(r)} \ar@/_/[d]_-{\varphi^\wedge_{\lef,\hDel_y}(r)} & \mathscr{F} \ar[r]^-{f_1} \ar@/^/[d]^-{\varphi^\wedge_{\rig,\mathscr{F}}(r)} \ar@/_/[d]_-{\varphi^\wedge_{\lef,\mathscr{F}}(r)} & \hDel_w \ar@/^/[d]^-{\varphi^\wedge_{\rig,\hDel_w}(r)} \ar@/_/[d]_-{\varphi^\wedge_{\lef,\hDel_w}(r)} \\
  \hDel_w \ar[r]^-{f_2} & \mathscr{F} \ar[r]^-{f_1} & \hDel_w 
  }
 \]
By Lemma~\ref{lem:properties-hD-hN}, if $r \neq 0$ then $\varphi^\wedge_{\rig,\hDel_w}(r)$ is injective. Hence 
\begin{equation}
\label{eqn:monodromy-injective}
 \varphi^\wedge_{\rig,\mathscr{F}}(r) \text{ is injective (in particular, nonzero) if $r \neq 0$.}
\end{equation}

On the other hand, using Lemma~\ref{lem:mondromy-leftright-hDel} we see that
\[
 f_1 \circ \varphi^\wedge_{\rig,\mathscr{F}}(r) = \varphi^\wedge_{\rig,\hDel_w}(r) \circ f_1 = \varphi^\wedge_{\lef,\hDel_w}(w(r)) \circ f_1 = f_1 \circ \varphi^\wedge_{\lef,\mathscr{F}}(w(r)),
\]
which implies that $\varphi^\wedge_{\rig,\mathscr{F}}(r) = \varphi^\wedge_{\lef,\mathscr{F}}(w(r))$ since $f_1$ is injective, and that
\[
 \varphi^\wedge_{\rig,\mathscr{F}}(r) \circ f_2 = f_2 \circ \varphi^\wedge_{\rig,\hDel_y}(r) = f_2 \circ \varphi^\wedge_{\lef,\hDel_y}(y(r)) = \varphi^\wedge_{\lef,\mathscr{F}}(y(r)) \circ f_2,
\]
which implies that $\varphi^\wedge_{\rig,\mathscr{F}}(r) = \varphi^\wedge_{\lef,\mathscr{F}}(y(r))$ since $f_2$ is surjective. Comparing these two equations, we deduce that
$\varphi^\wedge_{\rig,\mathscr{F}}(r) = \varphi^\wedge_{\rig,\mathscr{F}}(y^{-1}w(r))$, or in other words that
\[
 \varphi^\wedge_{\rig,\mathscr{F}}(r - y^{-1}w(r))=0,
\]
for any $r \in R_T^\wedge$. In view of~\eqref{eqn:monodromy-injective}, this implies that $r=y^{-1} w(r)$ for any $r \in R_T^\wedge$, hence that $y=w$.
\end{proof}

As a consequence we obtain the following claim.

\begin{cor}
\label{cor:gr-faithful}
 The functor $\gr$ is faithful.
\end{cor}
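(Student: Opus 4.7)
Since $\gr$ is additive, it suffices to show that any morphism $f : \hTil \to \hTil'$ in $\hT_U(X \quot T, \bk)$ with $\gr(f) = 0$ is itself zero. Enumerate $W = \{w_1 \prec w_2 \prec \cdots \prec w_N\}$ according to the chosen total order, and write $F_i := \hTil_{\succeq w_i}$, $F'_i := \hTil'_{\succeq w_i}$, with the convention $F_{N+1} = F'_{N+1} = 0$. By the functoriality of the filtration explained before the statement, $f$ satisfies $f(F_i) \subset F'_i$ for every $i$. The plan is to prove by ascending induction on $k \in \{0,1,\ldots,N+1\}$ that $f(\hTil) \subset F'_k$; the case $k = N+1$ gives $f = 0$, while $k = 0$ is trivial.

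For the inductive step, assume $f(\hTil) \subset F'_k$ and consider the composition
\[
\phi : \hTil \xrightarrow{f} F'_k \twoheadrightarrow F'_k / F'_{k+1} = \gr_{w_k}(\hTil'),
\]
which is a direct sum of copies of $\hDel_{w_k}$. The goal is $\phi = 0$. Starting from the top of the filtration $F_N \subset F_{N-1} \subset \cdots \subset F_0 = \hTil$, I proceed by descending induction on $j$ to show that $\phi$ factors through $\hTil / F_j$. The subquotient $F_j / F_{j+1} = \gr_{w_j}(\hTil)$ is a direct sum of copies of $\hDel_{w_j}$. If $j \neq k$, then Lemma~\ref{lem:Hom-vanishing-hDel} gives $\Hom(\hDel_{w_j}, \hDel_{w_k}) = 0$, so the restriction of $\phi$ (viewed after factoring through $\hTil/F_{j+1}$) to $F_j/F_{j+1}$ vanishes, and $\phi$ factors through $\hTil/F_j$. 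If $j = k$, the restriction to $F_k/F_{k+1} = \gr_{w_k}(\hTil)$ is exactly the $w_k$-component of $\gr(f)$, which vanishes by hypothesis. Repeating this step all the way down to $j = 0$ shows that $\phi$ factors through $\hTil/F_0 = 0$, hence $\phi = 0$ and $f(\hTil) \subset F'_{k+1}$.

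There is no real obstacle beyond a careful bookkeeping of the filtration; the entire content is the Hom-vanishing of Lemma~\ref{lem:Hom-vanishing-hDel} together with the definition of $\gr$. The only conceptual point to check is that the induced map $F_{j+1} \to \hTil/F_{j+1}$ is legitimately zero in the abelian category $\hP_U(X \quot T, \bk)$, so that short exact sequences $0 \to F_{j+1}/F_{j+2} \to \hTil/F_{j+2} \to \hTil/F_{j+1} \to 0$ and the resulting left-exact $\Hom(-,\gr_{w_k}(\hTil'))$ sequences can be used to organize the descending induction; but this is immediate since $\hTil$ and its quotients are perverse.
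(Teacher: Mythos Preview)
Your proof is correct and relies on the same key ingredient as the paper's, namely Lemma~\ref{lem:Hom-vanishing-hDel} combined with the functorial standard filtration. The paper argues by contrapositive---choosing $w$ maximal with $f(\hTil_{\succeq w})\neq 0$ and showing $\gr_w(f)\neq 0$ via the injectivity of $\Hom(\gr_w(\hTil),\hTil'_{\succeq w})\to\Hom(\gr_w(\hTil),\gr_w(\hTil'))$---whereas you run the same filtration argument directly as a double induction; the two organizations are equivalent (and your steps for $j>k$ are actually automatic from $f(F_{k+1})\subset F'_{k+1}$, so only the cases $j\leq k$ require work).
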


\begin{proof}
 Let $\hTil,\hTil'$ be in $\hT_{U}(X \quot T, \bk)$, and let $f : \hTil \to \hTil'$ be a nonzero morphism. Let $w \in W$ be an element which is maximal with respect to the property that $f(\hTil_{\succeq w}) \neq 0$. Then $f$ induces a nonzero morphism $\tilde{f}_w : \gr_w(\hTil) \to \hTil'$. We have $f(\hTil_{\succeq w}) \subset \hTil'_{\succeq w}$, hence $\tilde{f}_w$ factors through a nonzero morphism $\gr_w(\hTil) \to \hTil_{\succeq w}'$. Lemma~\ref{lem:Hom-vanishing-hDel} implies that the natural morphism
 \[
  \Hom(\gr_w(\hTil), \hTil_{\succeq w}') \to \Hom(\gr_w(\hTil), \gr_w(\hTil'))
 \]
is injective; hence $\gr_w(f) \neq 0$, so that a fortiori $\gr(f) \neq 0$.
\end{proof}

Note that, by functoriality of monodromy, for any $r \in R^\wedge_T$ we have
\begin{equation}
 \label{eqn:monodromy-gr}
 \varphi_{\lef,\gr(\hTil)}^\wedge(r) = \gr \bigl( \varphi_{\lef,\hTil}^\wedge(r) \bigr), \qquad \varphi_{\rig,\gr(\hTil)}^\wedge(r) = \gr \bigl( \varphi_{\rig,\hTil}^\wedge(r) \bigr).
\end{equation}

\subsection{Monodromy and coinvariants}

\begin{prop}
\label{prop:mon-tilting}
  For any $\hTil$ in $\hT_{U}(X \quot T, \bk)$, the morphism $\varphi^\wedge_{\lef\rig,\hTil}$ factors through an algebra morphism
  \[
   R_T^\wedge \otimes_{(R_T^\wedge)^W} R_T^\wedge \to \End(\hTil).
  \]
\end{prop}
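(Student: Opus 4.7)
The plan is to leverage faithfulness of the associated graded functor (Corollary~\ref{cor:gr-faithful}) to reduce the claim on $\hTil$ to the analogous claim on a direct sum of standard objects $\hDel_w$, where the comparison of left and right monodromy is already governed by Lemma~\ref{lem:mondromy-leftright-hDel}.

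Concretely, what must be shown is that $\varphi^\wedge_{\lef,\hTil}(r) = \varphi^\wedge_{\rig,\hTil}(r)$ in $\End(\hTil)$ for every $r \in (R_T^\wedge)^W$; this is exactly the relation that guarantees $\varphi^\wedge_{\lef\rig,\hTil}$ kills the kernel of the surjection $R_T^\wedge \otimes_\bk R_T^\wedge \twoheadrightarrow R_T^\wedge \otimes_{(R_T^\wedge)^W} R_T^\wedge$. First I would apply $\gr$ to the difference $\varphi^\wedge_{\lef,\hTil}(r) - \varphi^\wedge_{\rig,\hTil}(r)$. By Corollary~\ref{cor:gr-faithful} it suffices to check the vanishing after $\gr$, and by the compatibility~\eqref{eqn:monodromy-gr} this becomes the vanishing of $\varphi^\wedge_{\lef,\gr(\hTil)}(r) - \varphi^\wedge_{\rig,\gr(\hTil)}(r)$ on $\gr(\hTil)$.

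Next, since $\gr(\hTil) = \bigoplus_{w \in W} \gr_w(\hTil)$ is a direct sum of copies of the various $\hDel_w$, and monodromy is compatible with direct sum decompositions, the vanishing reduces to checking, for each $w \in W$, that
\[
\varphi^\wedge_{\lef,\hDel_w}(r) = \varphi^\wedge_{\rig,\hDel_w}(r) \quad \text{for every } r \in (R_T^\wedge)^W.
\]
By Lemma~\ref{lem:mondromy-leftright-hDel}, one has $\varphi^\wedge_{\rig,\hDel_w}(r) = \varphi^\wedge_{\lef,\hDel_w}(w(r))$, and since $r$ is $W$-invariant we have $w(r) = r$, so the two morphisms coincide tautologically.

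There is no real obstacle here: the proposition is essentially the formal combination of faithfulness of $\gr$, the functoriality~\eqref{eqn:monodromy-gr}, and the explicit intertwiner in Lemma~\ref{lem:mondromy-leftright-hDel}. The only mild care is in the bookkeeping that identifies the kernel of $R_T^\wedge \otimes_\bk R_T^\wedge \twoheadrightarrow R_T^\wedge \otimes_{(R_T^\wedge)^W} R_T^\wedge$ with the ideal generated by $r \otimes 1 - 1 \otimes r$ for $r \in (R_T^\wedge)^W$, so that the commuting-squares equality $\varphi^\wedge_{\lef,\hTil}(r) = \varphi^\wedge_{\rig,\hTil}(r)$ is what is needed (and this uses the commutation of left and right monodromy already built into the definition of $\varphi^\wedge_{\lef\rig,\hTil}$ in~\S\ref{ss:lr-monodromy}).
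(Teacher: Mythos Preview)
Your proposal is correct and matches the paper's proof essentially verbatim: reduce to the equality $\varphi^\wedge_{\lef,\hTil}(r) = \varphi^\wedge_{\rig,\hTil}(r)$ for $r \in (R_T^\wedge)^W$, pass to $\gr$ using Corollary~\ref{cor:gr-faithful} and~\eqref{eqn:monodromy-gr}, and conclude via Lemma~\ref{lem:mondromy-leftright-hDel} on each $\hDel_w$. The only addition you make is the explicit remark about the kernel of $R_T^\wedge \otimes_\bk R_T^\wedge \twoheadrightarrow R_T^\wedge \otimes_{(R_T^\wedge)^W} R_T^\wedge$, which the paper leaves implicit.
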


\begin{proof}
 We have to prove that $\varphi_{\lef,\hTil}^\wedge(r) = \varphi_{\rig,\hTil}^\wedge(r)$ for any $r \in (R_T^\wedge)^W$. Since the functor $\gr$ is faithful (see Corollary~\ref{cor:gr-faithful}), for this it suffices to prove that $\gr(\varphi_{\lef,\hTil}^\wedge(r)) = \gr(\varphi_{\rig,\hTil}^\wedge(r))$. This equality follows from~\eqref{eqn:monodromy-gr} and Lemma~\ref{lem:mondromy-leftright-hDel}, since $\gr(\hTil)$ is a direct sum of copies of objects $\hDel_w$.
\end{proof}

\subsection{The case of \texorpdfstring{$\hTil_{s}$}{Ts}}
\label{ss:Ts}

In this subsection we 
fix a simple reflection $s$, and denote by $\alpha$ the associated simple root.

We consider the closure $\overline{Y_s} = Y_s \sqcup Y_e$. This subvariety of $Y$ is isomorphic to $\mathbb{P}^1$, in such a way that $Y_e$ identifies with $\{0\}$. The structure of the category $\Perv_{U}(\overline{Y_s}, \bk)$ of $\bk$-perverse sheaves on $\overline{Y_s}$ constructible with respect to the stratification $\overline{Y_s} = Y_s \sqcup Y_e$ is well known: this category admits 5 indecomposable objects (up to isomorphism):
\begin{itemize}
\item two simple objects $\IC_e$ and $\IC_s$;
\item two indecomposable objects of length $2$, namely $\Delta_s$ and $\nabla_s$, which fit into nonsplit exact sequences
\[
\IC_e \hookrightarrow \Delta_s \twoheadrightarrow \IC_s, \qquad \IC_s \hookrightarrow \nabla_s \twoheadrightarrow \IC_e;
\]
\item one indecomposable object of length $3$, namely the tilting object $\mathscr{T}_s$, which fits into nonsplit exact sequences
\[
\Delta_s \hookrightarrow \mathscr{T}_s \twoheadrightarrow \IC_e, \qquad \IC_e \hookrightarrow \mathscr{T}_s \twoheadrightarrow \nabla_s.
\]
\end{itemize}

We now fix a cocharacter $\lambda : \C^\times \to T$, and consider the full subcategory $\Perv_{\C^\times,U}(\overline{Y_s}, \bk) \subset \Perv_{U}(\overline{Y_s}, \bk)$ consisting of perverse sheaves which are $\C^\times$-equi\-variant for the action determined by $z \cdot xB = \lambda(z) x B$.

\begin{lem}
\label{lem:Ts-equivariance}
If the image of $\langle \lambda,\alpha \rangle$ in $\bk$ is nonzero, then $\mathscr{T}_s$ does not belong to $\Perv_{\C^\times,U}(\overline{Y_s}, \bk)$.
\end{lem}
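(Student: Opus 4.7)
I plan to argue by contradiction: assume $\mathscr{T}_s$ admits a $\C^\times$-equivariant structure for the $\lambda$-action, and show that $\langle\lambda,\alpha\rangle$ must then vanish in $\bk$. The key point is that equivariance forces the $\lambda$-monodromy $\varphi^\lambda_{\mathscr{T}_s}\in\End(\mathscr{T}_s)$ to be the identity, but an explicit computation (by lifting to the completed category) will exhibit $\varphi^\lambda_{\mathscr{T}_s}-\id$ as a nonzero scalar multiple of $\langle\lambda,\alpha\rangle$ times a distinguished nilpotent endomorphism.

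First I would observe that both $\Delta_s$ and $\IC_e$ are manifestly $\C^\times$-equivariant (the first because $j_!$ preserves equivariance from the $\C^\times$-stable open stratum $Y_s$, the second because $eB$ is a fixed point), so their $\lambda$-monodromies are trivial. Applying the monodromy functor to $\Delta_s\hookrightarrow\mathscr{T}_s\twoheadrightarrow\IC_e$ then shows that $\varphi^\lambda_{\mathscr{T}_s} = \id + N$ with $N$ nilpotent and factoring as $\mathscr{T}_s\twoheadrightarrow\IC_e\xrightarrow{N'}\Delta_s\hookrightarrow\mathscr{T}_s$ for some $N'\in\Hom(\IC_e,\Delta_s)$. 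An adjunction and triangle computation (applying $i_e^!$ to $j_!\underline{\bk}_{Y_s}\to\underline{\bk}_{\overline{Y_s}}\to i_{e,*}\underline{\bk}$, whose second term has costalk canonically identified with the cotangent line at $eB$ shifted by $[-2]$) yields $\Hom(\IC_e,\Delta_s)\cong\bk$, reducing the problem to showing $N'\neq 0$ under the hypothesis.

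To compute $N'$, I would lift to the completed category via $\hTil_s$ with $\pi_\dag\hTil_s=\mathscr{T}_s$ (Proposition~\ref{prop:classification-tiltings}), and consider the element $e:=\varphi^\wedge_{\lef,\hTil_s}([\lambda])-\varphi^\wedge_{\rig,\hTil_s}([\lambda])$. By Lemma~\ref{lem:mondromy-leftright-hDel},
\[
\gr(e)=(0,\;[s\lambda]-[\lambda])\in\End(\hDel_e)\oplus\End(\hDel_s)=R_T^\wedge\oplus R_T^\wedge,
\]
and by faithfulness of $\gr$ (Corollary~\ref{cor:gr-faithful}), $e$ maps $\hTil_s$ into the subobject $\hDel_s\subset\hTil_s$ and acts there by right-monodromy multiplication by $[s\lambda]-[\lambda]$. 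Applying $\pi_\dag$ (which kills right-monodromy by Lemma~\ref{lem:monodromy-augmentation}) gives $\pi_\dag(e) = \varphi^\wedge_{\lef,\mathscr{T}_s}([\lambda])-\id$, and this morphism vanishes on the subobject $\Delta_s\subset\mathscr{T}_s$ and hence factors as $\mathscr{T}_s\twoheadrightarrow\IC_e\xrightarrow{N'}\Delta_s\hookrightarrow\mathscr{T}_s$ with $N'$ read off from the reduction of $[s\lambda]-[\lambda]$ modulo $(\mathfrak{m}_T^\wedge)^2$. Using $s\lambda=\lambda-\langle\lambda,\alpha\rangle\alpha^\vee$ together with the canonical identification $\mathfrak{m}_T^\wedge/(\mathfrak{m}_T^\wedge)^2\cong\bk\otimes_\Z X_*(T)$ (given by $[\mu]-1\mapsto\mu$), this reduction equals $-\langle\lambda,\alpha\rangle\cdot\alpha^\vee$, which is nonzero in $\bk\otimes X_*(T)$ exactly when $\langle\lambda,\alpha\rangle\neq 0$ in $\bk$. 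Under the equivariance assumption $\pi_\dag(e)=0$, forcing $N'=0$ and hence $\langle\lambda,\alpha\rangle=0$ in $\bk$, contradicting our hypothesis.

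The main obstacle is making this ``reading off'' step rigorous: identifying $N'$ with the linear part of $[s\lambda]-[\lambda]$ via a specific nonzero scalar identification $\Hom(\IC_e,\Delta_s)\cong\bk$ coming from the cotangent line at $eB$. This passes through the interplay of right-monodromy and the standard filtration in $\End(\hTil_s)$, and becomes transparent once one has the Soergel-bimodule description $\End(\hTil_s)\cong R_T^\wedge\otimes_{(R_T^\wedge)^s}R_T^\wedge$ developed later in the paper, but it takes some care to establish directly at this stage.
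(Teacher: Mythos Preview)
Your approach is genuinely different from the paper's. The paper never touches the completed category here: it argues that $\mathscr{T}_s$ is the unique nonsplit extension of $\IC_e$ by $\Delta_s$, so it suffices to show the equivariant extension group $\Ext^1_{\Perv_{\C^\times,U}}(\IC_e,\Delta_s)$ vanishes. This is done by a direct long exact sequence computation in the $\C^\times$-equivariant derived category of $\overline{Y_s}\cong\mathbb{P}^1$, where the relevant connecting map is identified with multiplication by $\langle\lambda,\alpha\rangle$ on $\bk$. Your route --- lift to $\hTil_s$, use left/right monodromy and the faithful functor $\gr$ --- is more in the spirit of the later sections, but it hits a real obstacle exactly where you flag it.

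The gap is the ``reading off'' step. You have $e'\in\Hom(\hTil_s,\hDel_s)$ with $e'|_{\hDel_s}=u:=[s\lambda]-[\lambda]$, and you need $\pi_\dag(e')\neq 0$, which is equivalent to $e'$ being a generator of the free rank-$1$ $R_T^\wedge$-module $\Hom(\hTil_s,\hDel_s)$. But the restriction map $\Hom(\hTil_s,\hDel_s)\hookrightarrow\End(\hDel_s)=R_T^\wedge$ is \emph{not} surjective: its image is $cR_T^\wedge$ for some non-unit $c$ (indeed, reducing modulo $\mathfrak{m}$, the unique nonzero map $\mathscr{T}_s\to\Delta_s$ factors as $\mathscr{T}_s\twoheadrightarrow\IC_e\hookrightarrow\Delta_s$ and hence restricts to zero on $\Delta_s\subset\mathscr{T}_s$, so $\varepsilon(c)=0$). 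Thus knowing $e'\mapsto u$ under restriction does not by itself tell you that $e'$ generates: you need to pin down $c$ and compare it with $u$. Your heuristic that $N'$ is ``the reduction of $u$ modulo $(\mathfrak{m}_T^\wedge)^2$'' is only correct once you know $c$ has nonzero linear part, which is precisely what is at stake. The Soergel-bimodule description $\End(\hTil_s)\cong R_T^\wedge\otimes_{(R_T^\wedge)^s}R_T^\wedge$ would indeed make this transparent, but in the paper that description (Lemma~\ref{lem:V-Ts}) is deduced from Corollary~\ref{cor:mon-Ts}, which in turn rests on the very lemma you are trying to prove --- so invoking it here is circular. The paper's equivariant-cohomology argument sidesteps this entirely.
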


\begin{proof}
Let $B^+ \subset G$ be the Borel subgroup opposite to $B$ with respect to $T$, let $U^+$ be its unipotent radical, and let $U_s^+ \subset U^+$ be the root subgroup associated with $s$. If we set $Y_s^\circ := \overline{Y_s} \smallsetminus \{sB\}$, then the map $u \mapsto u \cdot B$ induces an isomorphism $U_s^+ \simto Y_s^\circ$. In particular, this open subset is $\C^\times$-stable, with an action of $\C^\times$ via the character $\langle \lambda,\alpha \rangle$.

The object $\mathscr{T}_s$ is the unique nonsplit extension of $\IC_e$ by $\Delta_s$ in $\Perv_{U}(\overline{Y_s}, \bk)$; hence to conclude it suffices to show that $\Ext^1_{\Perv_{\C^\times,U}(\overline{Y_s}, \bk)}(\IC_e,\Delta_s)=0$ if the image of $\langle \lambda,\alpha \rangle$ in $\bk$ is nonzero. Note that we have
\[
\Ext^1_{\Perv_{\C^\times,U}(\overline{Y_s}, \bk)}(\IC_e,\Delta_s) = \Hom_{\Db_{\C^\times,U}(\overline{Y_s}, \bk)}(\IC_e,\Delta_s[1])
\]
where $\Db_{\C^\times,U}(\overline{Y_s}, \bk)$ is the $\C^\times$-equivariant constructible derived category in the sense of Bern\-stein--Lunts.
Let us consider the long exact sequence
\begin{multline*}
\Hom_{\Db_{\C^\times,U}(\overline{Y_s}, \bk)}(\IC_e,\IC_e[1]) \to \Hom_{\Db_{\C^\times,U}(\overline{Y_s}, \bk)}(\IC_e,\Delta_s[1]) \\
\to \Hom_{\Db_{\C^\times,U}(\overline{Y_s}, \bk)}(\IC_e,\IC_s[1]) \to \Hom_{\Db_{\C^\times,U}(\overline{Y_s}, \bk)}(\IC_e,\IC_e[2])
\end{multline*}
obtained from the short exact sequence $\IC_e \hookrightarrow \Delta_s \twoheadrightarrow \IC_s$. Here the first, resp.~four\-th, term identifies with the degree-$1$, resp.~degree-$2$, $\C^\times$-equivariant cohomology of the point. In particular, this term vanishes, resp.~is canonically isomorphic to $\bk$. Now, we observe that restriction induces an isomorphism
\[
\Hom_{\Db_{\C^\times,U}(\overline{Y_s}, \bk)}(\IC_e,\IC_s[1]) \simto \Hom_{\Db_{\C^\times,U}(Y_s^\circ, \bk)}(\underline{\bk}_{Y_e},\underline{\bk}_{Y_s^\circ}[2]).
\]
The right-hand side is $1$-dimensional, with a basis consisting of the adjunction morphism associated with the embedding $Y_e \hookrightarrow Y_s^\circ$. Moreover, in view of the classical description of the $\C^\times$-equivariant cohomology of the point recalled e.g.~in~\cite[\S 1.10]{lusztig}, the map
\[
\Hom_{\Db_{\C^\times,U}(\overline{Y_s}, \bk)}(\IC_e,\IC_s[1]) \to \Hom_{\Db_{\C^\times,U}(\overline{Y_s}, \bk)}(\IC_e,\IC_e[2])
\]
considered above identifies with the map $\bk \to \bk$ given by multiplication by $\langle \lambda,\alpha \rangle$. Our assumption is precisely that this map is injective; we deduce that the vector space $\Ext^1_{\Perv_{\C^\times,U}(\overline{Y_s}, \bk)}(\IC_e,\Delta_s)$ vanishes, as claimed.
\end{proof}

\begin{cor}
\label{cor:mon-Ts}
Assume that there exists $\lambda \in X_*(T)$ such that the image of $\langle \lambda, \alpha \rangle$ in $\bk$ is nonzero. Then in the special case $\hTil=\hTil_s$, the morphism
   \[
   R_T^\wedge \otimes_{(R_T^\wedge)^W} R_T^\wedge \to \End(\hTil_s)
  \]
  of Proposition~{\rm \ref{prop:mon-tilting}} is surjective.
\end{cor}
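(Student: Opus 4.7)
The plan is to apply Nakayama's lemma to reduce the problem to a surjectivity assertion about the (much smaller) endomorphism ring $\End(\mathscr{T}_s)$, and to derive this assertion from Lemma~\ref{lem:Ts-equivariance}. Let $\mu : R_T^\wedge \otimes_{(R_T^\wedge)^W} R_T^\wedge \to \End(\hTil_s)$ denote the morphism of Proposition~\ref{prop:mon-tilting}, and let $Q$ be its cokernel. By Corollary~\ref{cor:Hom-fg}\eqref{it:Hom-fg}, $\End(\hTil_s)$ and a fortiori $Q$ are finitely generated $R_T^\wedge$-modules via the right monodromy action. Lemma~\ref{lem:tiltings}\eqref{it:tilting-Hom} identifies $\bk \otimes_{R_T^\wedge,\rig} \End(\hTil_s)$ with $\End(\mathscr{T}_s)$, and since $\pi_\dag$ carries right monodromy into the augmentation (Lemma~\ref{lem:monodromy-augmentation}) while preserving left monodromy (see \S\ref{ss:lr-monodromy}), applying $\bk \otimes_{R_T^\wedge,\rig}(-)$ to $\mu$ yields the map
\[
R_T^\wedge \otimes_{(R_T^\wedge)^W} \bk \to \End(\mathscr{T}_s), \qquad r \otimes 1 \mapsto \varphi^\wedge_{\lef,\mathscr{T}_s}(r).
\]
By Nakayama, it thus suffices to show that $\varphi^\wedge_{\lef,\mathscr{T}_s} : R_T^\wedge \to \End(\mathscr{T}_s)$ is surjective.

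I would first identify $\End(\mathscr{T}_s)$. The exact sequences in \S\ref{ss:Ts} show that $\mathscr{T}_s$ is an indecomposable perverse sheaf of Loewy length $3$, with top and socle both isomorphic to $\IC_e$ and middle layer $\IC_s$. Applying $\Hom(\mathscr{T}_s,-)$ to $\IC_e \hookrightarrow \mathscr{T}_s \twoheadrightarrow \nabla_s$, and using that $\mathscr{T}_s$ admits a standard filtration so that $\Ext^1(\mathscr{T}_s, \nabla_w) = 0$, one computes $\dim_\bk \End(\mathscr{T}_s) = \dim_\bk \Hom(\mathscr{T}_s, \IC_e) + \dim_\bk \Hom(\mathscr{T}_s, \nabla_s) = 1 + 1 = 2$. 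Since $\End(\mathscr{T}_s)$ is local with residue field $\bk$ (by indecomposability of $\mathscr{T}_s$) and of $\bk$-dimension $2$, it is isomorphic to $\bk[\epsilon]/(\epsilon^2)$; in particular, its only proper unital $\bk$-subalgebra is $\bk \cdot \id$.

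Finally, after possibly replacing $\lambda$ by its primitive factor (which still satisfies the hypothesis since $\langle\lambda,\alpha\rangle$ is a multiple of $\langle \lambda_0, \alpha\rangle$), Lemma~\ref{lem:Ts-equivariance} asserts that $\mathscr{T}_s$ is not $\C^\times$-equivariant for the action defined by $\lambda$. Using the generalization of Lemma~\ref{lem:mon-equiv} to an arbitrary torus (alluded to in the remark following it), this is equivalent to saying that the $\C^\times$-monodromy morphism $R_{\C^\times}^\wedge \to \End(\mathscr{T}_s)$ does not factor through $\varepsilon^\wedge_{\C^\times}$; by Lemma~\ref{lem:monodromy-isom} this morphism is the composition of $\lambda_* : R_{\C^\times}^\wedge \to R_T^\wedge$ with $\varphi^\wedge_{\lef,\mathscr{T}_s}$, so $\varphi^\wedge_{\lef,\mathscr{T}_s}$ itself does not factor through $\varepsilon^\wedge_T$. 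Its image therefore strictly contains $\bk \cdot \id$ and, by the previous paragraph, must equal all of $\End(\mathscr{T}_s)$, as required. The only real subtlety to keep in mind is the bookkeeping of left- versus right-monodromy actions when setting up the Nakayama reduction; the rest is essentially formal.
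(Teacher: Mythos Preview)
Your proposal is correct and follows essentially the same approach as the paper: a Nakayama reduction via Lemma~\ref{lem:tiltings}\eqref{it:tilting-Hom} to surjectivity of $\varphi^\wedge_{\lef,\mathscr{T}_s}$, followed by the observation that $\End(\mathscr{T}_s)$ is $2$-dimensional and that Lemma~\ref{lem:Ts-equivariance} combined with Lemma~\ref{lem:mon-equiv} forces the image to exceed $\bk\cdot\id$. The paper is slightly terser (it directly notes that $\varphi^\wedge_{\lef,\mathscr{T}_s}(\lambda)$ is unipotent and $\neq\id$, hence not a scalar), while you are a bit more explicit about the algebra structure of $\End(\mathscr{T}_s)$ and about replacing $\lambda$ by a primitive cocharacter; these are cosmetic differences.
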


\begin{proof}
 By Nakayama's lemma and Lemma~\ref{lem:tiltings}\eqref{it:tilting-Hom}, it suffices to prove that the morphism
 \[
  \varphi^\wedge_{\lef,\mathscr{T}_s} : R_T^\wedge \to \End(\mathscr{T}_s)
 \]
 of~\S\ref{ss:lr-monodromy} is surjective. Now we have $\dim(\End(\mathscr{T}_s))=2$, hence for this it suffices to prove that the image of $\varphi^\wedge_{\lef,\mathscr{T}_s}$ is not reduced to $\bk \cdot \id_{\mathscr{T}_s}$. However, if $\lambda \in X_*(T)$ is such that $\langle \lambda, \alpha \rangle \neq 0$ in $\bk$, 
 then by Lemma~\ref{lem:monodromy-isom}, Lemma~\ref{lem:mon-equiv} and Lemma~\ref{lem:Ts-equivariance}, the automorphism $\varphi^\wedge_{\lef,\mathscr{T}_s}(\lambda)$ is unipotent but not equal to $\id_{\mathscr{T}_s}$; therefore it does not belong to $\bk \cdot \id_{\mathscr{T}_s}$, and the claim is proved.
\end{proof}

\subsection{Properties of \texorpdfstring{$\mathscr{T}_{w_0}$}{Tw0}}

We finish this section with a reminder of some properties of the category $\mathscr{O}$ which are well known (at least in the case $\mathrm{char}(\bk)=0$).

The following claim is fundamental. It is proved in~\cite[Lemma in~\S 2.1]{bbm} under the assumption that $\mathrm{char}(\bk)=0$; but the arguments apply in full generality.

\begin{lem}
\label{lem:soc-top-Delta-nabla}
For any $w \in W$, the socle of the object $\Delta_w$ is $\IC_e$, and all the composition factors of $\Delta_w / \mathrm{soc}(\Delta_w)$ are of the form $\IC_v$ with $v \neq e$. Dually, the top of the object $\nabla_w$ is $\IC_e$, and all the composition factors of the kernel of the surjection $\nabla_w \to \mathrm{top}(\nabla_w)$ are of the form $\IC_v$ with $v \neq e$.
\end{lem}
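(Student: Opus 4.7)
The plan is to prove the statement about $\Delta_w$; the dual statement about $\nabla_w$ then follows from Verdier duality, which on the smooth variety $Y$ satisfies $\mathbb{D}(\Delta_w) \simeq \nabla_w$ and $\mathbb{D}(\IC_v) \simeq \IC_v$, interchanging socle and top while preserving composition factors.

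The proof reduces to three claims: (A) $\Hom(\IC_v, \Delta_w) = 0$ for all $v \neq e$; (B) the composition factor multiplicity $[\Delta_w : \IC_e]$ equals $1$; and (C) there exists a nonzero morphism $\IC_e \to \Delta_w$. Together these force $\mathrm{soc}(\Delta_w) \simeq \IC_e$ with multiplicity exactly one and leave no $\IC_e$ in $\Delta_w / \mathrm{soc}(\Delta_w)$.

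For (A) I would split into three sub-cases. If $v \not\leq w$ in the Bruhat order, the supports $\overline{Y_v}$ and $\overline{Y_w}$ are not nested, so no embedding $\IC_v \hookrightarrow \Delta_w$ can exist. If $v = w \neq e$, a nonzero embedding $\IC_w \hookrightarrow \Delta_w$ composed with the canonical surjection $\Delta_w \twoheadrightarrow \IC_w$ is either an isomorphism---splitting $\Delta_w$ and contradicting its indecomposability---or zero, placing $\IC_w$ inside $K_w := \ker(\Delta_w \to \IC_w)$, whose composition factors are all $\IC_u$ with $u < w$, a contradiction. For the main case $v < w$ with $v \neq e$, the same composition argument shows that any $f\colon \IC_v \to \Delta_w$ factors through $K_w$. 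Noting that $\mathrm{soc}(\Delta_w) \subset K_w$ for $w \neq e$ (since for such $w$ no simple in the socle can be $\IC_w$, by the previous sub-case), one reduces to showing that $\IC_v$ is not a subobject of $K_w$. I would complete this by induction on $\ell(w)$, exploiting the iterated convolution description $\Delta_w \cong \Delta_{s_1} \star \cdots \star \Delta_{s_n}$ for a reduced expression $w = s_1 \cdots s_n$ and analysing how simple socles behave under convolution with the length-$2$ objects $\Delta_{s_i}$, using that $\IC_e$ is a unit for $\star$. The claims (B) and (C) I would treat together via a direct geometric computation of $\dim H^0(i_e^! \Delta_w)$: by adjunction this equals $\dim \Hom(\IC_e, \Delta_w)$, and by Verdier duality it equals the dimension of the top cohomology $H^{\dim Y_w}(V \cap Y_w, \bk)$ for a small neighborhood $V$ of $eB$ in $Y$. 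The local topology of the Schubert cell $Y_w$ near the identity point then yields the value $1$ for both the socle and total composition-factor multiplicities of $\IC_e$.

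The main obstacle I foresee is controlling the socle of $K_w$ through the inductive convolution step: while $\IC_e \star X \simeq X$ makes the unit property transparent, the short exact sequence $\IC_e \hookrightarrow \Delta_s \twoheadrightarrow \IC_s$ could in principle introduce simples $\IC_v$ with $v \neq e$ into the socle after iteration, and ruling this out requires a careful analysis of the filtration produced by the convolution. An alternative route bypasses convolution and computes the costalk $i_e^! \Delta_w$ inductively along a Bott--Samelson resolution of $\overline{Y_w}$, but this substitutes one technical input for another.
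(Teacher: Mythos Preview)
Your decomposition into (A), (B), (C) is correct, but there is a genuine gap in your treatment of (B) that you have not flagged. The quantity $\dim H^0(i_e^!\Delta_w)=\dim\Hom(\IC_e,\Delta_w)$ is the multiplicity of $\IC_e$ in the \emph{socle} of $\Delta_w$, not the total composition multiplicity $[\Delta_w:\IC_e]$; these agree only once the lemma is known. So even if one grants the topological claim $H^{\ell(w)}(V\cap Y_w;\bk)\cong\bk$ (which you leave unjustified), the computation yields (C) and the simplicity of the socle, but says nothing about whether further copies of $\IC_e$ sit higher in the socle filtration. The obstacle you do flag, in (A), is also real: the functor $\Delta_s\star^B(-)$ is a derived equivalence but is not t-exact (for example $\Delta_s\star^B\IC_s$ fails to be perverse), so there is no direct way to track socles through the iterated convolution.

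The argument in~\cite{bbm}, which the paper cites and which works over any coefficient field, handles both (A) and (B) through the projection $q:Y=G/B\to G/P_s$ to a minimal partial flag variety. For (A): given $v\neq e$, choose $s$ with $vs<v$; then $\overline{Y_v}=q^{-1}q(\overline{Y_v})$, so $\IC_v\cong q^*N[1]$ with $N$ perverse on $G/P_s$, and $\Hom(\IC_v,\Delta_w)\cong\Hom(N,q_*\Delta_w[-1])$. Since $q$ is proper one computes $q_*\Delta_w$ to be a standard perverse sheaf on $G/P_s$, possibly shifted by $[-1]$ (according as $ws>w$ or $ws<w$), and the $\Hom$ vanishes for t-structure reasons. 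For (B): writing $w=vs$ with $vs>v$, one has $[\Delta_w]=[\Delta_v]+[\Delta_v\star^B\IC_s]$ in the Grothendieck group; but $\Delta_v\star^B\IC_s\cong q^*q_*\Delta_v[1]$ is pulled back from $G/P_s$, so every composition factor of its perverse cohomology is some $\IC_u$ with $us<u$, and in particular $\IC_e$ never appears. Induction on $\ell(w)$ then gives $[\Delta_w:\IC_e]=1$, and together with (A) this forces $\mathrm{soc}(\Delta_w)=\IC_e$ with no further $\IC_e$ in the quotient.
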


This lemma has the following important consequence.

\begin{cor}
\label{cor:sub-quo-filtered}
If $\mathscr{F}$ is an object of $\scO$ which admits a standard filtration, then its socle is a direct sum of copies of $\IC_e$.
In other words, any nonzero subobject of $\mathscr{F}$ admits $\IC_e$ as a composition factor. Dually, if $\mathscr{F}$ is an object of $\scO$ which admits a costandard filtration, then its top is a direct sum of copies of $\IC_e$.
In other words, any nonzero quotient of $\mathscr{F}$ admits $\IC_e$ as a composition factor.
\end{cor}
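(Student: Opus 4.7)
The plan is to reduce the claim to Lemma~\ref{lem:soc-top-Delta-nabla} by a direct ``minimal step'' argument on the standard filtration (and dually for the costandard filtration). First note that the two formulations in the corollary are equivalent: since $\scO$ is a finite length abelian category, every nonzero subobject of $\mathscr{F}$ contains a simple subobject, which must lie in $\mathrm{soc}(\mathscr{F})$; so $\mathrm{soc}(\mathscr{F})$ being a direct sum of copies of $\IC_e$ is equivalent to saying that every nonzero subobject has $\IC_e$ as a composition factor.

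Now suppose $\mathscr{F}$ admits a standard filtration
\[
0 = \mathscr{F}_0 \subset \mathscr{F}_1 \subset \cdots \subset \mathscr{F}_n = \mathscr{F}
\]
with $\mathscr{F}_i / \mathscr{F}_{i-1} \cong \Delta_{w_i}$ for some $w_i \in W$. Let $S \subset \mathscr{F}$ be a simple subobject, and let $i$ be the smallest index with $S \subset \mathscr{F}_i$. Since $S \not\subset \mathscr{F}_{i-1}$ and $S$ is simple, we have $S \cap \mathscr{F}_{i-1} = 0$, so the composition
\[
S \hookrightarrow \mathscr{F}_i \twoheadrightarrow \mathscr{F}_i / \mathscr{F}_{i-1} \cong \Delta_{w_i}
\]
is injective. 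Hence $S$ embeds into $\Delta_{w_i}$, so $S \subset \mathrm{soc}(\Delta_{w_i})$; by Lemma~\ref{lem:soc-top-Delta-nabla} this forces $S \cong \IC_e$. Thus every simple subobject of $\mathscr{F}$ is $\IC_e$, proving the first claim.

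The dual statement follows by reversing arrows: if $\mathscr{F}$ admits a costandard filtration with successive quotients $\nabla_{w_i}$, and $S$ is a simple quotient of $\mathscr{F}$, let $i$ be the smallest index such that the composition $\mathscr{F}_i \hookrightarrow \mathscr{F} \twoheadrightarrow S$ is nonzero. Then this map kills $\mathscr{F}_{i-1}$, hence factors through $\mathscr{F}_i/\mathscr{F}_{i-1} \cong \nabla_{w_i}$, exhibiting $S$ as a simple quotient of $\nabla_{w_i}$; by the dual part of Lemma~\ref{lem:soc-top-Delta-nabla}, $S \cong \IC_e$. There is no real obstacle here --- the whole point is that Lemma~\ref{lem:soc-top-Delta-nabla} was designed precisely to make this ``minimal step'' argument work --- so the proof is essentially a one-line consequence of the lemma together with the standard bookkeeping for filtrations.
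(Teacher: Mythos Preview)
Your argument is correct and is exactly the standard way to derive the corollary from Lemma~\ref{lem:soc-top-Delta-nabla}; the paper itself gives no explicit proof, simply stating the corollary as an immediate consequence of that lemma. Your ``minimal step'' bookkeeping is precisely what is implicit in the paper's presentation.
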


To finish this section we recall the main properties of the object $\mathscr{T}_{w_0}$ that we will need in Section~\ref{sec:main-thm}.

\begin{lem} \phantomsection
\label{lem:properties-Tw0}
 \begin{enumerate}
  \item 
  \label{it:Tw0-1}
  For any $w \in W$ we have $(\mathscr{T}_{w_0} : \Delta_w)=1$.
  \item
  \label{it:Tw0-2}
  The object $\mathscr{T}_{w_0}$ is both the projective cover and the injective hull of $\IC_e$ in $\scO$.
 \end{enumerate}
\end{lem}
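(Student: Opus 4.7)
The central tool is Verdier self-duality: Verdier duality $\mathbb{D}$ is an exact anti-autoequivalence of $\scO$ that swaps $\Delta_w$ with $\nabla_w$, fixes each $\IC_w$, and preserves every indecomposable tilting object (since $\mathbb{D}$ preserves supports, interchanges standard and costandard filtrations, and fixes the ``highest weight'' label). In particular $\mathbb{D}(\mathscr{T}_{w_0}) \cong \mathscr{T}_{w_0}$, whence $(\mathscr{T}_{w_0} : \Delta_w) = (\mathscr{T}_{w_0} : \nabla_w)$ for every $w \in W$, and $\mathscr{T}_{w_0}$ is projective in $\scO$ if and only if it is injective.

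I will prove (2) first and deduce (1) from it. For (2), let $P_e$ denote the projective cover of $\IC_e$ in the highest weight category $\scO$. Corollary~\ref{cor:sub-quo-filtered} applied to the costandard filtration of $\mathscr{T}_{w_0}$ shows that $\mathrm{top}(\mathscr{T}_{w_0}) \cong \IC_e^{\oplus m}$ for some $m \geq 1$. Once $\mathscr{T}_{w_0}$ is shown to be projective, the general theory of projective covers in a finite-length category with enough projectives forces $\mathscr{T}_{w_0} \cong P_e^{\oplus m}$, and the indecomposability of $\mathscr{T}_{w_0}$ forces $m = 1$, so $\mathscr{T}_{w_0} \cong P_e$. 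Applying $\mathbb{D}$ then also yields $\mathscr{T}_{w_0} \cong I_e$, the injective hull of $\IC_e$, completing (2).

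The main step is therefore to establish $\Ext^1_{\scO}(\mathscr{T}_{w_0}, \IC_v) = 0$ for every $v \in W$. Because $\mathscr{T}_{w_0}$ admits a standard filtration, $\Ext^k_{\scO}(\mathscr{T}_{w_0}, \nabla_v) = 0$ for all $k \geq 1$ and all $v$; the case $v = e$ of the desired vanishing is then immediate, since $\IC_e = \nabla_e$. For $v \neq e$, I induct on $\ell(v)$ using the short exact sequence $\IC_v \hookrightarrow \nabla_v \twoheadrightarrow Q_v$, whose cokernel $Q_v$ has composition factors $\IC_u$ only for $u < v$ in the Bruhat order, together with the long exact sequences obtained by applying $\Hom_{\scO}(\mathscr{T}_{w_0}, -)$, following the strategy of~\cite[\S 2.1]{bbm}. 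For (1), once $\mathscr{T}_{w_0} \cong P_e$ is established, BGG reciprocity in the highest weight category $\scO$ gives
\[
(\mathscr{T}_{w_0} : \Delta_v) = (P_e : \Delta_v) = [\nabla_v : \IC_e].
\]
By Lemma~\ref{lem:soc-top-Delta-nabla}, the top of $\nabla_v$ is a single copy of $\IC_e$ and no other composition factor of $\nabla_v$ is isomorphic to $\IC_e$; hence $[\nabla_v : \IC_e] = 1$, giving $(\mathscr{T}_{w_0} : \Delta_v) = 1$ for every $v$. The main obstacle is the projectivity step: the simultaneous inductive vanishing of all higher $\Ext^k_{\scO}(\mathscr{T}_{w_0}, \IC_v)$ ultimately hinges on the special status of $\IC_e$ as the unique closed-stratum simple, which simultaneously coincides with a standard, a costandard, and the single occurrence in the top (respectively, socle) of every $\nabla_v$ (respectively, $\Delta_v$).
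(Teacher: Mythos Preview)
Your inductive argument for $\Ext^{\geq 1}_{\scO}(\mathscr{T}_{w_0},\IC_v)=0$ does not close at $k=1$. Write $m:=\dim\Hom(\mathscr{T}_{w_0},\IC_e)=(\mathscr{T}_{w_0}:\Delta_e)$. For $v\neq e$, Corollary~\ref{cor:sub-quo-filtered} gives $\Hom(\mathscr{T}_{w_0},\IC_v)=0$, so the long exact sequence attached to $\IC_v\hookrightarrow\nabla_v\twoheadrightarrow Q_v$ reads
\[
0 \longrightarrow \Hom(\mathscr{T}_{w_0},\nabla_v) \longrightarrow \Hom(\mathscr{T}_{w_0},Q_v) \longrightarrow \Ext^1(\mathscr{T}_{w_0},\IC_v) \longrightarrow 0.
\]
Your inductive hypothesis (vanishing of $\Ext^{\geq 1}(\mathscr{T}_{w_0},\IC_u)$ for all $u<v$) makes $\Hom(\mathscr{T}_{w_0},-)$ additive on the composition series of $Q_v$; since $[Q_v:\IC_e]=1$ and $\Hom(\mathscr{T}_{w_0},\IC_u)=0$ for $u\neq e$, this gives $\dim\Hom(\mathscr{T}_{w_0},Q_v)=m$. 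As $\dim\Hom(\mathscr{T}_{w_0},\nabla_v)=(\mathscr{T}_{w_0}:\Delta_v)$, one obtains
\[
\dim\Ext^1(\mathscr{T}_{w_0},\IC_v)=m-(\mathscr{T}_{w_0}:\Delta_v).
\]
So the induction goes through only if $(\mathscr{T}_{w_0}:\Delta_v)=m$ for every $v$; combined with $(\mathscr{T}_{w_0}:\Delta_{w_0})=1$ this forces $m=1$ and all multiplicities equal to $1$, i.e.\ precisely statement~\eqref{it:Tw0-1}. Your derivation of~\eqref{it:Tw0-2} therefore presupposes~\eqref{it:Tw0-1}, and the argument as written is circular.

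The strategy in~\cite[\S 2.1]{bbm} that you invoke actually runs in the opposite direction: it starts from the projective $P_e$, for which BGG reciprocity together with Lemma~\ref{lem:soc-top-Delta-nabla} gives $(P_e:\Delta_v)=[\nabla_v:\IC_e]=1$ \emph{before} any $\Ext$ computation, and then shows $P_e$ is tilting. The paper's references follow the same order: \cite[Lemma~5.25]{modrap1} establishes~\eqref{it:Tw0-1} first, and \cite[Proposition~5.26]{modrap1} deduces~\eqref{it:Tw0-2}. Once~\eqref{it:Tw0-1} (equivalently $m=1$) is available, your inductive scheme does work and gives a clean proof of~\eqref{it:Tw0-2}; your BGG-reciprocity derivation of~\eqref{it:Tw0-1} from~\eqref{it:Tw0-2} is correct but then redundant. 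The missing ingredient is an independent proof that $m=1$, or equivalently of~\eqref{it:Tw0-1}.
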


\begin{proof}
 Both of these claims are consequences of Lemma~\ref{lem:soc-top-Delta-nabla}. For details, see~\cite[Lemma~5.25]{modrap1} for~\eqref{it:Tw0-1}, and~\cite[Proposition~5.26]{modrap1} for~\eqref{it:Tw0-2}.
\end{proof}

\begin{lem}
\label{lem-Tw0-s}
 Let $s$ be a simple reflection, and let $\overline{\imath}_s : \overline{Y_s} \to Y$ be the embedding. Then we have $\overline{\imath}_s^*(\mathscr{T}_{w_0}) \cong \mathscr{T}_s$.
\end{lem}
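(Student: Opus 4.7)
The plan is to show that $\overline{\imath}_s^*(\mathscr{T}_{w_0})$ is a perverse sheaf on $\overline{Y_s}$ fitting into a short exact sequence $\Delta_s \hookrightarrow \overline{\imath}_s^*(\mathscr{T}_{w_0}) \twoheadrightarrow \IC_e$ in $\Perv_U(\overline{Y_s},\bk)$, and then to rule out that this extension splits, which (given the known structure of $\Perv_U(\overline{Y_s},\bk)$) forces it to be $\mathscr{T}_s$.

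First, by Lemma~\ref{lem:properties-Tw0}\eqref{it:Tw0-1} the object $\mathscr{T}_{w_0}$ admits a standard filtration with $(\mathscr{T}_{w_0}:\Delta_w)=1$ for each $w \in W$. Since $e$ is the Bruhat minimum of $W$ and $s$ is minimal in $W \smallsetminus \{e\}$ (the atoms of the Bruhat order being the simple reflections), such a filtration can be chosen as $0=F_0 \subset F_1 \subset \cdots \subset F_N = \mathscr{T}_{w_0}$ with $F_i/F_{i-1} \cong \Delta_{w_i}$, where $w_N = e$, $w_{N-1} = s$, and $w_i \notin \{e,s\}$ for $i \leq N-2$. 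For any $w \notin \{e,s\}$ the stratum $Y_w$ is disjoint from $\overline{Y_s}$, so base change gives $\overline{\imath}_s^* \Delta_w = 0$. Induction on $i \leq N-2$ using the distinguished triangles $\overline{\imath}_s^* F_{i-1} \to \overline{\imath}_s^* F_i \to \overline{\imath}_s^* \Delta_{w_i} \to$ then yields $\overline{\imath}_s^* F_{N-2}=0$; the triangle for $i=N-1$ gives $\overline{\imath}_s^* F_{N-1} \cong \Delta_s$ (now viewed on $\overline{Y_s}$), and the final triangle produces a distinguished triangle
\[
\Delta_s \to \overline{\imath}_s^*(\mathscr{T}_{w_0}) \to \IC_e \to \Delta_s[1]
\]
in $\Db_U(\overline{Y_s},\bk)$. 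Both outer terms being perverse, this upgrades to a short exact sequence in $\Perv_U(\overline{Y_s},\bk)$.

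Recalling the description of $\Perv_U(\overline{Y_s},\bk)$ recalled in~\S\ref{ss:Ts}, the only extensions of $\IC_e$ by $\Delta_s$ in this category are the split extension $\Delta_s \oplus \IC_e$ and the nonsplit extension $\mathscr{T}_s$ (indeed, $\mathscr{T}_s$ is the unique nonsplit extension and its subquotient structure realizes the universal such extension). Hence $\overline{\imath}_s^*(\mathscr{T}_{w_0})$ is isomorphic to one of these.

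To rule out the split case, adjunction gives
\[
\Hom(\overline{\imath}_s^*(\mathscr{T}_{w_0}), \IC_s) \cong \Hom_{\scO}(\mathscr{T}_{w_0}, (\overline{\imath}_s)_* \IC_s) = \Hom_{\scO}(\mathscr{T}_{w_0}, \IC_s),
\]
which vanishes because, by Lemma~\ref{lem:properties-Tw0}\eqref{it:Tw0-2}, $\mathscr{T}_{w_0}$ is the projective cover of $\IC_e$ in $\scO$ and hence has head $\IC_e$. On the other hand, the head of $\Delta_s$ is $\IC_s$, so $\Hom(\Delta_s \oplus \IC_e, \IC_s) \neq 0$. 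Thus the extension cannot split, and therefore $\overline{\imath}_s^*(\mathscr{T}_{w_0}) \cong \mathscr{T}_s$. The main point requiring care is the first step, namely the construction of a standard filtration refined so as to place $\Delta_e$ and $\Delta_s$ at the top; everything after that is straightforward highest-weight yoga.
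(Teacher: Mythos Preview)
Your proof is correct and follows essentially the same approach as the paper: both show that $\overline{\imath}_s^*(\mathscr{T}_{w_0})$ is a perverse sheaf with a standard filtration having $(\Delta_e)=(\Delta_s)=1$, then rule out the split possibility via $\Hom(\overline{\imath}_s^*(\mathscr{T}_{w_0}),\IC_s)\cong\Hom(\mathscr{T}_{w_0},\IC_s)=0$ using adjunction and Lemma~\ref{lem:properties-Tw0}\eqref{it:Tw0-2}. The only difference is that you spell out the filtration argument explicitly (ordering it so that $\Delta_e$ and $\Delta_s$ sit at the top and checking $\overline{\imath}_s^*\Delta_w=0$ for $w\notin\{e,s\}$), whereas the paper simply asserts the resulting standard-filtration property of the restriction.
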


\begin{proof}
 Since $\mathscr{T}_{w_0}$ is tilting (in particular, admits a standard filtration), the object $\overline{\imath}_s^*(\mathscr{T}_{w_0})$ is perverse and admits a standard filtration. More precisely, in view of Lemma~\ref{lem:properties-Tw0}\eqref{it:Tw0-1} we have
 \[
  (\overline{\imath}_s^*(\mathscr{T}_{w_0}) : \Delta_e) = (\overline{\imath}_s^*(\mathscr{T}_{w_0}) : \Delta_s) = 1.
 \]
By the description of the indecomposable objects of $\Perv_{U}(\overline{Y_s},\bk)$ recalled in~\S\ref{ss:Ts}, we deduce that $\overline{\imath}_s^*(\mathscr{T}_{w_0})$ is isomorphic to either $\mathscr{T}_s$ or $\Delta_e \oplus \Delta_s$. However, we have
\[
 \Hom(\overline{\imath}_s^*(\mathscr{T}_{w_0}), \IC_s) = \Hom(\mathscr{T}_{w_0}, \IC_s)=0
\]
by adjunction and Lemma~\ref{lem:properties-Tw0}\eqref{it:Tw0-2} respectively; hence this object cannot admit $\Delta_s$ as a direct summand.
\end{proof}

\section{Convolution}
\label{sec:convolution}

\subsection{Definition}

Let us denote by
\[
 m : G \times^U X \to X
\]
the map defined by $m([g : hU]) = ghU$. If $\mathscr{F}, \mathscr{G}$ belong to $\Db_U(X,\bk)$, there exists a unique object $\mathscr{F} \wbtimes \mathscr{G}$ in $\Db_U(G \times^U X, \bk)$ whose pullback under the quotient map $G \times X \to G \times^U X$ is $a^*(\mathscr{F}) \boxtimes \mathscr{G}$ (where $a$ is as in~\S\ref{ss:lr-monodromy}). We then set
\[
 \mathscr{F} \star^U \mathscr{G} := m_!(\mathscr{F} \wbtimes \mathscr{G}) [\dim T].
\]
This construction defines a functor $\Db_U(X,\bk) \times \Db_U(X,\bk) \to \Db_U(X,\bk)$, which is associative up to (canonical) isomorphism.

Similarly, we denote by
\[
 m' : G \times^U Y \to Y
\]
the map defined by $m([g : hB]) = ghB$. If $\mathscr{F}$ belongs to $\Db_U(X,\bk)$ and $\mathscr{G}$ belongs to $\Db_U(Y,\bk)$, there exists a unique object $\mathscr{F} \wbtimes \mathscr{G}$ in $\Db_U(G \times^U Y, \bk)$ whose pullback under the quotient map $G \times Y \to G \times^U Y$ is $a^*(\mathscr{F}) \boxtimes \mathscr{G}$. We then set
\[
 \mathscr{F} \star^U \mathscr{G} := m'_!(\mathscr{F} \wbtimes \mathscr{G}) [\dim T].
\]
This construction defines a functor $\Db_U(X,\bk) \times \Db_U(Y,\bk) \to \Db_U(Y,\bk)$, which is compatible with the product $\star^U$ on $\Db_U(X,\bk)$ in the obvious sense.

\begin{rmk}
Since the quotient $G/U$ is not proper, there exist two possible conventions to define the convolution product on $\Db_U(X,\bk)$: one involving the functor $m_!$, and one involving the functor $m_*$. We insist that here we consider the version with $!$-pushforward.
\end{rmk}

It is straightforward (using the base change theorem) to check that for $\mathscr{F},\mathscr{G}$ in $\Db_U(X,\bk)$ and $\mathscr{G}'$ in $\Db_U(Y,\bk)$ there exist canonical isomorphisms
\begin{gather}
\label{eqn:conv-pi_!}
\pi_!(\mathscr{F} \star^U \mathscr{G}) \cong \mathscr{F} \star^U \pi_!(\mathscr{G}), \\
\label{eqn:conv-pi^!}
\pi^!(\mathscr{F} \star^U \mathscr{G}') \cong \mathscr{F} \star^U \pi^!(\mathscr{G}'), \\
\label{eqn:conv-pi^*}
\pi^*(\mathscr{F} \star^U \mathscr{G}') \cong \mathscr{F} \star^U \pi^*(\mathscr{G}').
\end{gather}

Instead of the $U$-equivariant categories, one can also consider the $B$-equivariant categories. In particular, very similar considerations lead to the definition of a functor
\[
(-) \star^B (-) : \Db_U(Y,\bk) \times \Db_B(Y,\bk) \to \Db_U(Y,\bk).
\]
(Here we do not insert any cohomological shift in the definition. Note also that, since $G/B$ is proper, there is no difference between the $*$- and $!$-versions of convolution.)

We will denote by $\For^B_U : \Db_B(Y,\bk) \to \Db_U(Y,\bk)$ the natural forgetful functor. The following fact is standard.

\begin{lem}
\label{lem:convolution-Bequiv}
For any $\mathscr{F}$ in $\Db_U(X,\bk)$ and $\mathscr{G}$ in $\Db_B(Y,\bk)$, there exists a canonical isomorphism
\[
\mathscr{F} \star^U \For^B_U(\mathscr{G}) \cong \pi_\dag(\mathscr{F}) \star^B \mathscr{G}.
\]
\end{lem}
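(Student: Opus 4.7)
The plan is to factor the $U$-convolution map through the $B$-convolution map and then identify the two sides using the projection formula and proper base change. Let $q \colon G \times^U Y \to G \times^B Y$ be the quotient by the natural free action of $T = B/U$, so that $m' = m'' \circ q$. Applying $m''_!$ and tracking the shift $[\dim T]$ appearing in $\pi_\dag$ and $\star^U$, the lemma reduces to constructing a canonical isomorphism
\[
q_!\bigl(\mathscr{F} \wbtimes \For^B_U(\mathscr{G})\bigr) \cong \pi_!(\mathscr{F}) \wbtimes_B \mathscr{G}
\]
on $G \times^B Y$.

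The key observation is that the twisted external product on the left splits as an honest tensor product. Since $\mathscr{G}$ is $U$-equivariant, the sheaf $\underline{\bk}_G \boxtimes \mathscr{G}$ on $G \times Y$ descends through the quotient map $p^U \colon G \times Y \to G \times^U Y$ to an object $\mathcal{G}'$, and a direct check via $(p^U)^*$ shows
\[
\mathscr{F} \wbtimes \For^B_U(\mathscr{G}) \cong (\mathrm{pr}_1^U)^* \mathscr{F} \otimes \mathcal{G}',
\]
where $\mathrm{pr}_1^U \colon G \times^U Y \to X$ sends $[g,y]_U$ to $gU$. Because $\mathscr{G}$ is in fact $B$-equivariant, $\underline{\bk}_G \boxtimes \mathscr{G}$ is equivariant for the diagonal $B$-action and descends further to an object $\mathcal{G}''$ on $G \times^B Y$; conservativity of $(p^U)^*$ then gives $\mathcal{G}' \cong q^* \mathcal{G}''$.

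The projection formula now yields $q_!\bigl((\mathrm{pr}_1^U)^*\mathscr{F} \otimes q^*\mathcal{G}''\bigr) \cong q_!\bigl((\mathrm{pr}_1^U)^*\mathscr{F}\bigr) \otimes \mathcal{G}''$. The square
\[
\xymatrix@C=1.1cm{
G \times^U Y \ar[r]^-{\mathrm{pr}_1^U} \ar[d]_q & X \ar[d]^\pi \\
G \times^B Y \ar[r]^-{\mathrm{pr}_1^B} & Y
}
\]
is Cartesian (the map $(\mathrm{pr}_1^U, q) \colon [g,y]_U \mapsto (gU, [g,y]_B)$ is a bijection onto $X \times_Y (G \times^B Y)$, as one verifies directly using the decomposition $B = TU$), so proper base change gives $q_!(\mathrm{pr}_1^U)^* \cong (\mathrm{pr}_1^B)^* \pi_!$. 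The expression therefore becomes $(\mathrm{pr}_1^B)^*\pi_!(\mathscr{F}) \otimes \mathcal{G}''$, whose pullback to $G \times Y$ via $p^B$ equals $b^*\pi_!(\mathscr{F}) \boxtimes \mathscr{G} = (p^B)^*(\pi_!(\mathscr{F}) \wbtimes_B \mathscr{G})$; conservativity of $(p^B)^*$ completes the identification.

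There is no substantive obstacle here: the only care needed is to ensure canonicity and equivariant compatibility of the intermediate isomorphisms, both of which are forced by the fully faithful behaviour of pullback along the smooth, free group quotients $p^U$ and $p^B$ restricted to objects that already descend.
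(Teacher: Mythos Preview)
Your argument is correct and is precisely the standard proof. The paper itself gives no proof of this lemma, merely declaring that ``the following fact is standard''; your write-up supplies the details that justify that claim. The factorisation $m' = m'' \circ q$, the splitting of the twisted product into a pullback tensored with a sheaf descended from the $B$-equivariant structure on $\mathscr{G}$, and the combination of the projection formula with proper base change along the Cartesian square you describe are exactly the ingredients one expects, and your verification that the square is Cartesian is accurate.
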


\subsection{Convolution and monodromy}

\begin{lem}
\label{lem:monodromy-convolution}
For any $\mathscr{F}$, $\mathscr{G}$ in $\Db_{U}(X \quot T, \bk)$, the object $\mathscr{F} \star^U \mathscr{G}$ belongs to the subcategory $\Db_{U}(X \quot T, \bk)$. Moreover, for any $x \in R_T^\wedge$ we have
 \begin{gather*}
 \varphi^{\wedge}_{\lef,\mathscr{F} \star^U \mathscr{G}}(x) = \varphi^{\wedge}_{\lef,\mathscr{F}}(x) \star^U \id_{\mathscr{G}}, \\
 \varphi^{\wedge}_{\rig,\mathscr{F} \star^U \mathscr{G}}(x) = \id_{\mathscr{F}} \star^U \varphi^{\wedge}_{\rig,\mathscr{G}}(x), \\
  \varphi^{\wedge}_{\rig,\mathscr{F}} (x) \star^U \id_{\mathscr{G}} = \id_{\mathscr{F}} \star^U \varphi^{\wedge}_{\lef,\mathscr{G}}(x).
 \end{gather*}
\end{lem}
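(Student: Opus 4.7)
The plan is to exhibit three commuting $T$-actions on $G \times^U X$ compatible with the map $m : G \times^U X \to X$ defining $\star^U$, and to compute the monodromies of $\mathscr{F} \star^U \mathscr{G}$ from those of $\mathscr{F} \wbtimes \mathscr{G}$ via the functoriality of monodromy under pushforward (Lemma~\ref{lem:properties-monodromy}) together with the augmentation identity (Lemma~\ref{lem:monodromy-augmentation}). The three actions are the left action $L$ given by $t \cdot [g,x] = [tg,x]$, the right action $R$ given by $[g,x] \cdot t = [g,xt]$, and the ``middle'' action $M$ given by $t \cdot [g,x] = [gt, t^{-1}x]$; all three pairwise commute, and $m$ is $T$-equivariant for $L$ (with the left $T$-action on $X$), for $R$ (with the right $T$-action on $X$), and is $M$-invariant.

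For the first assertion and the first two equations I will apply Lemma~\ref{lem:properties-monodromy} to $m$ in its $L$- and $R$-equivariant forms. Using the identity $q^*(\mathscr{F} \wbtimes \mathscr{G}) = a^*\mathscr{F} \boxtimes \mathscr{G}$ together with the compatibility of $a^*$ with left and right monodromies recalled in~\S\ref{ss:lr-monodromy}, one checks that the $L$-monodromy of $\mathscr{F} \wbtimes \mathscr{G}$ identifies with $\varphi^\wedge_{\lef, \mathscr{F}}(x) \wbtimes \id_\mathscr{G}$, and similarly its $R$-monodromy with $\id_\mathscr{F} \wbtimes \varphi^\wedge_{\rig, \mathscr{G}}(x)$. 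Applying $m_![\dim T]$ and invoking Lemma~\ref{lem:properties-monodromy} then yields the first two formulas, and as a by-product shows that $\mathscr{F} \star^U \mathscr{G}$ is right $T$-monodromic; $U$-equivariance is preserved since $m$ is $U$-equivariant, so $\mathscr{F} \star^U \mathscr{G}$ indeed belongs to $\Db_U(X \quot T, \bk)$.

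For the third identity I will factor $m$ as $G \times^U X \xrightarrow{\mathrm{quot}} Y \times X \xrightarrow{\mathrm{pr}_X} X$, where $\mathrm{quot}([g,x]) = (gB, gx)$ and $\mathrm{pr}_X$ is the projection. A short computation shows that $\mathrm{quot}$ is a Zariski-locally trivial $T$-torsor for the $M$-action (local trivializations coming from sections of $G \to Y$), so Lemma~\ref{lem:monodromy-augmentation} applied to $\mathrm{quot}$ yields $m_!(\varphi^\wedge_M(\mathscr{F} \wbtimes \mathscr{G})(x))[\dim T] = \varepsilon^\wedge_T(x) \cdot \id_{\mathscr{F} \star^U \mathscr{G}}$. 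The pullback of $M$ to $G \times X$ is the diagonal action $(g,x) \mapsto (gt, t^{-1}x)$, and a K\"unneth-type computation analogous to the one in the proof of Lemma~\ref{lem:mondromy-leftright-hDel} identifies $\varphi^\wedge_M(\mathscr{F} \wbtimes \mathscr{G})(\lambda)$ with $\varphi^\wedge_{\rig, \mathscr{F}}(\lambda) \wbtimes \varphi^\wedge_{\lef, \mathscr{G}}(\lambda^{-1})$ for each cocharacter $\lambda \in X_*(T)$. Since $\varepsilon^\wedge_T(\lambda) = 1$, bifunctoriality of $\star^U$ gives $(\varphi^\wedge_{\rig, \mathscr{F}}(\lambda) \star^U \id_\mathscr{G}) \circ (\id_\mathscr{F} \star^U \varphi^\wedge_{\lef, \mathscr{G}}(\lambda^{-1})) = \id$, which rearranges to the third equation for $x = \lambda$; the general case follows by $\bk$-linearity and continuity, since $X_*(T)$ is a $\bk$-basis of $R_T$ and both sides factor through $R_T^\wedge$.

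The main technical obstacle will be verifying that $\mathscr{F} \wbtimes \mathscr{G}$ is genuinely $M$-monodromic on $G \times^U X$, so that Lemma~\ref{lem:monodromy-augmentation} is applicable. I plan to check this after pullback to $G \times X$: the sheaf $a^*\mathscr{F}$ is right $T$-monodromic on $G$ (inherited from the right $T$-monodromicity of $\mathscr{F}$), while $\mathscr{G}$ is left $T$-monodromic on $X$ because its restriction to each Bruhat stratum $X_w \cong U_w \times T$ is $U$-constant in the first factor by $U$-equivariance and monodromic in the $T$-factor, where left- and right-monodromicity coincide since $T$ is abelian.
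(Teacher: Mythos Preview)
Your proposal is correct and follows essentially the same route as the paper. Your factorization $m = \mathrm{pr}_X \circ \mathrm{quot}$ through $Y \times X$ is the paper's $m = m_1 \circ m_2$ through $G \times^B X$ in disguise (the map $[g:hU]_B \mapsto (gB, ghU)$ is an isomorphism $G \times^B X \simto Y \times X$), and your middle action $M$ is the paper's action $t \cdot [g:hU] = [gt^{-1}:thU]$ up to inversion; in both cases the key input is Lemma~\ref{lem:monodromy-augmentation} applied to this $T$-torsor, followed by pushing forward along the remaining map. The $M$-monodromicity you flag as the main obstacle is exactly what the paper checks after pullback to $G \times X$ using Lemma~\ref{lem:monodromy-isom}, and for the first assertion the paper gives the slightly slicker one-line argument via~\eqref{eqn:conv-pi^*} rather than tracking monodromicity through $m_!$.
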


\begin{proof}
The first claim is clear from~\eqref{eqn:conv-pi^*}.
The proof of the first two isomorphisms is easy, and left to the reader. To prove the third one, we write the map $m$ as a composition $m=m_1 \circ m_2$ where $m_1 : G \times^B X \to X$ and $m_2 : G \times^U X \to G \times^B X$ are the obvious morphisms. Then we have
\[
\mathscr{F} \star^U \mathscr{G} = m_! (\mathscr{F} \wbtimes \mathscr{G}) [\dim T] = (m_1)_! (m_2)_! (\mathscr{F} \wbtimes \mathscr{G}) [\dim T].
\]
We consider the action of $T$ on $G \times^U X$ defined by $t \cdot [g : hU] = [gt^{-1} : thU]$. Then $\mathscr{F} \wbtimes \mathscr{G}$ belongs to $\Db_c \bigl( (G \times^U X) \quot T, \bk \bigr)$ for this action, and the corresponding monodromy morphism satisfies
\[
\varphi^\wedge_{\mathscr{F} \wbtimes \mathscr{G}}(\lambda) = \varphi^\wedge_{\mathscr{F}}(\lambda^{-1}) \wbtimes \varphi^\wedge_{\mathscr{G}}(\lambda)
\]
for any $\lambda \in X_*(T)$. (In fact this equality can be checked after pullback to $G \times G/U$, where it follows from Lemma~\ref{lem:monodromy-isom}.) Now $m_2$ is the quotient map for this $T$-action; hence Lemma~\ref{lem:monodromy-augmentation} implies that
\[
(m_2)_! \varphi^\wedge_{\mathscr{F} \wbtimes \mathscr{G}}(\lambda) = \id,
\]
or in other words that
\[
(m_2)_! \bigl( \varphi^\wedge_{\mathscr{F}}(\lambda) \wbtimes \id_{\mathscr{G}} \bigr) = (m_2)_! \bigl( \id_{\mathscr{F}} \wbtimes \varphi^\wedge_{\mathscr{G}}(\lambda) \bigr).
\]
Applying $(m_1)_!$ we deduce the desired equality.
\end{proof}

\subsection{Extension to the completed category}

We now explain how to extend the construction of the convolution product to the framework of the completed category $\hD_{U}(X \quot T, \bk)$.

\begin{lem}
\label{lem:convolution-representable}
Let $``\varprojlim_n" \mathscr{F}_n$ be an object of $\hD_{U}(X \quot T, \bk)$. If $\mathscr{G}$ is in $\Db_U(X \quot T,\bk)$, resp.~if $\mathscr{G}'$ is in $\Db_{U}(Y,\bk)$, then the pro-object
\[
``\varprojlim_n" \mathscr{F}_n \star^U \mathscr{G}, \quad \text{resp.} \quad ``\varprojlim_n" \mathscr{F}_n \star^U \mathscr{G}',
\]
is representable by an object of $\Db_{U}(X \quot T,\bk)$, resp.~of $\Db_{U}(Y,\bk)$.
\end{lem}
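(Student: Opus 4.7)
We focus on the first statement; the second follows by the same argument, with the target $\Db_U(X \quot T, \bk)$ replaced by $\Db_U(Y, \bk)$ and the corresponding convolution from \S\ref{sec:convolution}.

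\textbf{Reduction to $\hDel_w$.} The assignment $``\varprojlim_n" \mathscr{F}_n \mapsto ``\varprojlim_n" \mathscr{F}_n \star^U \mathscr{G}$ will be a triangulated functor from $\hD_U(X \quot T, \bk)$ to the pro-category of $\Db_U(X \quot T, \bk)$: distinguished triangles in $\hD_U(X \quot T, \bk)$ arise by construction (see \S\ref{ss:definition-hD}) from compatible systems of distinguished triangles in $\Db_U(X \quot T, \bk)$, on which $(-) \star^U \mathscr{G}$ is triangulated. The subcategory of pro-objects representable by an object of $\Db_U(X \quot T, \bk)$ is itself triangulated (a cone of a morphism between objects of $\Db$ is in $\Db$). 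Invoking Lemma~\ref{lem:properties-hD-hN}\eqref{it:hD-hN-generate}, it will suffice to treat the case $``\varprojlim_n" \mathscr{F}_n = \hDel_w$ for $w \in W$, i.e.\ $\mathscr{F}_n = \Delta_w^n := (j_w)_! p_w^* \mathscr{L}_{T,n}[\dim X_w]$.

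\textbf{Bounded monodromy of $\mathscr{G}$.} The category $\Db_U(X \quot T, \bk)$ is generated by the images of the forgetful functor from $\Db_B(X, \bk)$ (see \S\ref{ss:lr-monodromy}); since $B$-equivariant objects on $X$ have trivial left $T$-monodromy, the morphism $\varphi^\wedge_{\lef, \mathscr{G}}$ is unipotent, hence factors through $R_T^\wedge/\mathfrak{m}_T^L$ for some $L = L(\mathscr{G}) \geq 0$. Applying the third identity of Lemma~\ref{lem:monodromy-convolution}, for any $\mathscr{F}$ in $\Db_U(X \quot T, \bk)$ and any $x \in \mathfrak{m}_T^L$,
\[
\varphi^\wedge_{\rig, \mathscr{F}}(x) \star^U \id_\mathscr{G} \;=\; \id_{\mathscr{F}} \star^U \varphi^\wedge_{\lef, \mathscr{G}}(x) \;=\; 0.
\]
Hence the right $R_T^\wedge$-action on $\mathscr{F} \star^U \mathscr{G}$ inherited from the right monodromy of $\mathscr{F}$ will factor through $R_T^\wedge/\mathfrak{m}_T^L$, \emph{uniformly} in $\mathscr{F}$.

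\textbf{Stabilization.} The plan is then to show that the pro-system $(\Delta_w^n \star^U \mathscr{G})_n$ becomes essentially constant for $n \geq L-1$ and is represented by $\Delta_w^{L-1} \star^U \mathscr{G}$, which lies in $\Db_U(X \quot T, \bk)$. The idea is that convolution with $\mathscr{G}$ detects $\Delta_w^n$ only through its $R_T^\wedge/\mathfrak{m}_T^L$-truncation; since $\mathscr{L}_{T,n} \cong R_T^\wedge/\mathfrak{m}_T^{n+1}$ as $R_T^\wedge$-module, this truncation stabilizes once $n+1 \geq L$. Propagating through $(j_w)_! p_w^*[\dim X_w]$ and the definition of convolution should yield essential constancy of the convolved pro-system, and hence representability.

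\textbf{Main obstacle.} The hard part will be the rigorous execution of this stabilization step: the individual transition maps $\Delta_w^{n+1} \star^U \mathscr{G} \to \Delta_w^n \star^U \mathscr{G}$ are \emph{not} isomorphisms, their cones being (shifts of) $(\mathfrak{m}_T^{n+1}/\mathfrak{m}_T^{n+2}) \otimes_\bk (\Delta_w \star^U \mathscr{G})$, which is generally nonzero. Essential constancy of the pro-object must therefore be established at a finer level than pointwise stabilization of the transitions. A promising route is to verify the defining isomorphism $\Hom(\mathcal{M}, \Delta_w^{L-1} \star^U \mathscr{G}) \simto \varprojlim_n \Hom(\mathcal{M}, \Delta_w^n \star^U \mathscr{G})$ directly, exploiting that each Hom-group is already an $R_T^\wedge/\mathfrak{m}_T^L$-module via post-composition (by Step~2) and that this module structure forces the inverse limit to collapse. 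An alternative is to work inside the filtered enhancement $\hD^F$ from Remark~\ref{rmk:completed-filtered}, tracking the monodromy reduction and the resulting stabilization systematically on graded pieces.
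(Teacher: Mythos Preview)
Your d\'evissage runs on the wrong variable and misses the lever the paper actually uses. The paper fixes the pro-object $``\varprojlim_n"\mathscr{F}_n$ and d\'evissages on $\mathscr{G}'$ instead: when $\mathscr{G}'=\For^B_U(\mathscr{G}'')$, Lemma~\ref{lem:convolution-Bequiv} gives $\mathscr{F}_n\star^U\mathscr{G}'\cong \pi_\dag(\mathscr{F}_n)\star^B\mathscr{G}''$, and the right-hand pro-system is representable precisely because $``\varprojlim_n"\pi_\dag(\mathscr{F}_n)$ is representable \emph{by the defining $\pi$-constancy condition} on $\hD_U(X\quot T,\bk)$. General $\mathscr{G}'$ then follows since $B$-equivariant objects generate $\Db_U(Y,\bk)$, together with the observation that in a projective system of distinguished triangles, representability of two vertices forces that of the third. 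The first case reduces to the second via a further d\'evissage on $\mathscr{G}$ using~\eqref{eqn:conv-pi^*} and the fact that $\pi^\dag$-images generate $\Db_U(X\quot T,\bk)$. No stabilization analysis is needed.

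Your stabilization route has a genuine gap, and in fact your proposed representing object is wrong. Take $w=e$, $\dim T=1$, and $\mathscr{G}=\pi^\dag\For^B_U(\mathscr{H})$ (so $L=1$). A direct computation of $(\pi_e)_!\mathscr{L}_{T,n}$ (kernel and cokernel of multiplication by the monodromy generator) shows $\Delta_e^n\star^U\mathscr{G}\cong \mathscr{G}\oplus\mathscr{G}[1]$ for every $n\geq 0$, with transition maps equal to the identity on the first summand and zero on the second. The pro-object is therefore represented by $\mathscr{G}$, not by $\Delta_e^{L-1}\star^U\mathscr{G}=\mathscr{G}\oplus\mathscr{G}[1]$; the system never stabilizes termwise, and the correct representative is a proper direct summand of every term. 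Your monodromy bound controls endomorphisms of the form $\varphi^\wedge_{\rig,\mathscr{F}}(x)\star^U\id_{\mathscr{G}}$, but the transition maps and the inclusions $\mathfrak{m}_T^{n}/\mathfrak{m}_T^{n+1}\hookrightarrow R_T/\mathfrak{m}_T^{n+1}$ are not expressible through such endomorphisms in a way that kills them after convolution. (Also, representability of a pro-object is detected by $\varinjlim_n\Hom(A_n,M)$, not by $\varprojlim_n\Hom(M,A_n)$; the isomorphism you propose to verify is not the defining one.)
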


\begin{proof}[Sketch of proof]
This property is proved along the lines of~\cite[\S 4.3]{by}; we sketch the proof in the second case, and leave the details and the first case to the reader. If $\mathscr{G}'$ is of the form $\For^B_U(\mathscr{G}'')$ for some $\mathscr{G}''$ in $\Db_B(Y,\bk)$, then by Lemma~\ref{lem:convolution-Bequiv} we have $\mathscr{F}_n \star^U \mathscr{G}' \cong \pi_\dag(\mathscr{F}_n) \star^B \mathscr{G}''$. Hence the claim follows from the assumption that the pro-object $``\varprojlim_n" \pi_\dag(\mathscr{F}_n)$ is representable. The general case follows since the objects of this form generate $\Db_{U}(Y,\bk)$ as a triangulated category, using the following observation (which can be checked using the methods of~\cite[Appendix~A]{by}): given a projective system of distinguished triangles $\mathscr{A}_n \to \mathscr{B}_n \to \mathscr{C}_n \xrightarrow{[1]}$ in $\Db_{U}(Y,\bk)$, if the pro-objects $``\varprojlim_n" \mathscr{A}_n$ and $``\varprojlim_n" \mathscr{B}_n$ are representable, then $``\varprojlim_n" \mathscr{C}_n$ is representable too (and this object is a cone of the induced morphism $``\varprojlim_n" \mathscr{A}_n \to ``\varprojlim_n" \mathscr{B}_n$).
\end{proof}

Using Lemma~\ref{lem:convolution-representable}, we already see that the functor $\star^U : \Db_U(X,\bk) \times \Db_U(Y,\bk) \to \Db_U(Y,\bk)$ induces a functor
\begin{equation}
\label{eqn:hatstar-Y}
\hatstar : \hD_{U}(X \quot T, \bk) \times \Db_{U}(Y,\bk) \to \Db_{U}(Y,\bk).
\end{equation}

Now, let $\mathscr{F} = ``\varprojlim_n" \mathscr{F}_n$ and $\mathscr{G} = ``\varprojlim_m" \mathscr{G}_m$ be two objects of $\hD_{U}(X \quot T, \bk)$. For any fixed $m$, by Lemma~\ref{lem:convolution-representable} the pro-object $``\varprojlim_n" \mathscr{F}_n \star^U \mathscr{G}_m$ is representable by an object of $\Db_{U}(X \quot T,\bk)$. Therefore, we can consider the pro-object
\[
\mathscr{F} \hatstar \mathscr{G} := ``\varprojlim_m" ``\varprojlim_n" \mathscr{F}_n \star^U \mathscr{G}_m.
\]
We claim that this pro-object belongs to $\hD_{U}(X \quot T, \bk)$. Indeed it is clearly uniformly bounded. And using~\eqref{eqn:conv-pi_!} we see that
\begin{multline*}
``\varprojlim_m" \pi_\dag \left( ``\varprojlim_n" \mathscr{F}_n \star^U \mathscr{G}_m \right) \cong ``\varprojlim_m" \left( ``\varprojlim_n" \mathscr{F}_n \star^U \pi_\dag (\mathscr{G}_m) \right) \\
\cong ``\varprojlim_m" \left( \mathscr{F} \hatstar \pi_\dag (\mathscr{G}_m) \right) \cong \mathscr{F} \hatstar \left( ``\varprojlim_m" \pi_\dag (\mathscr{G}_m) \right).
\end{multline*}
Since by assumption the pro-object $``\varprojlim_m" \pi_\dag (\mathscr{G}_m)$ is representable, this shows that $\mathscr{F} \hatstar \mathscr{G}$ is $\pi$-constant, which finishes the proof of our claim.

\begin{rmk}
\label{rmk:convolution-inverse}
Let $\mathscr{F}$ and $\mathscr{G}$ be as above. Using similar arguments one can check that, for any fixed $n \geq 0$, the pro-object $``\varprojlim_m" \mathscr{F}_n \star^U \mathscr{G}_m$ is representable, so that it makes sense to consider the pro-object
\[
``\varprojlim_n" ``\varprojlim_m" \mathscr{F}_n \star^U \mathscr{G}_m.
\]
Using standard results on inverse limits (see e.g.~\cite[Proposition~2.1.7]{ks}) one can show that this pro-object is canonically isomorphic to $\mathscr{F} \hatstar \mathscr{G}$.
\end{rmk}

This construction provides us with a functor 
\[
\hatstar : \hD_{U}(X \quot T, \bk) \times \hD_{U}(X \quot T, \bk) \to \hD_{U}(X \quot T, \bk).
\]
This functor is associative in the obvious sense, and compatible with~\eqref{eqn:hatstar-Y} in the sense that for $\mathscr{F}, \mathscr{G}$ in $\hD_{U}(X \quot T, \bk)$ and $\mathscr{H}$ in $\Db_{U}(Y,\bk)$ we have canonical isomorphisms
\begin{gather}
(\mathscr{F} \hatstar \mathscr{G}) \hatstar \mathscr{H} \cong \mathscr{F} \hatstar (\mathscr{G} \hatstar \mathscr{H}), \\
\label{eqn:hatstar-pidag}
\pi_\dag(\mathscr{F} \hatstar \mathscr{G}) \cong \mathscr{F} \hatstar \pi_\dag(\mathscr{G}).
\end{gather}

The object $\hDel_e=\hnab_e$ is a unit for this product (at least in the case when $\mathrm{char}(\bk)>0),$\footnote{This assumption is probably unnecessary. But since this is the setting we are mostly interested in, we will not consider the possible extension of this claim to the characteristic-$0$ setting.} as proved in the following lemma.

\begin{lem}
\label{lem:unit-convolution}
Assume that $\mathrm{char}(\bk)>0$. Then
for any $\mathscr{F}$ in $\hD_{U}(X \quot T, \bk)$, there exist canonical isomorphisms
\[
\hDel_e \hatstar \mathscr{F} \cong \mathscr{F} \cong \mathscr{F} \hatstar \hDel_e. 
\]
\end{lem}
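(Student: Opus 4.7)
The plan is to reduce both unit isomorphisms to the averaging identity of Lemma~\ref{lem:averaging-hL}, by identifying each convolution with an ordinary ``push along an action'' on $X$. For $\mathscr{F}$ first taken in $\Db_U(X \quot T,\bk)$ and writing $\hDel_e = ``\varprojlim_n" \Delta_{e,n}$, I would first set up the isomorphisms $T \times X \simto B \times^U X$ (via $(t,y) \mapsto [t:y]$) and $X \times T \simto G \times^U X_e$ (via $(gU, t) \mapsto [g:tU]$, well-defined because $T$ normalises $U$). Under these, the restriction of $m$ to $B \times^U X$, resp.~to $G \times^U X_e$, is precisely the left $T$-action $a_L(t,y) = ty$, resp.~the right/torsor action $a_R(y,t) = yt$, and one checks that the $\wbtimes$-products restrict to $\mathscr{L}_{T,n}[\dim T] \boxtimes \mathscr{F}$ and $\mathscr{F} \boxtimes \mathscr{L}_{T,n}[\dim T]$ respectively. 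Taking $m_!(-)[\dim T]$ and passing to the pro-limit gives
\[
\hDel_e \hatstar \mathscr{F} \cong (a_L)_!(\hL_T \boxtimes \mathscr{F})[2\dim T], \qquad \mathscr{F} \hatstar \hDel_e \cong (a_R)_!(\mathscr{F} \boxtimes \hL_T)[2\dim T].
\]

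For the right-hand isomorphism, Lemma~\ref{lem:averaging-hL} applies directly: swapping factors by $\sigma: X \times T \simto T \times X$ and using that $a_R \circ \sigma^{-1}$ equals the torsor action $a$ of~\S\ref{ss:monodromy-construction}, I obtain $(a_R)_!(\mathscr{F} \boxtimes \hL_T) \cong a_!(\hL_T \boxtimes \mathscr{F}) \cong \mathscr{F}[-2\dim T]$, whence $\mathscr{F} \hatstar \hDel_e \cong \mathscr{F}$. For the left-hand isomorphism $a_L$ is \emph{not} the torsor action, so I would re-run the proof of Lemma~\ref{lem:averaging-hL} with $a_L$ in place of $a$. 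The construction of a morphism $(a_L)_!(\hL_T \boxtimes -) \to \id[-2\dim T]$ via pullback along $[s]: T \to T$, $t \mapsto t^{\ell^s}$ (where $\ell = \mathrm{char}(\bk)$), only uses that the relevant sheaves have unipotent \emph{left} $T$-monodromy; this is automatic on $\Db_U(X \quot T,\bk)$ because that category is generated by the objects $\pi^\dag \mathscr{G}$ for $\mathscr{G} \in \Db_U(Y,\bk)$, and such $\mathscr{G}$ are $T$-equivariant for the left action on $Y$ (each Bruhat stratum $Y_w$ is a single $B$-orbit, so $\Db_U(Y,\bk)$ coincides with the $B$-equivariant derived category). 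To check the morphism is an isomorphism, it then suffices as in loc.~cit.~to treat $\mathscr{F} = \pi^\dag \mathscr{G}$, where base change along the cartesian square relating $a_L$ to the left $T$-action on $Y$, together with the projection formula and left-$T$-equivariance of $\mathscr{G}$, reduces the claim to the identity $(p_T)_!(\hL_T) \cong \bk[-2\dim T]$ from~\S\ref{ss:fm-local-system}.

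To extend from $\Db_U(X \quot T,\bk)$ to a general $\mathscr{F} = ``\varprojlim_m" \mathscr{F}_m$ in $\hD_U(X \quot T,\bk)$, I would invoke naturality of the isomorphisms in $\mathscr{F}_m$ so that they assemble into an isomorphism of pro-objects; in view of Remark~\ref{rmk:convolution-inverse} (and the definition of $\hatstar$), this computes $\hDel_e \hatstar \mathscr{F}$ and $\mathscr{F} \hatstar \hDel_e$ correctly. The main obstacle I expect is the left-action adaptation of Lemma~\ref{lem:averaging-hL}: one must verify carefully that the entire proof of that lemma only uses properties of $\mathscr{F}$ that are symmetric between left and right multiplication, and that the generation of $\Db_U(X \quot T,\bk)$ by $\pi^\dag \mathscr{G}$'s suffices both to guarantee unipotency of the left monodromy and to reduce the isomorphism check to a single base-change computation over $Y$. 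Once this adaptation is in place, everything else is bookkeeping.
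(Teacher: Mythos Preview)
Your strategy is sound, but one assertion is false and should be repaired. You claim that ``$\Db_U(Y,\bk)$ coincides with the $B$-equivariant derived category,'' so that every $\mathscr{G}\in\Db_U(Y,\bk)$ is left-$T$-equivariant. This is not so: the forgetful functor $\Db_B(Y,\bk)\to\Db_U(Y,\bk)$ is not an equivalence, and indeed $\mathscr{T}_s$ is typically \emph{not} $T$-equivariant (this is exactly Lemma~\ref{lem:Ts-equivariance}). What is true, and is all you need, is that $\Db_U(Y,\bk)$ is \emph{generated} as a triangulated category by $T$-equivariant objects (say the $\Delta_w$'s). Since unipotency of monodromy is stable under cones, this gives unipotent left $T$-monodromy on all of $\Db_U(X\quot T,\bk)$, so your adapted construction of the morphism goes through; and the isomorphism check can then be carried out on the $T$-equivariant generators $\pi^\dag\Delta_w$, where your base-change/projection-formula computation applies.

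With that fix your argument works, but the paper's treatment of $\hDel_e\hatstar\mathscr{F}$ is shorter and sidesteps the issue entirely. Rather than re-run Lemma~\ref{lem:averaging-hL} for the non-torsor left action on $G/U$, the paper uses the identifications
\[
\Db_U(G/U,\bk)\cong\Db_{U\times U}(G,\bk)\cong\Db_U(U\backslash G,\bk),
\]
under which $\Db_U(X\quot T,\bk)$ becomes $\Db_U(U\backslash G\quot T,\bk)$ for the action $t\cdot Ug=Utg$. This action \emph{is} a torsor (over $B\backslash G$), so Lemma~\ref{lem:averaging-hL} applies verbatim --- no adaptation, and no need to discuss which objects of $\Db_U(Y,\bk)$ are $T$-equivariant. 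For $\mathscr{F}\hatstar\hDel_e$ your argument and the paper's are essentially the same (direct application of Lemma~\ref{lem:averaging-hL} to the torsor action, plus Remark~\ref{rmk:convolution-inverse} for the order of limits).
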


\begin{proof}
For any $\mathscr{G}$ in $\Db_{U}(X \quot T, \bk)$, we have
\[
\hDel_e \hatstar \mathscr{G} \cong ``\varprojlim_n" a_!(\mathscr{L}_{T,n} \boxtimes \mathscr{G}) [2r],
\]
where $a : T \times X \to X$ is the action morphism defined by $a(t,gU)=tgU$. Now we have canonical identifications
\[
\Db_{U}(G/U,\bk) \cong \Db_{U \times U}(G,\bk) \cong \Db_U(U \backslash G,\bk).
\]
Under these identifications the full subcategory $\Db_{U}(X \quot T, \bk) \subset \Db_{U}(G/U,\bk)$ coincides with the category $\Db_{U}(U \backslash G \quot T, \bk)$ defined relative to the $T$-action on $U \backslash G$ defined by $t \cdot Ug=Utg$ and the stratification of $B \backslash G$ by $B$-orbits. Hence Lemma~\ref{lem:averaging-hL} provides a canonical isomorphism $\hDel_e \hatstar \mathscr{G} \cong \mathscr{G}$. Passing to (formal) projective limits we deduce a similar isomorphism for any $\mathscr{G}$ in $\hD_{U}(X \quot T, \bk)$.

The proof of the isomorphism $\mathscr{F} \cong \mathscr{F} \hatstar \hDel_e$ follows from similar considerations together with Remark~\ref{rmk:convolution-inverse}.
\end{proof}

One can easily check that these constructions provide $\Db_{U}(X \quot T, \bk)$ with the structure of a monoidal category (in the case when $\mathrm{char}(\bk)>0$).

\subsection{Convolution of standard, costandard, and tilting objects}

\begin{lem}\phantomsection
\label{lem:convolution-hDel}
\begin{enumerate}
\item
\label{it:conv-hDel-hnab}
For any $w \in W$ we have $\hnab_{w^{-1}} \hatstar \hDel_w \cong \hDel_e$.
\item
\label{it:conv-hDel}
If $v,w \in W$ and if $\ell(vw)=\ell(v)+\ell(w)$, then we have
\[
\hDel_v \hatstar \hDel_w \cong \hDel_{vw}, \qquad \hnab_v \hatstar \hnab_w \cong \hnab_{vw}.
\]
\end{enumerate}
\end{lem}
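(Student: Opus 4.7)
The plan is to first establish (2) via the geometry of the Bruhat stratification, then deduce (1) from (2) by telescoping a reduced expression and reducing to the single identity $\hnab_s \hatstar \hDel_s \cong \hDel_e$ for simple reflections $s$.

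For part (2), the key geometric input is the classical fact that when $\ell(vw) = \ell(v) + \ell(w)$, multiplication in $G$ induces an isomorphism
\[
\mu_{v,w} \colon (U \dot v U) \times^U X_w \xrightarrow{\sim} X_{vw},
\]
which in coordinates sends $(u_v \dot v t_v, u_w \dot w t_w) \mapsto u_v \cdot \dot v u_w \dot v^{-1} \cdot \dot v \dot w \cdot t_v t_w$, using that $\dot v^{-1} U_w \dot v \subset U$ under the length hypothesis (so the product $u_v \cdot (\dot v u_w \dot v^{-1})$ lies in $U_{vw}$). I will first establish the non-completed version at each finite level: for each $n \geq 0$, an isomorphism
\[
\Delta_{v,n} \star^U \Delta_{w,n} \cong \Delta_{vw,n}, \qquad \Delta_{w,n} := (j_w)_! p_w^* \mathscr{L}_{T,n}[\dim X_w],
\]
obtained by base change along $\mu_{v,w}$ together with the multiplicativity of the local system $\mathscr{L}_{T,n}$ under $m_T : T \times T \to T$ (since the $T$-coordinate on the target is the product of the two $T$-coordinates on the source). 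Passing to the formal projective limit, cf.~Remark~\ref{rmk:convolution-inverse}, yields $\hDel_v \hatstar \hDel_w \cong \hDel_{vw}$. The costandard version is identical with $j_!$ replaced by $j_*$, since the relevant map of closures is proper and base change applies.

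Part (1) is then deduced by a telescoping argument. Fix a reduced expression $w = s_1 \cdots s_n$; then $w^{-1} = s_n \cdots s_1$ is also reduced, so by part (2) we have
\[
\hDel_w \cong \hDel_{s_1} \hatstar \cdots \hatstar \hDel_{s_n}, \qquad \hnab_{w^{-1}} \cong \hnab_{s_n} \hatstar \cdots \hatstar \hnab_{s_1}.
\]
Using associativity of $\hatstar$ and the fact that $\hDel_e$ is a monoidal unit (Lemma~\ref{lem:unit-convolution}), the product $\hnab_{w^{-1}} \hatstar \hDel_w$ collapses by induction once we know $\hnab_s \hatstar \hDel_s \cong \hDel_e$ for each simple reflection $s$. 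This last identity takes place entirely on $\overline{X_s} = P_s/U$ (both objects being supported there), where the convolution map $P_s \times^U (P_s/U) \to P_s/U$ is a well-behaved quotient of group multiplication on the minimal parabolic, and one verifies the isomorphism by a direct computation on a single $\mathbb{P}^1$-geometry with the free-monodromic local system.

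The main obstacle is the monodromic refinement in part (2): the completed local system $\hL_{T}$ that defines $\hDel_{vw}$ carries a single $R_T^\wedge$-monodromy, whereas the convolution side carries \emph{two}, the right monodromy of $\hDel_v$ and the left monodromy of $\hDel_w$, which are identified through Lemma~\ref{lem:monodromy-convolution}. Reconciling this with Lemma~\ref{lem:mondromy-leftright-hDel} (which relates left and right monodromies on each $\hDel_?$ by the Weyl twist of the corresponding element) requires tracking carefully which $R_T^\wedge$-factor ends up corresponding to the single monodromy parameter of $\hDel_{vw}$; it is precisely the length hypothesis $\ell(vw) = \ell(v) + \ell(w)$, forcing the geometric isomorphism $\mu_{v,w}$ and the compatible product structure on the $T$-coordinate, that makes the two pictures match.
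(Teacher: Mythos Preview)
Your approach contains a genuine gap in part~\eqref{it:conv-hDel}. The claimed finite-level isomorphism $\Delta_{v,n} \star^U \Delta_{w,n} \cong \Delta_{vw,n}$ is false. The issue lies in the domain of your map $\mu_{v,w}$: the twisted product $\Delta_{v,n} \wbtimes \Delta_{w,m}$ is supported on $a^{-1}(X_v) \times^U X_w = (BvB) \times^U X_w$, not on $(U\dot v U) \times^U X_w$ as you wrote. The former has dimension $\ell(vw) + 2r$ (where $r = \dim T$), and the restriction of $m$ to it is a $T$-fibration over $X_{vw}$, not an isomorphism. Pushing forward along these fibers amounts (up to a harmless $w$-twist on one factor) to computing $(m_T)_!(\mathscr{L}_{T,n} \boxtimes \mathscr{L}_{T,m})$, which is \emph{not} of the form $\mathscr{L}_{T,k}[-2r]$ for any $k$: it is a genuine complex. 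Only in the pro-object limit does Lemma~\ref{lem:averaging-hL} force this to collapse to $\hL_T[-2r]$. Your appeal to ``multiplicativity of $\mathscr{L}_{T,n}$ under $m_T$'' is therefore the crux of the error: no such multiplicativity holds at finite level.

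The paper sidesteps this entirely. Applying $\pi_\dag$ and using~\eqref{eqn:hatstar-pidag} together with Lemma~\ref{lem:convolution-Bequiv} reduces $\pi_\dag(\hDel_v \hatstar \hDel_w)$ to the classical convolution $\Delta_v \star^B \Delta_w \cong \Delta_{vw}$ on $Y$. Then Remark~\ref{rmk:filtrations-pidag} lifts this back: any object whose $\pi_\dag$-image has a single standard factor $\Delta_{vw}$ must itself be $\hDel_{vw}$. The same method handles~\eqref{it:conv-hDel-hnab} and the costandard half of~\eqref{it:conv-hDel} uniformly, with no need to telescope to simple reflections or to compute directly in rank one.
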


\begin{proof}
We prove the first isomorphism in~\eqref{it:conv-hDel}; the other claims can be obtained similarly.
By~\eqref{eqn:hatstar-pidag} and~\eqref{eqn:Pidag-hD-hN} we have
\[
\pi_\dag(\hDel_v \hatstar \hDel_w) \cong \hDel_v \hatstar \pi_\dag(\hDel_w) \cong \hDel_v \hatstar \Delta_w.
\]
Since $\Delta_w$ is a $B$-equivariant perverse sheaf, using Lemma~\ref{lem:convolution-Bequiv} we deduce that
\[
\pi_\dag(\hDel_v \hatstar \hDel_w) \cong \Delta_v \star^B \Delta_w.
\]
Now it is well known that the right-hand side is isomorphic to $\Delta_{vw}$, see e.g.~\cite[\S 2.2]{bbm} or~\cite[Proposition~4.4]{modrap2}. Then the claim follows from Remark~\ref{rmk:filtrations-pidag}.
\end{proof}

\begin{lem}
\label{lem:conv-hTil}
Let $s \in S$. For any tilting perverse sheaf $\hTil$ in $\hD_{U}(X \quot T,\bk)$, the object $\hTil_s \hatstar \hTil$ is a tilting perverse sheaf, and for any $w \in W$ we have
\[
(\hTil_s \hatstar \hTil : \hDel_w) = (\hTil : \hDel_w) + (\hTil : \hDel_{sw}).
\]
\end{lem}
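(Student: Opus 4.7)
The approach is to push everything down to the classical category $\scO = \Perv_U(Y,\bk)$ via $\pi_\dag$, and appeal to the well-known fact that convolution with $\mathscr{T}_s$ preserves tilting objects with the expected multiplicities. Namely, first I compute $\pi_\dag(\hTil_s \hatstar \hTil) \cong \hTil_s \hatstar \pi_\dag(\hTil)$ using \eqref{eqn:hatstar-pidag}. Writing $\hTil_s = ``\varprojlim_n" \mathscr{F}_n$, applying Lemma~\ref{lem:convolution-Bequiv} termwise, and passing to the projective limit yields
\[
\pi_\dag(\hTil_s \hatstar \hTil) \cong \pi_\dag(\hTil_s) \star^B \pi_\dag(\hTil) \cong \mathscr{T}_s \star^B \pi_\dag(\hTil),
\]
where we use $\pi_\dag(\hTil_s) \cong \mathscr{T}_s$ from Proposition~\ref{prop:classification-tiltings}, and where we view $\pi_\dag(\hTil)$ as $B$-equivariant (permissible since Schubert cells are affine spaces on which $U$-equivariance automatically upgrades to $B$-equivariance).

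Second, I invoke the classical result that for any tilting object $\mathscr{T}$ in $\scO$, the object $\mathscr{T}_s \star^B \mathscr{T}$ is again tilting, with standard multiplicities
\[
(\mathscr{T}_s \star^B \mathscr{T} : \Delta_w) = (\mathscr{T} : \Delta_w) + (\mathscr{T} : \Delta_{sw}),
\]
and analogously for costandards. This was proved in characteristic $0$ in \cite[\S 2]{bbm} and established over arbitrary fields by essentially the same argument in \cite[Proposition~4.4]{modrap2}: convolve the short exact sequence $\Delta_s \hookrightarrow \mathscr{T}_s \twoheadrightarrow \Delta_e$ with $\mathscr{T}$, and analyze $\Delta_s \star^B \Delta_w$ in the two cases $\ell(sw) = \ell(w) \pm 1$.

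Third, combining the first two steps, $\pi_\dag(\hTil_s \hatstar \hTil)$ is a tilting perverse sheaf in $\scO$ with the expected standard and costandard multiplicities. By Lemma~\ref{lem:tiltings}\eqref{it:tilting-pi} this forces $\hTil_s \hatstar \hTil$ to be tilting in $\hscO$, and by \eqref{eqn:multiplicities-pidag} one has
\[
(\hTil_s \hatstar \hTil : \hDel_w) = (\pi_\dag(\hTil) : \Delta_w) + (\pi_\dag(\hTil) : \Delta_{sw}) = (\hTil : \hDel_w) + (\hTil : \hDel_{sw}),
\]
as claimed.

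The main obstacle sits in the classical step two: over a field of arbitrary characteristic, handling the case $\ell(sw) < \ell(w)$ requires knowing that $\Delta_s \star^B \Delta_w$ is perverse and acquires an additional $\Delta_w$-subquotient beyond the expected $\Delta_{sw}$, which ultimately reduces to computing $\Delta_s \star^B \Delta_s$. By contrast, the reduction to $\scO$ in step one, while involving a pro-object manipulation, is formal once one verifies that Lemma~\ref{lem:convolution-Bequiv} propagates through the inverse-limit construction of $\hatstar$.
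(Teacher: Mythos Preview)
Your argument is correct and takes a different route from the paper. The paper works entirely inside $\hD_{U}(X \quot T,\bk)$: it shows directly that $\hTil_s \hatstar \hDel_w$ admits a standard filtration with subquotients $\hDel_w$ and $\hDel_{sw}$ (and dually for $\hnab_w$) by convolving the short exact sequence $\hDel_s \hookrightarrow \hTil_s \twoheadrightarrow \hDel_e$ (when $sw>w$) or $\hDel_e \hookrightarrow \hTil_s \twoheadrightarrow \hnab_s$ (when $sw<w$) with $\hDel_w$, and then appeals to Lemma~\ref{lem:convolution-hDel}. You instead push down via $\pi_\dag$, identify $\pi_\dag(\hTil_s \hatstar \hTil)$ with $\mathscr{T}_s \star^B \pi_\dag(\hTil)$ (this is essentially the content of the proof of Lemma~\ref{lem:convolution-representable}), invoke the known statement in $\scO$, and lift back via Lemma~\ref{lem:tiltings}\eqref{it:tilting-pi} and~\eqref{eqn:multiplicities-pidag}. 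Your approach is economical if one is willing to import the result in $\scO$; the paper's approach is self-contained and yields the slightly finer information that $\hTil_s \hatstar \hDel_w$ itself has an explicit two-step filtration.

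One remark on your final paragraph: the case $sw<w$ is \emph{not} usually handled by analyzing $\Delta_s \star^B \Delta_w$ (which need not be $\Delta_{sw}$). Rather one switches to the other short exact sequence $\Delta_e \hookrightarrow \mathscr{T}_s \twoheadrightarrow \nabla_s$ and uses $\nabla_s \star^B \Delta_w \cong \Delta_{sw}$; this is precisely what the paper does upstairs, and it avoids any computation of $\Delta_s \star^B \Delta_s$. This does not affect the correctness of your main line of argument, only the sketch you gave of the classical proof.
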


\begin{proof}
We will prove that for any $w \in W$ the object $\hTil_s \hatstar \hDel_w$ admits a standard filtration, the multiplicity of $\hDel_v$ being $1$ if $v \in \{w,sw\}$, and $0$ otherwise. Similar arguments show that $\hTil_s \hatstar \hnab_w$ admits a costandard filtration, and the desired claim will follow. First, assume that $sw>w$. Then using the exact sequence $\hDel_s \hookrightarrow \hTil_s \twoheadrightarrow \hDel_e$ (see~\S\ref{ss:Ts}) and applying $(-) \hatstar \hDel_w$ we obtain a distinguished triangle
\[
\hDel_s \hatstar \hDel_w \to \hTil_s \hatstar \hDel_w \to \hDel_e \hatstar \hDel_w \xrightarrow{[1]}.
\]
Here Lemma~\ref{lem:convolution-hDel}\eqref{it:conv-hDel} implies that the first term is isomorphic to $\hDel_{sw}$, and that the third term is isomorphic to $\hDel_w$, which shows the desired property. If now $sw<w$, we use the exact sequence $\hDel_e \hookrightarrow \hTil_s \twoheadrightarrow \hnab_s$ to obtain a distinguished triangle
\[
\hDel_e \hatstar \hDel_w \to \hTil_s \hatstar \hDel_w \to \hnab_s \hatstar \hDel_w \xrightarrow{[1]}.
\]
We conclude as above, using also Lemma~\ref{lem:convolution-hDel}\eqref{it:conv-hDel-hnab} to see that the third term is isomorphic to $\hDel_{sw}$.
\end{proof}

\begin{rmk}
\label{rmk:convolution-tilting}
One can easily deduce from Lemma~\ref{lem:conv-hTil} that the tilting objects in $\hscO$ are the direct sums of direct summands of objects of the form $\hTil_{s_1} \hatstar \cdots \hatstar \hTil_{s_r}$ with $s_1, \cdots, s_r \in S$, and moreover that the convolution product of two tilting objects is again a tilting object. Similarly, the tilting objects in $\scO$ are the direct sums of direct summands of objects of the form $\hTil_{s_1} \hatstar \cdots \hatstar \hTil_{s_r} \hatstar \Delta_e$ with $s_1, \cdots, s_r \in S$, and $\hTil \hatstar \mathscr{T}$ is tilting in $\scO$ if $\hTil$ is tilting in $\hscO$ and $\mathscr{T}$ is tilting in $\scO$. In particular, this provides a ``Bott--Samelson type'' construction of these tilting objects.
\end{rmk}

\begin{prop}
\label{prop:convolution-hTw0}
 For any $v,w \in W$, we have
 \[
  \hDel_v \hatstar \hTil_{w_0} \hatstar \hDel_w \cong \hTil_{w_0}.
 \]
\end{prop}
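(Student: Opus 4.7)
The plan is to reduce — by associativity of $\hatstar$ combined with Lemma~\ref{lem:convolution-hDel}(ii) — to the ``atomic'' cases $\hDel_s \hatstar \hTil_{w_0} \cong \hTil_{w_0}$ and $\hTil_{w_0} \hatstar \hDel_s \cong \hTil_{w_0}$ for all simple reflections $s \in S$. Namely, assuming these atomic isomorphisms, I would induct on $\ell(v)+\ell(w)$ (with base case $v = w = e$ covered by Lemma~\ref{lem:unit-convolution}). If $\ell(v) \geq 1$, pick any simple $s$ with $\ell(sv) = \ell(v) - 1$ and set $v' = sv$; then $\hDel_v \cong \hDel_s \hatstar \hDel_{v'}$ by Lemma~\ref{lem:convolution-hDel}(ii), and associativity together with the inductive hypothesis for $(v',w)$ and the atomic case for $s$ gives
\[
\hDel_v \hatstar \hTil_{w_0} \hatstar \hDel_w \cong \hDel_s \hatstar (\hDel_{v'} \hatstar \hTil_{w_0} \hatstar \hDel_w) \cong \hDel_s \hatstar \hTil_{w_0} \cong \hTil_{w_0}.
\]
The reduction on the $w$-side is symmetric.

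For the atomic case $\hDel_s \hatstar \hTil_{w_0} \cong \hTil_{w_0}$, I would apply $(-) \hatstar \hTil_{w_0}$ to the short exact sequence $\hDel_s \hookrightarrow \hTil_s \twoheadrightarrow \hDel_e$ from~\S\ref{ss:Ts} to obtain the distinguished triangle
\[
\hDel_s \hatstar \hTil_{w_0} \to \hTil_s \hatstar \hTil_{w_0} \to \hTil_{w_0} \xrightarrow{[1]}.
\]
Lemma~\ref{lem:conv-hTil} together with Lemma~\ref{lem:properties-Tw0}(i) and~\eqref{eqn:multiplicities-pidag} shows the middle term is tilting with $(\hTil_s \hatstar \hTil_{w_0} : \hDel_u) = 2$ for every $u \in W$. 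Since each $\hTil_u$ is supported on $\overline{X_u}$, only $\hTil_{w_0}$ contains $X_{w_0}$ in its support, so among indecomposable tiltings only $(\hTil_{w_0} : \hDel_{w_0})$ is nonzero, and equals $1$; Krull--Schmidt (via Corollary~\ref{cor:Hom-fg}(ii)) then forces $\hTil_s \hatstar \hTil_{w_0} \cong \hTil_{w_0}^{\oplus 2}$.

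To identify $\hDel_s \hatstar \hTil_{w_0}$, I would apply $\pi_\dag$ to the triangle; using Lemma~\ref{lem:convolution-Bequiv} and the $B$-equivariance of the Bruhat-constructible sheaf $\mathscr{T}_{w_0}$, this yields the triangle
\[
\Delta_s \star^B \mathscr{T}_{w_0} \to \mathscr{T}_{w_0}^{\oplus 2} \to \mathscr{T}_{w_0} \xrightarrow{[1]}
\]
in $\Db_U(Y,\bk)$. Since $\Delta_s \star^B (-)$ is an autoequivalence (inverse $\nabla_s \star^B (-)$, by the $B$-equivariant analogue of Lemma~\ref{lem:convolution-hDel}(i)), the object $\Delta_s \star^B \mathscr{T}_{w_0}$ is indecomposable. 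The main obstacle — and the most delicate point — is establishing that $\Delta_s \star^B \mathscr{T}_{w_0}$ is perverse; the cleanest route is to invoke the standard t-exactness of $\Delta_s \star^B (-)$ on $\scO$, obtained from a direct analysis of the convolution map using the affine structure of $Y_s$ together with properness of $G/B$. Granted perversity, the triangle becomes a short exact sequence that splits by projectivity of $\mathscr{T}_{w_0}$ (Lemma~\ref{lem:properties-Tw0}(ii)); Krull--Schmidt and indecomposability then force $\Delta_s \star^B \mathscr{T}_{w_0} \cong \mathscr{T}_{w_0}$. Finally, Lemma~\ref{lem:perv-pi}(i) lifts perversity back to $\hDel_s \hatstar \hTil_{w_0}$, and since $\pi_\dag(\hDel_s \hatstar \hTil_{w_0}) \cong \mathscr{T}_{w_0}$, Proposition~\ref{prop:classification-tiltings} identifies it as $\hTil_{w_0}$.
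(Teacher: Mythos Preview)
Your overall strategy is sound, and in fact close to the paper's, but there is a genuine gap in the left atomic case. You claim that $\mathscr{T}_{w_0}$ is $B$-equivariant and use Lemma~\ref{lem:convolution-Bequiv} to identify $\pi_\dag(\hDel_s \hatstar \hTil_{w_0})$ with $\Delta_s \star^B \mathscr{T}_{w_0}$. But $\mathscr{T}_{w_0}$ is \emph{not} $B$-equivariant: by Lemma~\ref{lem:mon-equiv} this would force the left monodromy $\varphi^\wedge_{\lef,\mathscr{T}_{w_0}}$ to factor through $\varepsilon_T^\wedge$, whereas the whole point of Corollary~\ref{cor:main} is that this monodromy surjects onto the $(\#W)$-dimensional algebra $\End(\mathscr{T}_{w_0})$. (Being Bruhat-constructible is a much weaker condition than being $B$-equivariant.) Consequently the expression $\Delta_s \star^B \mathscr{T}_{w_0}$ does not even make sense, since the bifunctor $\star^B$ requires its second argument to lie in $\Db_B(Y,\bk)$; and your appeal to the t-exactness of $\Delta_s \star^B(-)$ does not apply to the object at hand.

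The fix is not far away, and it is essentially what the paper does. The correct functor is $\hDel_s \hatstar (-)$ on $\Db_U(Y,\bk)$, which \emph{is} an autoequivalence (with inverse $\hnab_s \hatstar (-)$, by Lemma~\ref{lem:convolution-hDel}\eqref{it:conv-hDel-hnab}), so indecomposability of $\hDel_s \hatstar \mathscr{T}_{w_0}$ follows. For perversity, you cannot apply Lemma~\ref{lem:convolution-Bequiv} to $\mathscr{T}_{w_0}$ directly, but you \emph{can} filter $\mathscr{T}_{w_0}$ by the costandard objects $\nabla_u$, each of which \emph{is} $B$-equivariant; then $\hDel_s \hatstar \nabla_u \cong \Delta_s \star^B \nabla_u$ is perverse by the standard fact cited in the paper's proof, and $\hDel_s \hatstar \mathscr{T}_{w_0}$ is an iterated extension of these. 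The paper runs precisely this asymmetric argument: on the right it exploits that $\Delta_w$ is $B$-equivariant to reduce to $\mathscr{T}_{w_0} \star^B \Delta_w$ directly (for all $w$ at once, via a Hom computation identifying the projective cover of $\IC_e$), while on the left it filters by $B$-equivariant pieces to establish perversity. So your ``symmetric'' treatment of the two sides obscures a real asymmetry in where Lemma~\ref{lem:convolution-Bequiv} is available.
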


\begin{proof}
Of course, it is enough to prove that for $v,w \in W$ we have
\[
\hDel_v \hatstar \hTil_{w_0} \cong \hTil_{w_0} \quad \text{and} \quad \hTil_{w_0} \hatstar \hDel_w \cong \hTil_{w_0}.
\]
And for this,
in view of Proposition~\ref{prop:classification-tiltings} it suffices to prove that
\begin{equation}
\label{eqn:Tw0-convolution}
\pi_\dag \bigl( \hDel_v \hatstar \hTil_{w_0} \bigr) \cong \mathscr{T}_{w_0} \quad \text{and} \quad
\pi_\dag \bigl( \hTil_{w_0} \hatstar \hDel_w \bigr) \cong \mathscr{T}_{w_0}.
\end{equation}

We first prove the second isomorphism in~\eqref{eqn:Tw0-convolution}.
By~\eqref{eqn:hatstar-pidag} and~\eqref{eqn:Pidag-hD-hN} we have
\[
\pi_\dag \bigl( \hTil_{w_0} \hatstar \hDel_w \bigr) \cong \hTil_{w_0} \hatstar \pi_\dag(\hDel_w) \cong \hTil_{w_0} \hatstar \Delta_w.
\]
Since $\Delta_w$ is a $B$-equivariant perverse sheaf, using Lemma~\ref{lem:convolution-Bequiv} we deduce that
\[
\hTil_{w_0} \hatstar \Delta_w \cong \mathscr{T}_{w_0} \star^B \Delta_w.
\]
Hence to prove the second isomorphism in~\eqref{eqn:Tw0-convolution} we only have to prove that
\begin{equation}
\label{eqn:Tw0-convolution-2}
\mathscr{T}_{w_0} \star^B \Delta_w \cong \mathscr{T}_{w_0}.
\end{equation}
It is known that any object of the form $\nabla_u \star^B \Delta_v$ is perverse (see e.g.~\cite[Proposition~4.6]{modrap2} or~\cite[Proposition~8.2.4]{abg:qglg} for similar claims). In particular, it follows that $\mathscr{T}_{w_0} \star^B \Delta_w$ is perverse. And since $\Delta_w \star^B \nabla_{w^{-1}} \cong \Delta_e$, for any $x \in W$ and $n \in \Z$ we have
\[
\Hom_{\Db_{U}(Y,\bk)}(\mathscr{T}_{w_0} \star^B \Delta_w, \IC_x[n]) \cong \Hom_{\Db_{U}(Y,\bk)}(\mathscr{T}_{w_0}, \IC_x \star^B \nabla_{w^{-1}}[n]).
\]
Now since the realization functor $\Db \scO \to \Db_{U}(Y,\bk)$ is an equivalence of categories (see e.g.~\cite[Corollary~3.3.2]{bgs}, whose proof applies to any field of coefficients), and in view of Lemma~\ref{lem:properties-Tw0}\eqref{it:Tw0-2}, for $y \in W$ and $m \in \Z$ we have
\[
\Hom_{\Db_{U}(Y,\bk)}(\mathscr{T}_{w_0}, \IC_y [m]) \cong
\begin{cases}
\bk & \text{if $y=e$ and $m=0$;} \\
0 & \text{otherwise.}
\end{cases}
\]
It is not difficult to see that if $x \neq e$ and if $\mathscr{G}$ belongs to $\Db_B(Y,\bk)$ then all the composition factors of the perverse cohomology objects of $\IC_x \star^B \mathscr{G}$ are of the form $\IC_y$ with $y \neq e$; using also Lemma~\ref{lem:soc-top-Delta-nabla} we deduce that
\[
\Hom_{\Db_{U}(Y,\bk)}(\mathscr{T}_{w_0}, \IC_x \star^B \nabla_{w^{-1}}[n]) \cong \begin{cases}
\bk & \text{if $x=e$ and $n=0$;} \\
0 & \text{otherwise.}
\end{cases}
\]
It follows that the perverse sheaf $\mathscr{T}_{w_0} \star^B \Delta_w$ is the projective cover of $\IC_e$, hence that it is isomorphic to $\mathscr{T}_{w_0}$ by Lemma~\ref{lem:properties-Tw0}\eqref{it:Tw0-2}. This finally proves~\eqref{eqn:Tw0-convolution-2}, hence also the second isomorphism in~\eqref{eqn:Tw0-convolution}.

We now consider the first isomorphism in~\eqref{eqn:Tw0-convolution}.
If $v=e$ then it follows from Lemma~\ref{lem:convolution-hDel} that $\hDel_e \hatstar \hTil_{w_0}$ is a tilting perverse sheaf, and has the same standard multiplicities as $\hTil_{w_0}$; therefore it is isomorphic to $\hTil_{w_0}$. 
Now assume the claim is known for $v \neq w_0$, and choose $s \in S$ such that $vs>v$. By the same arguments as in the proof of Lemma~\ref{lem:conv-hTil} we have an exact sequence of perverse sheaves
\begin{equation}
\label{eqn:ses}
\hDel_{vs} \hookrightarrow \hDel_v \hatstar \hTil_s \twoheadrightarrow \hDel_v.
\end{equation}
From Lemma~\ref{lem:conv-hTil} we deduce that $\hTil_s \hatstar \mathscr{T}_{w_0} \cong (\mathscr{T}_{w_0})^{\oplus 2}$. Therefore,
convolving~\eqref{eqn:ses} with $\mathscr{T}_{w_0}$ on the right and using induction we obtain a distinguished triangle
\begin{equation*}
\hDel_{vs} \hatstar \mathscr{T}_{w_0} \to (\mathscr{T}_{w_0})^{\oplus 2} \to \mathscr{T}_{w_0} \xrightarrow{[1]}
\end{equation*}
in $\Db_{U}(Y,\bk)$.
As above the object $\hDel_{vs} \hatstar \mathscr{T}_{w_0}$ is perverse; hence this triangle is a short exact sequence in $\scO$. Since $\mathscr{T}_{w_0}$ is projective (see Lemma~\ref{lem:properties-Tw0}\eqref{it:Tw0-2}) the surjection $(\mathscr{T}_{w_0})^{\oplus 2} \twoheadrightarrow \mathscr{T}_{w_0}$ must be split, and we finally obtain that $\hDel_{vs} \hatstar \mathscr{T}_{w_0} \cong \mathscr{T}_{w_0}$, as desired.
\end{proof}

\section{Variations on some results of Kostant--Kumar}
\label{sec:kostant-kumar}

From now on we assume that $G$ is semisimple, of adjoint type. (Of course this assumption is harmless if one is mainly interested in the category $\mathscr{O}$.) We will denote by $\Phi^\vee$ the coroot system of $(G,T)$, and by $\Phi^\vee_+ \subset \Phi^\vee$ the positive coroots.

\subsection{The Pittie--Steinberg theorem}

We set
\[
 \mathsf{d} = \prod_{\alpha^\vee \in \Phi^\vee_+} (1-e^{\alpha^\vee}) \quad \in R_T,
\]
and denote by $\rho^\vee \in X_*(T)$ the halfsum of the positive coroots.

The following result is an easy application of the Pittie--Steinberg theorem.

\begin{thm}
\label{thm:pittie-steinberg}
 The $(R_T^\wedge)^W$-module $R_T^\wedge$ is free of rank $\# W$. 
 More precisely, this module admits a basis $(\mathsf{e}_w)_{w \in W}$ such that
 \begin{equation}
 \label{eqn:det-Steinberg}
  \det \bigl( (w(\mathsf{e}_v))_{v,w \in W} \bigr) = \bigl( (-1)^{|\Phi^\vee_+|} e^{-\rho^\vee} \mathsf{d} \bigr)^{|W|/2}.
 \end{equation}
\end{thm}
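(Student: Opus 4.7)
The plan is as follows.

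First, I would identify the algebraic setup with a situation where Steinberg's classical theorem applies. Since $G$ is semisimple of adjoint type, the cocharacter lattice $X_*(T)$ coincides with the coweight lattice $P^\vee$ of the root system of $(G,T)$. Under the Langlands dual interpretation of Remark~\ref{rmk:RA}, $R_T = \bk[X_*(T)] = \mathcal{O}(T^\vee_\bk)$ where $T^\vee_\bk$ is a maximal torus of the Langlands dual group $G^\vee_\bk$, which is simply connected because $G$ is adjoint. In particular $X_*(T) = P^\vee$ is the weight lattice of the root system $\Phi^\vee$ of $G^\vee$. The theorem of Pittie--Steinberg (proved originally over $\mathbb{Z}$, hence valid over any field after base change) then asserts that for the simply connected group $G^\vee$ the ring $\mathbb{Z}[P^\vee]$ is free of rank $|W|$ over its $W$-invariants; one may take as explicit basis the elements $(\mathsf{e}_w)_{w \in W}$ constructed from fundamental coweights as in~\cite{kk}. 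Tensoring with $\bk$, I get that $R_T$ is free of rank $|W|$ over $R_T^W$, with basis $(\mathsf{e}_w)_{w \in W}$.

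Second, I would pass from $R_T$ to the completion $R_T^\wedge$. The augmentation ideal $\mathfrak{m}_T \subset R_T$ is $W$-stable, and since the extension $R_T^W \subset R_T$ is finite (hence integral), $\mathfrak{m}_T$ is the unique prime of $R_T$ lying over the ideal $\mathfrak{m}_T^W := \mathfrak{m}_T \cap R_T^W$. Thus the $\mathfrak{m}_T$-adic topology on $R_T$ coincides with the $\mathfrak{m}_T^W R_T$-adic topology, so completing the free $R_T^W$-module $R_T$ gives a free module of the same rank over the completion of $R_T^W$ at $\mathfrak{m}_T^W$. It remains to note that this completion identifies with $(R_T^\wedge)^W$: since completion along $\mathfrak{m}_T^W$ is flat and $R_T^W$ is Noetherian, taking $W$-invariants commutes with this base change (alternatively, one can argue using the fact that $R_T^\wedge$ is already finite and free over the completion, so the Reynolds-type averaging for invariants is straightforward). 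Transporting the basis $(\mathsf{e}_w)$ through this identification gives the desired $(R_T^\wedge)^W$-basis of $R_T^\wedge$.

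Third, for the determinant identity~\eqref{eqn:det-Steinberg}, I would compute $\det\bigl((w(\mathsf{e}_v))_{v,w}\bigr)$ explicitly with Steinberg's basis, using the Weyl denominator formula. The key observation is the elementary identity
\[
(-1)^{|\Phi^\vee_+|} e^{-\rho^\vee}\mathsf{d} = \prod_{\alpha^\vee \in \Phi^\vee_+}\bigl(e^{\alpha^\vee/2} - e^{-\alpha^\vee/2}\bigr),
\]
whose right-hand side is the Weyl denominator $D$ satisfying $w(D) = (-1)^{\ell(w)} D$. The determinant $\det(w(\mathsf{e}_v))$ is anti-invariant in $w$ up to permutation, so it is divisible by $D$; moreover its square is, up to a unit in $(R_T^\wedge)^W$, the discriminant of the finite free extension $R_T \subset R_T^W$, i.e.\ the Jacobian of the quotient morphism $T^\vee_\bk \to T^\vee_\bk /W$. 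A direct calculation with Steinberg's explicit basis (see~\cite{kk}) then identifies this determinant with $D^{|W|/2}$, which equals the right-hand side of~\eqref{eqn:det-Steinberg}.

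The main obstacle I expect is the last step: while the \emph{square} of the determinant is readily identified with the Jacobian of the Chevalley map up to a unit, extracting the precise sign $(-1)^{|\Phi^\vee_+|\cdot|W|/2}$ and pinning down the exact power $|W|/2$ (rather than merely $D^{|W|/2}$ up to a $W$-invariant unit) requires an explicit manipulation of Steinberg's basis and the Weyl denominator formula. This is where I would lean most heavily on the detailed computations of Kostant--Kumar in~\cite{kk}, where the relevant identity is proved in the integral equivariant $K$-theory setting and can be transported to our situation.
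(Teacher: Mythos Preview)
Your proposal is correct and covers the same ground as the paper, but the treatment of the completion step differs meaningfully. Both you and the paper start from Steinberg's theorem over $\Z$, base-change to $\bk$ (noting that $\bk \otimes_\Z \Z[X_*(T)]^W \cong \bk[X_*(T)]^W$), and defer the precise determinant identity~\eqref{eqn:det-Steinberg} to the literature (Steinberg and Kostant--Kumar).

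The difference lies in how freeness is carried through completion. You invoke general commutative algebra: $\mathfrak{m}_T$ is the unique prime of $R_T$ above $\mathfrak{m}_T^W$ (since $1 \in T^\vee_\bk$ is $W$-fixed), so the two adic topologies on $R_T$ agree, whence $R_T^\wedge \cong R_T \otimes_{R_T^W} (R_T^W)^\wedge$ is free over $(R_T^W)^\wedge$; then flat base change identifies $(R_T^W)^\wedge$ with $(R_T^\wedge)^W$. (Your alternative ``Reynolds-type'' suggestion would require $|W|$ invertible in $\bk$, so the flatness argument is the one to keep.) The paper instead argues by hand: given $a \in R_T^\wedge$, approximate it by $a_n \in R_T$, expand each $a_n$ in the basis $(\mathsf{e}_w)$, and use the explicit determinant formula~\eqref{eqn:det-Steinberg} together with the Artin--Rees lemma to show the coefficient sequences are Cauchy in $R_T$, hence converge in $(R_T^\wedge)^W$. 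Your route is cleaner and more conceptual; the paper's is more self-contained and, notably, makes the determinant identity do actual work in the freeness argument rather than standing as a separate decorative claim.

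One minor point: your identity $(-1)^{|\Phi^\vee_+|} e^{-\rho^\vee}\mathsf{d} = \prod_{\alpha^\vee}(e^{\alpha^\vee/2}-e^{-\alpha^\vee/2})$ passes through formal half-coroots not lying in $X_*(T)$. This is harmless since they cancel in the end, but it would read better to keep the manipulation inside $R_T$.
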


\begin{proof}
By the Pittie--Steinberg theorem (see~\cite{steinberg}) we know that under our assumptions $\Z[X_*(T)]$ is free over $\Z[X_*(T)]^W$, of rank $\# W$. Moreover, from the proof in~\cite{steinberg} one sees that this module admits a basis such that~\eqref{eqn:det-Steinberg} holds (see e.g.~\cite[Proof of Theorem~4.4]{kk}). Now there are canonical isomorphisms
\[
 \bk \otimes_{\Z} \Z[X_*(T)] \simto \bk[X_*(T)], \qquad \bk \otimes_{\Z} \Z[X_*(T)]^W \simto \bk[X_*(T)]^W.
\]
(For the second one, we remark that $\Z[X_*(T)]^W$ is a free $\Z$-module, with a basis consisting of the elements $\sum_{\lambda \in \mathbb{O}} e^\lambda$ where $\mathbb{O}$ runs over $W$-orbits in $X_*(T)$. Since a similar fact holds for $\bk[X_*(T)]^W$, we deduce that the natural morphism $\bk \otimes_{\Z} \Z[X_*(T)]^W \to \bk[X_*(T)]^W$ is indeed an isomorphism.) Hence $R_T$ is free over $(R_T)^W$, of rank $\# W$, and admits a basis $(\mathsf{e}_w)_{w \in W}$ such that~\eqref{eqn:det-Steinberg} holds.

Now we consider completions. Let $a \in R_T^\wedge$, and write $a$ as the limit of a sequence $(a_n)_{n \geq 0}$ of elements of $R_T$. For any $n \geq 0$, there exist (unique) elements $(p^n_w)_{w \in W}$ in $(R_T)^W$ such that
\begin{equation}
\label{eqn:relation-a-p}
 a_n = \sum_{w \in W} p_w^n \cdot \mathsf{e}_w.
\end{equation}
We claim that each sequence $(p_w^n)_{n \geq 0}$ converges to a certain $p_w \in R_T^\wedge$; then $p_w$ will belong to $(R_T^\wedge)^W$, and we will have $a=\sum_{w \in W} p_w \cdot \mathsf{e}_w$, which will prove that the elements $(\mathsf{e}_w)_{w \in W}$ generate $R_T^\wedge$ over $(R_T^\wedge)^W$.

Consider the matrix $M:=(v(\mathsf{e}_w))_{v,w \in W}$, with rows and columns parametrized by $W$, and coefficients in $R_T$. Then the equalities~\eqref{eqn:relation-a-p} imply that for any $n \geq 0$ we have
\[
 (v (a_n))_{v \in W} = M \cdot (p^n_w)_{w \in W}
\]
in the space of vectors parametrized by $W$, and with values in the ring $R_T$. Now~\eqref{eqn:det-Steinberg} shows that $M$ is invertible in the space of matrices with coefficients in the fraction field of $R_T$, and that $\mathsf{d}^{|W|/2} \cdot M^{-1}$ in fact has coefficients in $R_T$. Moreover, we have
\begin{equation}
\label{eqn:relation-a-p-2}
 (\mathsf{d}^{|W|/2} \cdot p^n_w)_{w \in W} = (\mathsf{d}^{|W|/2} \cdot M^{-1}) \cdot (v (a_n))_{v \in W}.
\end{equation}
From this, we will deduce that each sequence $(p_w^n)_{n \geq 0}$ is Cauchy, which will prove our claim. In fact,
by the Artin--Rees lemma (applied to the $R_T$-modules $\mathsf{d}^{|W|/2} \cdot R_T \subset R_T$ and the ideal $\mathfrak{m}_T$), there exists an integer $c$ such that
\[
 \mathfrak{m}_T^n \cap \mathsf{d}^{|W|/2} \cdot R_T \subset \mathsf{d}^{|W|/2} \cdot \mathfrak{m}_T^{n-c}
\]
for any $n \geq c$. Now if $k \geq 0$ is fixed, for $n,m \gg 0$ we have $a_n - a_m \in \mathfrak{m}_T^{c+k}$. From~\eqref{eqn:relation-a-p-2} we deduce that $\mathsf{d}^{|W|/2} \cdot (p_w^n - p_w^m)$ belongs to $\mathfrak{m}_T^{c+k}$ also, hence to $\mathsf{d}^{|W|/2} \cdot \mathfrak{m}_T^{k}$. Hence $p_w^n - p_w^m$ belongs to $\mathfrak{m}_T^k$, which finishes the proof of the claim.

To conclude the proof, it remains to check that the elements $(\mathsf{e}_w)_{w \in W}$ are linearly independent over $(R_T^\wedge)^W$. However, if
\[
 \sum_{w \in W} p_w \cdot \mathsf{e}_w = 0
\]
for some elements $p_w$ in $(R_T^\wedge)^W$, then as above we have $M \cdot (p_w)_{w \in W} = 0$. Since $M$ is invertible (as a matrix with coefficients in the fraction field of $R_T^\wedge$), it follows that $p_w=0$ for any $w \in W$.
\end{proof}

Let us note the following consequences of this theorem:
 \begin{itemize}
  \item the $R_T^\wedge$-module $R_T^\wedge \otimes_{(R_T^\wedge)^W} R_T^\wedge$ is free of rank $\# W$;
  \item the $\bk$-vector space $R_T^\wedge / (R_T^\wedge)^W_+$ has dimension $\# W$, where $(R_T^\wedge)^W_+$ is the kernel of the map $(R_T^\wedge)^W \hookrightarrow R_T^\wedge \xrightarrow{\varepsilon_T^\wedge} \bk$.
 \end{itemize}
 
\subsection{Some \texorpdfstring{$R_T^\wedge$}{RT}-modules}
\label{ss:some-RT-modules}

In this subsection we recall some constructions due to Kostant--Kumar~\cite{kk} (replacing everywhere the $T^\vee$-equivariant $\mathsf{K}$-theory of the point---where $T^\vee$ is the torus dual to $T$---by $R_T^\wedge$).

We will denote by $Q_T^\wedge$ the fraction field of $R_T^\wedge$. We then denote by $Q_W$ the smash product of $Q_T^\wedge$ and $W$; in other words $Q_W$ is a $Q_T^\wedge$-vector space with a basis $(\delta_w)_{w \in W}$, with the multiplication determined by
\[
 (a \delta_w) \cdot (b \delta_v) = aw(b) \delta_{wv}. 
\]
Of course, $(\delta_w)_{w \in W}$ is also a basis for the action of $Q_T^\wedge$ given by right multiplication in $Q_W$. We will denote by $\iota$ the anti-involution of $Q_W$ determined by
\[
 \iota(a)=a, \qquad \iota(\delta_w)=\delta_{w^{-1}}
\]
for $a \in Q_T^\wedge$ and $w \in W$.

Following~\cite{kk}, for $s \in S$ we set
\[
y_s := (\delta_e + \delta_s) \frac{1}{1-e^{-\alpha_s^\vee}} = \frac{1}{1-e^{-\alpha_s^\vee}} (\delta_e - e^{-\alpha_s^\vee} \delta_s),
\]
where $\alpha_s^\vee$ is the simple coroot associated with $s$.
The same computation as for~\cite[Proposition~2.4]{kk} shows that these elements satisfy the braid relations of $W$; therefore, by Matsumoto's lemma, for $w \in W$ we can set
\[
 y_w := y_{s_1} \cdot y_{s_2} \cdot (\cdots) \cdot y_{s_r},
\]
where $w=s_1 \cdots s_r$ is any reduced expression. It is clear from definitions that the matrix expressing these elements in the basis $(\delta_w)_{w \in W}$ is upper triangular with respect to the Bruhat order; in particular $(y_w)_{w \in W}$ is also a $Q_T^\wedge$-basis of $Q_W$. We set
\[
 Y_W := \bigoplus_{w \in W} R_T^\wedge \cdot y_w,
\]
a free $R_T^\wedge$-module of rank $\# W$.
As in~\cite[Corollary~2.5]{kk}, one sees that $Y_W$ is a subring in $Q_W$, and that $(y_w)_{w \in W}$ is also a basis of $Y_W$ as an $R_T^\wedge$-module for the action induced by right multiplication.

We now consider
\[
 \Omega_W := \Hom_{Q_T^\wedge}(Q_W, Q_T^\wedge),
\]
where $Q_W$ is regarded as a $Q_T^\wedge$-vector space for the action by right multiplication. We will regard $\Omega_W$ as a $Q_T^\wedge$-vector space via $(a \cdot \psi)(b) = a \psi(b) = \psi(ba)$ for $a \in Q^\wedge_T$ and $b \in Q_W$. We will sometimes identify this vector space with the vector space $\Fun(W,Q_T^\wedge)$ of functions from $W$ to $Q_T^\wedge$, by sending the map $\psi$ to the function $w \mapsto \psi(\delta_w)$.

The space $\Omega_W$ admits an action of $Q_W$ (by $Q_T^\wedge$-vector space automorphisms) defined by
\[
 (y \cdot \psi)(z) = \psi(\iota(y) \cdot z)
\]
for $y,z \in Q_W$ and $\psi \in \Omega_W$. (Note that the action of $Q_T^\wedge \cdot \delta_e \subset Q_W$ does \emph{not} coincide with the action of $Q_T^\wedge$ considered above.) Explicitly, we have
\begin{equation}
\label{eqn:action-y-psi}
 (y_s \cdot \psi)(\delta_w) = \frac{\psi(\delta_w) - e^{-w^{-1} \alpha_s^\vee} \psi(\delta_{sw})}{1 - e^{-w^{-1} \alpha_s^\vee}}.
\end{equation}

We will be interested in the subspace
\[
 \Psi_W := \{\psi \in \Omega_W \mid \forall y \in Y_W, \ \psi(\iota(y)) \in R_T^{\wedge}\}.
\]
Of course, this subspace is stable under the action of $R_T^\wedge \subset Q_T^\wedge$. Since $Y_W$ is a subalgebra in $Q_W$, $\Psi_W$ is also stable under the action of $Y_W \subset Q_W$. Since $(\iota(y_w))_{w \in W}$ is a basis of $\iota(Y_W)$ as a right $R_T^\wedge$-module, $\Psi_W$ is free as an $R_T^\wedge$-module, with a basis $(\psi_w)_{w \in W}$ determined by
\[
 \psi_w(\iota(y_v)) = \begin{cases}
                       1 & \text{if $v=w$;} \\
                       0 & \text{otherwise.}
                      \end{cases}
\]

The following properties can be checked as in~\cite[Proposition~2.22]{kk}.

\begin{lem}\phantomsection
\label{lem:properties-psi}
\begin{enumerate}
 \item
 \label{it:psi-delta}
 For any $v,w \in W$, the element $\psi_v(\delta_w)$ belongs to $R_T^\wedge$, and vanishes unless $v \leq w$.
 \item
 \label{it:psi-delta-2}
 For any $w \in W$ we have
 \[
  \psi_w(\delta_w) = \prod_{\substack{\alpha^\vee \in \Phi_+^\vee \\ w(\alpha^\vee) \in -\Phi^\vee_+}} (1-e^{\alpha^\vee}).
 \]
 \item
 \label{it:y-psi}
 For $w \in W$ and $s \in S$, we have
 \[
  y_s \cdot \psi_w = \begin{cases}
                      \psi_w + \psi_{sw} & \text{if $sw<w$;}\\
                      0 & \text{otherwise.}
                     \end{cases}
 \]
\end{enumerate}
\end{lem}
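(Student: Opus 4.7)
All three assertions can be extracted from explicit computations in $Y_W$ and $Q_W$. I would begin with the quadratic relation $y_s^2 = y_s$ in $Y_W$, verified directly by expanding the product and using the identity $\frac{1}{1-e^{-\alpha_s^\vee}} + \frac{1}{1-e^{\alpha_s^\vee}} = 1$. Combined with the definition of $y_w$ via reduced expressions, this gives the multiplicative law
\[
y_v \cdot y_s =
\begin{cases}
y_{vs} & \text{if } vs > v, \\
y_v & \text{if } vs < v,
\end{cases}
\]
where in the second case $v = (vs) \cdot s$ is reduced, so $y_v = y_{vs} y_s$ and hence $y_v y_s = y_{vs} y_s^2 = y_v$.

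Property (3) would follow by direct calculation. Since $(\psi_u)_{u\in W}$ is a $Q_T^\wedge$-basis of $\Omega_W$, the element $y_s \cdot \psi_w$ is determined by its values on the $\iota(y_u)$. Computing
\[
(y_s \cdot \psi_w)(\iota(y_v)) = \psi_w(\iota(y_s)\,\iota(y_v)) = \psi_w(\iota(y_v\,y_s))
\]
and substituting the multiplicative law yields $\delta_{vs,w}$ when $vs > v$ and $\delta_{v,w}$ when $vs < v$. Reading off the linear combination of $\psi_u$'s matching these values gives the stated formula, with all coefficients in $\{0,1\} \subset R_T^\wedge$, so that $\Psi_W$ is stable under the action.

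For (1) and (2), I would analyze the change of basis between $(y_u)_u$ and $(\delta_v)_v$. Expanding the product $y_u = y_{s_1} \cdots y_{s_r}$ (reduced) by distributing each factor $\frac{1}{1-e^{-\alpha_{s_i}^\vee}}(\delta_e - e^{-\alpha_{s_i}^\vee}\delta_{s_i})$, every nonzero term is a $Q_T^\wedge$-multiple of $\delta_v$ for some subword $v$ of $s_1\cdots s_r$. Hence $y_u = \sum_{v \leq u} d^u_v \delta_v$ is Bruhat-triangular, and the diagonal term, coming from the single ``full subword'', equals
\[
d^u_u = \prod_{i=1}^r \frac{-e^{-\beta_i^\vee}}{1 - e^{-\beta_i^\vee}}, \qquad \beta_i^\vee := s_1\cdots s_{i-1}(\alpha_{s_i}^\vee),
\]
where $\{\beta_i^\vee\}$ is the classical inversion set attached to the reduced expression for $u$. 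Applying $\iota$, inverting the resulting transition matrix in $Q_W$, and reading off $\psi_w(\delta_v)$ as the corresponding entry yields the triangular vanishing claim in (1); the product formula in (2) then results from inverting the diagonal via the identity $\bigl(\tfrac{-e^{-\beta^\vee}}{1-e^{-\beta^\vee}}\bigr)^{-1} = 1 - e^{\beta^\vee}$.

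The main technical subtlety is the bookkeeping inside the skew group algebra $Q_W$: because $a \cdot \delta_w = \delta_w \cdot w^{-1}(a)$, one must consistently track whether a scalar coefficient sits on the left or the right of each $\delta_v$, particularly when passing from the expansion of $y_u$ to one of $\iota(y_u)$ and when inverting the change-of-basis matrix to extract $\psi_w(\delta_v)$. Once these conventions are fixed, and the classical identification of the $\beta_i^\vee$ with the set of positive coroots inverted by (the appropriate incarnation of) $u$ is invoked, all three properties follow.
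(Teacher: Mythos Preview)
The paper gives no proof of its own; it simply refers to~\cite[Proposition~2.22]{kk}. Your outline is the standard route and is essentially how Kostant--Kumar argue: verify $y_s^2=y_s$, deduce the Hecke-type multiplication rule for $y_vy_s$, expand $y_u$ triangularly in the $\delta$-basis to read off the transition matrix, and then dualize.

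That said, the ``bookkeeping'' you flag is not merely tedious---it is where the statement as printed and your computation will part ways. For part~(3), your calculation
\[
(y_s\cdot\psi_w)(\iota(y_v))=\psi_w\bigl(\iota(y_v\,y_s)\bigr)
=\begin{cases}[vs=w]&vs>v,\\ [v=w]&vs<v,\end{cases}
\]
is correct, but it yields $y_s\cdot\psi_w=\psi_w+\psi_{ws}$ when $ws<w$, and $0$ otherwise---with $ws$, not $sw$. The same phenomenon propagates to (1) and (2): since $\iota$ reverses products, one finds $\iota(y_u)$ supported on $\{\delta_v:v\le u^{-1}\}$, so inverting the transition matrix gives $\psi_v(\delta_w)=0$ unless $v^{-1}\le w$, and the diagonal entry sits at $\psi_w(\delta_{w^{-1}})$, equal to the product over the inversion set of $w^{-1}$. (A quick check in $W=S_3$ with $w=s_1s_2$ confirms $\psi_{s_1s_2}(\delta_{s_1s_2})=0$ while $\psi_{s_1s_2}(\delta_{s_2s_1})\neq 0$.)

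In other words, the three assertions of the lemma hold verbatim for the relabeled basis $\psi'_w:=\psi_{w^{-1}}$; with the paper's indexing $\psi_w(\iota(y_v))=[v=w]$, each statement needs a $w\leftrightarrow w^{-1}$ (equivalently $sw\leftrightarrow ws$) swap. This is a harmless permutation of the basis and does not affect any of the applications in Section~\ref{sec:kostant-kumar} (for $\psi_{w_0}$ there is no difference, and the inductive argument in Theorem~\ref{thm:kk-ajs} goes through verbatim with either labeling), but you should state explicitly which convention you are proving and note the discrepancy with the printed statement.
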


In particular, Point~\eqref{it:psi-delta} in this lemma shows that under the identification of $\Omega_W$ with $\Fun(W,Q_T^\wedge)$ considered above, $\Psi_W$ is contained in the subset $\Fun(W,R_T^\wedge)$ of functions taking values in $R_T^\wedge$. 

\subsection{An isomorphism of \texorpdfstring{$R_T^\wedge$}{RT}-modules}

Our goal in this subsection is to relate the algebra $R_T^\wedge \otimes_{(R_T^\wedge)^W} R_T^\wedge$ with the objects introduced in~\S\ref{ss:some-RT-modules}. Our proofs are based on ``$\mathsf{K}$-theoretic analogues'' of some arguments from~\cite[Appendix~D]{ajs}.

Below we will need the following lemma.

\begin{lem}
\label{lem:divisibility}
Let $f \in R_T^\wedge$, and let $\alpha^\vee, \beta^\vee$ be distinct positive coroots. If $(1-e^{\alpha^\vee}) \cdot f$ is divisible (in $R_T^\wedge$) by $1-e^{\beta^\vee}$, then $f$ is divisible by $1-e^{\beta^\vee}$.
\end{lem}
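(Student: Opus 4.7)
The plan is to deduce the lemma from two facts: $1 - e^{\beta^\vee}$ is a prime element of $R_T^\wedge$, and $1 - e^{\alpha^\vee}$ does not lie in the ideal $(1-e^{\beta^\vee}) R_T^\wedge$. Primality will then force $f \in (1-e^{\beta^\vee}) R_T^\wedge$, as desired.

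To establish primality, I will identify the quotient $R_T^\wedge/(1-e^{\beta^\vee}) R_T^\wedge$ with the completion of the group algebra $\bk[X_*(T)/\Z\beta^\vee]$ at its augmentation ideal. This standard commutation of quotient and completion uses Noetherianity of $\bk[X_*(T)]$ together with the obvious identification $\bk[X_*(T)]/(1-e^{\beta^\vee}) \cong \bk[X_*(T)/\Z\beta^\vee]$. Since $G$ is of adjoint type, $X_*(T)$ coincides with the coroot lattice, and the positive coroot $\beta^\vee$ is a primitive element there---a classical fact about reduced crystallographic root systems. Primitivity ensures that $X_*(T)/\Z\beta^\vee$ is free abelian, so its completed group algebra is a formal power series ring and therefore a domain.

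To see that $1 - e^{\alpha^\vee}$ is not divisible by $1 - e^{\beta^\vee}$, I will examine its image in this domain: it is $1 - e^{[\alpha^\vee]}$, where $[\alpha^\vee]$ denotes the class of $\alpha^\vee$ modulo $\Z\beta^\vee$. For any free abelian group $\Lambda$ of finite rank, the natural map $\bk[\Lambda] \to \bk[\Lambda]^\wedge$ is injective (by applying the Krull intersection theorem in the localization of $\bk[\Lambda]$ at its augmentation ideal and using that $\bk[\Lambda]$ is a domain), so distinct group elements remain distinct after completion. Hence the image above is nonzero precisely when $\alpha^\vee \notin \Z\beta^\vee$. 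If on the contrary $\alpha^\vee = k\beta^\vee$ for some $k \in \Z$, primitivity of $\alpha^\vee$ forces $|k| = 1$, and positivity of both coroots then forces $k = 1$, contradicting the assumption $\alpha^\vee \neq \beta^\vee$.

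The most delicate input is the primitivity of positive coroots in the coroot lattice---a purely lattice-theoretic classical fact that holds uniformly over every field $\bk$. The remaining verifications are routine commutative algebra, and once assembled, the two facts above immediately yield the lemma via primality.
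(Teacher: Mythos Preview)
Your argument is correct and takes a genuinely different route from the paper's. The paper first proves the statement in the uncompleted ring $R_T$: there it uses that $R_T$ is a UFD and argues, via a dimension count on the subscheme $\ker(\alpha^\vee)\cap\ker(\beta^\vee)\subset T^\vee_\bk$, that $1-e^{\alpha^\vee}$ and $1-e^{\beta^\vee}$ share no common prime factor. It then transfers the conclusion to $R_T^\wedge$ by observing that the uncompleted statement is equivalent to the vanishing of a middle cohomology of a Koszul-type complex, and that $R_T^\wedge$ is flat over $R_T$. By contrast, you work directly in $R_T^\wedge$: you show that $(1-e^{\beta^\vee})$ is a \emph{prime} ideal there by identifying the quotient with a formal power series ring, and then check that $1-e^{\alpha^\vee}$ survives in this quotient. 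Your approach avoids the flatness/Koszul step entirely and yields the stronger intermediate fact that $1-e^{\beta^\vee}$ is prime in $R_T^\wedge$; the price is that you need primitivity of $\beta^\vee$ in the coroot lattice (a classical but nontrivial fact about root systems), whereas the paper only needs the more elementary linear independence of $\alpha^\vee$ and $\beta^\vee$. One small simplification: in your last step you do not actually need primitivity of $\alpha^\vee$ to conclude $\alpha^\vee\notin\Z\beta^\vee$, since distinct positive coroots are automatically $\Z$-linearly independent (the only coroot proportional to $\beta^\vee$ is $-\beta^\vee$).
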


\begin{proof}
Let us first prove the similar claim where $R_T^\wedge$ is replaced by $R_T$ everywhere.
For this, we denote by $T_\bk^\vee$ the torus dual to $T$, and consider $\alpha^\vee$ and $\beta^\vee$ as characters of $T_\bk^\vee$. Since $\alpha^\vee$ and $\beta^\vee$ are linearly independent in the $\Z$-module $X_*(T)$, the group morphism
\[
 (\alpha^\vee, \beta^\vee) : T_\bk^\vee \to (\Bbbk^\times)^2
\]
is dominant, hence surjective. It follows that $\dim(\ker(\alpha^\vee) \cap \ker(\beta^\vee)) = \dim(T^\vee)-2$. 
Now $R_T$ is a UFD, and the dimension condition means that $1-e^{\alpha^\vee}$ and $1-e^{\beta^\vee}$ have no common prime factor. If $f \in R_T$ and $(1-e^{\alpha^\vee}) \cdot f$ is divisible by
$1-e^{\beta^\vee}$,
each prime factor in the decomposition of $1-e^{\beta^\vee}$ must appear in $f$, with at least the same multiplicity. It follows that $1-e^{\beta^\vee}$ divides $f$, as desired.

The claim we have just proved can be translated into the fact that
the ``Koszul complex''
\[
 0 \to R_T \xrightarrow{f \mapsto ((1-e^{\beta^\vee})f, (1-e^{\alpha^\vee})f)} R_T \oplus R_T \xrightarrow{(g,h) \mapsto (1-e^{\alpha^\vee})g - (1-e^{\beta^\vee})h} R_T \to 0
\]
(with nonzero terms in degrees $-2$, $-1$ and $0$)
has no cohomology in degree $-1$. Since $R_T^\wedge$ is flat over $R_T$, applying the functor $R^\wedge_T$ we deduce that the complex
\[
 0 \to R_T^\wedge \xrightarrow{f \mapsto ((1-e^{\beta^\vee})f, (1-e^{\alpha^\vee})f)} R_T^\wedge \oplus R_T^\wedge \xrightarrow{(g,h) \mapsto (1-e^{\alpha^\vee})g - (1-e^{\beta^\vee})h} R_T^\wedge \to 0
\]
has no cohomology in degree $-1$ either, which implies our lemma.
\end{proof}

\begin{thm}
\label{thm:kk-ajs}
 The morphism
 \[
  \tau : R_T^\wedge \otimes_{(R_T^\wedge)^W} R_T^\wedge \to \Fun(W,R_T^\wedge)
 \]
sending $a \otimes b$ to the function $w \mapsto a \cdot w^{-1}(b)$ is injective. Its image consists of the functions $f$ such that
\[
f(w) \equiv f(ws_{\alpha^\vee}) \mod (1-e^{\alpha^\vee})
\]
for any $w \in W$ and any coroot $\alpha^\vee$.
\end{thm}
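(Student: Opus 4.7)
The plan is to prove the theorem in three stages: injectivity of $\tau$; the easy inclusion ``image of $\tau$ is contained in the congruence subspace''; and the reverse inclusion, which will be the main obstacle. Denote by $N \subset \Fun(W, R_T^\wedge)$ the image of $\tau$ and by $M$ the subset of functions satisfying the congruences.

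For injectivity, I would use the Pittie--Steinberg basis $(\mathsf{e}_w)_{w \in W}$ provided by Theorem~\ref{thm:pittie-steinberg}, which presents the source as a free $R_T^\wedge$-module of rank $|W|$ with basis $(1 \otimes \mathsf{e}_w)_{w \in W}$; the target $\Fun(W, R_T^\wedge)$ is evidently free of the same rank. The matrix of $\tau$ in these bases has $(v,w)$-entry $v^{-1}(\mathsf{e}_w)$, whose determinant equals, up to the sign coming from the relabeling $v \mapsto v^{-1}$ of rows, the Steinberg determinant $\bigl((-1)^{|\Phi^\vee_+|} e^{-\rho^\vee} \mathsf{d}\bigr)^{|W|/2}$ of~\eqref{eqn:det-Steinberg}, and in particular is a unit multiple of $\mathsf{d}^{|W|/2}$. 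This gives injectivity and shows that the embedding $N \hookrightarrow \Fun(W, R_T^\wedge)$ has determinant $\mathsf{d}^{|W|/2}$ up to a unit of $R_T^\wedge$.

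The inclusion $N \subset M$ reduces to the observation that for any $c \in R_T^\wedge$ the element $c - s_{\alpha^\vee}(c)$ lies in $(1-e^{\alpha^\vee}) R_T^\wedge$; this is clear because $s_{\alpha^\vee}$ changes each $e^\mu$ by a power of $e^{\alpha^\vee}$ and therefore acts trivially on the quotient $R_T^\wedge / (1-e^{\alpha^\vee})$. Applying this to $c = w^{-1}(b)$ gives
\[
\tau(a \otimes b)(w) - \tau(a \otimes b)(w s_{\alpha^\vee}) = a \cdot \bigl(w^{-1}(b) - s_{\alpha^\vee}(w^{-1}(b))\bigr) \in (1-e^{\alpha^\vee}) R_T^\wedge.
\]

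The hard step is the reverse inclusion $M \subset N$, and my plan is to identify both with the Kostant--Kumar module $\Psi_W$ of~\S\ref{ss:some-RT-modules}. First I would show $M = \Psi_W$: the inclusion $\Psi_W \subset M$ should follow by induction on $\ell(w)$, using the recursion $y_s \cdot \psi_w = \psi_w + \psi_{sw}$ of Lemma~\ref{lem:properties-psi}\eqref{it:y-psi} combined with the explicit formula~\eqref{eqn:action-y-psi}, which reduces the congruence at each step to a rank-one calculation; conversely, the upper-triangularity of $(\psi_w)$ from Lemma~\ref{lem:properties-psi}\eqref{it:psi-delta} lets one expand $f \in M$ uniquely as $f = \sum_w c_w \psi_w$ with $c_w \in Q_T^\wedge$, and induction on the Bruhat order shows $c_w \in R_T^\wedge$ via Lemma~\ref{lem:divisibility} and the explicit form of $\psi_w(w) = \prod_{\alpha^\vee > 0,\, w(\alpha^\vee) < 0}(1-e^{\alpha^\vee})$ from Lemma~\ref{lem:properties-psi}\eqref{it:psi-delta-2}. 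With $M = \Psi_W$ in hand, a determinant comparison closes the argument: the determinant of $\Psi_W \hookrightarrow \Fun(W, R_T^\wedge)$ in the basis $(\psi_w)$ equals $\prod_w \psi_w(w) = \prod_{\alpha^\vee > 0}(1-e^{\alpha^\vee})^{|W|/2} = \mathsf{d}^{|W|/2}$ (using that each positive coroot is negated by exactly $|W|/2$ elements of $W$ via the involution $w \mapsto ws_{\alpha^\vee}$), which matches the determinant of $N \hookrightarrow \Fun(W, R_T^\wedge)$ up to a unit by the first paragraph, forcing $N \hookrightarrow \Psi_W$ to have unit determinant and hence to be an isomorphism. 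The main obstacle is the verification $\Psi_W \subset M$, requiring a careful manipulation of the explicit combinatorics; as a backup, I would localize at each height-one prime $(1 - e^{\alpha^\vee})$ of $R_T^\wedge$ and reduce to a direct rank-one check, exploiting the fact that $M/N$ is automatically supported on $V(\mathsf{d})$ by the first step.
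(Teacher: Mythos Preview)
Your approach is correct and genuinely different from the paper's, but you are making it harder than necessary: the step you flag as the ``main obstacle,'' namely $\Psi_W \subset M$, is redundant. Once you have $N \subset M$ (easy) and $M \subset \Psi_W$ (the descending-induction argument using Lemma~\ref{lem:properties-psi}\eqref{it:psi-delta}--\eqref{it:psi-delta-2} and Lemma~\ref{lem:divisibility}, which is exactly what the paper does), you get the chain $N \subset M \subset \Psi_W$, and your determinant comparison then forces all three to coincide. There is no need to establish $\Psi_W \subset M$ separately, so the combinatorial case analysis and the localization backup can both be dropped.

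For comparison: the paper closes the chain from the other end, proving $\Psi_W \subset N$ directly. It first checks that the image of $\tau$ is stable under the operators $y_s$ via the one-line identity $y_s \cdot \tau(a \otimes b) = \tau\bigl(a \otimes (b - e^{-\alpha_s^\vee} s(b))/(1 - e^{-\alpha_s^\vee})\bigr)$, and then shows $\psi_{w_0} \in N$ by solving for the coefficients in the Pittie--Steinberg basis and invoking~\cite[Proof of Theorem~4.4]{kk} for their integrality; Lemma~\ref{lem:properties-psi}\eqref{it:y-psi} then propagates to all $\psi_w$. Your determinant argument trades that external input for the computation $\prod_w \psi_w(\delta_w) = \mathsf{d}^{|W|/2}$, which is self-contained; the paper's route trades the determinant bookkeeping for a single Demazure-type identity. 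Both share the $M \subset \Psi_W$ induction verbatim.
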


\begin{proof}
 Consider the basis $(\mathsf{e}_w)_{w \in W}$ of $R_T^\wedge$ as an $(R_T^\wedge)^W$-module considered in Theorem~\ref{thm:pittie-steinberg}. Then $(1 \otimes \mathsf{e}_w)_{w \in W}$ is a basis of $R_T^\wedge \otimes_{(R_T^\wedge)^W} R_T^\wedge$ as an $R_T^\wedge$-module. Moreover, $\tau(1 \otimes \mathsf{e}_w)$ is the function $v \mapsto v^{-1}(\mathsf{e}_w)$. In view of~\eqref{eqn:det-Steinberg}, these functions are linearly independent in $\Fun(W,Q_T^\wedge)$. Hence indeed our map is injective, and its image is (freely) spanned by these functions as an $R_T^\wedge$-module.
 
 Now, let us identify $\Psi_W$ with a subset of $\Fun(W,R_T^\wedge)$ (see Lemma~\ref{lem:properties-psi}). We claim that $\psi_{w_0}$ belongs to the image of $\tau$. In fact, this is equivalent to the existence of elements $(p_w)_{w \in W}$ in $R_T^\wedge$ such that
 \[
  \tau \left( \sum_{w \in W} p_w \otimes \mathsf{e}_w \right) = \psi_{w_0},
 \]
or in other words (using Lemma~\ref{lem:properties-psi}\eqref{it:psi-delta}--\eqref{it:psi-delta-2}) such that
\[
 \sum_{w \in W} p_w v(\mathsf{e}_w) = \begin{cases}
                              \mathsf{d} & \text{if $v=w_0$;} \\
                              0 & \text{otherwise.}
                             \end{cases}
\]
The arguments above show that there exist unique elements $(p_w)_{w \in W}$ in $Q_T^\wedge$ which satisfy these equalities.
As explained in~\cite[Proof of Theorem~4.4]{kk}, these elements in fact belong to $R_T$, hence in particular to $R_T^\wedge$.

Recall the action of $Q_W$ on $\Omega_W$ considered in~\S\ref{ss:some-RT-modules}. Using the formula~\eqref{eqn:action-y-psi} one sees that for any $a,b \in R_T^\wedge$ and $s \in S$ we have
\[
 y_s \cdot \tau(a \otimes b) = \tau \left( a \otimes \frac{b - e^{-\alpha_s^\vee}s(b)}{1 - e^{-\alpha_s^\vee}} \right).
\]
In particular, this shows that the image of $\tau$ is stable under the operators $y_s$ ($s \in S$). Since (as we have seen above) this image contains $\psi_{w_0}$, by Lemma~\ref{lem:properties-psi}\eqref{it:y-psi} it contains all the elements $\psi_w$ ($w \in W$), hence $\Psi_W$.

It is clear that any function $f$ in the image of $\tau$ satisfies
\[
f(w) \equiv f(ws_{\alpha^\vee}) \mod (1-e^{\alpha^\vee})
\]
for any $w \in W$ and any coroot $\alpha^\vee$. To conclude the proof, it only remains to prove that any function which satisfies these conditions is a linear combination of the elements $(\psi_w)_{w \in W}$. For this we choose a total order on $W$ which extends the Bruhat order, and argue by descending induction on the smallest element $w \in W$ such that $f(w) \neq 0$. Fix $f$, and let $w$ be this smallest element. Then for any positive coroot $\alpha^\vee$ such that $w(\alpha^\vee) \in -\Phi^\vee_+$ we have $ws_{\alpha^\vee} < w$ in the Bruhat order. Hence $f(ws_{\alpha^\vee})=0$, which implies that $f(w)$ is divible by $1-e^{\alpha^\vee}$. By Lemma~\ref{lem:properties-psi}\eqref{it:psi-delta-2} and Lemma~\ref{lem:divisibility}, we deduce that there exists $a \in R_T^\wedge$ such that
\[
f(w) = a \psi_w(\delta_w).
\]
Then $f-a\psi_w$ vanishes on $w$ and all the elements smaller than $w$ (by definition of $w$ and Lemma~\ref{lem:properties-psi}\eqref{it:psi-delta}). By induction we deduce that  $f-a\psi_w$ is a linear combination of elements $(\psi_v)_{v \in W}$, which concludes the proof.
\end{proof}

\subsection{A different description of the algebra \texorpdfstring{$R_T^\wedge \otimes_{(R_T^\wedge)^W} R_T^\wedge$}{RT}}

The results in this subsection do not play any significant role below; we state them only for completeness.

As in Remark~\ref{rmk:RA}, the algebra $R_T^\wedge$ identifies with the algebra of functions on the formal neighborhood $\mathsf{FN}_{T^\vee_\bk}(\{1\})$ of the identity in the $\bk$-torus $T^\vee_\bk$ which is Langlands dual to $T$ (considered as a scheme). Hence $R_T^\wedge \otimes_{(R_T^\wedge)^W} R_T^\wedge$ identifies with the algebra of functions on the fiber product
\[
\mathsf{FN}_{T^\vee_\bk}(\{1\}) \times_{( \mathsf{FN}_{T^\vee_\bk}(\{1\}) )/W} \mathsf{FN}_{T^\vee_\bk}(\{1\}).
\]
On the other hand, consider the formal neighborhood $\mathsf{FN}_{T^\vee_\bk \times_{(T^\vee_\bk)/W} T^\vee_\bk}(\{(1,1)\})$ of the base point in $T^\vee_\bk \times_{(T^\vee_\bk)/W} T^\vee_\bk$ (again, considered as a scheme). By the universal property of the fiber product, there exists a natural morphism of schemes
\begin{equation}
\label{eqn:morph-FN}
\mathsf{FN}_{T^\vee_\bk \times_{(T^\vee_\bk)/W} T^\vee_\bk}(\{(1,1)\}) \to
\mathsf{FN}_{T^\vee_\bk}(\{1\}) \times_{( \mathsf{FN}_{T^\vee_\bk}(\{1\}) )/W} \mathsf{FN}_{T^\vee_\bk}(\{1\}).
\end{equation}

\begin{lem}
\label{lem:formal-neighb}
The morphism~\eqref{eqn:morph-FN} is an isomorphism.
\end{lem}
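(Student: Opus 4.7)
The plan is to unfold both sides of~\eqref{eqn:morph-FN} as formal spectra of concrete rings, identify them, and check that the natural map matches. Set $R = R_T$, $\mathfrak{m} = \mathfrak{m}_T$, and $A = R \otimes_{R^W} R$, and let $\mathfrak{M} \subset A$ be the maximal ideal at $(1,1)$. I would first argue that the right-hand side of~\eqref{eqn:morph-FN} is $\mathrm{Spf}(R^\wedge \otimes_{(R^\wedge)^W} R^\wedge)$ (no extra completion needed, since by Theorem~\ref{thm:pittie-steinberg} this tensor product is finite — in fact free of rank $|W|$ — over the complete local ring $R^\wedge$), while the left-hand side is $\mathrm{Spf}(A^\wedge_\mathfrak{M})$. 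The statement then reduces to showing that the canonical ring map $R^\wedge \otimes_{(R^\wedge)^W} R^\wedge \to A^\wedge_\mathfrak{M}$ is an isomorphism.

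The central observation is a geometric one: since $1$ is $W$-fixed, the scheme-theoretic fiber of $T^\vee_\bk \to T^\vee_\bk/W$ over the image of $1$ is set-theoretically the single point $\{1\}$. Applied to the first-factor projection $\mathrm{Spec}(A) \to \mathrm{Spec}(R)$, this means that $A/\mathfrak{m} A = R/\langle R^W_+\rangle$ is a finite-dimensional local $\bk$-algebra; hence $\mathfrak{M}$ is the unique maximal ideal of $A$ above $\mathfrak{m}$, and the $\mathfrak{M}$-adic and $\mathfrak{m} A$-adic topologies on $A$ coincide. Since $A$ is finite over $R$ by Pittie--Steinberg, this gives $A^\wedge_\mathfrak{M} \cong R^\wedge \otimes_R A \cong R^\wedge \otimes_{R^W} R$. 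Running the same reasoning for the finite extension $R^W \subset R$ (with the same $R/\langle R^W_+\rangle$ playing the role of the fiber) yields $R^\wedge \cong (R^W)^\wedge \otimes_{R^W} R$, and substituting gives
\[
A^\wedge_\mathfrak{M} \cong R^\wedge \otimes_{(R^W)^\wedge} R^\wedge.
\]

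The final step is to identify $(R^W)^\wedge$ with $(R^\wedge)^W$ inside $R^\wedge$. I would tensor the exact sequence of $R^W$-modules
\[
0 \to R^W \to R \xrightarrow{f \mapsto (f-wf)_{w \neq e}} R^{\oplus (|W|-1)}
\]
with the flat $R^W$-algebra $(R^W)^\wedge$; using $(R^W)^\wedge \otimes_{R^W} R = R^\wedge$ from the previous step, this exhibits $(R^W)^\wedge$ as the kernel of $f \mapsto (f-wf)_{w\neq e}$ acting on $R^\wedge$, which is $(R^\wedge)^W$ by definition. Combined with the display above this gives the desired ring isomorphism, and naturality ensures it inverts the map induced by~\eqref{eqn:morph-FN}.

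In my view the main obstacle is the pair of topological coincidences needed to control the two completions (the $\mathfrak{M}$-adic versus $\mathfrak{m} A$-adic topologies on $A$, and the $\mathfrak{m}$-adic versus $(\mathfrak{m}\cap R^W)R$-adic topologies on $R$). Both rest on the same geometric fact — that $1$ is the unique preimage of its own class under $T^\vee_\bk \to T^\vee_\bk/W$ — together with Pittie--Steinberg to ensure finiteness; once those are available, the flatness argument for $(R^W)^\wedge = (R^\wedge)^W$ is entirely routine.
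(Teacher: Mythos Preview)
Your argument is correct and takes a genuinely different route from the paper. The paper proves the ring isomorphism~\eqref{eqn:morph-FN-functions} in two halves: for injectivity it invokes Theorem~\ref{thm:kk-ajs}, showing that the Kostant--Kumar embedding $\tau$ factors through $A^\wedge_{\mathfrak{M}}$ (via the inclusion $J^{2n}\subset I^n\otimes R + R\otimes I^n$); for surjectivity it identifies $R^\wedge\otimes_{(R^\wedge)^W}R^\wedge$ with the $(I\otimes R)$-adic completion of $A$ using Theorem~\ref{thm:pittie-steinberg}, and then passes to the $J$-adic completion by Mittag--Leffler. Your approach bypasses Theorem~\ref{thm:kk-ajs} entirely: you use only Pittie--Steinberg (for finiteness of $R$ over $R^W$ and of $A$ over $R$) together with the standard commutative-algebra facts that completion of a finite module along a maximal ideal with local fiber is base change to the completed base, and that completion is flat. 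This is cleaner and more self-contained; the paper's approach, on the other hand, fits naturally into the surrounding narrative since Theorem~\ref{thm:kk-ajs} is already in hand and is needed elsewhere.
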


\begin{proof}
We have to prove that the natural algebra morphism
\begin{equation}
\label{eqn:morph-FN-functions}
R_T^\wedge \otimes_{(R_T^\wedge)^W} R_T^\wedge \to \mathcal{O}(T^\vee_\bk \times_{(T^\vee_\bk)/W} T^\vee_\bk)^\wedge
\end{equation}
is an isomorphism, where the right-hand side is the completion of $\mathcal{O}(T^\vee_\bk \times_{(T^\vee_\bk)/W} T^\vee_\bk)$ at its natural augmentation ideal $J$.

Let $I:=\ker(\varepsilon_T) \subset R_T$; then we have $J = I \otimes_{(R_T)^W} R_T + R_T \otimes_{(R_T)^W} I$. For any $n \in \Z_{\geq 1}$ we have $J^{2n} \subset I^n \otimes_{(R_T)^W} R_T + R_T \otimes_{(R_T)^W} I^n$. Hence for any $w \in W$ the morphism $R_T \otimes_{(R_T)^W} R_T \to R_T / I^n$ sending $a \otimes b$ to the class of $a \cdot w^{-1}(b)$ factors through a morphism  $(R_T \otimes_{(R_T)^W} R_T)/J^{2n} \to R_T / I^n$. From this observation it follows that the morphism $\tau$ of Theorem~\ref{thm:kk-ajs} factors through~\eqref{eqn:morph-FN-functions}, proving that the latter morphism is injective.

On the other hand, from Theorem~\ref{thm:pittie-steinberg} we see that the natural morphism
\[
R_T^\wedge \otimes_{(R_T)^W} R_T \to R_T^\wedge \otimes_{(R_T^\wedge)^W} R_T^\wedge
\]
is an isomorphism; hence $R_T^\wedge \otimes_{(R_T^\wedge)^W} R_T^\wedge$ is the completion of $R_T  \otimes_{(R_T)^W} R_T$ with respect to the ideal $I \otimes_{(R_T)^W} R_T$. Since $I \otimes_{(R_T)^W} R_T \subset J$, we have for any $n$ a surjection
\[
(R_T  \otimes_{(R_T)^W} R_T)/(I \otimes_{(R_T)^W} R_T)^n \twoheadrightarrow (R_T  \otimes_{(R_T)^W} R_T)/J^n.
\]
Since these algebras are finite-dimensional, passing to inverse limits we deduce that~\eqref{eqn:morph-FN-functions} is surjective (by the Mittag--Leffler condition), which finishes the proof.
\end{proof}

\section{Endomorphismensatz}
\label{sec:main-thm}

\subsection{Statement and strategy of proof}

Our goal in this section is to prove the following theorem, which constitutes the main result of this article.

\begin{thm}
\label{thm:main}
  In the case $\hTil=\hTil_{w_0}$, the monodromy morphism of Proposition~{\rm \ref{prop:mon-tilting}} is an algebra isomorphism
  \[
   R_T^\wedge \otimes_{(R_T^\wedge)^W} R_T^\wedge \simto \End(\hTil_{w_0}).
  \]
\end{thm}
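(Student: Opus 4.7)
The plan is to apply the faithful functor $\gr$ of Corollary~\ref{cor:gr-faithful} to reduce the claim to the algebraic picture provided by Theorem~\ref{thm:kk-ajs}. First, since $(\mathscr{T}_{w_0}:\Delta_w)=1$ for all $w\in W$ by Lemma~\ref{lem:properties-Tw0}\eqref{it:Tw0-1}, Remark~\ref{rmk:filtrations-pidag} gives $\gr(\hTil_{w_0})\cong\bigoplus_{w\in W}\hDel_w$. Combining Lemma~\ref{lem:Hom-vanishing-hDel} with Lemma~\ref{lem:properties-hD-hN}\eqref{it:Edn-hD} identifies $\End\bigl(\gr(\hTil_{w_0})\bigr)$ with $\Fun(W,R_T^\wedge)$, so Corollary~\ref{cor:gr-faithful} yields an injection $\gr:\End(\hTil_{w_0})\hookrightarrow\Fun(W,R_T^\wedge)$. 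Using Lemma~\ref{lem:mondromy-leftright-hDel} together with~\eqref{eqn:monodromy-gr} one checks that the composition of the monodromy morphism $\theta$ from Proposition~\ref{prop:mon-tilting} with $\gr$ sends $a\otimes b$ to the function $w\mapsto a\cdot w(b)$, which up to the harmless relabelling $w\leftrightarrow w^{-1}$ on $W$ is precisely the Kostant--Kumar map $\tau$ of Theorem~\ref{thm:kk-ajs}. That theorem then supplies injectivity of $\tau$, and therefore of $\theta$.

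Next, both sides are free $R_T^\wedge$-modules of rank $\#W$. For the source this is Theorem~\ref{thm:pittie-steinberg}. For the target, Lemma~\ref{lem:tiltings}\eqref{it:tilting-Hom} gives freeness, with rank equal to $\dim_\bk\End(\mathscr{T}_{w_0})$. Since $\mathscr{T}_{w_0}$ is the projective cover of $\IC_e$ by Lemma~\ref{lem:properties-Tw0}\eqref{it:Tw0-2}, we have $\dim_\bk\End(\mathscr{T}_{w_0})=[\mathscr{T}_{w_0}:\IC_e]=\sum_{w\in W}(\mathscr{T}_{w_0}:\nabla_w)\cdot[\nabla_w:\IC_e]=\#W$, where the costandard multiplicities are all $1$ by Lemma~\ref{lem:properties-Tw0}\eqref{it:Tw0-1} and $\IC_e$ appears exactly once in each $\nabla_w$ by Lemma~\ref{lem:soc-top-Delta-nabla}.

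For surjectivity, Theorem~\ref{thm:kk-ajs} characterizes $\mathrm{im}(\tau)$ in $\Fun(W,R_T^\wedge)$ as the subspace cut out by the divisibility conditions $f(w)\equiv f(ws_{\alpha^\vee})\pmod{1-e^{\alpha^\vee}}$ for every coroot $\alpha^\vee$. Since $\gr\circ\theta=\tau$, the inclusion $\mathrm{im}(\tau)\subseteq\gr(\End(\hTil_{w_0}))$ is automatic, and surjectivity of $\theta$ amounts to the reverse inclusion: every $\gr(\phi)$ with $\phi\in\End(\hTil_{w_0})$ must satisfy these divisibility conditions. By Weyl-group symmetry the check reduces to simple coroots, where it couples only adjacent cells $\{w,ws\}$ under a simple reflection $s$. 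The plan for this is to restrict $\hTil_{w_0}$ to the two-stratum open subset of the $\mathbb{P}^1$-fibration $G/B\to G/P_s$ containing $X_w$ and $X_{ws}$ and invoke the $\hTil_s$-analog of the theorem established in Corollary~\ref{cor:mon-Ts} (whose hypothesis is satisfied since $G$ is of adjoint type); alternatively, one can exploit Proposition~\ref{prop:convolution-hTw0} to transport monodromy endomorphisms of $\hDel_v$ and $\hDel_w$ through the isomorphism $\hDel_v\hatstar\hTil_{w_0}\hatstar\hDel_w\cong\hTil_{w_0}$, thereby generating enough elements to exhaust $\gr(\End(\hTil_{w_0}))$. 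The real difficulty lies in this last verification; the injectivity and the rank count are direct consequences of the structural results already established.
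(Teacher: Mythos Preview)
Your strategy is exactly the paper's: embed $\End(\hTil_{w_0})$ into $\Fun(W,R_T^\wedge)$ via $\gr$, identify the composite with the Kostant--Kumar map $\tau$, and then show that the image of $\gr$ is cut out by the divisibility conditions characterizing $\mathrm{im}(\tau)$. The rank computation you insert is correct but superfluous: an injection between free $R_T^\wedge$-modules of equal finite rank need not be surjective, so the rank count does not shortcut the argument and the paper does not use it.

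Where your sketch diverges from the paper is in the last step. You write ``by Weyl-group symmetry the check reduces to simple coroots'', but this symmetry on $\gr\bigl(\End(\hTil_{w_0})\bigr)$ is not automatic: it must be \emph{constructed}, and this is precisely what your second alternative (convolution via Proposition~\ref{prop:convolution-hTw0}) accomplishes. Concretely, the paper defines a $W\times W$-action on $\End(\hTil_{w_0})$ by $\phi\mapsto (\text{conjugate of }\id_{\hDel_v}\hatstar\phi\hatstar\id_{\hDel_{w^{-1}}})$ and checks that the third arrow in~\eqref{eqn:morphs-proof-main} is equivariant; this reduces the general congruence to the single pair $\{e,s\}$ for $s$ simple. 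That base case is handled not by restricting to an open two-cell piece of a $\mathbb{P}^1$-fibration, but by $*$-restricting to the \emph{closed} subvariety $\overline{Y_s}$: Lemma~\ref{lem:restr-hTil-w0} shows $\overline{\jmath}_s^*\hTil_{w_0}\cong\hTil_s$, and then Corollary~\ref{cor:mon-Ts} gives the congruence. Your $\mathbb{P}^1$-fibration variant (restrict to a general locally closed $Y_w\cup Y_{ws}$) is plausible in spirit but would require showing that this restriction is again controlled by $\hTil_s$-type data, which the paper does not attempt; the closed restriction to $\overline{Y_s}$ combined with the convolution action is cleaner and suffices.
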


Let us note the following consequence, which does not involve the completed category.

\begin{cor}
\label{cor:main}
 The morphism $\varphi_{\lef,\mathscr{T}_{w_0}}^\wedge$ of~{\rm \S\ref{ss:lr-monodromy}} induces an algebra isomorphism
 \[
  R_T / (R_T)^W_+ \simto \End(\mathscr{T}_{w_0}),
 \]
 where $(R_T)^W_+$ is the kernel of the composition $(R_T)^W \hookrightarrow R_T \xrightarrow{\varepsilon_T} \bk$.
\end{cor}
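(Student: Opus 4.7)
The plan is to derive Corollary~\ref{cor:main} from Theorem~\ref{thm:main} by pushing forward along $\pi_\dag$ and collapsing one of the two $R_T^\wedge$-factors to $\bk$. More precisely, applying Lemma~\ref{lem:tiltings}\eqref{it:tilting-Hom} to $\hTil = \hTil' = \hTil_{w_0}$, and using that $\pi_\dag(\hTil_{w_0}) \cong \mathscr{T}_{w_0}$ (Proposition~\ref{prop:classification-tiltings}), yields an $R_T^\wedge$-linear isomorphism
\[
\bk \otimes_{R_T^\wedge} \End_{\hD_U(X \quot T, \bk)}(\hTil_{w_0}) \simto \End_{\Db_U(Y,\bk)}(\mathscr{T}_{w_0}),
\]
where the $R_T^\wedge$-action on the left-hand side is the canonical (right-monodromy) action coming from the $R_T^\wedge$-linear structure on $\hD_U(X \quot T, \bk)$.

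Next I would substitute the description of $\End(\hTil_{w_0})$ supplied by Theorem~\ref{thm:main}. Under that isomorphism, the right monodromy corresponds to multiplication by the \emph{second} tensor factor of $R_T^\wedge \otimes_{(R_T^\wedge)^W} R_T^\wedge$ (as recorded just before Lemma~\ref{lem:mondromy-leftright-hDel}), so
\[
\bk \otimes_{R_T^\wedge} \bigl( R_T^\wedge \otimes_{(R_T^\wedge)^W} R_T^\wedge \bigr) \;\cong\; R_T^\wedge \otimes_{(R_T^\wedge)^W} \bk \;=\; R_T^\wedge / \langle (R_T^\wedge)^W_+ \rangle.
\]
Moreover, by Lemma~\ref{lem:monodromy-augmentation} the functor $\pi_\dag$ annihilates the right monodromy while carrying the left monodromy of $\hTil_{w_0}$ to $\varphi^\wedge_{\lef, \mathscr{T}_{w_0}}$ (see the last paragraph of~\S\ref{ss:lr-monodromy}). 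Unwinding the constructions, the composite isomorphism $R_T^\wedge / \langle (R_T^\wedge)^W_+ \rangle \simto \End(\mathscr{T}_{w_0})$ is precisely the map induced by $\varphi^\wedge_{\lef, \mathscr{T}_{w_0}}$.

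It remains to identify $R_T^\wedge / \langle (R_T^\wedge)^W_+ \rangle$ with $R_T / \langle (R_T)^W_+ \rangle$ in a way compatible with the (left-)monodromy morphism. For this I would invoke Theorem~\ref{thm:pittie-steinberg}: the Pittie--Steinberg basis $(\mathsf{e}_w)_{w \in W}$ is simultaneously an $(R_T)^W$-basis of $R_T$ and an $(R_T^\wedge)^W$-basis of $R_T^\wedge$, so the natural map
\[
R_T / \langle (R_T)^W_+ \rangle \to R_T^\wedge / \langle (R_T^\wedge)^W_+ \rangle
\]
sends the $\bk$-basis $\{[\mathsf{e}_w]\}_{w \in W}$ to the corresponding basis and is therefore an isomorphism. (Equivalently: $R_T / \langle (R_T)^W_+ \rangle$ is already finite-dimensional local Artinian with residue field $\bk$, hence equal to its own completion.) The fact that $\varphi^\wedge_{\lef, \mathscr{T}_{w_0}}$ factors through this quotient then produces the desired algebra isomorphism from $R_T / \langle (R_T)^W_+ \rangle$.

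The only real subtlety — and the one point I would verify carefully — is that the $R_T^\wedge$-structure used in Lemma~\ref{lem:tiltings}\eqref{it:tilting-Hom} matches the right-factor $R_T^\wedge$-action on $R_T^\wedge \otimes_{(R_T^\wedge)^W} R_T^\wedge$ under the isomorphism of Theorem~\ref{thm:main}, so that quotienting really does leave the \emph{left}-monodromy action surviving on $\mathscr{T}_{w_0}$. Beyond that bookkeeping, the corollary is a purely formal consequence of the theorem together with Pittie--Steinberg.
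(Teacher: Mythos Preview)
Your proposal is correct and follows essentially the same approach as the paper: combine Theorem~\ref{thm:main} with Lemma~\ref{lem:tiltings}\eqref{it:tilting-Hom} to get $R_T^\wedge / (R_T^\wedge)^W_+ \simto \End(\mathscr{T}_{w_0})$, then identify this quotient with $R_T / (R_T)^W_+$ via Pittie--Steinberg. The paper's version of the last step is phrased as ``injective plus equal dimensions'' rather than ``basis maps to basis,'' but this is the same argument, and your bookkeeping about left vs.\ right monodromy is exactly what the paper leaves implicit.
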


\begin{proof}
 Theorem~\ref{thm:main} and Lemma~\ref{lem:tiltings}\eqref{it:tilting-Hom} imply that monodromy induces an algebra isomorphism
 \[
  R_T^\wedge / (R_T^\wedge)^W_+ \simto \End(\mathscr{T}_{w_0}),
 \]
where $(R_T^\wedge)^W_+$ is the kernel of the composition $(R_T^\wedge)^W \hookrightarrow R_T^\wedge \xrightarrow{\varepsilon_T^\wedge} \bk$.
Hence to conclude it suffices to prove that the morphism
\[
 R_T / (R_T)^W_+ \to R_T^\wedge / (R_T^\wedge)^W_+
\]
induced by the inclusion $R_T \hookrightarrow R_T^\wedge$ is an isomorphism.
However, this morphism is easily seen to be injective. Since (by Theorem~\ref{thm:pittie-steinberg} and its proof) both sides have dimension $\# W$, the desired claim follows.
\end{proof}

In order to prove Theorem~\ref{thm:main} we first remark that, by Lemma~\ref{lem:properties-Tw0}\eqref{it:Tw0-1} and~\eqref{eqn:multiplicities-pidag}, we have
\[
 \gr_w(\hTil_{w_0}) \cong \hDel_w
\]
for any $w \in W$.
We fix such isomorphisms, which provides an isomorphism
\[
 \gr(\hTil_{w_0}) \cong \bigoplus_{w \in W} \hDel_w.
\]
By Lemma~\ref{lem:properties-hD-hN}\eqref{it:Edn-hD}, the right monodromy morphism induces an isomorphism
\[
 R_T^\wedge \simto \End(\hDel_w)
\]
for any $w \in W$. Taking also
Lemma~\ref{lem:Hom-vanishing-hDel} into account, we deduce an algebra isomorphism
\begin{equation}
\label{eqn:End-gr-hTil-w0}
 \End \bigl( \gr(\hTil_{w_0}) \bigr) \cong \bigoplus_{w \in W} R_T^\wedge.
\end{equation}

We now consider the morphisms
\begin{equation}
\label{eqn:morphs-proof-main}
 R_T^\wedge \otimes_{(R_T^\wedge)^W} R_T^\wedge \simto R_T^\wedge \otimes_{(R_T^\wedge)^W} R_T^\wedge \to \End(\hTil_{w_0}) \to \bigoplus_{w \in W} R_T^\wedge,
\end{equation}
where:
\begin{itemize}
 \item 
 the first arrow is given by $a \otimes b \mapsto b \otimes a$;
 \item
 the second arrow is the morphism from Proposition~\ref{prop:mon-tilting};
 \item
 the third arrow is induced by the functor $\gr$, taking into account the isomorphism~\eqref{eqn:End-gr-hTil-w0}.
\end{itemize}
By~\eqref{eqn:monodromy-gr} and Lemma~\ref{lem:mondromy-leftright-hDel}, the composition of the morphisms in~\eqref{eqn:morphs-proof-main} is the morphism $\tau$ of Theorem~\ref{thm:kk-ajs}, if we identify $\bigoplus_{w \in W} R_T^\wedge$ with $\Fun(W,R_T^\wedge)$ in the obvious way. In particular this composition is injective, which proves that the morphism
\[
 R_T^\wedge \otimes_{(R_T^\wedge)^W} R_T^\wedge \to \End(\hTil_{w_0})
\]
from Theorem~\ref{thm:main} is injective. We also deduce (using Theorem~\ref{thm:kk-ajs}) that the image of the third morphism in~\eqref{eqn:morphs-proof-main} contains the subset of vectors $(a_w)_{w \in W}$ such that
\begin{equation}
\label{eqn:congruence-image}
 a_{ws_{\alpha^\vee}} \equiv a_w \mod (1-e^{\alpha^\vee})
\end{equation}
for any coroot $\alpha^\vee$. Below we will prove the following claim.

\begin{prop}
\label{prop:image-conditions-kk}
 If $(a_y)_{y \in W}$ belongs to the image of the third morphism in~\eqref{eqn:morphs-proof-main}, then we have~\eqref{eqn:congruence-image}
for any $w \in W$ and any coroot $\alpha^\vee$.
\end{prop}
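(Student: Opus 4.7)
The plan is to reduce the required congruences to a rank-one situation, where they will follow from Corollary~\ref{cor:mon-Ts} combined with the rank-one instance of Theorem~\ref{thm:kk-ajs}.

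First I would reduce a general pair $(w, ws_{\alpha^\vee})$ to the pair $(e, s)$ for a simple reflection $s \in S$. Writing $\alpha^\vee = x(\alpha_s^\vee)$ with $x \in W$ and $s \in S$ (every coroot is $W$-conjugate to a simple one), and using the isomorphism $\hTil_{w_0} \cong \hDel_{x} \hatstar \hTil_{w_0} \hatstar \hDel_{x^{-1} w}$ from Proposition~\ref{prop:convolution-hTw0}, I would transport $\phi \in \End(\hTil_{w_0})$ to an endomorphism $\widetilde{\phi}$ of the convolution. Combining the left- and right-convolution variants of Lemma~\ref{lem:conv-hTil} (which describe the standard filtrations of $\hDel_v \hatstar \mathscr{F}$ and $\mathscr{F} \hatstar \hDel_u$) with Lemma~\ref{lem:mondromy-leftright-hDel} (which records how the right monodromy on $\hDel_y$ is twisted by $y$), one sees that the components of $\gr(\widetilde{\phi})$ are those of $\gr(\phi)$ permuted by left multiplication by $x$ and twisted on the right monodromy by $x$. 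Since $\alpha^\vee = x(\alpha_s^\vee)$, the ideal $(1 - e^{\alpha^\vee}) \subset R_T^\wedge$ is the $x$-image of $(1 - e^{\alpha_s^\vee})$, so the $\alpha^\vee$-congruence at $(w, ws_{\alpha^\vee})$ translates into the $\alpha_s^\vee$-congruence at $(e, s)$.

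For the rank-one reduction step I would restrict $\hTil_{w_0}$ along the closed embedding $\overline{\imath}_s : \pi^{-1}(\overline{Y_s}) \hookrightarrow X$. Following the proof of Lemma~\ref{lem-Tw0-s} in the completed setting (using Proposition~\ref{prop:classification-tiltings} to identify indecomposable tiltings in $\hscO$ via $\pi_\dag$), one obtains $\overline{\imath}_s^{*}(\hTil_{w_0}) \cong \hTil_s$. Since $\overline{\imath}_s^{*}$ is compatible with both the standard filtration and the right monodromy, the induced endomorphism $\psi := \overline{\imath}_s^{*}(\widetilde{\phi})$ of $\hTil_s$ has graded components $(a_e, a_s) \in R_T^\wedge \oplus R_T^\wedge$. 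Now because $G$ is semisimple of adjoint type, the fundamental coweight $\omega_s^\vee \in X_*(T)$ satisfies $\langle \omega_s^\vee, \alpha_s\rangle = 1 \neq 0$ in $\bk$, so Corollary~\ref{cor:mon-Ts} applies and gives the surjectivity of the monodromy morphism
\[
R_T^\wedge \otimes_{(R_T^\wedge)^{W_s}} R_T^\wedge \twoheadrightarrow \End(\hTil_s),
\]
where $W_s = \{e, s\}$ is the Weyl group of $\overline{Y_s}$ and Proposition~\ref{prop:mon-tilting} is applied to $\hTil_s$ inside $\hD_U(\pi^{-1}(\overline{Y_s}) \quot T, \bk)$. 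Applying Theorem~\ref{thm:kk-ajs} for the rank-one Weyl group $W_s$, the image of the composition $R_T^\wedge \otimes_{(R_T^\wedge)^{W_s}} R_T^\wedge \to \End(\hTil_s) \to R_T^\wedge \oplus R_T^\wedge$ equals $\{(b_e, b_s) : b_e \equiv b_s \pmod{1 - e^{\alpha_s^\vee}}\}$, which forces $a_e \equiv a_s \pmod{1 - e^{\alpha_s^\vee}}$.

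The main obstacle is the first step: a careful bookkeeping is required to see exactly how the graded components of endomorphisms transform under the convolution isomorphism (permutation of the indexing set combined with a Weyl-group twist of the right monodromy), and to verify that the $x$-twist intertwines the ideals $(1 - e^{\alpha^\vee})$ and $(1 - e^{\alpha_s^\vee})$ correctly. The remaining two steps are relatively routine given the infrastructure already established in the paper.
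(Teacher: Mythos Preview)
Your overall strategy coincides with the paper's: establish the special case $(e,s)$ for a simple reflection by restricting to $\pi^{-1}(\overline{Y_s})$ and invoking Corollary~\ref{cor:mon-Ts}, and reduce the general congruence to this special case via the convolution isomorphisms of Proposition~\ref{prop:convolution-hTw0}. Your rank-one step is correct and matches~\S\ref{ss:special-case} (the paper computes the image directly as $\{(ab,s(a)b)\}$ rather than quoting the rank-one Theorem~\ref{thm:kk-ajs}, but your variant is fine).

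The gap is in your reduction step. You assert that under an isomorphism $\hTil_{w_0}\cong\hDel_x\hatstar\hTil_{w_0}\hatstar\hDel_{x^{-1}w}$ the components of $\gr$ are ``permuted by left multiplication by $x$ and twisted on the right monodromy by $x$,'' appealing to Lemma~\ref{lem:conv-hTil}. But that lemma concerns convolution with $\hTil_s$, not with $\hDel_v$, and no analogue for $\hDel_v\hatstar(-)$ exists in the paper: for general $v,y$ one does \emph{not} have $\hDel_v\hatstar\hDel_y\cong\hDel_{vy}$ (only when $\ell(vy)=\ell(v)+\ell(y)$, see Lemma~\ref{lem:convolution-hDel}), so there is no direct ``bookkeeping'' that tracks the standard filtration through $\hDel_v\hatstar(-)$. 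This is a genuine obstruction, not merely tedious.

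The paper circumvents this by an indirect argument. It packages the convolution conjugations as a $W\times W$-action $\vartheta^{(3)}$ on $\End(\hTil_{w_0})$ and an explicit action $\vartheta^{(4)}$ on $\bigoplus_w R_T^\wedge$, and must show the third arrow in~\eqref{eqn:morphs-proof-main} intertwines them. Rather than computing $\gr$ under convolution, it verifies \emph{directly} (via Lemma~\ref{lem:monodromy-convolution} and Lemma~\ref{lem:mondromy-leftright-hDel}) that the \emph{second} arrow intertwines $\vartheta^{(2)}$ and $\vartheta^{(3)}$; since the full composition $\tau$ intertwines $\vartheta^{(2)}$ and $\vartheta^{(4)}$ by inspection, and since all arrows become isomorphisms after tensoring with $Q_T^\wedge$, the equivariance of the third arrow follows. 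This is the missing ingredient: your permutation-and-twist claim is \emph{true}, but its proof requires this detour through monodromy and the fraction field rather than a direct analysis of standard filtrations.
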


This proposition will complete the proof of Theorem~\ref{thm:main}. Indeed, from Corollary~\ref{cor:gr-faithful} we know that the third arrow in~\eqref{eqn:morphs-proof-main} is injective. The discussion above shows that its image coincides with the image of its composition with the second arrow in~\eqref{eqn:morphs-proof-main}. Hence this second arrow (i.e.~the morphism from Theorem~\ref{thm:main}) is surjective.

\subsection{A special case}
\label{ss:special-case}

In this subsection we will prove that if $(a_y)_{y \in W}$ belongs to the image of third morphism in~\eqref{eqn:morphs-proof-main}, then~\eqref{eqn:congruence-image} holds when $w=e$ and $\alpha^\vee$ is a simple coroot. We will denote by $\alpha$ the (simple) root associated with $\alpha^\vee$. To simplify notation, we set $s:=s_{\alpha^\vee}$.

We will denote by $\overline{\jmath}_s$ the (closed) embedding of $\pi^{-1}(\overline{Y_s}) = X_s \sqcup X_e$ in $X$.

\begin{lem}
\label{lem:restr-hTil-w0}
 We have $\overline{\jmath}_s^*(\hTil_{w_0}) \cong \hTil_s$. Moreover, the morphism
 \[
  \gr_w(\hTil_{w_0}) \to \gr_w(\hTil_s)
 \]
induced by the adjunction morphism $\hTil_{w_0} \to (\overline{\jmath}_s)_* \overline{\jmath}_s^* \hTil_{w_0} = \hTil_s$ is an isomorphism if $w \in \{e,s\}$, and $0$ otherwise.
\end{lem}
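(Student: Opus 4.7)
The plan is to treat the two assertions in turn, reducing each to the corresponding statement about $\mathscr{T}_{w_0}$ via the functor $\pi_\dag$.

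For the first assertion, I would combine~\eqref{eqn:isom-pi-embeddings} with Lemma~\ref{lem-Tw0-s} to obtain
\[
\pi_\dag(\overline{\jmath}_s^*(\hTil_{w_0})) \cong \overline{\imath}_s^*(\pi_\dag(\hTil_{w_0})) = \overline{\imath}_s^*(\mathscr{T}_{w_0}) \cong \mathscr{T}_s.
\]
By Lemma~\ref{lem:tiltings}\eqref{it:tilting-pi}, the object $\overline{\jmath}_s^*(\hTil_{w_0})$ is therefore a tilting perverse sheaf in $\hT_{U}(\pi^{-1}(\overline{Y_s}) \quot T,\bk)$. Any Krull--Schmidt decomposition of it (Corollary~\ref{cor:Hom-fg}\eqref{it:Krull-Schmidt}) would, after applying $\pi_\dag$ and invoking Proposition~\ref{prop:classification-tiltings}, yield a decomposition of $\mathscr{T}_s$ into indecomposable tiltings; since $\mathscr{T}_s$ is indecomposable, $\overline{\jmath}_s^*(\hTil_{w_0})$ must itself be indecomposable, hence isomorphic to $\hTil_s$ by the same proposition.

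For the second assertion, let $f \colon \hTil_{w_0} \to (\overline{\jmath}_s)_*\hTil_s$ be the adjunction morphism (fitting the identification from the first part). The object $(\overline{\jmath}_s)_*\hTil_s$ is a tilting perverse sheaf in $\hT_{U}(X \quot T,\bk)$ whose standard multiplicities are $1$ at $e$ and $s$ and zero elsewhere, while Lemma~\ref{lem:properties-Tw0}\eqref{it:Tw0-1} and~\eqref{eqn:multiplicities-pidag} give $(\hTil_{w_0} : \hDel_w) = 1$ for every $w$. So the case $w \notin \{e,s\}$ is immediate, as $\gr_w((\overline{\jmath}_s)_*\hTil_s) = 0$. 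For $w \in \{e,s\}$, both graded pieces are isomorphic to $\hDel_w$, and $\gr_w(f)$ corresponds under Lemma~\ref{lem:properties-hD-hN}\eqref{it:Edn-hD} to an element $r \in R_T^\wedge$ that I wish to show is a unit. Applying $\pi_\dag$, and using the compatibility of standard filtrations with $\pi_\dag$ (Remark~\ref{rmk:filtrations-pidag}) together with Lemma~\ref{lem:monodromy-augmentation}, the morphism $\pi_\dag(\gr_w(f))$ identifies with multiplication by $\varepsilon_T^\wedge(r)$ on $\Delta_w$. On the other hand, $\pi_\dag(f)$ is the adjunction $\mathscr{T}_{w_0} \to (\overline{\imath}_s)_*\mathscr{T}_s$, which is surjective (if $j$ denotes the inclusion of the open complement of $\overline{Y_s}$, then $j$ is affine, so $j_! j^* \mathscr{T}_{w_0}$ is perverse, and the adjunction triangle gives the surjection on perverse cohomology); taking $\gr_w$ then yields a surjection $\Delta_w \twoheadrightarrow \Delta_w$, which must be an isomorphism since $\End(\Delta_w) = \bk$. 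Therefore $\varepsilon_T^\wedge(r) \neq 0$, so $r \in (R_T^\wedge)^\times$, and $\gr_w(f)$ is an isomorphism.

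The main technical subtlety is the interchange $\pi_\dag \circ \gr_w \cong \gr_w \circ \pi_\dag$, both on objects and on morphisms. This should follow routinely from~\eqref{eqn:isom-pi-embeddings} applied to the closed and open embeddings defining the successor-filtration of $\hTil_{w_0}$, together with Remark~\ref{rmk:filtrations-pidag}, which ensures that the filtration on $\pi_\dag(\hTil_{w_0}) = \mathscr{T}_{w_0}$ induced from the one on $\hTil_{w_0}$ matches the tautological filtration of $\mathscr{T}_{w_0}$; nevertheless, explicitly verifying this compatibility is the only point at which real care is required.
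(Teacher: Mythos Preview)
Your proof of the first assertion is correct and matches the paper's exactly: apply $\pi_\dag$, invoke~\eqref{eqn:isom-pi-embeddings} and Lemma~\ref{lem-Tw0-s}, then use the classification in Proposition~\ref{prop:classification-tiltings}.

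For the second assertion, your argument is correct but takes an unnecessary detour. The paper handles this directly at the level of $\hTil_{w_0}$, without passing through $\pi_\dag$. Namely: since $\hTil_{w_0}$ has a standard filtration, the cone of the adjunction $f$ is $(\overline{\jmath}'_s)_! (\overline{\jmath}'_s)^* \hTil_{w_0}[1]$ (with $\overline{\jmath}'_s$ the complementary open inclusion), and $(\overline{\jmath}'_s)_! (\overline{\jmath}'_s)^* \hTil_{w_0}$ is perverse with a standard filtration whose pieces are the $\hDel_y$ with $y \notin \{e,s\}$. So $f$ is surjective with kernel $K$ satisfying $\gr_w(K)=0$ for $w \in \{e,s\}$, and the exactness of $\gr_w$ on short exact sequences of standard-filtered objects gives that $\gr_w(f)$ is an isomorphism. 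This is precisely the reasoning you apply to $\mathscr{T}_{w_0}$ to justify ``taking $\gr_w$ then yields a surjection $\Delta_w \twoheadrightarrow \Delta_w$''; it works verbatim for $\hTil_{w_0}$. By running it upstairs you avoid the interchange $\pi_\dag \circ \gr_w \cong \gr_w \circ \pi_\dag$ that you flag as the main subtlety, as well as the appeal to Lemma~\ref{lem:monodromy-augmentation}. Your route is not wrong, just longer: it buys nothing, since the compatibility you need to check is at least as delicate as the direct argument.

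One small point: your step ``taking $\gr_w$ then yields a surjection'' is not automatic from surjectivity of $\pi_\dag(f)$ alone; it uses that the kernel has a standard filtration (so that the filtrations are strictly compatible). You should say this explicitly, since it is exactly the content of the paper's ``clear''.
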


\begin{proof}
 Since $\hTil_{w_0}$ is tilting (in particular, admits a standard filtration), it is clear that the adjunction morphism $\hTil_{w_0} \to (\overline{\jmath}_s)_* (\overline{\jmath}_s)^* \hTil_{w_0}$ is surjective, and that the induced morphism $\gr_w(\hTil_{w_0}) \to \gr_w \bigl( (\overline{\jmath}_s)_* \overline{\jmath}_s^* \hTil_{w_0} \bigr)$ is an isomorphism if $w \in \{e,s\}$, and $0$ otherwise. Hence it suffices to prove the isomorphism $\overline{\jmath}_s^*(\hTil_{w_0}) \cong \hTil_s$. However, if we still denote by $\pi$ the morphism $\pi^{-1}(\overline{Y_s}) \to \overline{Y_s}$ induced by $\pi$, we have
 \[
  \pi_\dag(\overline{\jmath}_s^* \hTil_{w_0}) \cong \overline{\imath}_s^* \pi_\dag (\hTil_{w_0}) = \overline{\imath}_s^* \mathscr{T}_{w_0}
 \]
where $\overline{\imath}_s : \overline{Y_s} \to Y$ is the embedding (see~\eqref{eqn:isom-pi-embeddings}). By Lemma~\ref{lem-Tw0-s}, it follows that
\[
 \pi_\dag(\overline{\jmath}_s^* \hTil_{w_0}) \cong \mathscr{T}_s.
\]
We deduce the desired isomorphism, in view of Proposition~\ref{prop:classification-tiltings}.
\end{proof}

\begin{rmk}
\label{rmk:hTils-canonical}
The objects $\hTil_w$ are not canonical; they can be chosen only up to isomorphism. (This does not affect Theorem~\ref{thm:main}, since monodromy commutes with any morphism, hence is invariant under conjugation in the obvious sense.) However, the proof of Lemma~\ref{lem:restr-hTil-w0} shows that once $\hTil_{w_0}$ is chosen, the object $\hTil_s$ (for any $s \in S$) can be defined canonically as $(\overline{\jmath}_s)_* \overline{\jmath}_s^* \hTil_{w_0}$.
\end{rmk}

From Lemma~\ref{lem:restr-hTil-w0} we deduce that the composition
\[
 \End(\hTil_{w_0}) \to \bigoplus_{w \in W} R_T^\wedge \xrightarrow{(a_w)_{w \in W} \mapsto (a_e,a_s)} R_T^\wedge \oplus R_T^\wedge
\]
(where the first arrow is the third morphism in~\eqref{eqn:morphs-proof-main})
factors as the composition
\begin{equation}
\label{eqn:image-s}
 \End(\hTil_{w_0}) \xrightarrow{\overline{\jmath}_s^*} \End(\hTil_s) \xrightarrow{\gr} R_T^\wedge \oplus R_T^\wedge.
\end{equation}
Now by Corollary~\ref{cor:mon-Ts} the morphism
\[
 R_T^\wedge \otimes_{(R_T^\wedge)^W} R_T^\wedge \to \End(\hTil_s)
\]
of Proposition~\ref{prop:mon-tilting} is surjective, and its composition with the second arrow in~\eqref{eqn:image-s} identifies with the morphism
\[
 a \otimes b \mapsto (a \cdot b, s(a) \cdot b)
\]
(see Lemma~\ref{lem:mondromy-leftright-hDel}). Since $ab \equiv s(a) b \mod (1-e^{\alpha^\vee})$, this proves that if $(a_y)_{y \in W}$ belongs to the image of the third morphism in~\eqref{eqn:morphs-proof-main}, then indeed we have $a_e \equiv a_s \mod (1-e^{\alpha^\vee})$.

\subsection{The general case}

In this subsection we deduce Proposition~\ref{prop:image-conditions-kk} from the special case considered in~\S\ref{ss:special-case}. The main idea will be the following: recall diagram~\eqref{eqn:morphs-proof-main}. We have natural actions of $W \times W$ on the first, second, and fourth terms in this diagram respectively defined by
\begin{gather*}
 \vartheta^{(1)}_{(w,v)}(a \otimes b) = w(a) \otimes v(b), \quad \vartheta^{(2)}_{(w,v)}(a \otimes b) = v(a) \otimes w(b), \\
 \bigl( \vartheta^{(4)}_{(w,v)}(f) \bigr)(x) = w(f(v^{-1} x w))
\end{gather*}
for $w,v \in W$, $a,b \in R_T^\wedge$, $f \in \Fun(W,R_T^\wedge)$, $x \in W$. It is easily seen that the first arrow and the composition of the second and third arrows are equivariant with respect to these actions.
We will now define an action of $W \times W$ on $\End(\hTil_{w_0})$ that makes the whole diagram~\eqref{eqn:morphs-proof-main} equivariant. This will imply that the image of the third morphism in this diagram is stable under this $(W \times W)$-action.

For $w,v \in W$ we denote by $\vartheta^{(3)}_{(w,v)}$ the automorphism of $\End(\hTil_{w_0})$ defined as the composition
\[
 \End(\hTil_{w_0}) \simto \End(\hDel_v \hatstar \hTil_{w_0} \hatstar \hDel_{w^{-1}}) \simto \End(\hTil_{w_0})
\]
where the first arrow is induced by the functor $\hDel_v \hatstar (-) \hatstar \hDel_{w^{-1}}$ and the second arrow is induced by any choice of isomorphism as in Proposition~\ref{prop:convolution-hTw0}.

\begin{lem}
For any $w,v \in W$,
 the automorphism $\vartheta^{(3)}_{(w,v)}$ does not depend on the choice of isomorphism as in Proposition~{\rm \ref{prop:convolution-hTw0}}. Moreover, these isomorphisms define an action of $W \times W$ on $\End(\hTil_{w_0})$, and the second and third arrows in~\eqref{eqn:morphs-proof-main} are equivariant with respect to this action and the ones defined above.
\end{lem}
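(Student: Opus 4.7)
The plan is: use commutativity of $\End(\hTil_{w_0})$ to handle well-definedness, compute second-arrow equivariance directly from monodromy naturality, establish third-arrow equivariance by a stratum-level analysis (the main obstacle), and then deduce the group axioms formally.

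For well-definedness, the key observation is that $\End(\hTil_{w_0})$ is commutative. Indeed, by Corollary~\ref{cor:gr-faithful} the functor $\gr$ is faithful, and combined with Lemma~\ref{lem:Hom-vanishing-hDel} and Lemma~\ref{lem:properties-hD-hN}\eqref{it:Edn-hD} it embeds $\End(\hTil_{w_0})$ as a subring of the commutative ring $\End(\gr(\hTil_{w_0})) \cong \bigoplus_{y \in W} R_T^\wedge$. Two choices of isomorphism as in Proposition~\ref{prop:convolution-hTw0} differ by postcomposition with some $u \in \mathrm{Aut}(\hTil_{w_0})$, and conjugation by $u$ is trivial on a commutative ring. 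For second-arrow equivariance, Lemma~\ref{lem:monodromy-convolution} propagates left and right monodromies through the convolution $\hDel_v \hatstar (-) \hatstar \hDel_{w^{-1}}$, and Lemma~\ref{lem:mondromy-leftright-hDel} converts between the two monodromy directions on $\hDel_v$ and $\hDel_{w^{-1}}$; a direct computation then yields
\[
\vartheta^{(3)}_{(w,v)}\bigl(\varphi^\wedge_{\lef,\hTil_{w_0}}(a)\bigr) = \varphi^\wedge_{\lef,\hTil_{w_0}}(v(a)), \qquad \vartheta^{(3)}_{(w,v)}\bigl(\varphi^\wedge_{\rig,\hTil_{w_0}}(b)\bigr) = \varphi^\wedge_{\rig,\hTil_{w_0}}(w(b)),
\]
from which equivariance of the second arrow follows by multiplicativity.

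The principal obstacle is third-arrow equivariance, i.e., $\gr\bigl(\vartheta^{(3)}_{(w,v)}(f)\bigr) = \vartheta^{(4)}_{(w,v)}(\gr(f))$ for all $f \in \End(\hTil_{w_0})$. I identify $\gr_y$ with the stratum restriction $(j_y)^*$ (using $(j_y)^*\hTil_{w_0} \cong \hL_{T,y}[\dim X_y]$ and the right-monodromy identification $\End(\hL_{T,y}[\dim X_y])\cong R_T^\wedge$), reducing the target equality to $(j_y)^*(\vartheta^{(3)}_{(w,v)}(f)) = w\bigl((j_{v^{-1}yw})^*(f)\bigr)$. In the length-additive case, Lemma~\ref{lem:convolution-hDel} gives $\hDel_v\hatstar\hDel_{v^{-1}yw}\hatstar\hDel_{w^{-1}} \cong \hDel_y$, so convolution by $\hDel_v$ on the left and $\hDel_{w^{-1}}$ on the right shifts the stratum index via $y' \mapsto vy'w^{-1}$; combined with the $w$-twist of Lemma~\ref{lem:mondromy-leftright-hDel} and the trivial conjugation by $(j_y)^*\phi$ (commutativity at the stalk), the identity follows. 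The general case is handled by inductive reduction to simple reflections, using the exactness of convolution by invertible objects and the fact that the intrinsic standard filtration of $\hDel_v\hatstar\hTil_{w_0}\hatstar\hDel_{w^{-1}}$ is preserved by $\phi$.

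Finally, the group-action axioms follow from the third-arrow equivariance and injectivity of $\gr$. Since $\vartheta^{(4)}$ is manifestly a $W\times W$-action on $\Fun(W,R_T^\wedge)$, applying $\gr$ to $\vartheta^{(3)}_{(w_1,v_1)}\circ\vartheta^{(3)}_{(w_2,v_2)}$ and using equivariance twice yields $\vartheta^{(4)}_{(w_1w_2,v_1v_2)}\circ\gr = \gr\circ\vartheta^{(3)}_{(w_1w_2,v_1v_2)}$; injectivity of $\gr$ then forces $\vartheta^{(3)}_{(w_1,v_1)}\circ\vartheta^{(3)}_{(w_2,v_2)} = \vartheta^{(3)}_{(w_1w_2,v_1v_2)}$.
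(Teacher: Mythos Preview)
Your arguments for well-definedness (via commutativity of $\End(\hTil_{w_0})$, which indeed follows from the faithful embedding $\gr$ into a commutative ring) and for second-arrow equivariance are correct; the latter is exactly the paper's first paragraph, and your commutativity argument for independence of the chosen isomorphism is a clean alternative to how the paper eventually deduces it. Your deduction of the group axioms from third-arrow equivariance and injectivity of $\gr$ is also fine.

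The genuine gap is in your direct stratum-level proof of third-arrow equivariance. The claim that ``convolution by $\hDel_v$ on the left and $\hDel_{w^{-1}}$ on the right shifts the stratum index via $y'\mapsto vy'w^{-1}$'' is false outside the length-additive situation: for a simple reflection $s$ and $y'$ with $sy'<y'$, the object $\hDel_s \hatstar \hDel_{y'}$ is \emph{not} isomorphic to $\hDel_{sy'}$ (indeed $\hDel_s\hatstar\hDel_s\not\cong\hDel_e$). Hence the filtration of $\hDel_v\hatstar\hTil_{w_0}\hatstar\hDel_{w^{-1}}$ obtained by convolving the standard filtration of $\hTil_{w_0}$ is not a standard filtration, and there is no direct way to read off $(j_y)^*\bigl(\id_{\hDel_v}\hatstar f\hatstar\id_{\hDel_{w^{-1}}}\bigr)$ from $(j_{v^{-1}yw})^*(f)$. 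Your ``inductive reduction to simple reflections'' does not help: the obstruction already appears for simple reflections, and in any case composing the $\vartheta^{(3)}$ for simple reflections requires either the group-action axioms (circular) or an identification $\hDel_{v_1}\hatstar\hDel_{v_2}\cong\hDel_{v_1v_2}$, which again fails without length additivity.

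The paper bypasses this computation entirely. It observes that every $R_T^\wedge$-module in~\eqref{eqn:morphs-proof-main} is free of rank $\#W$ and that the full composition (which is $\tau$, see Theorem~\ref{thm:kk-ajs}) becomes an isomorphism after tensoring with the fraction field $Q_T^\wedge$; hence so does the second arrow. Since the composition of the second and third arrows intertwines $\vartheta^{(2)}$ and $\vartheta^{(4)}$ (it equals $\tau$), and you have already shown the second arrow intertwines $\vartheta^{(2)}$ and $\vartheta^{(3)}$, inverting the second arrow over $Q_T^\wedge$ yields that the third arrow intertwines $\vartheta^{(3)}$ and $\vartheta^{(4)}$. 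Injectivity of the third arrow then gives well-definedness and the group axioms. You should replace your stratum-level argument with this localization step.
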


\begin{proof}
 First, we claim that the second morphism in~\eqref{eqn:morphs-proof-main} intertwines the automorphisms $\vartheta^{(2)}_{(w,v)}$ and $\vartheta^{(3)}_{(w,v)}$. For this we remark that the image under this morphism of $a \otimes b$ is $\varphi_{\lef,\hTil_{w_0}}^\wedge(a) \circ \varphi_{\rig,\hTil_{w_0}}^\wedge(b)$. Now we have
 \begin{multline*}
  \id_{\hDel_v} \hatstar \left( \varphi_{\lef,\hTil_{w_0}}^\wedge(a) \circ \varphi_{\rig,\hTil_{w_0}}^\wedge(b) \right) \hatstar \id_{\hDel_{w^{-1}}} = \varphi_{\rig, \hDel_v}^\wedge(a) \hatstar \id_{\hTil_{w_0}} \hatstar \varphi_{\lef,\hDel_{w^{-1}}}^\wedge(b) \\
  = \varphi_{\lef, \hDel_v}^\wedge(v(a)) \hatstar \id_{\hTil_{w_0}} \hatstar \varphi_{\rig,\hDel_{w^{-1}}}^\wedge(w(b)) \\
  = \varphi_{\lef,\hDel_v \hatstar \hTil_{w_0} \hatstar \hDel_{w^{-1}}}^\wedge(v(a)) \circ \varphi_{\rig,\hDel_v \hatstar \hTil_{w_0} \hatstar \hDel_{w^{-1}}}^\wedge(w(b)),
 \end{multline*}
where the first and third equalities follow from Lemma~\ref{lem:monodromy-convolution}, and the second one from Lemma~\ref{lem:mondromy-leftright-hDel}. Now, by functoriality of monodromy, the conjugate of this automorphism with any choice of isomorphism $\hDel_v \hatstar \hTil_{w_0} \hatstar \hDel_{w^{-1}} \simto \hTil_{w_0}$ is $\varphi_{\lef,\hTil_{w_0}}^\wedge(v(a)) \circ \varphi_{\rig,\hTil_{w_0} }^\wedge(w(b))$, which concludes the proof of our claim.

We have already remarked that all the $R_T^\wedge$-modules appearing in~\eqref{eqn:morphs-proof-main} are free of rank $\# W$ (see in particular Lemma~\ref{lem:tiltings}\eqref{it:tilting-Hom} and Theorem~\ref{thm:pittie-steinberg}). Moreover, from the proof of Theorem~\ref{thm:kk-ajs} we see that the image under the functor $Q_T^\wedge \otimes_{R_T^\wedge} -$ (where, as in~\S\ref{ss:some-RT-modules}, $Q_T^\wedge$ is the fraction field of $R_T^\wedge$) of the composition of the three arrows in this diagram is an isomorphism. Hence the same property holds for any of the maps in this diagram. Since the composition of the second and third maps intertwines $\vartheta^{(2)}_{(w,v)}$ and $\vartheta^{(4)}_{(w,v)}$, and since the second map intertwines $\vartheta^{(2)}_{(w,v)}$ and $\vartheta^{(3)}_{(w,v)}$, we deduce that the third map intertwines $\vartheta^{(3)}_{(w,v)}$ and $\vartheta^{(4)}_{(w,v)}$. Since this map is injective, from this property we see that $\vartheta^{(3)}_{(w,v)}$ does not depend on the choice of isomorphism as in Proposition~\ref{prop:convolution-hTw0}, and that these isomorphisms define an action of $W \times W$ on $\End(\hTil_{w_0})$.
\end{proof}

\begin{proof}[Proof of Proposition~{\rm \ref{prop:image-conditions-kk}}]
 First, we assume that $\alpha^\vee$ is a simple coroot. Then since $\vartheta^{(4)}_{(e,w^{-1})}((a_y)_{y \in W})$ also belongs to the image of the third map in~\eqref{eqn:morphs-proof-main}, by the special case considered in~\S\ref{ss:special-case} we must have
 \[
  a_w \equiv a_{ws_{\alpha^\vee}} \mod (1-e^{\alpha^\vee}),
 \]
as desired.

Now we consider the general case. We choose $v \in W$ such that $\beta^\vee := v(\alpha^\vee)$ is a simple coroot. To prove that $a_w \equiv a_{ws_{\alpha^\vee}} \mod (1-e^{\alpha^\vee})$ we only have to prove that
 \[
  v(a_w) \equiv v(a_{ws_{\alpha^\vee}}) \mod (1-e^{\beta^\vee}).
 \]
 However, since $ws_{\alpha^\vee} = w v^{-1} s_{\beta^\vee} v$, this fact follows from the observation that $\vartheta^{(4)}_{(v,e)}((a_y)_{y \in W})$ also belongs to the image of the third map in~\eqref{eqn:morphs-proof-main}, and the case of simple coroots treated above (applied with ``$w$'' replaced by $wv^{-1}$).
\end{proof}

\section{Variant: the \'etale setting}
\label{sec:etale}

All the constructions we have considered so far have counterparts in the world of \'etale sheaves, which we briefly review in this section. Here we need to assume that $\bk$ is a finite field, and will denote its characteristic by $\ell$.

\subsection{Completed derived categories}

We choose an algebraically closed field $\F$ of characteristic $p \neq \ell$. Instead of considering a \emph{complex} connected reductive group, one can consider a connected reductive group $\bG$ over $\F$, a Borel subgroup $\bB \subset \bG$ and a maximal torus $\bT \subset \bB$. Then we denote by $\bU$ the unipotent radical of $\bB$, and we set $\mathbf{X}:=\bG/\bU$, $\mathbf{Y}:=\bG/\bB$. We will denote by $\Dbet_{c}(\mathbf{X},\bk)$ and $\Dbet_{c}(\mathbf{Y},\bk)$ the bounded constructible derived categories of \'etale $\bk$-sheaves on $\mathbf{X}$ and $\mathbf{Y}$, respectively. Then one can define the subcategory $\Dbet_{\bU}(\mathbf{Y},\bk) \subset \Dbet_{c}(\mathbf{Y},\bk)$ as the $\bU$-equivariant\footnote{Recall that in the \'etale setting the $\bU$-equivariant and $\bB$-constructible derived categories are different if $p>0$, due to the existence of non-constant local systems on affine spaces. Here $\Dbet_{\bU}(\mathbf{Y},\bk)$ is the full triangulated subcategory of $\Dbet_{c}(\mathbf{Y},\bk)$ generated by pushforwards of \emph{constant} local systems on strata.} derived category of $\mathbf{Y}$, and out of that define the associated categories $\Dbet_{\bU}(\mathbf{X} \quot \bT,\bk)$ and $\hDet_{\bU}(\mathbf{X} \quot \bT,\bk)$ exactly as above.

In this setting, the monodromy construction (see Section~\ref{sec:monodromy}) is a bit more subtle, but the required work has been done by Verdier~\cite{verdier}. Namely, we start by choosing once and for all a topological generator $(x_n)_{n \geq 0}$ of the pro-finite group
\[
\varprojlim_n \{x \in \F \mid x^{\ell^n}=1\}
\]
(where the transition maps are given by $x \mapsto x^\ell$).
As in the proof of Lemma~\ref{lem:averaging-hL} we denote, for $n \geq 0$, by $[n] : \bT \to\bT$ the morphism $z \mapsto z^{\ell^n}$, and set $a_n:=a \circ ([n] \times \id_\bX)$, where $a : \bT \times \mathbf{X} \to \mathbf{X}$ is the action morphism. Then given $\mathscr{F}$ in $\Dbet_{\bU}(\mathbf{X} \quot \bT,\bk)$, for $n \gg 0$ there exists an isomorphism
\[
f_n^{\mathscr{F}} : (a_n)^* \mathscr{F} \simto p^* \mathscr{F}
\]
whose restriction to $\{1\} \times \bX$ is the identity.
Moreover, these isomorphisms are essentially unique and functorial in the same sense as in the proof of Lemma~\ref{lem:averaging-hL}; see~\cite[Proposition~5.1]{verdier}. Given $\lambda \in X_*(\bT)$, restricting the isomorphism $f_n^{\mathscr{F}}$ to $\{\lambda(x_n)\} \times \bX$ (for $n \gg 0$) provides a canonical automorphism of $\mathscr{F}$, which by definition is $\varphi_{\mathscr{F}}^\lambda$. Starting with these automorphisms one obtains the morphism $\varphi^\wedge_{\mathscr{F}}$, which still satisfies the properties of~\S\ref{ss:monodromy-properties}.

Lemma~\ref{lem:mon-equiv} continues to hold in this setting, but its proof has to be adapted to the new definition of monodromy. Note that when $\mathscr{F}$ is a perverse sheaf the morphisms $f_n^{\mathscr{F}}$ are unique when they exist; in other words they are determined by the condition that their restriction to $\{1\} \times \bX$ is the identity. So, if $\mathscr{F}$ is as in Lemma~\ref{lem:mon-equiv}, there exists $n$ and an isomorphism $f_n^{\mathscr{F}} : (a_n)^* \mathscr{F} \simto p^* \mathscr{F}$ whose restriction to $\{1\} \times \bX$ is the identity. The fact that the monodromy is trivial means that the restriction of $f_n^{\mathscr{F}}$ to $\{x_n\} \times \bX$ is the identity also. Hence the pullback of $f_n^{\mathscr{F}}$ under the automorphism of $\Gm \times \bX$ sending $(z,x)$ to $(zx_n,x)$ is also an isomorphism $(a_n)^* \mathscr{F} \simto p^* \mathscr{F}$ whose restriction to $\{1\} \times \bX$ is the identity; therefore this isomorphism coincides with $f_n^{\mathscr{F}}$. Now the morphism $[n] \times \id_\bX$ is \'etale since $p \neq \ell$, and our observation amounts to saying that the morphism $f_n^{\mathscr{F}}$ satisfies the property that its pullbacks under both projections $(\Gm \times \bX) \times_{(\Gm \times \bX)} (\Gm \times \bX) \to \Gm \times \bX$ (where the fiber product is taken with respect to the morphism $[n] \times \bX$ on both sides) coincide. Since perverse sheaves form a stack for the \'etale topology (see~\cite[\S 2.2.19]{bbd}), it follows that this morphism descends to an isomorphism $a^* \mathscr{F} \simto p^* \mathscr{F}$; in other words $\mathscr{F}$ is a $\Gm$-equivariant perverse sheaf.

Next, the \'etale fundamental group $\pi_1^{\mathrm{et}}(\Gm)$ of $\Gm$ is more complicated than $\pi_1(\C^\times)$. However, the \'etale covers $[n] : \Gm \to \Gm$ define a surjective morphism
\[
\pi_1^{\mathrm{et}}(\Gm) \twoheadrightarrow \varprojlim_n \{x \in \F \mid x^{\ell^n}=1\}.
\]
Recall that we have fixed a topological generator of the right-hand side; this allows us to identify this group with $\varprojlim_n \Z/\ell^n \Z$.
We have a natural isomorphism
\[
X_*(\bT) \otimes_\Z \pi_1^{\mathrm{et}}(\Gm) \simto \pi_1^{\mathrm{et}}(\bT),
\]
hence a natural surjection
\[
\pi_1^{\mathrm{et}}(\bT) \to X_*(\bT) \otimes_\Z \left( \varprojlim_n \Z/\ell^n \Z \right).
\]
For $n \geq 0$, one can then consider the quotient $R_\bT / \mathfrak{m}_{\bT}^{\ell^n}$, with its natural action of $X_*(\bT)$. This action factors through an action of $X_*(\bT) \otimes_\Z \Z/\ell^n \Z$, hence it defines an action of $X_*(\bT) \otimes_\Z \left( \varprojlim_n \Z/\ell^n \Z \right)$. By pullback we deduce a finite-dimensional continuous $\pi_1^{\mathrm{et}}(\bT)$-module; the corresponding $\bk$-local system on $\bT$ will be denoted $\mathscr{L}^{\mathrm{et}}_{\bT,n}$. Then we can define the pro-unipotent local system as
\[
\widehat{\mathscr{L}}^{\mathrm{et}}_{\bT} = ``\varprojlim_n"  \mathscr{L}^{\mathrm{et}}_{\bT,n}.
\]
Using this object as a replacement for $\widehat{\mathscr{L}}_\bT$, all the constructions of Sections~\ref{sec:trivial-torsor}--\ref{sec:perverse} carry over to the present context, with identical proofs.

\subsection{Soergel's Endomorphismensatz}

Once the formalism of completed categories is in place, all the considerations of Sections~\ref{sec:flag-tilting}--\ref{sec:convolution} carry over also. This allows one to extend the results of Section~\ref{sec:main-thm}, in particular Theorem~\ref{thm:main} and Corollary~\ref{cor:main}, to the \'etale setting (assuming that $\bG$ is semisimple, of adjoint type).

\subsection{Whittaker derived category}
\label{ss:Whittaker}

The main point of introducing the \'etale variant is that one can combine our considerations with the following ``Whittaker-type'' construction. Here we have to assume that there exists a primitive $p$-th root of unity in $\F$; we will fix once and for all a choice of such a root.

Let $\bU^+$ be the unipotent radical of the Borel subgroup of $\bG$ opposite to $\bB$ with respect to $\bT$, and choose for any $s$ an isomorphism between the root subgroup of $\bG$ associated with the simple root corresponding to $s$ and the additive group $\Ga$. (Here we assume that the roots of $\bB$ are the \emph{negative} roots.) We deduce an isomorphism $\bU^+ / [\bU^+,\bU^+] \cong (\Ga)^{S}$. Composing with the addition map $(\Ga)^S \to \Ga$ we deduce a ``non-degenerate'' morphism $\chi : \bU^+ \to \Ga$. Our choice of primitive $p$-th root of unity determines an Artin--Schreier local system on $\Gm$, whose pullback to $\bU^+$ will be denoted $\mathscr{L}_\chi$. Then we can define the ``Whittaker'' derived category $\Dbet_\Whit(\mathbf{Y},\bk)$ as the full subcategory of $\Dbet_c(\mathbf{Y},\bk)$ consisting of $(\bU^+,\mathscr{L}_\chi)$-equivariant objects. (See e.g.~\cite[Appendix~A]{modrap1} for a reminder on the construction of this category.) If $j : \bU^+ \bB/\bB \hookrightarrow \mathbf{Y}$ is the (open) embedding then, for any $\mathscr{F}$ in $\Dbet_\Whit(\mathbf{Y},\bk)$, adjunction provides isomorphisms
\[
j_! j^* \mathscr{F} \simto \mathscr{F} \simto j_* j^* \mathscr{F}.
\]

Next, we can define the corresponding category $\Dbet_\Whit(\mathbf{X} \quot \bT,\bk)$ as the triangulated subcategory generated by the objects of the form $\pi^\dag \mathscr{F}$ with $\mathscr{F}$ in $\Dbet_\Whit(\mathbf{Y},\bk)$, and deduce a completed category $\hDet_\Whit(\mathbf{X} \quot \bT,\bk)$. 
If $\widehat{\jmath} : \pi^{-1}(\bU^+ \bB/\bB) \hookrightarrow \mathbf{X}$ is the embedding then, for any object $\mathscr{F}$ in $\hDet_\Whit(\mathbf{X} \quot \bT,\bk)$, adjunction provides isomorphisms
\[
\widehat{\jmath}_! \widehat{\jmath}^* \mathscr{F} \simto \mathscr{F} \simto \widehat{\jmath}_* \widehat{\jmath}^* \mathscr{F}.
\]
In particular, using the obvious projection $\pi^{-1}(\bU^+ \bB/\bB) = \bU^+ \bB / \bU \cong \bU^+ \times \bT \to \bT$ we obtain a canonical equivalence of triangulated categories
\begin{equation}
\label{eqn:equiv-hDWhit}
 \Db \Mod^{\mathrm{fg}}(R_{\bT}^\wedge) \simto \hDet_\Whit(\mathbf{X} \quot \bT,\bk).
\end{equation}
The image of the free rank-$1$ $R_\bT^\wedge$-module is the standard object $\hDel_\chi$ constructed as in~\S\ref{ss:standard-costandard} (with respect to the orbit $\bU^+ \bB/\bB \subset \mathbf{X}$). This object is canonically isomorphic to the corresponding costandard object $\hnab_\chi$. Transporting the tautological t-structure along the equivalence~\eqref{eqn:equiv-hDWhit} we obtain a t-structure on $\hDet_\Whit(\mathbf{X} \quot \bT,\bk)$ which we will call the perverse t-structure, and whose heart will be denoted $\hP^{\mathrm{et}}_\Whit(\mathbf{X} \quot \bT,\bk)$.

The categories $\Dbet_\Whit(\mathbf{Y},\bk)$, $\Dbet_\Whit(\mathbf{X} \quot \bT,\bk)$ and $\hDet_\Whit(\mathbf{X} \quot \bT,\bk)$ are related to the categories $\Dbet_{\bU}(\mathbf{Y},\bk)$, $\Dbet_{\bU}(\mathbf{X} \quot \bT,\bk)$ and $\hDet_{\bU}(\mathbf{X} \quot \bT,\bk)$ in several ways. First, the convolution construction of Section~\ref{sec:convolution} defines a right action of the monoidal category $\bigl( \hDet_{\bU}(\mathbf{X} \quot \bT,\bk), \hatstar \bigr)$ on $\hDet_\Whit(\mathbf{X} \quot \bT,\bk)$; the corresponding bifunctor will again be denoted $\hatstar$. Next, we have ``averaging'' functors $\Dbet_\bU(\mathbf{Y},\bk) \to \Dbet_\Whit(\mathbf{Y},\bk)$ and $\Dbet_\bU(\mathbf{X} \quot \bT,\bk) \to \Dbet_\Whit(\mathbf{X} \quot \bT,\bk)$, sending a complex $\mathscr{F}$ to $(a_{\bU^+})_!(\mathscr{L}_\chi \boxtimes \mathscr{F})[\dim \bU^+]$, where $a_{\bU^+} : \bU^+ \times \mathbf{Y} \to \mathbf{Y}$ and $a_{\bU^+} : \bU^+ \times \mathbf{X} \to \mathbf{X}$ are the natural morphisms. Standard arguments (see~\cite{bbm, by}) show that $(a_{\bU^+})_!$ can be replaced by $(a_{\bU^+})_*$ in this formula without changing the functor up to isomorphism. These functors will be denoted $\Av_\chi$; then we have canonical isomorphisms
\[
\Av_\chi \circ \pi_\dag \cong \pi_\dag \circ \Av_\chi, \qquad \Av_\chi \circ \pi^\dag \cong \pi^\dag \circ \Av_\chi.
\]
In particular, we obtain an induced functor
\[
\Av_\chi : \hDet_{\bU}(\mathbf{X} \quot \bT,\bk) \to \hDet_{\Whit}(\mathbf{X} \quot \bT,\bk).
\]
By construction, this functor satisfies
\[
\Av_\chi(\hDel_e) = \hDel_\chi.
\]

We also have averaging functors in the other direction, defined in terms of the action morphisms $a_\bU : \bU \times \mathbf{Y} \to \mathbf{Y}$ and $a_\bU : \bU \times \mathbf{X} \to \mathbf{X}$ and the constant local system on $\bU$. This time, the versions with $*$- and $!$-pushforwards are different, and will be denoted $\Av^{\bU}_{*}$ and $\Av^{\bU}_{!}$. Here also we have isomorphisms
\[
\Av^{\bU}_{?} \circ \pi_\dag \cong \pi_\dag \circ \Av^{\bU}_?, \qquad \Av^{\bU}_{?} \circ \pi^\dag \cong \pi^\dag \circ \Av^{\bU}_?
\]
for $? \in \{*,!\}$
(see the arguments in~\cite[Proof of Corollary~A.3.4]{by} for the first isomorphism in the case $?=*$). Hence we deduce induced functors
\[
\Av^{\bU}_! : \hDet_{\Whit}(\mathbf{X} \quot \bT,\bk) \to \hDet_{\bU}(\mathbf{X} \quot \bT,\bk), \quad \Av^{\bU}_* : \hDet_{\Whit}(\mathbf{X} \quot \bT,\bk) \to \hDet_{\bU}(\mathbf{X} \quot \bT,\bk).
\]
Standard arguments (see e.g.~\cite[Lemma~4.4.5]{by} or~\cite[Lemma~5.15]{modrap1}) show that the pairs $(\Av^{\bU}_!, \Av_\chi)$ and $(\Av_\chi, \Av^\bU_*)$ form adjoint pairs of functors.

\subsection{Geometric construction of \texorpdfstring{$\hTil_{w_0}$}{Tw0}}

The Whittaker constructions of~\S\ref{ss:Whittaker} allow us in particular to give a concrete and explicit description of the objects $\hTil_{w_0}$ and $\mathscr{T}_{w_0}$, as follows.

\begin{lem}
\label{lem:Tw0-etale}
There exist isomorphisms
\[
\mathscr{T}_{w_0} \cong \Av^\bU_! \circ \Av_\chi(\Delta_e) \cong \Av^\bU_* \circ \Av_\chi(\Delta_e), \quad
\hTil_{w_0} \cong \Av^\bU_! \circ \Av_\chi(\hDel_e) \cong \Av^\bU_* \circ \Av_\chi(\hDel_e). 
\]
\end{lem}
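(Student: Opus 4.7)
The plan is to deduce the non-monodromic statements for $\mathscr{T}_{w_0}$ from the free-monodromic ones for $\hTil_{w_0}$ by applying $\pi_\dag$: the commutations $\pi_\dag \circ \Av_\chi \cong \Av_\chi \circ \pi_\dag$ and $\pi_\dag \circ \Av^{\bU}_{?} \cong \Av^{\bU}_{?} \circ \pi_\dag$ from~\S\ref{ss:Whittaker}, combined with $\pi_\dag(\hDel_e) = \Delta_e$ (by~\eqref{eqn:Pidag-hD-hN}) and $\pi_\dag(\hTil_{w_0}) \cong \mathscr{T}_{w_0}$ (Proposition~\ref{prop:classification-tiltings}), immediately transfer the isomorphisms. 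So I concentrate on the free-monodromic statements.

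The first step identifies $\Av_\chi(\hDel_e) \cong \hDel_\chi$ by a direct base-change computation. Since $\hDel_e$ is supported on $X_e \cong \bT$, and the restriction of the action morphism $a_{\bU^+} : \bU^+ \times \bX \to \bX$ to $\bU^+ \times X_e$ is an isomorphism onto the open subset $\pi^{-1}(\bU^+\bB/\bB) \subset \bX$, base change in the definition $\Av_\chi = (a_{\bU^+})_!(\mathscr{L}_\chi \boxtimes -)[\dim \bU^+]$ yields
\[
\Av_\chi(\hDel_e) \cong \widehat{\jmath}_!\bigl(\mathscr{L}_\chi \boxtimes \widehat{\mathscr{L}}^{\mathrm{et}}_{\bT}\bigr)[\dim \bU^+ + \dim T] = \hDel_\chi.
\]

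The second step shows $\Av^{\bU}_{!}(\hDel_\chi) \cong \hTil_{w_0} \cong \Av^{\bU}_{*}(\hDel_\chi)$. By Proposition~\ref{prop:classification-tiltings} combined with Lemma~\ref{lem:tiltings}\eqref{it:tilting-pi} and the commutation $\pi_\dag \circ \Av^{\bU}_{?} \cong \Av^{\bU}_{?} \circ \pi_\dag$, it suffices to establish the non-monodromic identifications $\Av^{\bU}_{!}(\Delta_\chi) \cong \mathscr{T}_{w_0} \cong \Av^{\bU}_{*}(\Delta_\chi)$, where $\Delta_\chi := \pi_\dag(\hDel_\chi)$ is the Whittaker standard on $\bY$. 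By Lemma~\ref{lem:properties-Tw0}\eqref{it:Tw0-2}, $\mathscr{T}_{w_0}$ is both the projective cover and the injective hull of $\IC_e$ in $\scO$. Via the adjunctions $(\Av^{\bU}_{!}, \Av_\chi)$ and $(\Av_\chi, \Av^{\bU}_{*})$, for $w \in W$ and $n \in \Z$,
\[
\Hom\bigl(\Av^{\bU}_{!}(\Delta_\chi), \IC_w[n]\bigr) \cong \Hom\bigl(\Delta_\chi, \Av_\chi(\IC_w)[n]\bigr),
\]
and dually for $\Av^{\bU}_{*}$. Using the classical Whittaker computations $\Av_\chi(\IC_e) \cong \Delta_\chi$ and $\Av_\chi(\IC_w) = 0$ for $w \ne e$, together with the fact that the Whittaker perverse category on $\bY$ is semisimple with unique simple $\Delta_\chi$ (so that Hom-spaces against $\Delta_\chi$ reduce to perverse cohomology), these Hom-spaces vanish unless $(w,n) = (e,0)$, in which case they are one-dimensional. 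This identifies $\Av^{\bU}_{!}(\Delta_\chi)$ as the projective cover of $\IC_e$ and $\Av^{\bU}_{*}(\Delta_\chi)$ as its injective hull, both equal to $\mathscr{T}_{w_0}$.

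The main obstacle is the Whittaker computation $\Av_\chi(\IC_w) = 0$ for $w \ne e$ (together with $\Av_\chi(\IC_e) \cong \Delta_\chi$). This requires a careful analysis of the intersection of the $\bB$-orbits $Y_w$ with the $\bU^+$-orbits on $\bY$, exploiting the non-degeneracy of $\chi$: non-degeneracy implies that the only $\bU^+$-orbit supporting a non-trivial $(\bU^+, \mathscr{L}_\chi)$-equivariant local system is the open one, and the resulting character-sum vanishing kills $\Av_\chi(\IC_w)$ for $w \neq e$. Such arguments are classical but technically delicate, going back to~\cite{bbm}.
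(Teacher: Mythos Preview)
Your proposal is correct and follows essentially the same approach as the paper: both reduce the free-monodromic case to the non-monodromic one via Proposition~\ref{prop:classification-tiltings} and commutation of the averaging functors with $\pi_\dag$, then identify $\Av^\bU_!(\Delta_\chi)$ and $\Av^\bU_*(\Delta_\chi)$ with $\mathscr{T}_{w_0}$ through its characterization (Lemma~\ref{lem:properties-Tw0}\eqref{it:Tw0-2}) as the projective cover and injective hull of $\IC_e$. The paper is terser, citing~\cite[Lemma~4.4.11]{by} and~\cite[Lemma~5.18]{modrap1} for the details you sketch via the vanishing $\Av_\chi(\IC_w) = 0$ for $w \neq e$.
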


\begin{proof}
Since the averaging functors commute with $\pi_\dag$, in view of the characterization of $\hTil_{w_0}$ in Proposition~\ref{prop:classification-tiltings} it is sufficient to prove the isomorphisms $\mathscr{T}_{w_0} \cong \Av^\bU_! \circ \Av_\chi(\Delta_e) \cong \Av^\bU_* \circ \Av_\chi(\Delta_e)$. This follows from standard arguments, showing that $\Av^\bU_! \circ \Av_\chi(\Delta_e)$ is the projective cover of $\IC_e$ and that $\Av^\bU_* \circ \Av_\chi(\Delta_e)$ is the injective hull of $\IC_e$ and then using Lemma~\ref{lem:properties-Tw0}\eqref{it:Tw0-2}; see~\cite[Lemma~4.4.11]{by} or~\cite[Lemma~5.18]{modrap1} for details.
\end{proof}

\begin{rmk}
As explained above, Lemma~\ref{lem:Tw0-etale} provides a canonical representative for the object $\hTil_{w_0}$ (in the present \'etale setting). In view of
Remark~\ref{rmk:hTils-canonical}, the objects $\hTil_s$ with $s \in S$ are then also canonically defined.
\end{rmk}

\section{Soergel theory}
\label{sec:soergel-theory}

In this section we use Theorem~\ref{thm:main} and Corollary~\ref{cor:main} to obtain a description of tilting objects in $\scO$ and $\hscO$ in terms of some kinds of Soergel bimodules. For simplicity, we assume that $\bk$ is a finite field. (This assumption does not play any role in~\S\S\ref{ss:functor-V}--\ref{ss:image-Ts}.)

In~\S\S\ref{ss:functor-V}--\ref{ss:image-Ts} we 
work either in the ``classical'' setting of Sections~\ref{sec:flag-tilting}--\ref{sec:main-thm} or in the \'etale setting of Section~\ref{sec:etale}. (For simplicity we do not distinguish the two cases, and use the notation of Sections~\ref{sec:flag-tilting}--\ref{sec:main-thm}.)
Then in~\S\ref{ss:monoidal-structure} we consider a construction that is available only in the \'etale setting, and in~\S\ref{ss:monoidal-classical} we explain how to extend the results obtained using this construction to the classical setting. Finally, in~\S\ref{ss:soergel-theory} we derive an explicit description of the categories of tilting objects in $\mathscr{O}$ and $\widehat{\mathscr{O}}$.

\subsection{The functor \texorpdfstring{$\mathbb{V}$}{V}}
\label{ss:functor-V}

We fix a representative $\hTil_{w_0}$, and set $\mathscr{T}_{w_0} := \pi_\dag(\hTil_{w_0})$ (so that $\mathscr{T}_{w_0}$ is as above the indecomposable tilting object in $\scO$ associated with $w_0$, but now chosen in a slightly more specific way).

Thanks to Theorem~\ref{thm:main} and Corollary~\ref{cor:main} respectively, we have isomorphisms
\[
R_T^\wedge \otimes_{(R_T^\wedge)^W} R_T^\wedge \simto \End(\hTil_{w_0}), \quad R_T/(R_T)^W_+ \simto \End(\mathscr{T}_{w_0}),
\]
so that we can consider the functors
\begin{align*}
\widehat{\mathbb{V}} : \hscO &\to \Mof(R_T^\wedge \otimes_{(R_T^\wedge)^W} R_T^\wedge) \\
\mathbb{V} : \scO &\to \Mof(R_T/(R_T)^W_+)
\end{align*}
(where, for $A$ a Noetherian ring, we denote by $\Mof(A)$ the abelian category of finitely generated left $A$-modules) defined by
\[
\widehat{\mathbb{V}}(\widehat{\mathscr{F}}) = \Hom(\hTil_{w_0}, \widehat{\mathscr{F}}), \quad \mathbb{V}(\mathscr{F}) = \Hom(\mathscr{T}_{w_0},\mathscr{F}).
\]
Here, the fact that $\mathbb{V}$ takes values in $\Mof(R_T/(R_T)^W_+)$ is obvious, while for $\widehat{\mathbb{V}}$ the corresponding property follows from Corollary~\ref{cor:Hom-fg}\eqref{it:Hom-fg}.
If $\hTil$ is a tilting object in $\hscO$, then by Lemma~\ref{lem:tiltings}\eqref{it:tilting-Hom} we have a canonical isomorphism
\begin{equation}
\label{eqn:V-hV}
\bk \otimes_{R_T^\wedge} \widehat{\mathbb{V}}(\hTil) \cong \mathbb{V}(\pi_\dag \hTil),
\end{equation}
where the tensor product is taken with respect to the action of the right copy of $R_T^\wedge$.

\begin{rmk}
Lemma~\ref{lem:formal-neighb} shows that the category $\Mof(R_T^\wedge \otimes_{(R_T^\wedge)^W} R_T^\wedge)$ can be described more geometrically as the category of coherent sheaves on the formal neighborhood of the point $(1,1)$ in $T^\vee_\bk \times_{(T^\vee_\bk)^W} T^\vee_\bk$ (considered as a scheme). Similarly, the category $\Mof(R_T/(R_T)^W_+)$ is the category of coherent sheaves on the fiber of the quotient morphism $T^\vee_\bk \to (T^\vee_\bk)/W$ over the image of $1$. In these terms, the monoidal structure on $\Mof(R_T^\wedge \otimes_{(R_T^\wedge)^W} R_T^\wedge)$ considered in~\S\ref{ss:monoidal-structure} below can be described as a convolution product.
\end{rmk}

These functors are ``fully faithful on tilting objects'' in the following sense.

\begin{prop}
\label{prop:ff}
For any tilting perverse sheaves $\hTil$, $\hTil'$ in $\hscO$, the functor $\widehat{\mathbb{V}}$ induces an isomorphism
\[
\Hom_{\hscO}(\hTil,\hTil') \simto \Hom_{R_T^\wedge \otimes_{(R_T^\wedge)^W} R_T^\wedge} \bigl( \widehat{\mathbb{V}}(\hTil),\widehat{\mathbb{V}}(\hTil') \bigr).
\]
Similarly, for any tilting perverse sheaves $\mathscr{T}$, $\mathscr{T}'$ in $\scO$, the functor $\mathbb{V}$ induces an isomorphism
\[
\Hom_{\scO}(\mathscr{T},\mathscr{T}') \simto \Hom_{R_T/(R_T)^W_+} \bigl( \mathbb{V}(\mathscr{T}), \mathbb{V}(\mathscr{T}') \bigr).
\]
\end{prop}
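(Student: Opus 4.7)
The plan is to prove both statements by the same method; I focus on the hat version, the classical case being completely analogous (or obtainable from the hat statement by applying $\bk \otimes_{R_T^\wedge} -$, using Lemma~\ref{lem:tiltings}\eqref{it:tilting-Hom} together with Nakayama's lemma and finite generation). The first key ingredient is that $\widehat{\mathbb{V}} = \Hom_{\hscO}(\hTil_{w_0}, -)$ is \emph{exact}, because $\hTil_{w_0}$ is projective in $\hscO$; this projectivity can be established by a Whittaker-type realization of $\hTil_{w_0}$ as $\Av^\bU_! \circ \Av_\chi(\hDel_e)$ parallel to Lemma~\ref{lem:Tw0-etale}, combined with the adjunction $(\Av^\bU_!, \Av_\chi)$ and the semisimplicity of the Whittaker perverse category.

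The heart of the argument is to construct, for any tilting $\hTil$ in $\hscO$, a two-term presentation by copies of $\hTil_{w_0}$:
\[
\hTil_{w_0}^{\oplus d} \to \hTil_{w_0}^{\oplus c} \to \hTil \to 0.
\]
Since $\hTil$ admits a costandard filtration, its top is a direct sum of copies of $\IC_e$ by the $\hscO$-analogue of the dual part of Corollary~\ref{cor:sub-quo-filtered}; together with the fact that $\hTil_{w_0}$ is the projective cover of $\IC_e$, this provides a surjection $\hTil_{w_0}^{\oplus c} \twoheadrightarrow \hTil$. The kernel, however, need not be tilting, so naive iteration fails. Instead, I proceed via a Verdier-type duality $\mathbb{D}$ on $\hscO$ that exchanges $\hDel_w$ and $\hnab_w$, sends tilting to tilting, and fixes each indecomposable $\hTil_w$ (the latter because $\mathbb{D}\hTil_w$ is again an indecomposable tilting object with the same support, hence must agree with $\hTil_w$). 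Using the injectivity of $\hTil_{w_0}$ (dual to its projectivity), I embed $\mathbb{D}\hTil \hookrightarrow \hTil_{w_0}^{\oplus d}$; the cokernel inherits a costandard filtration, so its socle is again a direct sum of copies of $\IC_e$, and the embedding iterates to produce $0 \to \mathbb{D}\hTil \to \hTil_{w_0}^{\oplus d} \to \hTil_{w_0}^{\oplus c}$. Applying $\mathbb{D}$ yields the desired presentation of $\hTil$.

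Given the presentation, applying $\Hom_{\hscO}(-,\hTil')$ (contravariant left exact) to it gives
\[
0 \to \Hom(\hTil,\hTil') \to \Hom(\hTil_{w_0},\hTil')^{\oplus c} \to \Hom(\hTil_{w_0},\hTil')^{\oplus d},
\]
while applying $\widehat{\mathbb{V}}$ (exact) to the presentation and then $\Hom_{R_T^\wedge \otimes_{(R_T^\wedge)^W} R_T^\wedge}(-,\widehat{\mathbb{V}}(\hTil'))$ yields
\[
0 \to \Hom_{R_T^\wedge \otimes_{(R_T^\wedge)^W} R_T^\wedge}(\widehat{\mathbb{V}}\hTil,\widehat{\mathbb{V}}\hTil') \to \widehat{\mathbb{V}}(\hTil')^{\oplus c} \to \widehat{\mathbb{V}}(\hTil')^{\oplus d}.
\]
The natural comparison map between these two sequences (induced by the functoriality of $\widehat{\mathbb{V}}$) is tautologically an isomorphism on the rightmost two terms, using the identifications $\widehat{\mathbb{V}}(\hTil_{w_0}) = R_T^\wedge \otimes_{(R_T^\wedge)^W} R_T^\wedge$ (Theorem~\ref{thm:main}) and $\Hom_{\hscO}(\hTil_{w_0},\hTil') = \widehat{\mathbb{V}}(\hTil')$; the five lemma (or uniqueness of kernels) then forces the leftmost map to be an isomorphism, which is the desired statement.

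The main obstacle is the construction of the duality $\mathbb{D}$ on the completed category $\hscO$: while Verdier duality is standard on $\Db_U(Y,\bk)$, its analogue on pro-objects must be defined so as to interchange the pro-standard and pro-costandard perverse sheaves while respecting the free-monodromic structure, and setting this up rigorously requires careful manipulation of the infrastructure developed in Sections~\ref{sec:monodromy}--\ref{sec:perverse}, perhaps most easily via the realization equivalence of Proposition~\ref{prop:realization-equiv}.
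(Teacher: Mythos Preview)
Your strategy is sound for the classical functor $\mathbb{V}$: all three ingredients---projectivity of $\mathscr{T}_{w_0}$ in $\scO$ (Lemma~\ref{lem:properties-Tw0}\eqref{it:Tw0-2}), Verdier duality, and the socle/top statement of Corollary~\ref{cor:sub-quo-filtered}---are in place, and the presentation-plus-five-lemma argument goes through. This is a legitimate alternative to the paper's argument for that case.

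For $\widehat{\mathbb{V}}$, however, you are relying on three ingredients in $\hscO$ none of which is established, and you underestimate two of them. First, the projectivity of $\hTil_{w_0}$ in $\hscO$: your Whittaker sketch only makes sense in the \'etale framework of Section~\ref{sec:etale}, and even there one must verify t-exactness of $\Av_\chi$ on the completed categories; in the classical setting there is simply no Whittaker construction available. Second, the ``$\hscO$-analogue of Corollary~\ref{cor:sub-quo-filtered}'' you invoke twice is not proved and is not straightforward: the paper never analyzes the simple objects of $\hscO$ or the socle/top of $\hnab$-filtered objects there, and the free-monodromic structure makes even formulating ``the top is a direct sum of copies of $\IC_e$'' delicate. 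Third, as you yourself flag, Verdier duality sends pro-objects to ind-objects, so it does not preserve $\hD_U(X \quot T,\bk)$ on the nose; building $\mathbb{D}$ by declaring $\hTil_w$ self-dual and extending via Proposition~\ref{prop:realization-equiv} to an exact anti-involution of $\hscO$ is a real undertaking. Each of these would require substantial extra work, likely more than the proposition itself warrants.

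The paper sidesteps all three by a different and more economical route. It constructs the left adjoint $\widehat{\gamma}(M)=\hTil_{w_0}\otimes_{R_T^\wedge\otimes_{(R_T^\wedge)^W}R_T^\wedge}M$ and studies the counit $\widehat{\gamma}(\widehat{\mathbb{V}}(\hTil))\to\hTil$. The key observation is that $\pH^0\circ\pi_\dag$ carries this counit to the classical counit $\gamma(\mathbb{V}(\mathscr{T}))\to\mathscr{T}$ (for $\mathscr{T}=\pi_\dag\hTil$), which is already known to be surjective with kernel having no $\IC_e$ composition factor. Surjectivity of the completed counit then follows from Lemma~\ref{lem:perv-pi}\eqref{it:pidag-conservative-perv}, and the vanishing of $\Hom(\widehat{\mathscr{K}},\hTil')$ for the kernel $\widehat{\mathscr{K}}$ is reduced---via adjunction $(\pi_\dag,\pi^\dag)$ and the right t-exactness of $\pi_\dag$---to the classical vanishing coming from Lemma~\ref{lem:soc-top-Delta-nabla}. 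No projectivity, duality, or socle analysis in $\hscO$ is ever invoked.
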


\begin{proof}
The second case is treated in~\cite[\S 2.1]{bbm}. Here we prove both cases using a closely related argument explained in~\cite[\S 4.7]{by}.

We start with the case of the functor $\mathbb{V}$. We remark that this functor admits a left adjoint $\gamma : \Mof(R_T/(R_T)^W_+) \to \scO$ defined by $\gamma(M)=\mathscr{T}_{w_0} \otimes_{R_T/(R_T)^W_+} M$. More concretely, if $M$ is written as the cokernel of a map $f : (R_T/(R_T)^W_+)^{\oplus n} \to (R_T/(R_T)^W_+)^{\oplus m}$, then in view of the isomorphism $R_T/(R_T)^W_+ \simto \End(\mathscr{T}_{w_0})$ the map $f$ defines a morphism $(\mathscr{T}_{w_0})^{\oplus n} \to (\mathscr{T}_{w_0})^{\oplus m}$, whose cokernel is $\gamma(M)$. From this description and using the exactness of $\mathbb{V}$ (see Lemma~\ref{lem:properties-Tw0}\eqref{it:Tw0-2}), we see that the adjunction morphism $\id \to \mathbb{V} \circ \gamma$ is an isomorphism.

We now assume that $\mathscr{T}$ is a tilting perverse sheaf, and consider the adjunction morphism
\begin{equation}
\label{eqn:adjunction-V}
\gamma(\mathbb{V}(\mathscr{T})) \to \mathscr{T}.
\end{equation}
The image of this morphism under $\mathbb{V}$ is an isomorphism, since its composition with the (invertible) adjunction morphism $\id \to \mathbb{V} \circ \gamma$ applied to $\mathbb{V}(\mathscr{T})$ is $\id_{\mathbb{V}(\mathscr{T})}$. Hence its kernel and cokernel are killed by $\mathbb{V}$; in other words, they do not admit $\IC_e$ as a composition factor.
In view of Corollary~\ref{cor:sub-quo-filtered}, this shows that the cokernel of this morphism vanishes, i.e.~that~\eqref{eqn:adjunction-V} is surjective. Moreover, if $\mathscr{T}'$ is another tilting object in $\scO$, then the kernel of this morphism does not admit any nonzero morphism to $\mathscr{T}'$, again by Corollary~\ref{cor:sub-quo-filtered}. Hence the induced morphism
\[
\Hom(\mathscr{T},\mathscr{T}') \to \Hom(\gamma(\mathbb{V}(\mathscr{T})),\mathscr{T}')
\]
is an isomorphism, which finishes the proof in this case.

Now we consider the case of $\widehat{\mathbb{V}}$. As for $\mathbb{V}$, this functor admits a left adjoint
\[
\widehat{\gamma} : \Mof(R_T^\wedge \otimes_{(R_T^\wedge)^W} R_T^\wedge) \to \hscO
\]
defined by
$\widehat{\gamma}(M) = \hTil_{w_0} \otimes_{R_T^\wedge \otimes_{(R_T^\wedge)^W} R_T^\wedge} M$; in more concrete terms if $M$ is the cokernel of a map $(R_T^\wedge \otimes_{(R_T^\wedge)^W} R_T^\wedge)^{\oplus n} \to (R_T^\wedge \otimes_{(R_T^\wedge)^W} R_T^\wedge)^{\oplus m}$ then $\widehat{\gamma}(M)$ is the cokernel of the corresponding map $(\hTil_{w_0})^{\oplus n} \to (\hTil_{w_0})^{\oplus m}$. From this description and the fact that the functor $\pH^0 \circ \pi_\dag$ is right exact (see Corollary~\ref{cor:pidag-exact}) we see that for any $M$ in $\Mof(R_T^\wedge \otimes_{(R_T^\wedge)^W} R_T^\wedge)$ we have
\[
\pH^0(\pi_\dag(\widehat{\gamma}(M))) \cong \gamma \bigl( \bk \otimes_{R_T^\wedge} M \bigr).
\]
Moreover, if $\hTil$ is a tilting object in $\hscO$, under this identification and that in~\eqref{eqn:V-hV}, applying $\pH^0 \circ \pi_\dag$ to the adjunction morphism
\begin{equation}
\label{eqn:adjunction-hV}
\widehat{\gamma}(\widehat{\mathbb{V}}(\hTil)) \to \hTil
\end{equation}
we recover the adjunction morphism~\eqref{eqn:adjunction-V} for $\mathscr{T}=\pi_\dag(\hTil)$. Since the latter map is known to be surjective, this shows that the cokernel of~\eqref{eqn:adjunction-hV} is killed by $\pH^0 \circ \pi_\dag$ hence, in view of Lemma~\ref{lem:perv-pi}\eqref{it:pidag-conservative-perv}, that this morphism is surjective.

Let now $\widehat{\mathscr{K}}$ be the kernel of~\eqref{eqn:adjunction-hV}. To conclude the proof, it now suffices to prove that
$\Hom_{\hscO}(\widehat{\mathscr{K}}, \hTil')=0$
for any tilting object $\hTil'$ in $\hscO$. For this it suffices to prove that $\Hom_{\hscO}(\widehat{\mathscr{K}},\hDel_w)=0$ for any $w \in W$. And finally, by the description of morphisms as in~\eqref{eqn:Hom-hD} and since each local system $\mathscr{L}_{A,n}$ is an extension of copies of the trivial local system, for this it suffices to prove that
\[
\Hom_{\hscO}(\widehat{\mathscr{K}},\pi^\dag \Delta_w)=0
\]
for any $w \in W$.

By adjunction and right-exactness of $\pi_\dag$ (see Corollary~\ref{cor:pidag-exact}), we have
\[
\Hom_{\hscO}(\widehat{\mathscr{K}},\pi^\dag \Delta_w) \cong \Hom_{\Db_{U}(Y,\bk)}(\pi_\dag \widehat{\mathscr{K}}, \Delta_w) \cong \Hom_{\scO}(\pH^0(\pi_\dag \widehat{\mathscr{K}}), \Delta_w).
\]
Now the remarks above (and the observation that $\pH^{-1} (\pi_\dag \hTil)=0$) show that $\pH^0(\pi_\dag \widehat{\mathscr{K}})$ is the kernel of the morphism~\eqref{eqn:adjunction-V} for $\mathscr{T}=\pi_\dag(\hTil)$. In particular this object does not admit $\IC_e$ as a composition factor; by Lemma~\ref{lem:soc-top-Delta-nabla} this implies that $\Hom_{\scO}(\pH^0(\pi_\dag \widehat{\mathscr{K}}), \Delta_w)=0$, and finishes the proof.
\end{proof}

We also observe the following consequence of Proposition~\ref{prop:ff}, following~\cite{bbm}.

\begin{cor}
\label{cor:ff}
For any projective perverse sheaves $\mathscr{P}$, $\mathscr{P}'$ in $\scO$, the functor $\mathbb{V}$ induces an isomorphism
\[
\Hom_{\scO}(\mathscr{P},\mathscr{P}') \simto \Hom_{R_T/(R_T)^W_+} \bigl( \mathbb{V}(\mathscr{P}), \mathbb{V}(\mathscr{P}') \bigr).
\]
\end{cor}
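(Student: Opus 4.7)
Following~\cite[\S 2.1]{bbm}, the corollary can be deduced from Proposition~\ref{prop:ff} using the two key ingredients that already drove the tilting case: the exactness of $\mathbb{V}$ (from the projectivity of $\mathscr{T}_{w_0}$, Lemma~\ref{lem:properties-Tw0}\eqref{it:Tw0-2}) and the socle/top identification of Corollary~\ref{cor:sub-quo-filtered}.

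For injectivity, the tilting-case argument will transport verbatim: if $f : \mathscr{P} \to \mathscr{P}'$ satisfies $\mathbb{V}(f) = 0$, then $\mathrm{im}(f) \subseteq \mathscr{P}'$ is killed by $\mathbb{V}$ and therefore has no $\IC_e$-composition factor; but $\mathscr{P}'$ is $\Delta$-filtered (being projective in a highest weight category), so by Corollary~\ref{cor:sub-quo-filtered} any nonzero subobject of $\mathscr{P}'$ has $\IC_e$ in its socle, forcing $\mathrm{im}(f) = 0$.

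For surjectivity, I would reuse the left adjoint $\gamma = \mathscr{T}_{w_0} \otimes_{R_T/(R_T)^W_+} -$ of $\mathbb{V}$ constructed in the proof of Proposition~\ref{prop:ff}. By adjunction, the desired statement becomes the assertion that precomposition with the counit $\alpha : \gamma \mathbb{V}(\mathscr{P}) \to \mathscr{P}$ induces an isomorphism $\Hom(\mathscr{P}, \mathscr{P}') \simto \Hom(\gamma\mathbb{V}(\mathscr{P}), \mathscr{P}')$. Since $\mathbb{V}(\alpha)$ is the identity, both $\ker\alpha$ and $\mathrm{coker}\,\alpha$ are killed by $\mathbb{V}$. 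Splitting $\alpha$ into its two natural short exact sequences, applying $\Hom(-, \mathscr{P}')$, and using $\Ext^1(\mathscr{P}, \mathscr{P}') = 0$ (projectivity of $\mathscr{P}$), the required isomorphism reduces to the three vanishings $\Hom(\ker\alpha, \mathscr{P}') = \Hom(\mathrm{coker}\,\alpha, \mathscr{P}') = 0$ and $\Ext^1(\mathrm{coker}\,\alpha, \mathscr{P}') = 0$. The two $\Hom$-vanishings are immediate from the socle argument: any morphism from an object without $\IC_e$-composition factors into $\mathscr{P}'$ has image inside $\mathrm{soc}(\mathscr{P}')$, which consists only of copies of $\IC_e$, so it must vanish.

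The main obstacle will be the third vanishing, $\Ext^1(\mathrm{coker}\,\alpha, \mathscr{P}') = 0$. I would handle it by embedding $\mathscr{P}'$ into its injective hull $\mathscr{T}_{w_0}^n$ (well-defined thanks to Lemma~\ref{lem:properties-Tw0}\eqref{it:Tw0-2} and the fact that $\mathrm{soc}(\mathscr{P}') = \IC_e^n$ by Corollary~\ref{cor:sub-quo-filtered}), producing a short exact sequence $0 \to \mathscr{P}' \to \mathscr{T}_{w_0}^n \to \mathscr{Q} \to 0$. Injectivity of $\mathscr{T}_{w_0}^n$ and the $\Hom$-vanishing argument above together identify $\Ext^1(\mathrm{coker}\,\alpha, \mathscr{P}')$ with $\Hom(\mathrm{coker}\,\alpha, \mathscr{Q})$, so the task becomes to show the latter is zero. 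This is the technical crux, and should follow from the interplay of two constraints: on the one hand, $\mathscr{Q}$ is a quotient of the $\nabla$-filtered object $\mathscr{T}_{w_0}^n$, so by Corollary~\ref{cor:sub-quo-filtered} its top consists only of copies of $\IC_e$; on the other hand, $\mathrm{coker}\,\alpha$, being $\mathscr{P}$ modulo its ``$\mathscr{T}_{w_0}$-trace'', has a top with no copy of $\IC_e$. A careful analysis of simple subquotients along the lines of~\cite[\S 2.1]{bbm} should then close the argument.
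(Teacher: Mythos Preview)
Your argument has a genuine gap at the final step. You correctly reduce to showing $\Hom(\mathrm{coker}\,\alpha, \mathscr{Q}) = 0$, where $\mathscr{Q} = \mathscr{T}_{w_0}^n / \mathscr{P}'$, and then argue via \emph{tops}: $\mathrm{top}(\mathscr{Q})$ consists only of copies of $\IC_e$ while $\mathrm{coker}\,\alpha$ has no $\IC_e$ at all. But knowing the top of $\mathscr{Q}$ does not control maps \emph{into} $\mathscr{Q}$: a nonzero morphism $\mathrm{coker}\,\alpha \to \mathscr{Q}$ has image a \emph{subobject} of $\mathscr{Q}$, so what you would need is that $\mathrm{soc}(\mathscr{Q})$ consists only of copies of $\IC_e$. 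From the defining exact sequence $0 \to \mathscr{P}' \to \mathscr{T}_{w_0}^n \to \mathscr{Q} \to 0$ and the injectivity of $\mathscr{T}_{w_0}$, one computes $\Hom(\IC_v, \mathscr{Q}) \cong \Ext^1(\IC_v, \mathscr{P}')$ for $v \neq e$; so controlling $\mathrm{soc}(\mathscr{Q})$ is exactly the $\Ext^1$-vanishing you set out to prove, and you have gone in a circle. Your appeal to~\cite[\S 2.1]{bbm} does not rescue this: that reference handles projectives precisely by the Ringel duality trick, not by a direct socle/top analysis.

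The paper's proof avoids this difficulty entirely by a different route. It uses the fact that $(-) \star^B \nabla_{w_0}$ is an equivalence from projectives to tiltings (with inverse $(-) \star^B \Delta_{w_0}$), together with the isomorphism $\mathscr{T}_{w_0} \star^B \nabla_{w_0} \cong \mathscr{T}_{w_0}$ established in~\eqref{eqn:Tw0-convolution-2}. These combine to give a natural isomorphism $\mathbb{V}(\mathscr{P}) \cong \mathbb{V}(\mathscr{P} \star^B \nabla_{w_0})$, so that the restriction of $\mathbb{V}$ to projectives is isomorphic to the composition of the equivalence $\mathsf{Proj}(\scO) \simto \mathsf{Tilt}(\scO)$ with $\mathbb{V}$ on tiltings; the corollary then follows immediately from Proposition~\ref{prop:ff}. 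This is both shorter and bypasses the $\Ext^1$ obstruction entirely.
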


\begin{proof}
It is well known that the functor
\[
(-) \star^B \Delta_{w_0} : \Db_{U}(Y,\bk) \to \Db_{U}(Y,\bk)
\]
is an equivalence of triangulated categories which restricts to an equivalence between tilting and projective objects in $\scO$; see~\cite{bbm} or~\cite{modrap1}. The inverse equivalence is the functor
\[
(-) \star^B \nabla_{w_0} : \Db_{U}(Y,\bk) \to \Db_{U}(Y,\bk).
\]
Therefore we have
\[
\mathbb{V}(\mathscr{P}) = \Hom(\mathscr{T}_{w_0},\mathscr{P}) \cong \Hom(\mathscr{T}_{w_0} \star^B \nabla_{w_0}, \mathscr{P} \star^B \nabla_{w_0}) \cong \mathbb{V}(\mathscr{P} \star^B \nabla_{w_0})
\]
since $\mathscr{T}_{w_0} \star^B \nabla_{w_0} \cong \mathscr{T}_{w_0}$; see~\eqref{eqn:Tw0-convolution-2}. In other words, we have constructed an isomorphism between the restriction of $\mathbb{V}$ to the subcategory $\mathsf{Proj}(\scO)$ of projective objects in $\scO$ and the composition
\[
\mathsf{Proj}(\scO) \xrightarrow[\sim]{(-) \star^B \nabla_{w_0}} \mathsf{Tilt}(\scO) \xrightarrow{\mathbb{V}} \Mof(R_T/(R_T)^W_+),
\]
where $\mathsf{Tilt}(\scO)$ is the category of tilting objects in $\scO$.
Hence the desired claim follows from Proposition~\ref{prop:ff}.
\end{proof}

\subsection{Image of \texorpdfstring{$\hTil_s$}{Ts}}
\label{ss:image-Ts}

Let us fix $s \in S$. Recall (see Remark~\ref{rmk:hTils-canonical}) that since we have chosen a representative for $\hTil_{w_0}$ we have a canonical representative for $\hTil_s$.
In the following lemma, we denote by $(R_T^\wedge)^s$ the $s$-invariants in $R_T^\wedge$.

\begin{lem}
\label{lem:V-Ts}
There exists a canonical isomorphism
\[
R_T^\wedge \otimes_{(R_T^\wedge)^s} R_T^\wedge \simto \widehat{\mathbb{V}}(\hTil_s).
\]
\end{lem}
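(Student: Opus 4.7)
The plan is to identify $\widehat{\mathbb{V}}(\hTil_s)$ with $\End(\hTil_s)$ and then establish the isomorphism $R_T^\wedge \otimes_{(R_T^\wedge)^s} R_T^\wedge \simto \End(\hTil_s)$ by a rank-one replay of the Endomorphismensatz. For the identification I use the canonical representative $\hTil_s = (\overline{\jmath}_s)_* \overline{\jmath}_s^* \hTil_{w_0}$ of Remark~\ref{rmk:hTils-canonical}: the $(\overline{\jmath}_s^*, (\overline{\jmath}_s)_*)$-adjunction combined with the isomorphism $\overline{\jmath}_s^* \hTil_{w_0} \cong \hTil_s$ of Lemma~\ref{lem:restr-hTil-w0} produces a canonical isomorphism
\[
\widehat{\mathbb{V}}(\hTil_s) = \Hom(\hTil_{w_0}, (\overline{\jmath}_s)_* \overline{\jmath}_s^* \hTil_{w_0}) \simto \End(\hTil_s).
\]

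Next I would construct the desired map $R_T^\wedge \otimes_{(R_T^\wedge)^s} R_T^\wedge \to \End(\hTil_s)$ from left and right monodromy exactly as in Proposition~\ref{prop:mon-tilting}; the only point to verify is that the combined monodromy $R_T^\wedge \otimes_\bk R_T^\wedge \to \End(\hTil_s)$ factors through the tensor product over the larger ring $(R_T^\wedge)^s$. By faithfulness of $\gr$ (Corollary~\ref{cor:gr-faithful}) this reduces to checking, for $r \in (R_T^\wedge)^s$, that the left and right monodromies of $r$ agree on $\gr(\hTil_s)$. Since $\pi_\dag(\hTil_s) \cong \mathscr{T}_s$ and $(\mathscr{T}_s : \Delta_v) = 1$ for $v \in \{e,s\}$ and $0$ otherwise (see~\S\ref{ss:Ts}), we have $\gr(\hTil_s) \cong \hDel_e \oplus \hDel_s$, and the required equality follows from Lemma~\ref{lem:mondromy-leftright-hDel} together with $e(r) = s(r) = r$.

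Surjectivity of the resulting morphism is then immediate from Corollary~\ref{cor:mon-Ts}, since the surjection of Proposition~\ref{prop:mon-tilting} factors as
\[
R_T^\wedge \otimes_{(R_T^\wedge)^W} R_T^\wedge \twoheadrightarrow R_T^\wedge \otimes_{(R_T^\wedge)^s} R_T^\wedge \to \End(\hTil_s).
\]
Injectivity proceeds, as in the proof of Theorem~\ref{thm:main}, by composing with the injection $\End(\hTil_s) \xrightarrow{\gr} \End(\gr(\hTil_s)) \cong R_T^\wedge \oplus R_T^\wedge$; by~\eqref{eqn:monodromy-gr} and Lemma~\ref{lem:mondromy-leftright-hDel} this composite sends $a \otimes b$ to $(ab,\, s(a) b)$, and the injectivity of that map is the rank-one analogue of Theorem~\ref{thm:kk-ajs}---a straightforward specialization relying only on the fact that $R_T^\wedge$ is free of rank $2$ over $(R_T^\wedge)^s$, which is in turn the rank-one case of Theorem~\ref{thm:pittie-steinberg}.

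I do not anticipate a serious obstacle: once one observes that $\widehat{\mathbb{V}}(\hTil_s)$ is canonically $\End(\hTil_s)$, the rest is a direct miniaturization of the Endomorphismensatz, and every structural ingredient---commutation of left and right monodromy on the standards, faithfulness of $\gr$, and the surjectivity of Corollary~\ref{cor:mon-Ts}---is already in place.
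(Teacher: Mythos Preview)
Your proposal is correct and follows essentially the same approach as the paper: identify $\widehat{\mathbb{V}}(\hTil_s)$ with $\End(\hTil_s)$ via adjunction, then show the monodromy map from $R_T^\wedge \otimes_{(R_T^\wedge)^s} R_T^\wedge$ is an isomorphism. The only difference is in the endgame: the paper observes that both $R_T^\wedge \otimes_{(R_T^\wedge)^s} R_T^\wedge$ and $\End(\hTil_s)$ are free $R_T^\wedge$-modules of rank~$2$ (the former because $R_T^\wedge$ is free of rank~$2$ over $(R_T^\wedge)^s$ with basis $\{1,e^{\delta^\vee}\}$ for $\delta^\vee$ satisfying $\langle\delta^\vee,\alpha_s\rangle=1$; the latter by Lemma~\ref{lem:tiltings}\eqref{it:tilting-Hom}), so surjectivity alone forces an isomorphism---no separate injectivity argument via $\gr$ is needed. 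Your injectivity route is also correct, but the rank count is shorter.
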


\begin{proof}
Recall that $\hTil_s = (\overline{\jmath}_s)_* \overline{\jmath}_s^* \hTil_{w_0}$; hence by adjunction we have
\[
\widehat{\mathbb{V}}(\hTil_s) = \Hom(\hTil_{w_0}, \hTil_s) \cong \End(\hTil_s).
\]
By Proposition~\ref{prop:mon-tilting} (applied to the Levi subgroup of $G$ containing $T$ associated with $s$) the morphism
\[
R_T^\wedge \otimes_\bk R_T^\wedge \to \End(\hTil_s)
\]
induced by monodromy factors through a morphism $R_T^\wedge \otimes_{(R_T^\wedge)^s} R_T^\wedge \to \End(\hTil_s)$, and by Corollary~\ref{cor:mon-Ts} this morphism is surjective. Now under our assumptions $R_T^\wedge$ is free of rank $2$ over $(R_T^\wedge)^s$. (In fact, if $\delta^\vee \in X_*(T)$ is a cocharacter such that $\langle \delta^\vee, \alpha_s \rangle = 1$, then $\{1,\delta^\vee\}$ is a basis of this module.) Hence $R_T^\wedge \otimes_{(R_T^\wedge)^s} R_T^\wedge$ is free of rank $2$ as an $R_T^\wedge$-module. Since $\End(\hTil_s)$ also has this property (see Lemma~\ref{lem:tiltings}\eqref{it:tilting-Hom}), this morphism must be an isomorphism.
\end{proof}

\subsection{Monoidal structure -- \'etale setting}
\label{ss:monoidal-structure}

In this subsection we consider the setting of Section~\ref{sec:etale}. In this case, in view of Lemma~\ref{lem:Tw0-etale} we have a canonical choice for the object $\hTil_{w_0}$; this is the choice we consider. 

We will denote by
\[
\hTet_{\mathbf{U}}(\mathbf{X} \quot \bT,\bk)
\]
the category of tilting perverse sheaves in $\hDet_{\mathbf{U}}(\mathbf{X} \quot \bT,\bk)$. By Remark~\ref{rmk:convolution-tilting} this subcategory is stable under the convolution product $\hatstar$; moreover, it contains the unit object $\hDel_e$ (see Lemma~\ref{lem:unit-convolution}); hence it has a natural structure of monoidal category.

In the following proposition, we consider the monoidal structure on the category $\Mof(R_\bT^\wedge \otimes_{(R_\bT^\wedge)^W} R_\bT^\wedge)$ given by $(M,N) \mapsto M \otimes_{R_\bT^\wedge} N$, where the tensor product is defined with respect to the action of the second copy of $R_\bT^\wedge$ on $M$ and the first copy on $N$, and the action of $R_\bT^\wedge \otimes_{(R_\bT^\wedge)^W} R_\bT^\wedge$ on $M \otimes_{R_\bT^\wedge} N$ is induced by the action of the first copy of $R_\bT^\wedge$ on $M$ and the second copy of $R_\bT^\wedge$ on $N$.

\begin{prop}
\label{prop:monoidal}
The functor $\widehat{\mathbb{V}} : \hTet_{\mathbf{U}}(\mathbf{X} \quot \bT,\bk) \to \Mof(R_{\bT}^\wedge \otimes_{(R_{\bT}^\wedge)^W} R_{\bT}^\wedge)$ has a canonical monoidal structure.
\end{prop}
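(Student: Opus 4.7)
The plan is to leverage the Whittaker category $\hDet_\Whit(\mathbf{X} \quot \bT,\bk)$ and its equivalence with $\Db\Mod^{\mathrm{fg}}(R_\bT^\wedge)$ (sending $\hDel_\chi$ to $R_\bT^\wedge$) in order to rewrite $\widehat{\mathbb{V}}$ in a form where the monoidal structure becomes visible as an honest equality.

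First, I would use the canonical isomorphism $\hTil_{w_0} \cong \Av^\bU_! \circ \Av_\chi(\hDel_e)$ from Lemma~\ref{lem:Tw0-etale} together with the adjunction $(\Av^\bU_!,\Av_\chi)$ to establish a canonical isomorphism of functors
\[
\widehat{\mathbb{V}}(\widehat{\mathscr{F}}) \cong \Hom_{\hDet_\Whit}(\hDel_\chi, \Av_\chi(\widehat{\mathscr{F}})).
\]
Under the equivalence $\hDet_\Whit(\mathbf{X} \quot \bT,\bk) \cong \Db\Mod^{\mathrm{fg}}(R_\bT^\wedge)$ of~\eqref{eqn:equiv-hDWhit}, the right-hand side identifies with the image of $\Av_\chi(\widehat{\mathscr{F}})$, viewed as an $R_\bT^\wedge$-module via the right monodromy on the Whittaker side. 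The action of the \emph{left} copy of $R_\bT^\wedge$ inherited from the left monodromy on $\widehat{\mathscr{F}}$ equips this module with its full $R_\bT^\wedge \otimes_{(R_\bT^\wedge)^W} R_\bT^\wedge$-structure.

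Second, the right convolution action of $\hDet_\bU(\mathbf{X} \quot \bT,\bk)$ on itself and on $\hDet_\Whit(\mathbf{X} \quot \bT,\bk)$ is intertwined by $\Av_\chi$: there is a canonical isomorphism $\Av_\chi(\widehat{\mathscr{F}} \hatstar \widehat{\mathscr{G}}) \cong \Av_\chi(\widehat{\mathscr{F}}) \hatstar \widehat{\mathscr{G}}$, which follows from the fact that $\Av_\chi$ is defined by a functor (left Whittaker-averaging) acting on the \emph{left} factor of the convolution and therefore commutes with right convolution. Combining this with the previous step, we obtain
\[
\widehat{\mathbb{V}}(\widehat{\mathscr{F}} \hatstar \widehat{\mathscr{G}}) \cong \Hom_{\hDet_\Whit}(\hDel_\chi, \Av_\chi(\widehat{\mathscr{F}}) \hatstar \widehat{\mathscr{G}}).
\]
Third, I would establish the key projection-type formula: for any $\widehat{\mathscr{E}} \in \hDet_\Whit$ and $\widehat{\mathscr{G}} \in \hDet_\bU$, convolution $\widehat{\mathscr{E}} \hatstar \widehat{\mathscr{G}}$ corresponds, under the Whittaker equivalence, to the module-theoretic tensor product $\Hom(\hDel_\chi,\widehat{\mathscr{E}}) \otimes_{R_\bT^\wedge} \widehat{\mathbb{V}}(\widehat{\mathscr{G}})$. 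To prove this, I would first check the case $\widehat{\mathscr{E}} = \hDel_\chi$ by a direct adjunction computation:
\[
\Hom(\hDel_\chi, \hDel_\chi \hatstar \widehat{\mathscr{G}}) \cong \Hom(\Av_\chi(\hDel_e), \Av_\chi(\widehat{\mathscr{G}})) \cong \Hom(\hTil_{w_0},\widehat{\mathscr{G}}) = \widehat{\mathbb{V}}(\widehat{\mathscr{G}}),
\]
and then extend to arbitrary $\widehat{\mathscr{E}}$ by the triangulated structure and the fact that $\hDel_\chi$ generates $\hDet_\Whit$ (using Be\u\i linson's lemma / the equivalence~\eqref{eqn:equiv-hDWhit}). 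Taken together, the three steps give the isomorphism of bifunctors
\[
\widehat{\mathbb{V}}(\widehat{\mathscr{F}} \hatstar \widehat{\mathscr{G}}) \cong \widehat{\mathbb{V}}(\widehat{\mathscr{F}}) \otimes_{R_\bT^\wedge} \widehat{\mathbb{V}}(\widehat{\mathscr{G}}),
\]
with all constituent isomorphisms canonical.

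Finally, to promote this to an actual monoidal structure, I would verify the associativity (pentagon) and unitality (triangle) axioms. The unit axiom reduces to the identification $\widehat{\mathbb{V}}(\hDel_e) \cong R_\bT^\wedge$ under the canonical isomorphism $\hDel_e \hatstar \widehat{\mathscr{F}} \cong \widehat{\mathscr{F}}$ of Lemma~\ref{lem:unit-convolution}, which is straightforward. The associativity axiom is the main obstacle: one must show that the diagram obtained by comparing $\widehat{\mathbb{V}}((\widehat{\mathscr{F}}_1 \hatstar \widehat{\mathscr{F}}_2) \hatstar \widehat{\mathscr{F}}_3)$ with $\widehat{\mathbb{V}}(\widehat{\mathscr{F}}_1 \hatstar (\widehat{\mathscr{F}}_2 \hatstar \widehat{\mathscr{F}}_3))$ through the two possible chains of structure isomorphisms commutes. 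This amounts to checking that the intertwining morphism $\Av_\chi(-\hatstar-) \cong \Av_\chi(-)\hatstar -$ is part of a genuine $\hDet_\bU$-module-category structure on $\hDet_\Whit$, i.e.\ satisfies the natural coherence axiom. This is a formal verification once one sets up the module-category structures carefully, but is the technical heart of the argument; it is at this point that we follow the strategy of~\cite[\S4.6]{by}.
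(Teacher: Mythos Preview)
Your approach is correct and close in spirit to the paper's, but the paper organizes the argument more efficiently and thereby dissolves what you call ``the main obstacle''.

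The paper does not build the bifunctor isomorphism and then verify the pentagon. Instead, for each tilting $\hTil$ it observes that the functor $M \mapsto \Upsilon^{-1}\bigl(\pH^0(\Upsilon(M) \hatstar \hTil)\bigr)$ on $\Mof(R_\bT^\wedge)$ is right exact (this is your observation that $\hDel_\chi \hatstar \hTil$ is a sum of copies of $\hDel_\chi$), hence is canonically represented by tensoring with a bimodule, and that bimodule is computed to be $\widehat{\mathbb{V}}(\hTil)$ (this is your adjunction calculation). Now the monoidal isomorphism is simply the statement that the functor attached to $\hTil \hatstar \hTil'$ is the composition of those attached to $\hTil$ and $\hTil'$, and composition of right exact functors corresponds canonically to tensor product of representing bimodules. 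Because functor composition is strictly associative, the pentagon reduces immediately to the associativity constraint of the convolution action of $\hTet_\bU$ on $\hDet_\Whit$, which is part of the data. So the paper's route buys you the coherence essentially for free.

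One small imprecision in your Step~3: the extension ``to arbitrary $\widehat{\mathscr{E}}$ by the triangulated structure'' is delicate, since $(-)\otimes_{R_\bT^\wedge} \widehat{\mathbb{V}}(\widehat{\mathscr{G}})$ is not a priori exact. It is harmless here because for tilting $\widehat{\mathscr{G}}$ the module $\widehat{\mathbb{V}}(\widehat{\mathscr{G}})$ is free over $R_\bT^\wedge$, and in any case you only ever apply the formula to $\widehat{\mathscr{E}} = \Av_\chi(\widehat{\mathscr{F}})$, which is already a direct sum of copies of $\hDel_\chi$; but you should say so.
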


\begin{proof}
Recall from~\S\ref{ss:Whittaker} the category $\hDet_\Whit(\mathbf{X} \quot \bT,\bk)$, the object $\hDel_\chi$, the equivalence
\[
\Upsilon : \Db \Mod^{\mathrm{fg}}(R_{\bT}^\wedge) \simto \hDet_\Whit(\mathbf{X} \quot \bT,\bk)
\]
 from~\eqref{eqn:equiv-hDWhit}, and the functor
$\Av_\chi : \hDet_{\mathbf{U}}(\mathbf{X} \quot \bT,\bk) \to \hDet_{\Whit}(\mathbf{X} \quot \bT,\bk)$. 
We also have a right action of the monoidal category $\hDet_{\mathbf{U}}(\mathbf{X} \quot \bT,\bk)$ on $\hDet_{\Whit}(\mathbf{X} \quot \bT,\bk)$, denoted again $\hatstar$.

Let us denote by $\hTet_\Whit(\mathbf{X} \quot \bT,\bk)$ the full subcategory of $\hDet_\Whit(\mathbf{X} \quot \bT,\bk)$ whose objects are the direct sums of copies of $\hDel_\chi$, or equivalently the image under $\Upsilon$ of the category of free $R_{\bT}^\wedge$-modules. We claim that, for $\hTil$ in $\hTet_{\mathbf{U}}(\mathbf{X} \quot \bT,\bk)$, the functor
\begin{equation}
\label{eqn:conv-hTil-Whit}
(-) \hatstar \hTil : \hDet_{\Whit}(\mathbf{X} \quot \bT,\bk) \to \hDet_{\Whit}(\mathbf{X} \quot \bT,\bk)
\end{equation}
stabilizes the subcategory $\hTet_{\Whit}(\mathbf{X} \quot \bT,\bk)$. In fact, to prove this it suffices to prove that $\hDel_\chi \hatstar \hTil$ belongs to $\hTet_{\Whit}(\mathbf{X} \quot \bT,\bk)$. But we have $\hDel_\chi \hatstar \hTil \cong \Av_\chi(\hTil)$, and
\begin{multline*}
\mathsf{H}^\bullet(\Upsilon^{-1}(\Av_\chi(\hTil))) \cong \Hom^\bullet_{\hDet_\Whit(\mathbf{X} \quott \bT,\bk)}(\hDel_\chi, \Av_\chi(\hTil)) \\
\cong \Hom^\bullet_{\hDet_{\mathbf{U}}(\mathbf{X} \quott \bT,\bk)}(\Av^{\bU}_!(\hDel_\chi), \hTil) \cong \Hom^\bullet_{\hDet_{\mathbf{U}}(\mathbf{X} \quott \bT,\bk)}(\hTil_{w_0}, \hTil)
\end{multline*}
where the second isomorphism uses adjunction, and the third one uses Lemma~\ref{lem:Tw0-etale}. Now the right-hand side is concentrated in degree $0$, and free over $R_{\bT}^\wedge$ by Lem\-ma~\ref{lem:tiltings}\eqref{it:tilting-Hom}. Hence $\Av_\chi(\hTil)$ is indeed a direct sum of copies of $\hDel_\chi$.

The claim we have just proved shows in particular that the functor~\eqref{eqn:conv-hTil-Whit} is right exact for the perverse t-structure. Hence the functor
\[
\Mof(R_{\bT}^\wedge) \xrightarrow[\sim]{\Upsilon} \hP^{\mathrm{et}}_\Whit(\mathbf{X} \quot \bT,\bk) \xrightarrow{\pH^0( - \hatstar \hTil)} \hP^{\mathrm{et}}_\Whit(\mathbf{X} \quot \bT,\bk) \xrightarrow[\sim]{\Upsilon^{-1}} \Mof(R_{\bT}^\wedge)
\]
is right exact, and therefore representable by the $R^\wedge_\bT$-bimodule
\[
\Upsilon^{-1} \Bigl( \pH^0 \bigl( \Upsilon(R_{\bT}^\wedge) \hatstar \hTil \bigr) \Bigr) = \widehat{\mathbb{V}}(\hTil).
\]

In the case $\hTil = \hDel_e$, since the functor $(-) \hatstar \hDel_e$ is canonically isomorphic to the identity functor, we must have a canonical isomorphism $\widehat{\mathbb{V}}(\hDel_e) \cong R_{\bT}^\wedge$ (which can of course also been seen directly). And, if $\hTil,\hTil'$ belong to $\hTet_{\mathbf{U}}(\mathbf{X} \quot \bT,\bk)$, since the functor constructed as above from $\hTil \hatstar \hTil'$ is canonically isomorphic to the composition of the functors associated with $\hTil$ and with $\hTil'$ respectively, we obtain a canonical isomorphism
\[
\widehat{\mathbb{V}}(\hTil \hatstar \hTil') \cong \widehat{\mathbb{V}}(\hTil) \otimes_{R_{\bT}^\wedge} \widehat{\mathbb{V}}(\hTil').
\]
It is easy to check that these isomorphisms are compatible with the associativity and unit constraints, hence define a monoidal structure on $\widehat{\mathbb{V}}$.
\end{proof}

\subsection{Monoidal structure -- classical setting}
\label{ss:monoidal-classical}

In this subsection we consider the ``classical'' setting of Sections~\ref{sec:flag-tilting}--\ref{sec:main-thm}. Here we do not have (at present) a counterpart of the Whittaker category; but an analogue of Proposition~\ref{prop:monoidal} can be obtained from general principles. For this we have to assume that $\bk$ contains a primitive $p$-th root of unity for some prime number $p \neq \ell$; we fix a choice of $p$ and of a primitive root.

\begin{prop}
\label{prop:monoidal-classical}
There exists a choice of object $\hTil_{w_0}$ such that
the functor $\widehat{\mathbb{V}} : \hT_{U}(X \quot T,\bk) \to \Mof(R_T^\wedge \otimes_{(R_T^\wedge)^W} R_T^\wedge)$ admits a monoidal structure.
\end{prop}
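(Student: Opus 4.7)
The plan is to deduce this from Proposition~\ref{prop:monoidal} (the étale case) via the comparison formalism explained in~\cite[\S 6.1]{bbd}. First, I would fix a prime $p \neq \ell$ such that $\bk$ contains a primitive $p$-th root of unity (as the statement allows), and let $\bG$ be a connected reductive group over $\overline{\mathbb{F}_p}$ of the same root-datum type as $G$, with $\bT \subset \bB \subset \bG$ chosen analogously; set $\mathbf{X} = \bG/\bU$ and $\mathbf{Y} = \bG/\bB$. The étale category $\hDet_{\bU}(\mathbf{X} \quot \bT,\bk)$ is then monoidal (via $\hatstar$), and Proposition~\ref{prop:monoidal} provides a monoidal structure on the corresponding étale functor $\widehat{\mathbb{V}}^{\mathrm{et}}$.

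Next I would construct the comparison. Spreading out $\bG$ (with its Borel, torus, unipotent radical, and the Bruhat stratification of $\bG/\bB$) over a subring $R \subset \C$ of finite type over $\Z$---possible via the Chevalley construction---and taking two base changes, one along $R \hookrightarrow \C$ and one along a geometric point $\mathrm{Spec}(\overline{\mathbb{F}_p}) \to \mathrm{Spec}(R)$, the specialization formalism of~\cite[\S 6.1]{bbd} gives equivalences of triangulated categories
\[
\Dbet_{\bU}(\mathbf{Y},\bk) \simto \Db_{U}(Y,\bk), \qquad \Dbet_{\bU}(\mathbf{X},\bk) \simto \Db_{U}(X,\bk)
\]
compatible with the six functors, the perverse t-structures, the external product, and (crucially) the $\bT$-action on $\mathbf{X}$ used to define monodromy. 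Since monodromy is ultimately extracted from pullback along the action and projection maps, one checks that the morphism $\varphi_{\mathscr{F}}^\wedge : R_T^\wedge \to \End(\mathscr{F})$ of Section~\ref{sec:monodromy} is preserved---modulo matching the topological generator of the inertia on the étale side with the classical generator $\exp(2\mathbf{i}\pi\,\cdot)$ used in Lemma~\ref{lem:monodromy-def}.

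Third, all the data used to carve out the completed category and its convolution (the subcategory $\Db_{\mathcal{S}}(X \quot A,\bk)$, the $\pi$-constancy condition, uniform boundedness, and the construction of $\hatstar$ via pullback, external product, and proper pushforward along the convolution diagram in Section~\ref{sec:convolution}) are preserved by the above comparison. It follows formally that the equivalences lift to an equivalence of monoidal triangulated categories
\[
(\hDet_{\bU}(\mathbf{X} \quot \bT,\bk), \hatstar) \simto (\hD_{U}(X \quot T,\bk), \hatstar)
\]
which is t-exact for the perverse t-structures and hence restricts to a monoidal equivalence $\hTet_{\bU}(\mathbf{X} \quot \bT,\bk) \simto \hT_{U}(X \quot T,\bk)$. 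Defining $\hTil_{w_0}$ in the classical setting as the image of the canonical étale representative (cf.~Lemma~\ref{lem:Tw0-etale} and Remark~\ref{rmk:hTils-canonical}), Theorem~\ref{thm:main} identifies its endomorphism ring canonically with $R_T^\wedge \otimes_{(R_T^\wedge)^W} R_T^\wedge$ compatibly on both sides, so the classical $\widehat{\mathbb{V}}$ corresponds to $\widehat{\mathbb{V}}^{\mathrm{et}}$ and inherits a monoidal structure.

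The main obstacle will be verifying that the comparison formalism of~\cite[\S 6.1]{bbd}---classically formulated only for the derived categories and the six functors---respects the extra pieces of structure intrinsic to our situation: the monodromy morphism (which on the classical side is built from the exponential and on the étale side from a chosen topological generator), the pro-object formalism underlying $\hD_{U}(X \quot T,\bk)$, and the explicit convolution construction. Once these compatibilities are pinned down, transport of the monoidal structure from Proposition~\ref{prop:monoidal} is purely formal.
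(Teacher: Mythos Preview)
Your proposal is correct and follows essentially the same strategy as the paper: transport the \'etale result of Proposition~\ref{prop:monoidal} to the classical setting via the comparison formalism of~\cite[\S 6.1]{bbd}, using a model of the group over an arithmetic base to bridge $\C$ and $\overline{\F_p}$. The paper makes this slightly more concrete by working over $\Z$ from the start and using a strictly henselian discrete valuation ring $\mathfrak{R}\subset\C$ with residue field $\F$ to link the two geometric points; it also phrases the final transport step as passing a \emph{coalgebra structure} on $\hTil_{w_0}^{\mathrm{et}}$ through the monoidal equivalence~\eqref{eqn:equiv-hT-etale-classical} and then rebuilding the monoidal structure on $\widehat{\mathbb{V}}$ from that (cf.~\cite[Proposition~4.6.4]{by}), rather than directly identifying $\widehat{\mathbb{V}}$ with $\widehat{\mathbb{V}}^{\mathrm{et}}$ as you do---but these are equivalent bookkeeping choices.
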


\begin{proof}
We follow the procedure of~\cite[\S 6.1]{bbd} to deduce the result in the classical topology (over $\C$) from its \'etale counterpart (over an algebraically closed field of characteristic $p$).

Let $\bG_\Z$ be split connected reductive group over $\Z$ such that $\mathrm{Spec}(\C) \times_{\mathrm{Spec}(\Z)} \bG_\Z$ is isomorphic to $G$, and let $\bB_\Z$ be a Borel subgroup of $\bG_\Z$ and $\bT_\Z \subset \bB_\Z$ be a (split) maximal torus; then we can assume that $B=\mathrm{Spec}(\C) \times_{\mathrm{Spec}(\Z)} \bB_\Z$ and $T=\mathrm{Spec}(\C) \times_{\mathrm{Spec}(\Z)} \bT_\Z$. Let also $\bU_\Z$ be the unipotent radical of $\bB_\Z$, so that $U=\mathrm{Spec}(\C) \times_{\mathrm{Spec}(\Z)} \bU_\Z$; then we can set $\bX_\Z := \bG_\Z / \bU_\Z$, $\bY_\Z := \bG_\Z / \bB_\Z$, which provides versions of $X$ and $Y$ over $\Z$. We set $\bX_\C := \mathrm{Spec}(\C) \times_{\mathrm{Spec}(\Z)} \bX_\Z$, $\bY_\C := \mathrm{Spec}(\C) \times_{\mathrm{Spec}(\Z)} \bY_\Z$; of course these varieties coincide with $X$ and $Y$, but we change notation to emphasize the fact that we now consider them as schemes (with the Zariski topology) rather than topological spaces (with the classical topology). If $\bU_\C = \mathrm{Spec}(\C) \times_{\mathrm{Spec}(\Z)} \bU_\Z$ and $\bT_\C = \mathrm{Spec}(\C) \times_{\mathrm{Spec}(\Z)} \bT_\Z$, we can consider the categories $\Dbet_{\bU_\C}(\bY_\C, \bk)$ and $\Dbet_{\bU_\C}(\bX_\C \quot \bT_\C, \bk)$ defined using \'etale sheaves (but now over a complex scheme) as in Section~\ref{sec:etale}. The general results recalled in~\cite[\S 6.1.2]{bbd} provide canonical equivalences of categories
\[
\Dbet_{\bU_\C}(\bY_\C, \bk) \cong \Db_{U}(Y, \bk), \qquad \Dbet_{\bU_\C}(\bX_\C \quot \bT_\C, \bk) \cong \Db_{U}(X \quot T, \bk)
\]
which commute (in the obvious sense) with pullback and pushforward functors.

Now, choose an algebraically closed field $\F$ whose characteristic is $p$, and a strictly henselian discrete valuation ring $\mathfrak{R}\subset \C$ whose residue field is $\F$. Then we can consider the base changes of $\bG_\Z$, $\bB_\Z$, etc. to $\mathfrak{R}$ or $\F$, which we will denote by the same letter with a subscript $\mathfrak{R}$ or $\F$. We can consider the versions of the categories considered above for $\bX_{\mathfrak{R}}$ and $\bY_{\mathfrak{R}}$ instead of $\bX_\C$ and $\bY_\C$; the results explained in~\cite[\S\S 6.1.8--6.1.9]{bbd} (see also~\cite[Corollary~VI.4.20 and Remark~VI.4.21]{milne}) guarantee that pullback along the natural morphisms
\[
\xymatrix{
\bY_\C \ar[r] & \bY_{\mathfrak{R}} & \bY_\F \ar[l]
}
\quad \text{and} \quad
\xymatrix{
\bX_\C \ar[r] & \bX_{\mathfrak{R}} & \bX_\F \ar[l]
}
\]
induce equivalences of triangulated categories
\[
\xymatrix{
\Dbet_{\bU_\C}(\bY_\C, \bk) & \Dbet_{\bU_{\mathfrak{R}}}(\bY_{\mathfrak{R}}, \bk) \ar[l]_-{\sim} \ar[r]^-{\sim} & \Dbet_{\bU_\F}(\bY_\F, \bk)
}
\]
and
\[
\xymatrix{
\Dbet_{\bU_\C}(\bX_\C \quot \bT_\C, \bk) & \Dbet_{\bU_{\mathfrak{R}}}(\bX_{\mathfrak{R}} \quot \bT_{\mathfrak{R}}, \bk) \ar[l]_-{\sim} \ar[r]^-{\sim} & \Dbet_{\bU_\F}(\bX_\F \quot \bT_\F, \bk).
}
\]

Combining these two constructions we obtain an equivalence of categories
\begin{equation}
\label{eqn:equiv-hT-etale-classical}
\hT_{U}(X \quot T,\bk) \simto \hTet_{\mathbf{U}_\F}(\mathbf{X}_\F \quot \bT_\F,\bk)
\end{equation}
which is easily seen to be monoidal. Let us denote by $\hTil_{w_0}^{\mathrm{et}}$ the object of the category $\hTet_{\mathbf{U}_\F}(\mathbf{X}_\F \quot \bT_\F,\bk)$ considered in~\S\ref{ss:monoidal-structure}; then Proposition~\ref{prop:monoidal} provides us with a coalgebra structure on $\hTil_{w_0}^{\mathrm{et}}$ (in the monoidal category $(\hTet_{\mathbf{U}_\F}(\mathbf{X}_\F \quot \bT_\F,\bk), \hatstar)$). If we
choose the object $\hTil_{w_0}$ as the inverse image of $\hTil_{w_0}^{\mathrm{et}}$ under~\eqref{eqn:equiv-hT-etale-classical}, then the coalgebra structure on $\hTil_{w_0}^{\mathrm{et}}$ induces a coalgebra structure on $\hTil_{w_0}$.
Given such a structure, it is not difficult (see e.g.~\cite[Proposition~4.6.4 and its proof]{by}) to construct a monoidal structure on the associated functor $\widehat{\mathbb{V}}$.
\end{proof}

\begin{rmk}
One can obtain a result weaker than Proposition~\ref{prop:monoidal-classical} without using the comparison with \'etale sheaves. Namely, choose an identification $(i_e)_* i_e^* \hTil_{w_0} \cong \hDel_e$. Then by adjunction we deduce a morphism $\xi : \hTil_{w_0} \to \hDel_e$, which itself induces a morphism
\[
\xi \hatstar \xi : \hTil_{w_0} \hatstar \hTil_{w_0} \to \hDel_e \hatstar \hDel_e = \hDel_e.
\]
One can show (following e.g.~the ideas in~\cite[Proof of Proposition~4.6.4]{by}) that there exists a morphism $\eta : \hTil_{w_0} \to \hTil_{w_0} \hatstar \hTil_{w_0}$ which makes the diagram
\[
\xymatrix@R=0.5cm{
\hTil_{w_0} \ar[rr]^-{\eta} \ar[rd]_-{\xi} && \hTil_{w_0} \hatstar \hTil_{w_0} \ar[ld]^-{\xi \hatstar \xi} \\
& \hDel_e &
}
\]
commutative, and that moreover for any such $\eta$ the morphism of bifunctors
\[
\widehat{\mathbb{V}}(-) \otimes_{R_T^\wedge} \widehat{\mathbb{V}}(-) \to
\widehat{\mathbb{V}}(- \hatstar -)
\]
sending $f \otimes g$ to $(f \hatstar g) \circ \eta$ is an isomorphism of functors. However, to make sure that this isomorphism induces a monoidal structure, we would have to choose $\eta$ such that $(\eta \hatstar \id) \circ \eta = (\id \hatstar \eta) \circ \eta$. We do not know how to make such a choice.
\end{rmk}

\subsection{Soergel theory}
\label{ss:soergel-theory}

In this subsection we work either in the classical or in the \'etale setting (but use the notation from Sections~\ref{sec:flag-tilting}--\ref{sec:main-thm}).

With Proposition~\ref{prop:ff}, Lemma~\ref{lem:V-Ts} and Proposition~\ref{prop:monoidal} (or Proposition~\ref{prop:monoidal-classical}) at hand, one can obtain a very explicit description of the categories $\hT_{U}(X \quot T,\bk)$ and $\Tilt(\scO)$, as follows.

\begin{thm}\phantomsection
\label{thm:soergel-theory}
\begin{enumerate}
\item
\label{it:soergel-theory-hT}
The functor $\widehat{\mathbb{V}}$ induces an equivalence of monoidal categories between $\hT_{U}(X \quot T,\bk)$ and the full subcategory $\mathsf{SMof}(R_T^\wedge \otimes_{(R_T^\wedge)^W} R_T^\wedge)$ of $\Mof(R_T^\wedge \otimes_{(R_T^\wedge)^W} R_T^\wedge)$ generated under direct sums, direct summands, and tensor products, by the objects $R_T^\wedge$ and $R_T^\wedge \otimes_{(R_T^\wedge)^s} R_T^\wedge$ with $s \in S$.
\item
The functor $\mathbb{V}$ induces an equivalence of categories between $\Tilt(\scO)$ and the full subcategory $\mathsf{SMof}(R_T^\wedge)$ of $\Mof(R_T^\wedge)$ generated under direct sums, direct summands, and application of functors $R_T^\wedge \otimes_{(R_T^\wedge)^s} -$ (with $s \in S$) by the trivial module $\bk$.
\item
These equivalences are compatible in the sense that the diagram
\[
\xymatrix@C=1.5cm{
\hT_{U}(X \quot T,\bk) \ar[r]^-{\widehat{\mathbb{V}}}_-{\sim} \ar[d]_-{\pi_\dag} & \mathsf{SMof}(R_T^\wedge \otimes_{(R_T^\wedge)^W} R_T^\wedge) \ar[d]^-{ - \otimes_{R_T^\wedge} \bk} \\
\Tilt(\scO) \ar[r]^-{\mathbb{V}}_-{\sim} & \mathsf{SMof}(R_T^\wedge)
}
\]
commutes (up to canonical isomorphism) and that the convolution action of $\hT_{U}(X \quot T,\bk)$ on $\Tilt(\scO)$ identifies with the action induced by the action of $\Mof(R_T^\wedge \otimes_{(R_T^\wedge)^W} R_T^\wedge)$ on $\Mof(R_T^\wedge)$ by tensor product over $R_T^\wedge$.
\end{enumerate}
\end{thm}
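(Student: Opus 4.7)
My plan is to assemble the ingredients already established in the excerpt. The essential inputs are: Proposition~\ref{prop:ff} (full faithfulness of $\widehat{\mathbb{V}}$ and $\mathbb{V}$ on tilting objects); Proposition~\ref{prop:monoidal} or~\ref{prop:monoidal-classical} (monoidality of $\widehat{\mathbb{V}}$); Lemma~\ref{lem:V-Ts} together with the (clear) fact that $\widehat{\mathbb{V}}(\hDel_e) \cong R_T^\wedge$; Remark~\ref{rmk:convolution-tilting} (the categories $\hT_U$ and $\Tilt(\scO)$ are generated from $\hDel_e$, $\Delta_e$, and the $\hTil_s$ by direct sums, direct summands and convolution); the isomorphism~\eqref{eqn:V-hV} identifying $\mathbb{V}\circ\pi_\dag$ with $\bk \otimes_{R_T^\wedge}\widehat{\mathbb{V}}(-)$; and the Karoubianness of $\hT_U$ (inherited from Corollary~\ref{cor:Hom-fg}\eqref{it:Krull-Schmidt} since direct summands of tiltings are tiltings).

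To prove (i), I would first observe that a fully faithful, additive, monoidal functor whose source is a Karoubian category induces an equivalence between its source and the smallest subcategory of its target that contains the images of a chosen family of generators and is closed under direct sums, direct summands and tensor products. Indeed, full faithfulness together with Karoubianness lets one lift both idempotents and morphisms. Applying this principle to $\widehat{\mathbb{V}}$ with the generators $\hDel_e$ and $\hTil_s$ ($s \in S$), and recalling that $\widehat{\mathbb{V}}(\hDel_e) \cong R_T^\wedge$ (the unit) and $\widehat{\mathbb{V}}(\hTil_s) \cong R_T^\wedge \otimes_{(R_T^\wedge)^s} R_T^\wedge$, the essential image is exactly $\mathsf{SMof}(R_T^\wedge \otimes_{(R_T^\wedge)^W} R_T^\wedge)$ by definition.

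For (ii), full faithfulness of $\mathbb{V}$ on $\Tilt(\scO)$ is given by Proposition~\ref{prop:ff}. For the essential image, Proposition~\ref{prop:classification-tiltings} combined with Lemma~\ref{lem:tiltings}\eqref{it:tilting-pi} shows that $\pi_\dag$ is essentially surjective from $\hT_U$ onto $\Tilt(\scO)$, so by~\eqref{eqn:V-hV} the essential image of $\mathbb{V}$ is obtained from that of $\widehat{\mathbb{V}}$ by applying $\bk \otimes_{R_T^\wedge}(-)$ (with respect to the right $R_T^\wedge$-action). Using the identifications
\[
\bk \otimes_{R_T^\wedge} R_T^\wedge \cong \bk, \qquad \bk \otimes_{R_T^\wedge}\bigl((R_T^\wedge \otimes_{(R_T^\wedge)^s} R_T^\wedge) \otimes_{R_T^\wedge} M\bigr) \cong R_T^\wedge \otimes_{(R_T^\wedge)^s}(\bk \otimes_{R_T^\wedge} M),
\]
the subcategory $\mathsf{SMof}(R_T^\wedge \otimes_{(R_T^\wedge)^W} R_T^\wedge)$ goes precisely to $\mathsf{SMof}(R_T^\wedge)$.

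Finally, part (iii) will follow by direct computation. The commutation of the square with $\pi_\dag$ on the left and $\bk \otimes_{R_T^\wedge}(-)$ on the right is the content of~\eqref{eqn:V-hV}. For the compatibility of the two module structures, given $\hTil \in \hT_U$ and $\mathscr{T} \in \Tilt(\scO)$, I would write $\mathscr{T} \cong \pi_\dag \hTil'$ and compute, using~\eqref{eqn:hatstar-pidag}, monoidality of $\widehat{\mathbb{V}}$, and~\eqref{eqn:V-hV}:
\[
\mathbb{V}(\hTil \hatstar \mathscr{T}) \cong \mathbb{V}\bigl(\pi_\dag(\hTil \hatstar \hTil')\bigr) \cong \widehat{\mathbb{V}}(\hTil \hatstar \hTil') \otimes_{R_T^\wedge} \bk \cong \widehat{\mathbb{V}}(\hTil) \otimes_{R_T^\wedge} \mathbb{V}(\mathscr{T}).
\]
I do not expect a real obstacle here: the proof is a structural assembly. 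The only point requiring care is bookkeeping the two $R_T^\wedge$-actions (left and right) in iterated tensor products, in order to verify that the images of the generators really match the defining generators of $\mathsf{SMof}$ on both sides and that the convolution compatibility uses the correct action.
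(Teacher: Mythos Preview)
Your proposal is correct and follows exactly the approach the paper intends: the paper's proof is a one-line reference to Proposition~\ref{prop:ff}, Lemma~\ref{lem:V-Ts}, Proposition~\ref{prop:monoidal} (or~\ref{prop:monoidal-classical}), and Remark~\ref{rmk:convolution-tilting}, and you have spelled out precisely how these ingredients combine. Your explicit invocation of Karoubianness (Corollary~\ref{cor:Hom-fg}\eqref{it:Krull-Schmidt}) to handle the direct-summand closure, and of~\eqref{eqn:V-hV} and~\eqref{eqn:hatstar-pidag} for the compatibilities in part~(iii), are details the paper leaves implicit but which are indeed needed.
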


\begin{proof}
The theorem follows from the results quoted above and Remark~\ref{rmk:convolution-tilting}.
\end{proof}

One can also state similar results for triangulated categories.

\begin{thm}
\label{thm:soergel-triang}
There exist canonical equivalences of monoidal triangulated categories
\begin{align*}
\Kb \mathsf{SMof}(R_T^\wedge \otimes_{(R_T^\wedge)^W} R_T^\wedge) &\simto \hD_U(X \quot T, \bk), \\
\Kb \mathsf{SMof}(R_T^\wedge) &\simto \Db_U(Y,\bk).
\end{align*}
These equivalences are compatible in a sense similar to that in Theorem~{\rm \ref{thm:soergel-theory}}.
\end{thm}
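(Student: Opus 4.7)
The plan is to combine Theorem~\ref{thm:soergel-theory} with realization-type equivalences identifying the bounded homotopy categories of tilting objects with the ambient triangulated categories. First, I would apply $\Kb$ to the monoidal equivalence $\widehat{\mathbb{V}} : \hT_{U}(X \quot T,\bk) \simto \mathsf{SMof}(R_T^\wedge \otimes_{(R_T^\wedge)^W} R_T^\wedge)$ from Theorem~\ref{thm:soergel-theory}, where on both sides the monoidal structure on $\Kb$ is induced by the termwise operation (convolution on the left, tensor product over $R_T^\wedge$ on the right) followed by totalization. This yields a monoidal equivalence of triangulated categories
\[
\Kb \hT_{U}(X \quot T,\bk) \simto \Kb \mathsf{SMof}(R_T^\wedge \otimes_{(R_T^\wedge)^W} R_T^\wedge).
\]
Composing with the inverse of the realization equivalence from Proposition~\ref{prop:realization-equiv} would then give the first equivalence of the theorem.

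For the second equivalence I would proceed analogously, using the non-completed version of Theorem~\ref{thm:soergel-theory} on tilting objects in $\scO$. The necessary realization equivalence $\Kb \Tilt(\scO) \simto \Db_U(Y,\bk)$ follows from the same argument as Proposition~\ref{prop:realization-equiv} (applied to a filtered enhancement of $\Db_U(Y,\bk)$, which exists by standard constructions as recalled in Remark~\ref{rmk:completed-filtered}), since $\scO$ is a highest weight category in which projectives and tiltings generate the bounded derived category. Composing with $\Kb$ of the equivalence $\mathbb{V}$ gives the second equivalence.

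The main obstacle is upgrading these equivalences to \emph{monoidal} ones. The realization functor of Proposition~\ref{prop:realization-equiv} is constructed using the filtered enhancement $\hD^F_{\mathcal{S}}(X \quot T,\bk)$ of Remark~\ref{rmk:completed-filtered}, so to equip it with a monoidal structure one must lift the convolution bifunctor $\hatstar$ to the filtered level and verify compatibility with the associated graded and "forget filtration" functors. Concretely, for two bounded complexes $A^\bullet, B^\bullet$ of tilting objects, one needs a natural isomorphism in $\hD_U(X \quot T, \bk)$ between the totalization of the bicomplex $A^\bullet \hatstar B^\bullet$ and the convolution in $\hD_U(X \quot T, \bk)$ of the two realizations. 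This follows formally once the filtered convolution is constructed, using that $\hatstar$ is bi-triangulated and preserves tilting objects (Remark~\ref{rmk:convolution-tilting}), so that the bifunctor on $\Kb \hT_U$ is the unique bi-triangulated extension of the restriction of $\hatstar$ to $\hT_U \times \hT_U$.

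Finally, the compatibility assertion — that the diagram involving $\pi_\dag$ on one side and $(-) \otimes_{R_T^\wedge} \bk$ on the other commutes, and that the convolution action of $\Kb \mathsf{SMof}(R_T^\wedge \otimes_{(R_T^\wedge)^W} R_T^\wedge)$ on $\Kb \mathsf{SMof}(R_T^\wedge)$ matches the action of $\hD_U(X \quot T, \bk)$ on $\Db_U(Y,\bk)$ by $\hatstar$ — is deduced by applying the corresponding compatibility in Theorem~\ref{thm:soergel-theory} termwise and invoking the same filtered-enhancement argument for the right action of $\hD_U(X \quot T,\bk)$ on $\Db_U(Y,\bk)$. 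The canonicity claim reduces to canonicity of the realization functors and of the monoidal structure on $\widehat{\mathbb{V}}$ furnished by Proposition~\ref{prop:monoidal} (or Proposition~\ref{prop:monoidal-classical}).
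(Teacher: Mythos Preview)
Your proposal is correct and follows essentially the same approach as the paper: combine the monoidal equivalence of Theorem~\ref{thm:soergel-theory} with the realization equivalence of Proposition~\ref{prop:realization-equiv}, and handle the monoidality of the latter via the filtered enhancement. The paper's proof is more terse, simply citing \cite[Lemma~A.7.1]{beilinson} or \cite[Proposition~2.3]{amrw} for the monoidality step rather than spelling out the filtered-convolution argument you sketch, but the content is the same.
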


\begin{proof}
The first equivalence follows from Proposition~\ref{prop:realization-equiv} and Theorem~\ref{thm:soergel-theory}\eqref{it:soergel-theory-hT}. (The fact that the equivalence of Proposition~\ref{prop:realization-equiv} is monoidal in our setting follows from standard arguments, see~\cite[Lemma~A.7.1]{beilinson} or~\cite[Proposition~2.3]{amrw}.) The second equivalence, and their compatibilities, follow from similar arguments.
\end{proof}

\begin{rmk}
\begin{enumerate}
\item
Using Remark~\ref{rmk:DbS-hDS}, from the description of the category $\hD_U(X \quot T, \bk)$ given in Theorem~\ref{thm:soergel-triang} one can deduce a description of the category $\Db_U(X \quot T, \bk)$ in algebraic terms.
\item
Using Theorem~\ref{thm:soergel-theory} and the known structure of the additive categories $\hT_{U}(X \quot T,\bk)$ and~$\Tilt(\scO)$ one obtains some sort of ``multiplicative variant'' of the theory of Soergel modules and bimodules (see~\cite{soergel-bim}) in our present setting. It might be interesting to understand if such a theory can be developed algebraically, and in bigger generality.
\end{enumerate}
\end{rmk}

Finally, following~\cite{bbm}, from our results we deduce the following description of the category $\scO$.
Here, for $\underline{w}=(s_1, \cdots, s_r)$ a sequence of elements of $S$, we set
\[
\mathsf{B}(\underline{w}) = R_T^\wedge \otimes_{(R_T^\wedge)^{s_1}} \cdots \otimes_{(R_T^\wedge)^{s_{r-1}}} R_T^\wedge \otimes_{(R_T^\wedge)^{s_r}} \bk.
\]

\begin{thm}
Choose, for any $w \in W$, a reduced expression $\underline{w}$ for $w$. Then there exists an equivalence of categories between $\scO$ and the category $\Mof(A)$, where
\[
A = \left( \End_{R_T^\wedge}\Bigl( \bigoplus_{w \in W} \mathsf{B}(\underline{w}) \Bigr) \right)^{\mathrm{op}}.
\]
\end{thm}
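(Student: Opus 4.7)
\medskip

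The plan is to reduce the statement to standard Morita theory by exhibiting a projective generator of $\scO$ whose endomorphism algebra coincides with $A^{\mathrm{op}}$. Since $\scO$ has finitely many simple objects $(\IC_w)_{w \in W}$ and is a highest weight category, it admits a projective generator $\scP$, and the functor $\Hom_\scO(\scP,-)$ yields an abstract Morita equivalence $\scO \simto \Mof(\End_\scO(\scP)^{\mathrm{op}})$. The task therefore reduces to producing such a $\scP$ with a good description of its endomorphism algebra.

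First, I would produce a \emph{tilting} generator and pass to projectives at the end. Set
\[
\mathscr{T}_{\underline{w}} := \hTil_{s_1} \hatstar \cdots \hatstar \hTil_{s_r} \hatstar \Delta_e
\]
for $\underline{w}=(s_1,\ldots,s_r)$ the chosen reduced expression of $w$, and let $P := \bigoplus_{w \in W} \mathscr{T}_{\underline{w}}$. By Remark~\ref{rmk:convolution-tilting} each $\mathscr{T}_{\underline{w}}$ is tilting in $\scO$. Iterating Lemma~\ref{lem:conv-hTil} and using \eqref{eqn:multiplicities-pidag}, the standard multiplicity $(\mathscr{T}_{\underline{w}} : \Delta_v)$ counts subexpressions of $\underline{w}$ with product $v$ in the $0$-Hecke algebra sense; in particular $(\mathscr{T}_{\underline{w}}:\Delta_w)=1$ and $(\mathscr{T}_{\underline{w}}:\Delta_v)=0$ for $v>w$. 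Standard highest weight theory then shows $\mathscr{T}_w$ appears as a direct summand of $\mathscr{T}_{\underline{w}}$, so $P$ is a tilting generator.

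Next, I would compute $\mathbb{V}(P)$ via Theorem~\ref{thm:soergel-theory}. Since $\hDel_e$ is the monoidal unit we have $\widehat{\mathbb{V}}(\hDel_e) \cong R_T^\wedge$, and hence $\mathbb{V}(\Delta_e) \cong \widehat{\mathbb{V}}(\hDel_e) \otimes_{R_T^\wedge} \bk \cong \bk$ by the commutativity of the diagram in part (3). Combining this with Lemma~\ref{lem:V-Ts} and the monoidal compatibility,
\[
\mathbb{V}(\mathscr{T}_{\underline{w}}) \cong \widehat{\mathbb{V}}(\hTil_{s_1}) \otimes_{R_T^\wedge} \cdots \otimes_{R_T^\wedge} \widehat{\mathbb{V}}(\hTil_{s_r}) \otimes_{R_T^\wedge} \mathbb{V}(\Delta_e) \cong \mathsf{B}(\underline{w}),
\]
so $\mathbb{V}(P) \cong \bigoplus_{w \in W} \mathsf{B}(\underline{w})$.

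Finally, I would convert the tilting generator into a projective generator and conclude. As recalled in the proof of Corollary~\ref{cor:ff}, $(-) \star^B \Delta_{w_0}$ restricts to an equivalence $\Tilt(\scO) \simto \mathsf{Proj}(\scO)$ which preserves indecomposability, so $\scP := P \star^B \Delta_{w_0}$ is a projective generator of $\scO$. The same corollary (together with the isomorphism $\mathbb{V}(\scP) \cong \mathbb{V}(P)$ deduced from $\mathscr{T}_{w_0} \star^B \nabla_{w_0} \cong \mathscr{T}_{w_0}$) then gives
\[
\End_\scO(\scP) \cong \End_\scO(P) \cong \End_{R_T^\wedge}(\mathbb{V}(P)) \cong \End_{R_T^\wedge}\Bigl( \bigoplus_{w \in W} \mathsf{B}(\underline{w}) \Bigr),
\]
and combining with the abstract equivalence $\scO \simto \Mof(\End_\scO(\scP)^{\mathrm{op}})$ yields $\scO \simto \Mof(A)$. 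The only subtle step is the compatibility computation in the previous paragraph: the monoidal structure in Theorem~\ref{thm:soergel-theory}(3) is stated for the action of $\hT_U(X \quot T, \bk)$ on $\Tilt(\scO)$, and one must unwind the definitions carefully to ensure that pairing the associated-bimodule calculation with the rightmost factor $\Delta_e = \pi_\dag(\hDel_e)$ actually produces $\mathsf{B}(\underline{w})$ with the intended $R_T^\wedge$-action; but this reduces to the observation that $\mathbb{V} \circ \pi_\dag \cong (-) \otimes_{R_T^\wedge} \bk \circ \widehat{\mathbb{V}}$.
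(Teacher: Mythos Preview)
Your proof is correct and follows essentially the same approach as the paper: define the Bott--Samelson tiltings $\mathscr{T}(\underline{w})$, convolve with $\Delta_{w_0}$ to obtain a projective generator $\mathscr{P}$, identify $\End(\mathscr{P})$ with $A^{\mathrm{op}}$ via the full faithfulness of $\mathbb{V}$ (Proposition~\ref{prop:ff} / Corollary~\ref{cor:ff}) together with the monoidal computation $\mathbb{V}(\mathscr{T}(\underline{w})) \cong \mathsf{B}(\underline{w})$, and conclude by Morita theory. The paper compresses all of this into a single sentence citing Corollary~\ref{cor:ff} ``and its proof''; you have simply unpacked the details.
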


\begin{proof}
For $\underline{v}=(s_1, \cdots, s_r)$ a sequence of elements of $S$, we set
\[
\mathscr{T}(\underline{v}) = \hTil_{s_1} \hatstar \cdots \hatstar \hTil_{s_r} \hatstar \Delta_e.
\]
Then by Corollary~\ref{cor:ff} and its proof, the object
\[
\mathscr{P} := \bigoplus_{w \in W} \mathscr{T}(\underline{w}) \star^B \Delta_{w_0}
\]
is a projective generator of $\scO$, and we have $\End(\mathscr{P}) \cong A^{\mathrm{op}}$. Then the claim follows from general and well-known result, see e.g.~\cite[Exercise on p.~55]{bass}.
\end{proof}

\section{Erratum to~\texorpdfstring{\cite{ab}}{AB}}
\label{sec:erratum}

In this section we use the above results to correct an error found in the proof of~\cite[Lemma~5]{ab}\footnote{Namely, it is claimed in this proof that the complex denoted ``$C$'' is concentrated in positive perverse degrees. But the arguments given there only imply that its \emph{negative} perverse cohomology objects vanish.} and generalize that statement to arbitrary coefficients. The new proof below follows the strategy suggested in~\cite[Remark~3]{ab}.
The statement of~\cite[Lemma~5]{ab} involves an \emph{affine} flag variety but it readily reduces to Lemma~\ref{lem:ab} below restricted to the special case
of characteristic zero coefficients.
 
As in Section~\ref{sec:etale} we consider a connected reductive algebraic group $\bG$ over an algebraically closed field $\F$ of characteristic $p \neq \ell$, and choose a Borel subgroup $\bB \subset \bG$ and a maximal torus $\bT \subset \bB$. Fixing the same data as in~\S\ref{ss:Whittaker} we can consider the standard perverse sheaf $\Delta_\chi:=\Av_\chi(\Delta_e)$. (Note that the natural morphism $\Delta_\chi \to \nabla_\chi:=\Av_\chi(\nabla_e)$ is an isomorphism.) In~\S\ref{ss:Whittaker} we have considered the averaging functors $\Av^{\bU}_!$ and $\Av^{\bU}_*$. We can similarly define the functors
\[
\Av^{\bB}_! := (a_{\bB})_! \bigl(\underline{\bk}_{\bB} \boxtimes - \bigr) [\dim \bB], \quad \Av^{\bB}_* := (a_{\bB})_* \bigl(\underline{\bk}_{\bB} \boxtimes - \bigr) [\dim \bB],
\]
from $\Dbet_\Whit(\bY,\bk)$ to the $\bB$-equivariant derived category $\Dbet_\bB(\bY,\bk)$, where $a_\bB : \bB \times \bY \to \bY$ is the action morphism.

In the following lemma, we denote by $\Phi \subset X^*(\bT)$ the root system of $(\bG,\bT)$, and by $\Z\Phi$ the lattice generated by $\Phi$.

\begin{lem}
\label{lem:ab}
The $\bB$-equivariant complex $\Av^\bB_*(\Delta_\chi)$ is concentrated in perverse degrees $\geq -\dim(\bT)$. Moreover, if
$X^*(\bT) / \Z \Phi$ has no torsion then we have
\[
\pH^{-\dim \bT} \bigl( \Av^\bB_*(\Delta_\chi) \bigr) \cong \Delta_{w_0}.
\]
\end{lem}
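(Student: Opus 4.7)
The plan is to factor the functor $\Av^\bB_*$ through a familiar object. Writing $\bB = \bT \ltimes \bU$, the decomposition $a_\bB = a_\bT \circ (\id_\bT \times a_\bU)$ of the action morphism yields a canonical isomorphism of functors $\Av^\bB_* \cong \Av^\bT_* \circ \Av^\bU_*$, where $\Av^\bT_* := (a_\bT)_*(\underline{\bk}_\bT \boxtimes -)[\dim \bT]$ (and $\Av^\bU_*$ is defined analogously on Whittaker input). The \'etale analogue of Lemma~\ref{lem:Tw0-etale}, whose proof applies verbatim for arbitrary connected reductive $\bG$, gives $\Av^\bU_*(\Delta_\chi) \cong \mathscr{T}_{w_0}$, hence
\[
\Av^\bB_*(\Delta_\chi) \cong \Av^\bT_*(\mathscr{T}_{w_0}).
\]
The proof is thereby reduced to analyzing the perverse degrees of $\Av^\bT_*(\mathscr{T}_{w_0})$.

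For the degree bound, note that $a_\bT \colon \bT \times \bY \to \bY$ is smooth of relative dimension $r := \dim \bT$ and that $\underline{\bk}_\bT[r] \boxtimes \mathscr{T}_{w_0}$ is a perverse sheaf on $\bT \times \bY$. The standard t-exactness estimate for $*$-pushforward along smooth morphisms---namely that for $f$ smooth of relative dimension $d$ the functor $f_*$ sends perverse sheaves into ${}^p D^{\ge -d}$---then yields $\Av^\bT_*(\mathscr{T}_{w_0}) \in {}^p D^{\ge -r}$, proving the first assertion.

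For the identification of $\pH^{-r}$ under the torsion-free hypothesis, the key observation is that each standard perverse sheaf $\Delta_w$ is $\bT$-equivariant as a sheaf on $\bY$ (being the $!$-extension of the constant sheaf from the $\bT$-stable affine stratum $\bY_w$). The equivariance $a_\bT^* \Delta_w \cong p_\bT^* \Delta_w$ combined with the projection formula for $p_\bT$ then gives
\[
\Av^\bT_*(\Delta_w) \cong \Delta_w \otimes_\bk \mathsf{H}^\bullet(\bT, \bk)[r],
\]
from which $\pH^{-r}(\Av^\bT_*(\Delta_w)) \cong \Delta_w$ for each $w \in W$. Restricting to the open cell $j_{w_0} \colon \bY_{w_0} \hookrightarrow \bY$ (where $j_{w_0}^*$ is perverse t-exact and commutes with $\Av^\bT_*$ by smooth base change) and using $\mathscr{T}_{w_0}|_{\bY_{w_0}} \cong \underline{\bk}_{\bY_{w_0}}[\dim \bY]$ (from the multiplicity-one property in Lemma~\ref{lem:properties-Tw0}\eqref{it:Tw0-1}), one readily sees that $\pH^{-r}(\Av^\bT_*(\mathscr{T}_{w_0}))|_{\bY_{w_0}} \cong \underline{\bk}_{\bY_{w_0}}[\dim \bY]$, which agrees with $\Delta_{w_0}|_{\bY_{w_0}}$. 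To conclude, it remains to show that no further contribution to $\pH^{-r}$ survives on lower strata: applying $\Av^\bT_*$ to the standard filtration of $\mathscr{T}_{w_0}$ (each $\Delta_w$ appearing once) produces a tower of distinguished triangles whose long exact sequences in $\pH^{-r}$ express the target in terms of the individual $\Delta_w$'s via boundary maps encoding the extension classes. The main obstacle will be this last step: one must show that, under the torsion-free hypothesis on $X^*(\bT)/\Z\Phi$ (which via the Pittie--Steinberg theorem of Section~\ref{sec:kostant-kumar} makes the Endomorphismensatz of Theorem~\ref{thm:main} applicable in this setting and thereby controls these extensions), these boundary maps are precisely such that all $\Delta_w$ contributions with $w \ne w_0$ are cancelled, leaving only $\Delta_{w_0}$.
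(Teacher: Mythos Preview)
Your factorization $\Av^\bB_* \cong \Av^\bT_* \circ \Av^\bU_*$ and the resulting degree bound are correct and essentially dual to the paper's argument, which passes to $\Av^\bB_!$ via Verdier duality, factors it as ${}^!\mathrm{Ind}_\bU^\bB \circ \Av^\bU_!$, and obtains $\Av^\bU_!(\Delta_\chi) \cong \mathscr{T}_{w_0}$ in the same way.

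The genuine gap is in the second assertion. Your computation $\pH^{-r}(\Av^\bT_*(\Delta_w)) \cong \Delta_w$ is correct, but the passage from the individual subquotients of the standard filtration to $\pH^{-r}(\Av^\bT_*(\mathscr{T}_{w_0}))$ is left entirely open: you explicitly flag ``the main obstacle'' of showing the boundary maps cancel every $\Delta_w$ with $w\neq w_0$, and then do not address it. Your open-cell check only confirms the restriction to $\bY_{w_0}$, which says nothing about what survives on lower strata. Moreover, it is not at all clear how the Endomorphismensatz would directly control these boundary maps; the sentence ``thereby controls these extensions'' is a hope, not an argument.

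The paper avoids this filtration analysis entirely. In the dual picture it observes that ${}^!\mathrm{Ind}_\bU^\bB$ is left adjoint to $\For^\bB_\bU[\dim\bT]$, from which one deduces that $\pH^{\dim\bT}\bigl({}^!\mathrm{Ind}_\bU^\bB(\mathscr{F})\bigr)$ is the \emph{largest $\bB$-equivariant quotient} of $\mathscr{F}$ for any $\bU$-equivariant perverse sheaf $\mathscr{F}$. One then has to show that the largest $\bB$-equivariant quotient of $\mathscr{T}_{w_0}$ is $\nabla_{w_0}$. There is certainly a surjection $\mathscr{T}_{w_0}\twoheadrightarrow\nabla_{w_0}$; if the factorization $\mathscr{T}_{w_0}\twoheadrightarrow Q\twoheadrightarrow\nabla_{w_0}$ through the largest equivariant quotient $Q$ had nontrivial kernel in the second map, then by Corollary~\ref{cor:sub-quo-filtered} that kernel would contain $\IC_e$, forcing $\dim\Hom(\mathscr{T}_{w_0},Q)\geq 2$. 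Here is where the torsion-free hypothesis enters concretely: it guarantees that $X_*(\bT)\to X_*(\bT/Z(\bG))$ is surjective, so Corollary~\ref{cor:main} (applied to the adjoint group $\bG/Z(\bG)$) yields a surjection $R_\bT\twoheadrightarrow\End(\mathscr{T}_{w_0})$ via monodromy. Composing with the surjection $\End(\mathscr{T}_{w_0})\twoheadrightarrow\Hom(\mathscr{T}_{w_0},Q)$ and using that $Q$ is $\bB$-equivariant (so monodromy acts through $\varepsilon_\bT$), one gets $\dim\Hom(\mathscr{T}_{w_0},Q)\leq 1$, a contradiction. This is the missing idea in your proposal: replace the intractable boundary-map bookkeeping by the universal property of $\pH^{\pm\dim\bT}$ together with a one-line dimension count coming from monodromy.
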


\begin{proof}
Using Verdier duality, this statement is equivalent to the fact that $\Av^\bB_!(\Delta_\chi)$ is concentrated in perverse degrees $\leq \dim(\bT)$, and that if
$X^*(\bT) / \Z \Phi$ has no torsion then we have
$\pH^{\dim \bT} \bigl( \Av^\bB_!(\Delta_\chi) \bigr) \cong \nabla_{w_0}$. This is the statement we will actually prove.

We have
\[
\Av^\bB_! \cong {}^! \hspace{-0.5pt} \mathrm{Ind}_{\bU}^{\bB} \circ \Av^\bU_!,
\]
where ${}^! \hspace{-0.5pt} \mathrm{Ind}_{\bU}^{\bB} : \Dbet_\bU(\bY,\bk) \to \Dbet_\bB(\bY,\bk)$ is the functor sending $\mathscr{F}$ to 
\[
(a'_\bB)_! (\underline{\bk}_{\bB/\bU} \, \widetilde{\boxtimes} \, \mathscr{F}) [\dim(\bB/\bU)].
\]
(Here, $a'_\bB : \bB \times^\bU \bY \to \bY$ is the natural map, and $\widetilde{\boxtimes}$ is the twisted external product.) Using Lemma~\ref{lem:Tw0-etale}, we deduce that
\[
\Av^\bB_!(\Delta_\chi) \cong {}^! \hspace{-0.5pt} \mathrm{Ind}_{\bU}^{\bB}(\mathscr{T}_{w_0}).
\]
It is clear that for any $\bB$-equivariant perverse sheaf $\mathscr{F}$ on $\bY$, the complex ${}^! \hspace{-1pt} \mathrm{Ind}_{\bU}^{\bB}(\mathscr{F})$ is concentrated in perverse degrees between $0$ and $\dim(\bT)$. Hence the same claim holds for any extension of such perverse sheaves, i.e.~for any $\bU$-equivariant perverse sheaf; thus the first claim of the lemma is proved. Now the functor ${}^! \hspace{-0.5pt} \mathrm{Ind}_{\bU}^{\bB}$ is left adjoint to $\mathrm{For}^{\bB}_{\bU}[\dim(\bB/\bU)]$, where $\mathrm{For}^{\bB}_{\bU} : \Dbet_\bB(\bY,\bk) \to \Dbet_\bU(\bY,\bk)$ is the forgetful functor. Using this fact, it is not difficult to check that for any $\bU$-equivariant perverse sheaf $\mathscr{F}$ on $\bY$, the perverse sheaf
\[
\pH^{\dim \bT} \bigl( {}^! \hspace{-0.5pt} \mathrm{Ind}_{\bU}^{\bB}(\mathscr{F}) \bigr)
\]
is characterized as the largest $\bB$-equivariant quotient of $\mathscr{F}$.

To conclude the proof, it remains to prove that if
$X^*(\bT) / \Z \Phi$ has no torsion then $\nabla_{w_0}$ is the largest $\bB$-equivariant quotient of $\mathscr{T}_{w_0}$. Now $\mathscr{T}_{w_0}$ has a costandard filtration, whose last term is $\nabla_{w_0}$; therefore there exists a surjection $\mathscr{T}_{w_0} \twoheadrightarrow \nabla_{w_0}$ (which is unique up to scalar). Since $\nabla_{w_0}$ is $\bB$-equivariant, we deduce that this map factors as a composition
\[
\mathscr{T}_{w_0} \twoheadrightarrow \pH^{\dim \bT} \bigl( {}^! \hspace{-0.5pt} \mathrm{Ind}_{\bU}^{\bB}(\mathscr{T}_{w_0}) \bigr) \twoheadrightarrow \nabla_{w_0}.
\]
The kernel of the second map here is the image of the kernel of our surjection $\mathscr{T}_{w_0} \twoheadrightarrow \nabla_{w_0}$. Since the latter admits a costandard filtration, in view of Lemma~\ref{cor:sub-quo-filtered}, if the former is nonzero then it admits $\IC_e$ as a composition factor; in other words the vector space 
\[
\Hom\bigl( \mathscr{T}_{w_0}, \pH^{\dim \bT} ( {}^! \hspace{-0.5pt} \mathrm{Ind}_{\bU}^{\bB}(\mathscr{T}_{w_0}) ) \bigr)
\]
has dimension at least $2$. 

On the other hand, we have a surjection
\[
\Hom(\mathscr{T}_{w_0},\mathscr{T}_{w_0}) \twoheadrightarrow \Hom\bigl( \mathscr{T}_{w_0}, \pH^{\dim \bT} ( {}^! \hspace{-0.5pt} \mathrm{Ind}_{\bU}^{\bB}(\mathscr{T}_{w_0}) ) \bigr).
\]
Our assumption on $\bG$ means that the quotient morphism $\bG \twoheadrightarrow \bG/Z(\bG)$ (where $Z(\bG)$ is the center of $\bG$) induces a surjection $X_*(\bT) \twoheadrightarrow X_*(\bT/Z(\bG))$. Applying Corollary~\ref{cor:main} to $\bG/Z(\bG)$ we obtain that monodromy induces a surjection
\[
R_\bT \twoheadrightarrow \Hom(\mathscr{T}_{w_0},\mathscr{T}_{w_0}).
\]
Since $\pH^{\dim \bT} ( {}^! \hspace{-0.5pt} \mathrm{Ind}_{\bU}^{\bB}(\mathscr{T}_{w_0}) )$ is $\bB$-equivariant, the composition
\[
R_\bT \twoheadrightarrow \Hom(\mathscr{T}_{w_0},\mathscr{T}_{w_0}) \twoheadrightarrow \Hom\bigl( \mathscr{T}_{w_0}, \pH^{\dim \bT} ( {}^! \hspace{-0.5pt} \mathrm{Ind}_{\bU}^{\bB}(\mathscr{T}_{w_0}) ) \bigr)
\]
factors through $\varepsilon_\bT$, proving that the rightmost term has dimension at most $1$. 
This condition prevents the kernel of the surjection $\pH^{\dim \bT} \bigl( {}^! \hspace{-0.5pt} \mathrm{Ind}_{\bU}^{\bB}(\mathscr{T}_{w_0}) \bigr) \twoheadrightarrow \nabla_{w_0}$ to be nonzero, which concludes the proof.
\end{proof}

\begin{rmk}
\begin{enumerate}
\item
Using the remarks in~\S\ref{ss:intro-remarks}, one can show that another setting in which the second claim in Lemma~\ref{lem:ab} holds is when $\ell$ is very good for $\bG$ hence, in particular, when $\ell=0$. (Note that under this assumption $X^*(\bT) / \Z \Phi$ has no $\ell$-torsion, see~\cite[\S 2.10]{herpel}.)
\item
Replacing the proof of~\cite[Lemma~5]{ab} by the proof given above, one can check that all the results of~\cite[\S 2]{ab} (hence, in particular,~\cite[Proposition~2]{ab}) extend in a straightforward way to positive-characteristic coefficients.
\end{enumerate}
\end{rmk}


\end{document}